\newcommand{\termKcolor}{NavyBlue}
\newcommand{\termJcolor}{VioletRed}
\newcommand{\McKernan}{M${}^{\text{c}}$Kernan}
\newcommand{\alert}[2][RoyalBlue]{{\color{#1}#2}}
\NewDocumentCommand{\Ltwo}{ 
  m    
  O{X} 
  s    
  m    
}{L^{#1}_{(2)}\paren{\IfBooleanF{#3}{#2;} #4}}
\NewDocumentCommand{\Ltwosp}{
  D//{q}             
  t{M}               
  D||{\vphi}         
  o 
  D<>{\clomega}      
}{\Ltwo{n,#1}*{D \otimes F \IfBooleanT{#2}{\otimes M}}_{#3, \IfNoValueF{#4}{#4,} #5}}
\NewDocumentCommand{\Harm}{ 
  D//{q}             
  D||{\vphi}         
  o 
  D<>{\clomega}      
}{\mathcal{H}^{n,#1}_{#2, \IfNoValueF{#3}{#3,} #4}}
\newcommand{\spHbase}{\mathcal{H}}
\NewDocumentCommand{\spH}{ 
  D//{q}       
  t{M}         
  s            
  O{\IfBooleanTF{#5}{#6 +1}{\sigma}}
  t{|}         
  D<>{\sigma}  
  d()          
}{\spHbase^{n,#1}\IfNoValueTF{#7}{
    \begingroup%
    \newcommand{\upidl}{\IfBooleanTF{#3}{\holo_X}{\defidlof{\lcc[#4]}}}%
    \paren{\IfBooleanT{#2}{M\otimes}
      \IfBooleanTF{#5}{\frac{\upidl}{\defidlof{\lcc[#6]}}}{\upidl}}\endgroup%
  }{\paren{\IfBooleanT{#2}{M\otimes}#7}}}
\DeclareMathOperator{\lc}{lc} 
\NewDocumentCommand{\lcc}{ 
  O{\sigma}                
  D<>{X}                   
  D(){S}
}{\lc_{#2}^{#1}\paren{#3}}
\NewDocumentCommand{\lcS}{  
  s            
  D//{S}       
  D||{\sigma}  
  O{p}         
}{\mathtt{\IfBooleanT{#1}{\rs} #2}^{#3}_{#4}}
\NewDocumentCommand{\PRes}{ 
  O{\lcS}  
  d()      
}{\mathcal R_{#1}\IfNoValueF{#2}{\paren{#2}}}
\newcommand{\defidlof}[1]{\mathcal{I}_{#1}}  
\NewDocumentCommand{\mtidlof}{   
  O{}      
  D<>{#1}  
  m        
}{\multidl_{#2}\paren{#3}} 
\NewDocumentCommand{\residlof}{  
  D||{\sigma}   
  d<>           
  m             
  s             
  O{RoyalBlue}  
}{\sheaf R_{\IfNoValueTF{#2}{}{#2,} #1}\IfBooleanT{#4}{^{{\color{#5}\infty}}}\paren{#3}}
\NewDocumentCommand{\Adjidlof}{
  D||{\sigma}  
  O{X}         
  D<>{S}       
  m            
}{\operatorname{\mathit{Adj}}^{#1}_{\paren{#2,#3}}\paren{#4}}
\def\ELAdjidlof{\Adjidlof|\text{EL}|} 
\def\HMAdjidlof{\Adjidlof|\text{HM}|} 
\def\GAdjidlof{\Adjidlof|\text{G}|} 
\NewDocumentCommand{\aidlof}{
  D||{\sigma}  
  d<>          
  m            
  D//{}        
  O{\psi}      
  s            
}{\sheaf{J}_{\!\IfNoValueTF{#2}{}{#2,} #1}\IfBooleanF{#6}{\paren{#3; #4#5}}}
\NewDocumentCommand{\lcV}{ 
  D||{\sigma}    
  D//{\vphi_L}   
  d??            
  O{\psi}        
}{\:d\operatorname{lcv}^{#1}_{\IfNoValueF{#3}{#3,}#2}\left[#4\right]}
\newcommand{\dvol}{\:d\vol}
\newcommand{\RTFsym}{\mathfrak{F}} 
\NewDocumentCommand{\RTF}{ 
  s    
  G{\RTFsym} 
  d//  
  o    
  >{\SplitArgument{1}{,}} d<> 
  d||  
  d()  
  o    
}{%
  \begingroup%
    \newif\ifboolup%
    \booluptrue%
    \IfNoValueT{#4}{\IfNoValueT{#5}{\IfNoValueT{#6}{\boolupfalse}}}%
    \IfNoValueT{#7}{\boolupfalse}%
    \newcommand{\srptstr}{\cramped{{}^{\IfNoValueF{#4}{#4}\IfNoValueF{#5}{\inner#5}\IfNoValueF{#6}{\abs{#6}^2}}%
      \ifboolup _
      \fi{\ifboolup\displaystyle\fi\IfNoValueF{#7}{\paren{#7}}\IfNoValueF{#8}{%
          \ifboolup {\scriptstyle #8} \else _{#8} \fi%
        }}}}%
    \ifboolup%
      \IfBooleanTF{#1}{
        \smash[t]{
          \IfNoValueF{#3}{{}^{#3}}#2\raisebox{\depthof{$\srptstr$} * \real{0.3}}{$\srptstr$}%
        }%
      }{\IfNoValueF{#3}{{}^{#3}}#2\raisebox{\depthof{$\srptstr$} * \real{0.3}}{$\srptstr$}}%
    \else%
      \IfNoValueF{#3}{{}^{#3}}#2\srptstr%
    \fi%
  \endgroup%
} 
\def\RTI{\RTF{\mathfrak{I}}}
\NewDocumentCommand{\mtlog}{O{e} d() D||{\psi}}{\log\!#1^{\paren{#2}}\abs{#3}}
\NewDocumentCommand{\slog}{O{e} D||{\psi}}{\log\abs{#1 #2}}
\NewDocumentCommand{\dlog}{O{e} D||{\psi}}{\mtlog[#1](2)|#2|}
\NewDocumentCommand{\logpole}{ 
  D||{\psi}     
  D//{\sigma}   
  O{e}          
  D<>{1+\eps}   
}{\abs{#1}^{#2} \paren{\slog[#3]|#1|}^{#4}}
\DeclareFontFamily{OMX}{MnSymbolE}{}
\DeclareSymbolFont{MnLargeSymbols}{OMX}{MnSymbolE}{m}{n}
\DeclareFontShape{OMX}{MnSymbolE}{m}{n}{
    <-6>  MnSymbolE5
   <6-7>  MnSymbolE6
   <7-8>  MnSymbolE7
   <8-9>  MnSymbolE8
   <9-10> MnSymbolE9
  <10-12> MnSymbolE10
  <12->   MnSymbolE12
}{}
\DeclareFontShape{OMX}{MnSymbolE}{b}{n}{
    <-6>  MnSymbolE-Bold5
   <6-7>  MnSymbolE-Bold6
   <7-8>  MnSymbolE-Bold7
   <8-9>  MnSymbolE-Bold8
   <9-10> MnSymbolE-Bold9
  <10-12> MnSymbolE-Bold10
  <12->   MnSymbolE-Bold12
}{}
\DeclareMathDelimiter{\llangle}{\mathopen}%
{MnLargeSymbols}{'164}{MnLargeSymbols}{'164}
\DeclareMathDelimiter{\rrangle}{\mathclose}%
{MnLargeSymbols}{'171}{MnLargeSymbols}{'171}
\newcommand{\eqcls}[1]{\left[#1\right]}
\NewDocumentCommand{\idxup}{ 
  m          
  O{\omega}  
}{\paren{#1}^{\mathrlap{\!#2}\makebox[\maxof{\widthof{$#2$}-\widthof{$\!\omega$}}{0pt}]{}}}
\NewDocumentCommand{\dep}{t{;} d<> O{\nu} m}{#4\IfBooleanTF{#1}{_}{^}{\IfNoValueF{#2}{#2\:}(#3)}}
\NewDocumentCommand{\sm}{s m}{{#2}\IfBooleanTF{#1}{_}{^}\text{sm}}
\newcommand{\tlog}{{\text{log}}}
\newcommand{\charfct}{\mathbbm 1} 
\NewDocumentCommand{\lelong}{m O{x}}{\operatorname{\boldsymbol{\nu}}\paren{#1,#2}}
\newcommand{\currInt}[1]{\left[#1\right]} 
\NewDocumentCommand{\rs}{ 
  s  
  m  
}{\IfBooleanTF{#1}{\smash[t]{\widetilde{#2}}}{\widetilde{#2}}}
\DeclareMathOperator{\Ann}{Ann}  
\DeclareMathOperator{\mlc}{mlc} 
\newcommand{\Diff}{\operatorname{Diff}^*} 
\newcommand{\coef}{\operatorname{coef}}
\newcommand{\sect}[1][s]{\mathtt{#1}} 
\newcommand{\bphi}{\boldsymbol{\vphi}}
\newcommand{\bphip}[1][p]{\res\bphi_{#1}} 
\newcommand{\divP}{\sheaf P}  
\NewDocumentCommand{\cbn}{  
  D//{\sigma_V}
  O{\sigma}
}{\mathfrak{C}^{#1}_{#2}} 
\NewDocumentCommand{\Iset}{  
  t{.}             
  D//{V}           
  O{\sigma}        
}{\IfBooleanTF{#1}{I}{\rs I}^{#2}_{#3}} 
\newtheorem{prop}{Proposition}[subsection]
\newtheorem{lemma}[prop]{Lemma}
\newtheorem{thm}[prop]{Theorem}
\newtheorem{cor}[prop]{Corollary}
\newtheorem{SNCassumption}[prop]{Snc assumption}
\newtheorem{SNCassumptionx}{Snc assumption}
\newtheorem{definition-thm}[prop]{Definition-Theorem}
\theoremstyle{remark}
\newtheorem{remark}[prop]{Remark}
\theoremstyle{definition}
\newtheorem{definition}[prop]{Definition}
\newtheorem{example}[prop]{Example}
\newtheorem{notation}[prop]{Notation}
\numberwithin{equation}{subsection}
\begin{document}

\newcommand{\titlestr}{%
  A new definition of analytic adjoint ideal sheaves via
  the residue functions of log-canonical measures I%
}

\newcommand{\shorttitlestr}{%
  A new definition of analytic adjoint ideal sheaves I%
}

\newcommand{\MCname}{Tsz On Mario Chan}
\newcommand{\MCnameshort}{Mario Chan}
\newcommand{\MCemail}{mariochan@pusan.ac.kr}

\newcommand{\addressstr}{%
  Dept.~of Mathematics, Pusan National
  University, Busan 46241, South Korea%
}

\newcommand{\subjclassstr}[1][,]{%
  32J25 (primary)#1  
  14B05 (secondary)
}

\newcommand{\keywordstr}[1][,]{%
  adjoint ideal sheaf#1
  multiplier ideal sheaf#1
  lc centre#1
  $L^2$ extension%
}

\newcommand{\dedicatorystr}{%
  In the memory of Prof.~Jean-Pierre Demailly%
}

\newcommand{\thankstr}{%
  The author would like to thank Young-Jun Choi for his support and
  encouragement on publishing this work.
  His thanks also go to Chen-Yu Chi for drawing his attention to this
  topic during his stay at the NCTS, and to an anonymous referee for
  correcting a reference in an early version of this paper.
  The theoretical basis of this work started to take shape when the
  author visited Jean-Pierre Demailly in \textfr{Institut Fourier},
  whom the author feels so indebted to and will never be able to repay.
  This work was partly supported by the National Research Foundation (NRF) of
  Korea grant funded by the Korea government (No.~2018R1C1B3005963 and
  No.~2023R1A2C1007227)%
}


\title[\shorttitlestr]{\titlestr}
 
\author[\MCnameshort]{\MCname}
\email{\MCemail}
\address{\addressstr}


\thanks{\thankstr}
 
\subjclass[2020]{\subjclassstr}

\keywords{\keywordstr}

\dedicatory{\dedicatorystr}

\begin{abstract}

A new definition of analytic adjoint ideal sheaves for
quasi-plurisubharmonic (quasi-psh) functions with only neat analytic
singularities is studied and shown to admit some residue short exact
sequences which are obtained by restricting sections of the newly
defined adjoint ideal sheaves to some unions of $\sigma$-log-canonical
($\sigma$-lc) centres.
The newly defined adjoint ideal sheaves induce naturally some residue $L^2$ norms on the
unions of $\sigma$-lc centres which are invariant under log-resolutions.
They can also describe unions of $\sigma$-lc centres without the need of
log-resolutions even if the quasi-psh functions in question are not in
a simple-normal-crossing configuration.
This is hinting their potential use in discussing the $\sigma$-lc
centres even when the quasi-psh functions in question have more
general singularities.
Furthermore, their relations between the algebraic adjoint ideal
sheaves of Ein--Lazarsfeld as well as those of Hacon--\McKernan\ are
described in order to illustrate their role as a (potentially finer)
measurement of singularities in the minimal model program.
In the course of the study, a local $L^2$ extension theorem is proven,
which shows that holomorphic sections on any unions of $\sigma$-lc
centres can be extended holomorphically to some neighbourhood of the
unions of $\sigma$-lc centres with some $L^2$ estimates.
The proof does not rely on the techniques in the Ohsawa--Takegoshi-type
$L^2$ extension theorems.


\end{abstract} 

\date{\today} 

\maketitle



\section{Introduction}
\label{sec:introduction}


\subsection{A brief account on the background of adjoint ideal
  sheaves}
\label{sec:brief-account-on-background}

The algebraic version of the adjoint ideal sheaf $\Adjidlof||<S>{D}$
arises when one tries to restrict the multiplier ideal sheaf
$\mtidlof[X]{D}$ of some $\fieldQ$-divisor $D$ on an ambient
projective manifold $X$ (or, more generally, normal variety) to a
(possibly singular) hypersurface $S$,
which does not lie in the support of $D$, and compares it with the
multiplier ideal sheaf $\mtidlof[S]{\res D_S}$ of the restriction of
that divisor $\res D_S$ on $S$ (see \cite{Lazarsfeld_book-II}*{\S
  9.3.E} and \cite{Takayama_adj-ideal}*{Prop.~2.4}).
According to Lazarsfeld (\cite{Lazarsfeld_book-II}*{\S 9.3.E}), it
first appeared in the work of \textfr{Vaquié} (\cite{Vaquie_adjIdl})
and was later rediscovered in the work of Ein and Lazarsfeld
(\cite{Ein&Lazarsfeld_adjIdl}).
That it fits into the short exact sequence induced by the restriction
map, namely,
\begin{equation*}
  0 \to \mtidlof[X]{D} \otimes \holo_{X}\paren{-S}
  \to \Adjidlof||<S>{D} \to \iota_*\nu_*\mtidlof[\rs S]{\nu^*\iota^*D}
  \to 0 \; ,  
\end{equation*}
where $\nu \colon \rs S \to S$ is any resolution of singularities and
$\iota \colon S \hookrightarrow X$ is the natural inclusion,
is originally a defining property of such kind of ideal sheaves.
In \cite{Ein&Lazarsfeld_adjIdl}, it is used as a measurement of the
singularities on $S$ (when $D=0$). 
Its another use can be found in the study of pluricanonical systems in
the works of Takayama (\cite{Takayama_adj-ideal}) and
Hacon and M${}^{\text{c}}$Kernan (\cite{Hacon&Mckernan}\footnote{
  The adjoint ideal sheaf in this paper is indeed named as
  ``multiplier ideal sheaf'' (see \cite{Hacon&Mckernan}*{Def.-Lemma
    4.2}).
  It is eventually called ``adjoint ideal sheaf'' in
  \cite{Ein-Popa}*{Def.~2.4}.
}).
In these works, its role is more or less an auxiliary object to
facilitate the use of the associated long exact sequence and Nadel
vanishing theorem to guarantee that global sections of
$\mtidlof[S]{\res D_S}$ (after tensored with some line bundle with
sufficient positivity) can be extended to some global sections of
$\mtidlof[X]{D}$.

The adjoint ideal sheaves introduced in both
\cite{Ein&Lazarsfeld_adjIdl} and \cite{Hacon&Mckernan} are defined
also for $S$ being a reduced divisor with simple normal crossings
(snc)\footnote{
  Indeed Ein and Lazarsfeld do not require $S$ to have only snc in
  \cite{Ein&Lazarsfeld_adjIdl}*{Prop.~3.1}.
} such that $\supp D$ contains no lc centres of $(X,S)$, i.e.~no
intersection of any number of irreducible components of $S$ is
contained in $\supp D$.
While the two are the same when $S$ is a disjoint union of smooth
hypersurfaces (and $X$ is smooth), \mmark{they are different for more
  general $S$ (see \eqref{eq:definition-alg-ELAdjidl} and
  \eqref{eq:definition-alg-HMAdjidl} for their definitions).}{Add reference
  to the definitions and equality in the smooth
  case. \alert{Done without proof of the equality.}} 
Indeed, it follows from their respective definitions that the
Ein--Lazarsfeld adjoint ideal sheaf, $\ELAdjidlof{D}$, is trivial if
and only if $(X, S+D)$ is purely log-terminal (plt) with $\floor D =0$
(cf.~\cite{Takagi_alg-adjoint-ideal}*{Remark 1.3 (2)}), while the
Hacon--M${}^{\text{c}}$Kernan adjoint ideal sheaf, $\HMAdjidlof{D}$,
is trivial if and only if $(X, S+D)$ is divisorially log-terminal
(dlt) with $\floor D = 0$ (see \cite{Hacon&Mckernan}*{Lemma 4.3 (1)}).
(Readers are referred to \cite{Kollar_Sing-of-MMP}*{Def.~2.8} for
various notions of singularities in the minimal model program.
Note that the definition of ``plt'' here follows that in
\cite{Kollar_Sing-of-MMP}*{Def.~2.8} (in which coefficients of $D$ are
assumed to be $\leq 1$ and only discrepancies of \emph{exceptional
  divisors} have to be $>-1$) instead of
\cite{Takagi_alg-adjoint-ideal}*{Def.~1.1 (ii)} (in which
discrepancies of \emph{all} divisors not dominating components of $S$
have to be $>-1$).
In this sense, $(X,S+D)$ being plt alone is not sufficient to guarantee
$\ELAdjidlof{D} =\holo_X$.)

Further research and applications on these algebraic versions of
adjoint ideal sheaves include works of Takagi
(\cite{Takagi_alg-adjoint-ideal}) and Eisenstein
(\cite{Eisenstein_thesis}), in which the generalisation of
$\ELAdjidlof{D}$ with the reduced divisor $S$ replaced by a reduced
closed subscheme of pure codimension (possibly $> 1$) is studied.
Following the spirit of \cite{Takayama_adj-ideal} and
\cite{Hacon&Mckernan}, Ein and Popa (\cite{Ein-Popa}) consider
$\HMAdjidlof<S>{\ideal a^c}$, where $S$ is a reduced snc divisor and
$\ideal a^c$ is the formally exponentiated ideal sheaf of a coherent ideal sheaf
$\ideal a$ by a real number $c \geq 0$,
and, among other things, improve the vanishing and extension statements in
\cite{Hacon&Mckernan}*{Lemma 4.4 (2) and (3)} via the use of the corresponding
short exact sequence of $\HMAdjidlof<S>{\ideal a^c}$ induced from a
restriction map (in a slightly different form from the one shown
above) and an induction on the number of components of $S$ as 
well as the dimension (see \cite{Ein-Popa}*{Thm.~2.9 and Prop.~4.1}).

For the analytic version, Guenancia proposes an analytic definition of
the adjoint ideal sheaves in \cite{Guenancia}, which is later
generalised by Dano Kim (\cite{KimDano-adjIdl}).
In the case where the relevant subvariety $S$ is a \emph{smooth
hypersurface} whose associated line bundle has a canonical section
$\sect$, for any plurisubharmonic (psh) function $\bphi$ on a complex
manifold $X$ such that $S \not\subset \bphi^{-1}(-\infty)$, their
adjoint ideal sheaf can be given as
\begin{equation} \label{eq:Guenancia-adj-ideal}
  \Adjidlof<S>{\bphi} =\bigcup_{\lambda > 1} \mtidlof[X]{\lambda \bphi
  +\phi_S +\log\abs{\psi_S}^{\sigma +1}} \; ,
\end{equation}
where $\phi_S :=\log\abs{\sect}^2$ and $\psi_S :=\phi_S -\sm\vphi_S$
for some smooth potential $\sm\vphi_S$ of $S$ (see Notations
\ref{notation:potential-def} and
\ref{notation:potentials}) and $\mtidlof<X>{\vphi}$ denotes the multiplier
ideal sheaf of a potential $\vphi$ on $X$ (see Section
\ref{sec:setup}).\footnote{ \label{fn:Guenancia-Adjidl} 
  The index $\sigma$ in the current notation $\Adjidlof{\bphi}$ is off
  by $1$ from the one given in \cite{KimDano-adjIdl} so that it is
  easier to compare with the new version of analytic adjoint ideal
  sheaves introduced in next section.
  For $S$ being a smooth hypersurface, Guenancia (\cite{Guenancia})
  gives the definition with $\sigma =1$ while Dano Kim
  (\cite{KimDano-adjIdl}) suggests to allow any $\sigma > 0$.
  For $S$ being a reduced snc divisor, the adjoint ideal sheaf considered 
  by Guenancia in \cite{Guenancia}, denoted by
  $\GAdjidlof{\bphi}$, is $\Adjidlof|1|{\bphi}$ but having the
  function $\psi_S$ replaced by $\Psi_G$, which is locally of the form
  $\Psi_G =\prod_{j=1}^{\sigma} \log\abs{z_j}^2$ (while $\psi_S$
  is locally of the form $\sum_{j=1}^\sigma \log\abs{z_j}^2
  +\BigO(1)$).
  It is easy to check that $c\abs{\psi_S}^2 \lesssim_\tlog \abs{\Psi_G}^2
  \lesssim_\tlog C\abs{\psi_S}^{\sigma+1}$ (see Notation
  \ref{notation:ineq-up-to-constants}) on a small polydisc centred at
  the origin for some constants $c, C > 0$, so $\Adjidlof|1|{\bphi}
  \subset \GAdjidlof{\bphi} \subset \Adjidlof{\bphi}$.
}\mmark{}{Add reference to computation of $\GAdjidlof{\bphi}$ in the
  footnote. \alert{Reference to the counter example added to next footnote.}}
It is proved that when $\bphi$ is obtained from some algebraic data
(like the $\fieldQ$-divisor $D$ or the formally exponentiated ideal
sheaf $\ideal a^c$ above), which has only neat analytic 
singularities (see Section \ref{sec:setup} item
\eqref{item:neat-analytic-singularities} for the definition) such that
the polar set does not contain $S$, their analytic version of adjoint
ideal sheaves (with $\sigma \in (0,1]$) coincides with the algebraic
version.
(Note that the Ein--Lazarsfeld and Hacon--M${}^{\text{c}}$Kernan
versions of adjoint ideal sheaves coincide when $X$ is smooth and $S$
is a disjoint union of smooth hypersurfaces.)\footnote{
  Guenancia also claims in \cite{Guenancia}*{Prop.~2.11} that his
  analytic adjoint ideal sheaf coincides with the Ein--Lazarsfeld
  algebraic adjoint ideal sheaf even when $S$ is a reduced snc divisor.
  See Example \ref{example:Guenancia-neq-EL} for a counter example to
  this claim. See also the Erratum \cite{Guenancia_AdjIdl-Erratum} by
  Guenancia.
}\mmark{}{Better reference to the counter example to Guenancia's claim
  in the footnote. \alert{Done.}}
Moreover, when $e^{\bphi}$ is locally \textde{Hölder} continuous (thus
including the case when $\bphi$ has only neat analytic singularities)
and when the solution to the strong openness conjecture on multiplier
ideal sheaves (see \cite{Guan&Zhou_openness} or \cite{Hiep_openness})
is taken into account, their adjoint ideal sheaves also fit into the
corresponding short exact sequence induced from restriction as in the
algebraic case (with $\mtidlof[X]{D}$ replaced by $\mtidlof[X]{\bphi}$
and $\mtidlof[\rs S]{\nu^*\iota^*D} =\mtidlof[S]{\res D_S}$ by
$\mtidlof[S]{\res\bphi_S}$), which in turn implies the coherence of
the analytic adjoint ideal sheaves.
While the well-definedness of the residue morphism
$\Adjidlof<S>{\bphi} \to \mtidlof[S]{\res\bphi_S}$ is guaranteed by
the assumption that $e^{\bphi}$ being locally \textde{Hölder}
continuous, the surjectivity of the residue morphism relies on the
$L^2$ estimates from the Ohsawa--Takegoshi--Manivel $L^2$ extension
theorem (see \cite{Manivel} or \cite{Demailly_on_OTM-extension}) or
its variants.
Even at the moment of writing, no Ohsawa--Takegoshi-type $L^2$ extension
theorem is applicable for proving the surjectivity of the
residue morphism when the subvariety $S$ is an snc divisor with
intersecting components (due to the non-integrable singularities on
the Ohsawa measure).
That is why $S$ is mostly assumed to be a smooth hypersurface in the
expositions in \cite{Guenancia} and \cite{KimDano-adjIdl}.

More recent research on the analytic adjoint ideal sheaves that the
author is aware of includes the works of Qi'an Guan and Zhenqian Li
(\cite{Guan&Li_adjIdl-not-coherent}, which shows that existence of
$\bphi$ such that the corresponding Guenancia's version of adjoint
ideal sheaf is not coherent), Zhenqian Li (\cite{Li_adj-idl-II}, which
generalises Guenancia's adjoint ideal sheaf, in a way different from
the current article, so that $S$ can be locally a complete
intersection, and obtains a short exact sequence induced from
restriction via the Ohsawa--Takegoshi--Manivel $L^2$ extension
theorem) and Dano Kim and Hoseob Seo (\cite{KimDano&Seo_adj-idl},
which considers Guenancia's adjoint ideal sheaf with $S$ being a
singular hypersurface).

\subsection{The main results of this article}
\label{sec:main-results}

Let $X$ be a complex manifold, $L$ a holomorphic line bundle equipped
with a singular metric $e^{-\vphi_L}$, and $\psi \leq -1$ a global function
on $X$ such that $\vphi_L+m\psi$ are locally differences of quasi-psh
functions with neat analytic singularities for all $m \in \fieldR$.
Let $m=m_1$ be the only jumping number of the family
$\set{\mtidlof[X]{\vphi_L+m\psi}}_{m\in[m_0,m_1]}$ on the interval
$(m_0,m_1]$, where $0 \leq m_0 < m_1$ (be warned that $m_1$ can be any
jumping number of the family in general and \emph{need not be} the
smallest one).
After a suitable normalisation, assume that $m_1=1$ and, for most
cases, $m_0 =0$ (see Section \ref{sec:setup} item
\eqref{item:jumping-no} and footnote
\ref{fn:renormalising-jumping-no}).\footnote{
  Note that the existence of jumping numbers is guaranteed when $X$ is
  compact or is a relatively compact domain in some ambient manifold. 
  Note also that the set of jumping numbers has no accumulation points
  when $\vphi_L$ and $\psi$ have only neat analytic singularities.}
Assume also that $\vphi_L+m_1\psi =\vphi_L+\psi$ is quasi-psh on $X$.
The subvariety
$S$ playing the role in previous section 
is now given as the \emph{scheme-theoretic difference between the
subvarieties associated to the multiplier ideal sheaves
$\mtidlof[X]{\vphi_L}$ and $\mtidlof[X]{\vphi_L+\psi}$} (see Section
\ref{sec:setup} item \eqref{item:def-S}).
The subvariety $S$ such defined is reduced (see
\cite{Demailly_extension}*{Lemma 4.2}).
In \cite{Chan&Choi_ext-with-lcv-codim-1} and
\cite{Chan_on-L2-ext-with-lc-measures}, the author introduces a
sequence of lc-measures $\seq{\!\lcV}_{\sigma \in\Nnum}$ and their
corresponding residue functions $\RTF|f|(\eps)[\sigma]$ (see Section
\ref{sec:residue-fcts-with-nas}) which provide, at least in the case
when the reduced subvariety $S$ is an snc divisor, a natural way to define the
residue morphism when restriction from $X$ to $\lcc$, the union of lc centres
of $(X,S)$ of codimension $\sigma$ (or \emph{$\sigma$-lc centres} for
short; see Definition \ref{def:sigma-lc-centres} and Remark
\ref{rem:sigma-lcc-explain} for the usage of the terminology when $S$
is not an snc divisor), is considered for each natural number $\sigma
\geq 1$ (see Theorem \ref{thm:results-on-residue-fct}).
This gives a good reason to consider the following new version of
analytic adjoint ideal sheaves.
\begin{definition} \label{def:adjoint-ideal-sheaves}
  Given any integer $\sigma \geq 0$ and the family
  $\set{\mtidlof[X]{\vphi_L+m\psi}}_{m\in[0,1]}$ with a jumping number
  at $m=1$, the \emph{(analytic) adjoint ideal sheaf $\aidlof{\vphi_L}
    := \aidlof<X>{\vphi_L}$ of index $\sigma$} of $(X,\vphi_L,\psi)$
  is the sheaf associated to the presheaf over $X$ given by 
  \begin{equation*}
    \bigcap_{\eps > 0} \mtidlof{\vphi_L+\psi +\log\paren{\logpole}}\paren{V}
  \end{equation*}
  for every open set $V \subset X$.
  Then, its stalk at each $x \in X$ can be described as
  \begin{equation*}
    \aidlof{\vphi_L}_x
    =\setd{f\in \holo_{X,x}}{\exists~\text{open set } V_x \ni x \: , \;
      \forall~\eps > 0 \: , \; \frac{\abs f^2
        e^{-\vphi_L-\psi}}{\logpole} \in L^1\paren{V_x} } \; . \:\footnotemark
  \end{equation*}%
  \footnotetext{The notation $\aidlof{\cdot}[\cdot]$ is chosen so as
    to be in line with the current choice of the typeface of the
    notation of multiplier ideal sheaves $\mtidlof{\cdot}$ and to
    signify that it denotes an ad\underline{J}oint ideal sheaf.
    The author also prefers to use a different notation from the
    algebraic and Guenancia's versions of adjoint ideal sheaves because
    of the subtle difference indicated in Theorem
    \ref{thm:comparison-alg-and-analytic-aidl}.}%
\end{definition}

In studies involving restriction of a system (for example, $(X,\bphi)$
for some psh potential $\bphi$, $(X,\ideal a^c)$ for some ideal $\ideal
a$, or $(X,D)$ for some $\fieldQ$-divisor $D$) to some lc centres, it
is usual to assume that the support of the system
(i.e.~$\bphi^{-1}(-\infty)$, the zero locus of $\ideal a$ or $\supp
D$ respectively in regard to the examples above) does not contain any
of the lc centres (see, for example, \cite{Fujino_log-MMP}*{Thm.~6.1}
or \cite{Ein-Popa}*{\S 2.2}).
In the current setup, thanks to the
assumption on the jumping number, 
after passing to some log-resolution of $(X,\vphi_L,\psi)$ (possible
under the assumption that $\vphi_L$ and $\psi$ having only neat
analytic singularities),
there is an effective ($\Znum$-)divisor $S_0$
with $\supp S_0 \subset S$ such that
\begin{equation*}
  \vphi_L+\psi = \bphi +\phi_{S_0} +\phi_S \; ,
\end{equation*}
where $\phi_{S_0} := \log\abs{\sect_0}^2$ ($\phi_S
:=\log\abs{\sect}^2$), a potential defined from a canonical section
$\sect_0$ of $S_0$ ($\sect$ of $S$), and $\bphi$ is a quasi-psh
potential which is locally integrable around general points on $S$
(see \eqref{eq:define-bphi} in Section \ref{sec:snc-assumption}).
It follows automatically from the snc assumption on $\vphi_L$ (hence
$\bphi$; it essentially means that the polar set of $\vphi_L$ as well
as $\bphi$ is a divisor having snc with $S$; see Snc assumption
\ref{assum:snc-nas}) that $\bphi^{-1}(-\infty)$ contains no lc centres
of $(X,S)$.
The system $(X,\vphi_L,\psi)$ indeed describes a slightly more general
setting than in other studies.

One can compare the definitions of $\aidlof{\vphi_L}[\psi_S]$ and
$\Adjidlof{\vphi_L}$, which are apparently similar, when $S$ is a
disjoint union of smooth hypersurfaces.
The union over the parameter $\lambda$ in the definition of
$\Adjidlof{\vphi_L}$ in \eqref{eq:Guenancia-adj-ideal} is needed to
compensate for the lack of the openness property for adjoint ideal
sheaves, in the sense that, for any holomorphic function $f$,
$\frac{\abs f^2 \:e^{-\vphi_L-\phi_S}}{\abs{\psi_S}^{\sigma+1}} \in
\Lloc$ \emph{does not} imply that there exists some constant $\lambda
> 1$ such that $\frac{\abs f^2
  \:e^{-\lambda\vphi_L-\phi_S}}{\abs{\psi_S}^{\sigma+1}} \in \Lloc$ for
any $\sigma > 0$ (especially when $(X,\vphi_L,\psi_S)$ is not yet in
an snc configuration; see \cite{Guenancia}*{Counterexample on
  pp.1023}; also cf.~Example \ref{example:compare-alg-ana-adj}).
With the introduction of the parameter $\lambda$, Guenancia shows
that his version of the adjoint ideal sheaves coincides with the
algebraic version when $S$ is smooth \mmark{(see
  \cite{Guenancia}*{Prop.~2.11} and its erratum
  \cite{Guenancia_AdjIdl-Erratum}; see also Example 
  \ref{example:Guenancia-neq-EL} and Remark
  \ref{rem:flaw-in-Guenancia})}{Add reference to the counter example
  \alert{Done.}}.

Concerning the openness property under the Snc assumption
\ref{assum:snc-nas}, since $\bphi^{-1}(-\infty)$ contains no lc
centres of $(X,S)$, it turns out that $\aidlof{\vphi_L}$ (one may
put $\vphi_L=\bphi$ and $\psi=\psi_S$ for a fair comparison) retains the
openness property of $\Adjidlof{\bphi}$ (see Section
\ref{sec:openness}) even without the direct limit over $\lambda >1$ in
the definition (although it is different from the openness property of
$\Adjidlof{\vphi_L}$ if $\vphi_L^{-1}(-\infty)$ contains some
components of $S$).

Assume that $S$ given above is a reduced divisor (which need not have
snc and may have more than one components).
Without the parameter $\lambda$ in the definition, the new adjoint
ideal sheaves introduced above coincide with the Ein--Lazarsfeld or
Hacon--\McKernan\ algebraic adjoint ideal sheaves according to the
their indices $\sigma$ in ``almost all'' cases in the following sense.
\begin{thm}[see Theorem \ref{thm:comparison-alg-and-analytic-aidl}]
  \label{thm:intro-comparison-alg-and-analytic-aidl}
  Given an lc pair $(X,S)$ (in which $S$ is a reduced divisor but need
  not have snc), 
  let $\vphi_{\ideal a^c}$ be a quasi-psh potential induced from
  $\ideal a^c$, where $\ideal a$ is a coherent ideal sheaf on $X$
  whose zero locus contains no lc centres of $(X,S)$ in the sense of
  \cite{Kollar_Sing-of-MMP}*{Def.~4.15}, and $c \geq 0$ is a real
  number. 
  Set also $\psi := \psi_S$ (see Notation \ref{notation:potentials}).
  Then, one has, for any $c \geq 0$,
  \begin{equation*}
    \ELAdjidlof{\ideal a^c} \subset
    \aidlof|1|<X>{\vphi_{\ideal a^c}}
    \quad\text{ and }\quad
    \HMAdjidlof{\ideal a^c} \;
    \subset
    \aidlof|\sigma_{\mlc}|<X>{\vphi_{\ideal a^c}} \; ,
  \end{equation*}
  and there exists a countable discrete set $N := N(\ideal a, S)
  \subset \fieldR_{> 0}$ (excluding $0$) such that equalities hold for
  both inclusions for any $c \in \fieldR_{\geq 0} \setminus N$.
  Note that the integer $\sigma_{\mlc}$ depends on $c$ and is the
  smallest integer such that $\aidlof|\sigma_{\mlc}|<X>{\vphi_{\ideal
      a^c}} =\mtidlof<X>{\vphi_{\ideal a^c} +m^c \psi}$, where $m^c \in
  [0,1)$ such that $\mtidlof{\vphi_{\ideal a^c} +m\psi}$ is unchanged
  as $m$ varies within $[m^c,1)$.

  More specifically, if, for a given $c \geq 0$, the zero locus of
  $\ideal a$ contains no lc centres of $(X,\vphi_{\ideal a^c},\psi)$
  defined in Definition-Theorem \ref{thm:main-results-log-resoln},
  then $c \not\in N$ and therefore, in such case,
  \begin{equation*}
    \ELAdjidlof{\ideal a^c}
    =\aidlof|1|<X>{\vphi_{\ideal a^c}}
    \quad\text{ and }\quad
    \HMAdjidlof{\ideal a^c} \;
    =\aidlof|\sigma_{\mlc}|<X>{\vphi_{\ideal a^c}} \; .
  \end{equation*}

\end{thm}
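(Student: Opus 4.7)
The plan is to reduce everything to an snc configuration via a common log-resolution $\pi\colon Y \to X$ of the triple $(X, S, \ideal a)$, so that $\pi^{-1}S$ together with $\pi^*\ideal a =\holo_Y(-F)$ and the exceptional locus of $\pi$ form a reduced snc divisor $E = \sum_j E_j$ on $Y$. On $Y$ all three objects $\ELAdjidlof{\ideal a^c}$, $\HMAdjidlof{\ideal a^c}$ and $\aidlof|\sigma|<X>{\vphi_{\ideal a^c}}$ admit explicit presentations: the algebraic ones through the standard rounding formulas involving $K_{Y/X} - \lfloor cF + \sum_j a_j E_j \rfloor$, where the $a_j$ are the discrepancy coefficients and the choice of which $E_j$ contributes with coefficient $1$ (rather than being rounded with $c\cdot\mathrm{ord}_{E_j}F + a_j$) distinguishes the Ein--Lazarsfeld from the Hacon--M${}^{\text{c}}$Kernan version; the analytic one through the explicit local integrability criterion in snc monomial coordinates, using that $\pi^*(\vphi_{\ideal a^c}+\psi)$ is, up to a smooth function, $\sum_j b_j(c)\log\abs{w_j}^2$ with $b_j(c)$ affine in $c$.

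In the snc coordinates $(w_1,\dots,w_n)$ on $Y$, a local holomorphic function $g = \pi^*f \cdot (\text{unit})$ belongs to the pullback of $\aidlof|\sigma|<X>{\vphi_{\ideal a^c}}$ if and only if, for every $\eps > 0$, the weight $\frac{\abs g^2 \prod_j \abs{w_j}^{-2b_j(c)}}{\abs{\psi}^\sigma (\log\abs{\psi})^{1+\eps}}$ is locally integrable, where $\psi = \log \prod_{j\in J_S}\abs{w_j}^2 + \BigO(1)$ with $J_S$ indexing components of $\pi^{-1}S$. A direct comparison of exponents in each coordinate direction reduces the integrability condition to a collection of strict inequalities $\mathrm{ord}_{w_j}g > b_j(c) - 1 - \delta_{j,\sigma\text{-lcc}}$, where the $\delta$-term accounts for the logarithmic pole contribution allowed by the residue function theory and is precisely $1$ for those $E_j$ that contribute to the union of $\sigma$-lc centres used in the definition of $\HMAdjidlof$ (in particular, for $\sigma=1$ one recovers only the strict transform components, giving Ein--Lazarsfeld). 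The factor $(\log\abs{\psi})^{1+\eps}$ coupled with the $\bigcap_{\eps>0}$ in Definition \ref{def:adjoint-ideal-sheaves} converts these strict inequalities into the (non-strict) $\lfloor\cdot\rfloor$-rounding appearing in the algebraic definition, yielding both inclusions.

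To pin down the set $N$, I would declare $c \in N$ exactly when $b_j(c) \in \Znum$ for some index $j$ corresponding to a prime divisor $E_j$ on $Y$ whose image $\pi(E_j)$ lies in the zero locus $Z(\ideal a)$ and is also contained in some lc centre of $(X,S)$ relevant to the index under consideration. Since $b_j(c) = c\cdot\mathrm{ord}_{E_j}F + (\text{constant})$ is affine with positive slope in that situation, each such $j$ contributes at most one (for the EL case) or finitely many (for the HM case) exceptional values of $c$, so the union $N$ is countable and discrete. Outside $N$ the strict and non-strict inequalities coincide, so the inclusions become equalities, which also uses the strong openness theorem of Guan--Zhou--Hiep to control the intersection over $\eps > 0$. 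The final ``more specific'' assertion is then immediate: if $Z(\ideal a)$ meets no lc centre of $(X,\vphi_{\ideal a^c},\psi)$, then no $E_j$ of the above problematic type exists at that $c$, hence $c \notin N$.

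The main obstacle I expect is the precise identification of which exceptional divisors contribute to the index $\sigma_{\mlc}$ in the analytic definition, i.e.\ the bookkeeping that matches the iterative residue/jumping behaviour of Definition-Theorem \ref{thm:main-results-log-resoln} with the combinatorial choice of ``lc-exceptional'' divisors in the Hacon--M${}^{\text{c}}$Kernan definition; this is where one must appeal carefully to Theorem \ref{thm:results-on-residue-fct} (both to justify that the snc computation is meaningful in the index $\sigma_{\mlc}$ and to ensure that the minimality of $\sigma_{\mlc}$ really matches the codimension of the lc centres of the birationally transformed pair rather than those of $(X,S)$ itself).
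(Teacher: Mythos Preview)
Your overall plan (pass to a log-resolution, compute in snc monomial coordinates, compare exponents) is the same as the paper's, and it works for the two \emph{inclusions} and for the Hacon--\McKernan\ equality essentially as you describe.  The discrete set $N$ is also identified correctly in spirit: it consists of those $c$ for which some $b_j(c)$ hits an integer on a divisor that is simultaneously in $\supp F$ and contracted into an lc centre.

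There is, however, a genuine gap in the Ein--Lazarsfeld \emph{equality} direction.  Your claim that the local integrability condition for $\aidlof|1|{\vphi_{\ideal a^c}}$ reduces to a collection of per-coordinate inequalities ``$\operatorname{ord}_{w_j} g > b_j(c) -1 -\delta_{j,\sigma\text{-lcc}}$'' with $\delta=1$ only along the strict transform components is not correct.  On the log-resolution the analytic adjoint ideal is $\mtidlof{\pi^*_\ominus\vphi_{\ideal a^c} + m_0\pi^*\psi} \cdot \defidlof{\lcc[2]<\rs X>(\rs* S)}$ (Theorem \ref{thm:results-on-residue-fct}), and the factor $\defidlof{\lcc[2]<\rs X>(\rs* S)}$ is \emph{not} a monomial ideal of the form $\holo_{\rs X}(-\Gamma_{\Exc})$; it is the reduced ideal of the pairwise intersections of components of $\rs S$.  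A section in $\aidlof|1|$ is therefore not a priori forced to vanish along the exceptional components $\Gamma_{\Exc}$ of $\Gamma$, which is exactly what the Ein--Lazarsfeld definition requires.

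The paper closes this gap with an argument you do not have: it invokes the connectedness result Theorem \ref{thm:connected-1-lc-centres} (a consequence of local vanishing for higher direct images) to find, for each exceptional component $D \subset \Gamma_{\Exc}$, an intersection point $D \cap \pi^{-1}_*S$.  At that $2$-lc centre the $\sigma=1$ integrability forces $\pi^*f$ to vanish on one of the two branches; but since $f$ is pulled back from $X$ and $\pi(D) \subset S$, vanishing on $\pi^{-1}_*S$ would already force vanishing on $D$, so in either case $\pi^*f$ vanishes on $D$.  Connectedness then propagates this to all of $\Gamma_{\Exc}$.  This step is where the geometry (that $f$ comes from downstairs and fibres of $\pi|_{\rs S}$ are connected) does real work, and it cannot be replaced by a purely local monomial bookkeeping.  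Incidentally, the strong openness theorem is not needed here: everything has neat analytic singularities, so the relevant openness is elementary.
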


See Example \ref{example:adjidl-neq-in-non-lc-case} for an example of
the different versions of adjoint ideal sheaves when $(X,S)$ is not an
lc pair.

Despite being slightly different from the algebraic version of the
adjoint ideal sheaves, the ease of analysis and nice properties of
$\aidlof{\vphi_L}$ strongly suggest that the advantage has outweighed
the drawback of adopting the new definition (see also Remark
\ref{rem:why-no-lambda}).
Among those nice properties, the sheaves $\aidlof{\vphi_L}$ for
various $\sigma$ give a finer structure via a filtration between
$\mtidlof{\vphi_L} =\mtidlof{\vphi_L +m_0 \psi}$ and
$\mtidlof{\vphi_L+\psi}$ and fit into the expected residue short exact
sequence, which are stated as follows (the statements are stated with
the normalisation $m_1 =1$ but for general $m_0 \geq 0$ for the ease
of applications in practice).
\begin{thm}[Theorem \ref{thm:results-on-residue-fct}, Remark
  \ref{rem:residue-fct-result-jumping-no}, Theorem
  \ref{thm:short-exact-seq} and Corollary
  \ref{cor:local-L2-estimates}]
  \label{thm:main-results}
  Given the notation above (or as in Section \ref{sec:setup}) and under
  the Snc assumption \ref{assum:snc-nas} (thus $\vphi_L$ and $\psi$
  having only neat analytic singularities with snc in particular), it
  follows that
  \begin{enumerate}[labelindent=0em, leftmargin=*]
  \item one has
    \begin{equation*}
      \aidlof{\vphi_L} = \mtidlof[X]{\vphi_L +m_0 \psi} \cdot
      \defidlof{\lcc[\sigma+1]}
    \end{equation*}
    for all integers $\sigma \geq 0$, where
    $\defidlof{\lcc[\sigma+1]}$ is the defining ideal sheaf of
    the union of $(\sigma+1)$-lc centres $\lcc[\sigma+1]$ (with the
    reduced structure), and, therefore, obtains a filtration
    \begin{equation*}
      \mtidlof{\vphi_L +\psi} =\aidlof|0|{\vphi_L}
      \subset \aidlof|1|{\vphi_L} \subset \dotsm
      \subset \aidlof|\sigma_{\mlc}|{\vphi_L}
      =\mtidlof{\vphi_L +m_0 \psi} \; ,
    \end{equation*}
    where $\sigma_{\mlc}$ is the codimension of a minimal lc centre
    (mlc) of $(X,S)$;

  \item \label{item:thm:residue-exact-seq}
    the ideal sheaves $\aidlof{\vphi_L}$ fit into the residue
    short exact sequence
    \begin{equation*}
      \xymatrix{
        0 \ar[r]
        & {\aidlof|\sigma-1|{\vphi_L}} \ar[r]
        & {\aidlof{\vphi_L}} \ar[r]^-{\Res}
        & {\residlof{\bphi}} \ar[r]
        & 0
      } \; ,
    \end{equation*}
    where $\residlof{\bphi}$ is a coherent sheaf supported on $\lcc$
    such that, on an open set $V$ with $\lcc \cap V = \bigcup_{p\in\cbn}
    \lcS$, where $\lcS$'s are the $\sigma$-lc centres in $V$ indexed
    by $p \in \cbn$, one has
    \begin{equation*}
      K_X \otimes \residlof{\bphi}(V)
      =\prod_{p\in\cbn} K_{\lcS} \otimes \parres{S_0^{-1} \otimes
        S^{-1}}_{\lcS} 
      \otimes \mtidlof[\lcS]{\bphi}\paren{\lcS} \; ;
    \end{equation*}

  \item \label{item:thm:local-L2-estimate}
    for any $\paren{g_p}_{p\in\cbn} \in 
    \residlof{\bphi}\paren{\cl V}$ on a sufficiently small open set
    $V$ with $\lcc \cap V =\bigcup_{p\in\cbn} \lcS$ as above, there
    exists a section $f \in \aidlof{\vphi_L}\paren{\cl V}$ such that
    $\Res(f) =\paren{g_p}_{p\in\cbn}$, and, for any $\eps > 0$, 
    \begin{equation*}
      \eps \int_V \frac{\abs f^2 \:e^{-\vphi_L-\psi} \dvol_V}{\logpole}
      \leq C\sum_{p\in\cbn} \frac{\pi^\sigma}{(\sigma -1)! \:\vect\nu_p}
      \int_{\lcS} \abs{g_p}^2 \:e^{-\bphi} \dvol_{\lcS}
      \; ,
    \end{equation*}
    where $\vect\nu_p$ is the product of the generic Lelong numbers
    $\lelong{\psi}[D_i]$ of $\psi$ along the irreducible components
    $D_i$ of $S$ containing $\lcS$, and $C > 0$ is a constant
    depending only ``mildly'' on $\bphi$.
  \end{enumerate} 
\end{thm}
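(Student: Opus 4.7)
The plan is to reduce everything to the local snc model, derive (1) by a direct integrability computation, build the residue morphism of (2) on top of the residue-function machinery already established in Theorem~\ref{thm:results-on-residue-fct}, and then tackle the surjectivity together with the $L^2$ estimate of (3), which forms the main analytic obstacle. Concretely, for (1), fix a generic point $x \in \lcc \setminus \lcc[\sigma+1]$ and choose local coordinates $(z_1,\ldots,z_n)$ adapted to the Snc assumption \ref{assum:snc-nas}, so that $\psi = \sum_{i=1}^\sigma \nu_i \log\abs{z_i}^2 + O(1)$ and the components of $S$ through $x$ are $\{z_i = 0\}$. Writing $r_i = \abs{z_i}$ and substituting $u_i = -\log r_i$, the integrand $\frac{\abs{f}^2\,e^{-\vphi_L-\psi}}{\logpole}$ factors as a transverse multiplier-ideal integrand times a normal expression which, after introducing spherical coordinates on the $(u_1,\ldots,u_\sigma)$-octant, reduces to a radial integral of the shape $\int_{\rho_0}^\infty \frac{d\rho}{\rho\,(\log\rho)^{1+\eps}}$. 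Finiteness for every $\eps > 0$ is thus equivalent to the conjunction of $f \in \mtidlof[X]{\vphi_L + m_0\psi}$ and $f \in \defidlof{\lcc[\sigma+1]}$, giving the claimed factorization; the filtration and the identification of $\aidlof|0|{\vphi_L}$ and $\aidlof|\sigma_{\mlc}|{\vphi_L}$ follow immediately.

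For (2), define $\Res$ on a local section $f$ of $\aidlof{\vphi_L}$ by $\Res(f) := \lim_{\eps\to 0^+} \RTF|f|(\eps)[\sigma]$ on each $\sigma$-lc centre $\lcS$; Theorem~\ref{thm:results-on-residue-fct} ensures that this limit exists as a well-defined section of $K_{\lcS} \otimes \parres{S_0^{-1}\otimes S^{-1}}_{\lcS} \otimes \mtidlof[\lcS]{\bphi}$, and the factorization of (1) guarantees that $\Res(f)$ is unobstructed by lc centres of codimension larger than $\sigma$, so it lands in $\residlof{\bphi}$. Exactness on the left is built into the filtration of (1). For exactness in the middle, $f \in \ker\Res$ means that the residue on every $\sigma$-lc centre vanishes, and the same local integrability computation used in (1), now applied to the index $\sigma - 1$, identifies this kernel with $\mtidlof[X]{\vphi_L + m_0\psi} \cdot \defidlof{\lcc[\sigma]} = \aidlof|\sigma-1|{\vphi_L}$. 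This also explains how the coherence of $\residlof{\bphi}$ is inherited from the coherence of multiplier ideals on the smooth components $\lcS$ provided by the snc configuration.

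The main obstacle is part (3). For the $L^2$ estimate, apply the polar substitution of (1) to the left-hand side: the $\sigma$ angular integrations $d\theta_i$ each contribute $2\pi$; the normalisation $\psi = \sum \nu_i\log\abs{z_i}^2 + O(1)$ absorbs the Lelong-number product $\vect\nu_p$; the radial part $\eps\!\int \frac{d\rho}{\rho\,(\log\rho)^{1+\eps}}$ exactly cancels the $\eps$-prefactor on the LHS; integration over the angular simplex in the $(u_1,\ldots,u_\sigma)$-octant produces the combinatorial factor $\frac{1}{(\sigma-1)!}$; and the transverse integral is $\int_{\lcS}\abs{g_p}^2\,e^{-\bphi}\dvol_{\lcS}$, up to a uniform constant $C$ depending only mildly on $\bphi$, which assembles the claimed bound with prefactor $\frac{\pi^\sigma}{(\sigma-1)!\,\vect\nu_p}$. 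For the surjectivity, I would first build local holomorphic extensions on each snc chart of a sufficiently small $\cl V$ by transporting $g_p$ from $\lcS$ along the normal directions and multiplying by the canonical sections of $S$ and $S_0$ read off from the snc trivialisation, check $\Res(f) = (g_p)_{p\in\cbn}$ by a direct residue computation, and then patch the local pieces by a partition of unity subordinate to the snc atlas; the bound from the previous step then places the global $f$ in $\aidlof{\vphi_L}(\cl V)$. The delicate point is that the estimate in the theorem is sharp in its combinatorial prefactor $\frac{\pi^\sigma}{(\sigma-1)!\,\vect\nu_p}$, so substantial care is required to keep that constant stable both through the polar substitutions and through the partition-of-unity patching — which, rather than any appeal to an Ohsawa–Takegoshi-type extension, is where the bulk of the analytic work is concentrated.
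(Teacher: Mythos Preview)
There are several genuine gaps.

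\textbf{Part (1).} Restricting to a generic point of $\lcc \setminus \lcc[\sigma+1]$ where only $\sigma$ components of $S$ meet trivialises the factorisation: at such a point $\defidlof{\lcc[\sigma+1]}$ is locally $\holo_X$, so you have proved nothing about it. The actual content of the equality $\aidlof{\vphi_L} = \mtidlof{\vphi_L+m_0\psi}\cdot\defidlof{\lcc[\sigma+1]}$ lives precisely at points where $\sigma_V > \sigma$ components of $S$ meet. The paper handles this by working on an admissible open set with $S\cap V = \{z_1\cdots z_{\sigma_V}=0\}$ and expanding $f/\sect_0$ in a Taylor series to obtain the decomposition $f = \sum_{p\in\cbn} g_p\,\sect_0\, z_{p(\sigma+1)}\cdots z_{p(\sigma_V)}$ (Section~\ref{sec:local-expression-of-germs}); membership in $\defidlof{\lcc[\sigma+1]}$ is then read off from this decomposition together with Lemma~\ref{lem:g_p-regularity}.

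\textbf{Part (2).} Your proposed definition ``$\Res(f) := \lim_{\eps\to 0^+}\RTF|f|(\eps)[\sigma]$'' is a type error: that limit is a nonnegative real number (the squared residue \emph{norm}), not a section. The paper's $\Res$ is the Poincar\'e residue, equivalently $f \mapsto (\res{g_p}_{\lcS})_{p\in\cbn}$ via the Taylor decomposition above; the residue function $\RTF$ computes $\sum_p \frac{\pi^\sigma}{(\sigma-1)!\,\vect\nu_p}\int_{\lcS}|g_p|^2 e^{-\bphi}\dvol_{\lcS}$, which is the norm of $\Res(f)$, not $\Res(f)$ itself.

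\textbf{Part (3).} Patching local holomorphic extensions with a partition of unity destroys holomorphicity. Since $V$ may be taken to be a single admissible chart, no patching is needed: the paper's extension is simply $f := \sum_{p\in\cbn}(\pr_p^* g_p)\,\sect_0\, z_{p(\sigma+1)}\cdots z_{p(\sigma_V)}$ on all of $V$. For the $L^2$ estimate, the crucial step you are missing is that $\bphi$ depends on all coordinates, so Fubini cannot be applied directly. The paper replaces $\bphi$ by its pullback $\bphip := \pr_p^*\iota_p^*\bphi$ (constant in the normal directions) using the comparison $\bphi \sim_{\tlog} \bphip$ from \eqref{eq:bdds-on-bphi}, and replaces $\abs\psi$ in the denominator by the smaller $\abs{\psi_p} := \abs{\sum_{j=1}^\sigma \nu_{p(j)}\log|z_{p(j)}|^2}+1$, so that the integrand splits as a product. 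This comparison $\bphi \sim_{\tlog} \bphip$ is exactly the ``mild'' dependence of $C$ on $\bphi$. Your description of the radial integral producing the factor $\frac{1}{(\sigma-1)!}$ via ``integration over the angular simplex'' is not how the paper gets it either: it arises from the iterated integration-by-parts formula \cite{Chan_on-L2-ext-with-lc-measures}*{(eq\,2.2.4)} applied to a product cutoff $\rho = \prod_j \rho_{p(j)}$.
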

Readers are referred to Theorem \ref{thm:results-on-residue-fct} (see
also Remark \ref{rem:residue-fct-result-jumping-no}), Theorem
\ref{thm:short-exact-seq} and Corollary \ref{cor:local-L2-estimates}
for the precise statements.
The residue morphism $\Res$ in \eqref{item:thm:residue-exact-seq}, if
tensored by $K_X$, is given by the \textfr{Poincaré} residue map
(see \eqref{eq:g_p-from-Poincare-residue} or
\cite{Kollar_Sing-of-MMP}*{\S 4.18}).
The surjectivity of $\Res$ in Theorem \ref{thm:main-results}
\eqref{item:thm:residue-exact-seq} (or Theorem
\ref{thm:short-exact-seq}) and the estimate in Theorem
\ref{thm:main-results} \eqref{item:thm:local-L2-estimate} (or
Corollary \ref{cor:local-L2-estimates}) can indeed be regarded as a
\emph{local $L^2$ extension theorem with estimates}, which does
\emph{not} rely on any Ohsawa--Takegoshi-type $L^2$ extension
theorems.
Note that both sides of the estimate in Theorem \ref{thm:main-results}
\eqref{item:thm:local-L2-estimate} can be expressed as the residue
functions (see Section \ref{sec:residue-fcts-with-nas}) so that the
estimate is read as 
\begin{equation*}
  \RTF|f|(\eps)[V,\sigma] \leq C \:\RTF|f|(0)[V,\sigma] \; ,
\end{equation*}
where $\RTF*|f|(0)[V,\sigma]$ is the residue norm of $f$ given respect
to the $\sigma$-lc measure $\lcV$ (see
\eqref{eq:lc-measure-for-non-cpt-supp} for a precise definition).
The residue norm $\RTF|\cdot|(0)[V,\sigma]$ indeed induces an $L^2$ norm
$\norm\cdot_{\lcc<V>}$ on $\residlof{\bphi}(\cl V)$ for any open set
$V \Subset X$.
The latter norm is also referred to as the \emph{residue norm on
  $\residlof{\bphi}(\cl V)$} for convenience.

The residue short exact sequence in Theorem
\ref{thm:main-results} \eqref{item:thm:residue-exact-seq} can
facilitate inductive arguments.
One example can be found in the study of injectivity theorem in
\cite{Chan&Choi_injectivity-I}.

Further results show that the adjoint ideal sheaves
$\aidlof{\vphi_L}$ and the residue norms $\RTF|\cdot|(0)[V,\sigma]$
(hence $\norm\cdot_{\lcc<V>}$) exhibit good invariant properties with
respect to log-resolutions such that it makes sense to define the
residue morphisms $\Res$ and the residue norms even when
$(X,\vphi_L,\psi)$ is not in an snc configuration, i.e.~not satisfying
the Snc assumption \ref{assum:snc-nas}.
To state the results, suppose $m=1$ is again a jumping number of the
family $\set{\mtidlof<X>{\vphi +m\psi}}_{m \in [0,1]}$ and $S$ is
given as before (which is named as the \emph{lc locus\footnote{The use of
    the term ``lc locus'' differs from the use in some literature. See
    item \eqref{item:def-S} in Section \ref{sec:setup} as well as
    footnote \ref{fn:lc-locus-original-meaning}.} of the family at
  jumping number $m=1$} for convenience). 
Let $\pi \colon \rs X \to X$ be any log-resolution of
$(X,\vphi_L,\psi)$, which gives rise to the effective divisors $E$,
$R$ (where $K_{\rs X / X} \sim E+R$) and the potentials
$\pi^*_\ominus\vphi_L$.
Then let $\rs S$, $\rs S_0$
and $\rs\bphi$ (the counterparts of $S$, $S_0$ and $\bphi$) be defined
from the family $\set{\mtidlof<\rs X>{\pi^*_\ominus\vphi_L
    +m\pi^*\psi}}_{m\in[0,1]}$ at jumping number $m=1$ (see Section
\ref{sec:snc-assumption}, Proposition
\ref{prop:log-resoln-on-jump-subvar} and the beginning of Section
\ref{sec:non-snc}).
The lc locus $\rs S$ such defined is a reduced snc divisor in $\rs X$.
Note that $S \subset \pi(\rs S)$ but the equality need not hold in
general, as seen from Example \ref{example:S-neq-pi-rs-S}.
(An incorrect claim is made in
\cite{Chan&Choi_ext-with-lcv-codim-1}*{\S 2.1} and an earlier version
of this writing.
See Proposition \ref{prop:log-resoln-on-jump-subvar} for the correct
relation between $S$ and $\pi(\rs S)$ and for some sufficient
conditions for the two being equal.)

As $\rs S$ is reduced and has only snc, one can easily describe
$\lcc<\rs X>(\rs* S)$, the union of lc centres of $(\rs X, \rs S)$ of
codimension $\sigma$ in $\rs X$ in the sense of
\cite{Kollar_Sing-of-MMP}*{Def.~4.15}.
Notice the $\holo_X$-isomorphism $\aidlof<X>{\vphi_L} \isom
\pi_*\paren{E \otimes \aidlof<\rs
  X>{\pi^*_\ominus\vphi_L}[\pi^*\psi]}$ in \eqref{eq:log-resoln-aidl}
(or $K_X \otimes \aidlof<X>{\vphi_L} \isom \pi_*\paren{K_{\rs X}
  \otimes R^{-1} \otimes \aidlof<\rs
  X>{\pi^*_\ominus\vphi_L}[\pi^*\psi]}$; see Remark
\ref{rem:residue-norm-on-hol-n-forms-intrinsic}),
and set $\residlof<\rs X>{\pi^*_\ominus\vphi_L;\pi^*\psi}
:=\residlof<\rs X>{\rs\bphi}$ for the consistency of notations on $\rs
X$ and $X$.
The results on $\aidlof<X>{\vphi_L}$ with respect to a log-resolution
$\pi$ can then be stated.
\begin{definition-thm}[Theorem \ref{thm:direct-image-residl},
  Definition \ref{def:non-snc-residl-residue-norm}, Theorem
  \ref{thm:sigma-lc-centres}, Definition
  \ref{def:sigma-lc-centres} and Proposition \ref{prop:relation-of-S-and-lcc}]
  \label{thm:main-results-log-resoln}
  Suppose that $(X,\vphi_L,\psi)$ is given as described in Section
  \ref{sec:setup} (so, in particular,
  $\vphi_L$ has only neat analytic singularities but
  $(X,\vphi_L,\psi)$ may \emph{not} satisfy the Snc assumption
  \ref{assum:snc-nas}).
  \begin{enumerate}[labelindent=0pt, leftmargin=*, itemsep=1ex]
  \item There exists a coherent $\holo_X$-sheaf
    $\residlof<X>{\vphi_L;\psi}$ and a residue morphism $\Res$, both
    are unique up to isomorphisms, such that the sequence
    \begin{equation*}
      \xymatrix{
        0 \ar[r]
        & {\aidlof|\sigma-1|<X>{\vphi_L}} \ar[r]
        & {\aidlof<X>{\vphi_L}} \ar[r]^-{\Res}
        & {\residlof<X>{\vphi_L;\psi}} \ar[r]
        & 0
      }
    \end{equation*}
    is exact for all integers $\sigma \geq 1$.
    The residue norm $\RTF|\cdot|(0)[V,\sigma]$ induces an $L^2$ norm
    $\norm\cdot_{\lcc<V>(\vphi_L;\psi)}$ on 
    $\residlof<X>{\vphi_L;\psi}(\cl V)$ for any open set $V \Subset
    X$ such that the monomorphism 
    \begin{equation*}
      \tau \colon \residlof<X>{\vphi_L;\psi}\paren{\cl V}
      \hookrightarrow
      E \otimes \residlof<\rs X>{\pi^*_\ominus\vphi_L;
        \pi^*\psi}\paren{\pi^{-1}(\cl V)}
    \end{equation*}
    induced via the residue morphisms is an isometric embedding, where
    the target space is equipped with the residue norm
    $\norm\cdot_{\lcc<\pi^{-1}(V)>(\rs* S)}$.

  \item A \emph{$\sigma$-lc centre} of $(X,\vphi_L,\psi)$ is defined to be an
    irreducible component of the reduced closed analytic subset
    $\lcc(\vphi_L;\psi)$ of $X$ given by the ideal sheaf 
    \begin{equation*}
      \defidlof{\lcc(\vphi_L;\psi)}
      :=\Ann_{\holo_X}\paren{
        \frac{\aidlof<X>{\vphi_L}}{\aidlof|\sigma-1|<X>{\vphi_L}}
      } \; .
    \end{equation*}
    It follows that
    \begin{equation*}
      \Ann_{\holo_X}\paren{\frac{
          \aidlof<X>{\vphi_L} 
        }{
          \aidlof|\sigma-1|<X>{\vphi_L}
        }}
      =\Ann_{\holo_X}\paren{\pi_*\paren{ E \otimes
          \frac{
            \aidlof<\rs X>{\pi^*_\ominus\vphi_L}[\pi^*\psi]
          }{
            \aidlof|\sigma-1|<\rs X>{\pi^*_\ominus\vphi_L}[\pi^*\psi]
          }
        }} \; ,
    \end{equation*}
    which implies that the sheaves $\residlof<X>{\vphi_L;
      \psi}$ and $\pi_*\paren{E \otimes \residlof<\rs X>{\pi^*_\ominus
        \vphi_L; \pi^*\psi}}$ (being different in general) have the same
    support. 
    Moreover, in general, one has
    \begin{equation*}
      \lcc(\vphi_L;\psi) \subset \pi\paren{\lcc<\rs X>(\rs* S)} \; .
    \end{equation*}
    If $\lcS*$ is a $\sigma$-lc centre in $\lcc<\rs X>(\rs* S)$
    such that $\pi\paren{\lcS*} \not\subset \lcc(\vphi_L;\psi)$,
    then $\res{\frac{\pi^*f}{\rs \sect_0}}_{\lcS*} \equiv 0$ (or,
    equivalently, $\res{\Res(f)}_{\pi(\lcS*)} \equiv 0$) for all $f \in
    \aidlof<X>{\vphi_L}(V)$ and open set $V \Subset X$ such that
    $\pi^{-1}(V) \cap \lcS* \neq \emptyset$, where $\rs \sect_0$ is a
    holomorphic canonical section of $\rs S_0$.
    A $\sigma$-lc centre $\lcS* \subset \lcc<\rs X>(\rs* S)$ such that
    $\res{\frac{\pi^*f}{\rs\sect_0}}_{\lcS*} \not\equiv 0$ for some $f
    \in \aidlof<X>{\vphi_L}(V)$ is said to be \emph{$\pi$-supportive}.
    
  \item The \emph{index of the mlc of $(X,\vphi_L,\psi)$},
    denoted by $\sigma_{\mlc} := \sigma_{\mlc}\paren{X,\vphi_L,\psi}$,
    is defined to be the smallest non-negative integer such that $\lcc[\sigma
    +1]<X>(\vphi_L;\psi) =\emptyset$ for all $\sigma \geq
    \sigma_{\mlc}$.
    One then has $S = \lcc[1]<X>(\vphi_L;\psi) \cup \dotsm \cup
    \lcc[\sigma_{\mlc}]<X>(\vphi_L;\psi)$. 
    
  \end{enumerate}
  When $(X,\vphi_L,\psi)$ itself satisfies the Snc assumption
  \ref{assum:snc-nas}, the sheaf $\residlof<X>{\vphi_L;\psi}
  =\residlof<X>{\bphi}$, the residue morphism $\Res$ and the union of
  $\sigma$-lc centres $\lcc(\vphi_L;\psi) =\lcc$ all reduce back to
  the corresponding entities considered in Theorem \ref{thm:main-results}. 
\end{definition-thm}

Under the definition of lc centres given above, even when the lc locus
$S$ of $\set{\mtidlof<X>{\vphi_L+m\psi}}_{m \in [0,1]}$ at jumping
number $m=1$ is a reduced snc divisor satisfying $S =\pi(\rs S)$,
there may be more lc centres of $(X,\vphi_L,\psi)$ than those of
$(X,S)$, as illustrated in Example \ref{example:compare-alg-ana-adj}.
On the other hand, when $\Delta$ is an effective $\fieldQ$-divisor on
$X$ (which is $\fieldQ$-Cartier as $X$ is smooth) such that
$(X,\Delta)$ is an lc pair and $\psi_\Delta
:= \phi_\Delta -\sm\vphi_{\Delta} \leq -1$ a global function on
$X$ given according to Notation \ref{notation:potentials}, the
irreducible components in the disjoint union
$\bigsqcup_{\sigma \geq 1}\lcc(0;\psi_\Delta)$ are exactly those
lc centres of $(X,\Delta)$ on $X$ in the sense of
\cite{Kollar_Sing-of-MMP}*{Def.~4.15 and Thm.~4.16} (see Theorem
\ref{thm:aidl-and-singularities-in-MMP-intro} and Proposition
\ref{prop:mlc=rs_mlc-in-lc-system}).
These reflect the fact that the definition of lc centres of
$(X,\vphi_L,\psi)$ via adjoint ideal sheaves given above incorporates
not only the singularity information in $S$ ($\subset
\psi^{-1}(-\infty)$), but also those in $\vphi_L$,
which give some reasons for the difference between the algebraic and
analytic versions of adjoint ideal sheaves in Theorem
\ref{thm:intro-comparison-alg-and-analytic-aidl}.

The residue exact sequence in Definition-Theorem
\ref{thm:main-results-log-resoln} for $\sigma = 1$, together with the
vanishing of the higher direct images of multiplier ideal sheaves via
a log-resolution of $(X,\vphi_L,\psi)$, leads to the following
statement.

\begin{thm}[Theorem \ref{thm:pi-supportive-1-lc-centres}]
  \label{thm:connected-1-lc-centres-intro} 
  Under the notation and assumptions in Definition-Theorem
  \ref{thm:main-results-log-resoln},
  given any $1$-lc centre $\lcS|1|$ of
  $(X,\vphi_L,\psi)$ (which may not be a divisor),
  there is one and only one $\pi$-supportive $1$-lc centre $\rs D
  \subset \lcc[1]<\rs X>(\rs* S)$ of $(\rs X, \rs S)$ such that $\rs
  D$ is mapped into, and thus onto, $\lcS|1|$ by $\pi$.
\end{thm}

This theorem is used in the proofs of Theorems
\ref{thm:intro-comparison-alg-and-analytic-aidl} and
\ref{thm:aidl-and-singularities-in-MMP-intro}.
Although it is a different statement from Theorem
\ref{thm:connected-1-lc-centres-intro}, readers are referred to the
connectedness lemma (see Shokurov
\cite{Shokurov_3-fold-log-flips}*{Connectedness Lemma 5.7} and
\textfr{Kollár} \cite{Kollar_AST_1992}*{Thm.~17.4}) for a comparison.
A brief survey and the recent research regarding the connectedness can
be found in the article of Birkar (\cite{Birkar_non-klt-connectedness}).

Note that the sequence $\seq{\aidlof<X>{\vphi_L}}_{\sigma \geq 0}$
stabilises when $\sigma \geq \sigma_{\mlc}$, i.e.~$\aidlof<X>{\vphi_L}
=\mtidlof<X>{\vphi_L}$ for all $\sigma \geq \sigma_{\mlc}$, by the
definition of $\sigma_{\mlc}$.
This exhibits the role of the adjoint ideal sheaves
$\aidlof<X>{\vphi_L}$ as a measurement of the singularities of the lc
locus $S =\bigcup_{\sigma \geq 1} \lcc(\vphi_L;\psi)$ (and the
corresponding lc centres) of $\set{\mtidlof<X>{\vphi_L+m\psi}}_{m \in
  [0,1]}$ at the jumping number $m=1$, which distantly resembles the
first use of adjoint ideal sheaves in \cite{Ein&Lazarsfeld_adjIdl}.
The following theorem serves as an illustration of this aspect of the
adjoint ideal sheaves.
\begin{thm}[Theorem \ref{thm:aidl-and-singularities-in-MMP}]
  \label{thm:aidl-and-singularities-in-MMP-intro}
  Let $\Delta$ be an effective $\fieldQ$-divisor on a complex manifold
  $X$ and consider the function $\psi_\Delta :=\phi_\Delta
  -\sm\vphi_\Delta$ as described in Notation \ref{notation:potentials}.
  Then,
  \begin{enumerate}
  \item the pair $(X,\Delta)$ is klt if and only if
    $\aidlof|0|<X>{0}[\psi_\Delta] =\holo_X$;

  \item the pair $(X,\Delta)$ is plt if and only if
    $\aidlof|1|<X>{0}[\psi_\Delta] =\holo_X$ and
    $\aidlof|0|<X>{0}[\psi_\Delta] =\defidlof{\floor\Delta}$, the
    defining ideal sheaf of $\floor\Delta$;
    
  \item the pair $(X,\Delta)$ is lc if and only if
    $\aidlof<X>{0}[\psi_\Delta] =\holo_X$ for some (sufficiently
    large) integer $\sigma \geq 0$ (and it suffices to consider only
    $\sigma \in [0,n]$).
  \end{enumerate}
\end{thm}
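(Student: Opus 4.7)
The plan is to handle the three parts by combining the filtration of Theorem \ref{thm:main-results}(i), the log-resolution reduction of Definition-Theorem \ref{thm:main-results-log-resoln}, and a direct integral comparison. Part (1) will reduce essentially to the classical klt criterion; Part (3) carries the real content via a short exponential-decay argument for the ``if'' direction and the log-resolution isomorphism for the ``only if''; Part (2) will then follow from (3) together with the identification of $\bigsqcup_{\sigma \geq 1} \lcc(0;\psi_\Delta)$ with the lc centres of $(X,\Delta)$ in the sense of \cite{Kollar_Sing-of-MMP}*{Def.~4.15} that is noted right after Definition-Theorem \ref{thm:main-results-log-resoln}.

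For (1), the stalk formula in Definition \ref{def:adjoint-ideal-sheaves} gives $\aidlof|0|<X>{0}[\psi_\Delta] = \bigcap_{\eps > 0} \mtidlof<X>{\psi_\Delta + (1+\eps)\log\slog|\psi_\Delta|}$; the strong openness property of multiplier ideal sheaves absorbs the double-log term and collapses the intersection to $\mtidlof<X>{\psi_\Delta} = \mtidlof<X>{\Delta}$ (the smooth $\sm\vphi_\Delta$ being irrelevant), which equals $\holo_X$ precisely when $(X,\Delta)$ is klt. For the ``if'' direction of (3), assume $\aidlof|\sigma|<X>{0}[\psi_\Delta] = \holo_X$; then each $x \in X$ admits a neighbourhood $V_x$ with $\int_{V_x} e^{-\psi_\Delta}/\paren{\abs{\psi_\Delta}^\sigma (\slog|\psi_\Delta|)^{1+\eps}} < \infty$ for every $\eps > 0$. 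For any $m \in (0,1)$, the exponential decay $e^{(1-m)\psi_\Delta} \to 0$ as $\psi_\Delta \to -\infty$ dominates every polynomial-logarithmic growth in $\abs{\psi_\Delta}$, yielding a pointwise bound $e^{(1-m)\psi_\Delta} \leq C_{m,\sigma,\eps}\abs{\psi_\Delta}^{-\sigma}(\slog|\psi_\Delta|)^{-1-\eps}$ on $\set{\psi_\Delta \leq -M}$ for $M$ large, hence $\int_{V_x} e^{-m\psi_\Delta} < \infty$. This means $\mtidlof<X>{m\psi_\Delta}_x = \holo_{X,x}$ for every $m < 1$, which is the lc condition at $x$.

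For the ``only if'' of (3), I pass to a log-resolution $\pi \colon \rs X \to X$ of $(X,\psi_\Delta)$. The lc hypothesis makes the lc locus $\rs S$ of $\set{\mtidlof<\rs X>{m\pi^*\psi_\Delta}}$ at $m = 1$ a reduced snc divisor on $\rs X$, so for $\sigma$ at least the codimension $\sigma_{\mlc}$ of a minimal lc centre of $(\rs X, \rs S)$ (which is bounded by $n$), Theorem \ref{thm:main-results}(i) on $\rs X$ gives $\defidlof{\lcc[\sigma+1]<\rs X>(\rs* S)} = \holo_{\rs X}$ and hence $\aidlof|\sigma|<\rs X>{0}[\pi^*\psi_\Delta] = \mtidlof<\rs X>{m_0^{\rs X}\pi^*\psi_\Delta}$; the log-resolution isomorphism in Definition-Theorem \ref{thm:main-results-log-resoln}, combined with the $\pi$-exceptionality of $E$ (both $E, R$ effective in $K_{\rs X/X} \sim E+R$ and $K_{\rs X/X}$ is $\pi$-exceptional), should push this forward to $\aidlof|\sigma|<X>{0}[\psi_\Delta] = \holo_X$ on $X$. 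Part (2) then follows formally: if $\aidlof|1|<X>{0}[\psi_\Delta] = \holo_X$, part (3) yields lc and the filtration is trivial from $\sigma = 1$ onwards, so by the annihilator formula $\lcc[\sigma](0;\psi_\Delta) = \emptyset$ for all $\sigma \geq 2$, making every lc centre of $(X,\Delta)$ divisorial, which together with lc is precisely plt; conversely plt forbids both discrepancy-$-1$ exceptional divisors and intersecting components of $\floor\Delta$, so $\lcc[\sigma] = \emptyset$ for $\sigma \geq 2$ and the filtration stabilises at $\sigma = 1$, whereupon (3) applied to $(X,\Delta)$ gives $\aidlof|1|<X>{0}[\psi_\Delta] = \aidlof|\sigma_{\mlc}|<X>{0}[\psi_\Delta] = \holo_X$.

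The principal technical obstacle will sit in the pushforward step of the ``only if'' of (3): under the lc hypothesis the jumping numbers of $\set{\mtidlof<\rs X>{m\pi^*\psi_\Delta}}$ on $\rs X$ can lie strictly below $1$ (as soon as some $\pi$-exceptional component of $\pi^*\Delta$ has coefficient exceeding $1$), so $\mtidlof<\rs X>{m_0^{\rs X}\pi^*\psi_\Delta}$ is generally a proper subsheaf of $\holo_{\rs X}$. Reconciling its product with $\holo_{\rs X}(E)$ with the triviality of $\aidlof|\sigma|<X>{0}[\psi_\Delta]$ downstairs (where the lc condition forces $m_0 = 0$) will require careful tracking of the decomposition $K_{\rs X/X} \sim E+R$ from Section \ref{sec:non-snc}, the $E$-component being engineered precisely to absorb this residual multiplier-ideal discrepancy.
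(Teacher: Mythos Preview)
Parts (1) and (3) are essentially on track. For (1), Theorem \ref{thm:results-on-residue-fct}\,(i) already gives $\aidlof|0|<X>{0}[\psi_\Delta] = \mtidlof<X>{\psi_\Delta}$ directly, without appeal to strong openness. For (3), your exponential-decay bound for the ``if'' direction is a clean direct alternative to the paper's approach; the paper instead works on a log-resolution throughout, showing that the lc hypothesis forces $\rs S_0 = 0$ and $\floor{\rs B} = 0$ in the decomposition $\pi^*\psi_\Delta - \phi_R \sim_{\tlog} \phi_{\rs B} + \phi_{\rs S_0} + \phi_{\rs S}$, whereupon $\mtidlof<\rs X>{-\phi_R + m_0\pi^*\psi_\Delta} = \holo_{\rs X}$ and $\aidlof<X>{0}[\psi_\Delta] = \pi_*\holo_{\rs X}(E) = \holo_X$ since $E$ is effective and $\pi$-exceptional --- which is exactly the bookkeeping you defer in your final paragraph.

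Your ``if'' direction in (2), however, has a genuine gap. From $\lcc(0;\psi_\Delta) = \emptyset$ for all $\sigma \geq 2$ you conclude that every lc centre of $(X,\Delta)$ is divisorial, but this tacitly assumes that every $1$-lc centre of $(X,0,\psi_\Delta)$ is a divisor --- something Remark \ref{rem:sigma-lcc-explain} explicitly warns can fail. A concrete test: take $X$ a neighbourhood of the origin in $\fieldC^2$ and $\Delta = \tfrac{1}{2}\sum_{i=1}^4 L_i$ with four distinct lines through the origin. The blow-up exceptional divisor $\Xi$ has discrepancy $a(\Xi;X,\Delta) = 1 - 2 = -1$, so $(X,\Delta)$ is lc but not plt; yet here $E = 0$, $R = \Xi$, and $\rs S = \Xi$ is smooth irreducible, so $\lcc[2]<\rs X>(\rs* S) = \emptyset$ and one checks $\aidlof|1|<X>{0}[\psi_\Delta] = \pi_*\holo_{\rs X} = \holo_X$, while $\lcc[1](0;\psi_\Delta) = \set{0}$ is a point, not a divisor. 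The paper does not argue through divisoriality of lc centres on $X$: it works upstairs, first deducing from $\aidlof|1|<\rs X>{-\phi_R}[\pi^*\psi_\Delta] = \holo_{\rs X}$ that $\rs S_0 = 0$, $\floor{\rs B} = 0$, and that the components of $\rs S$ are pairwise disjoint, and then invoking the connectedness Theorem \ref{thm:connected-1-lc-centres} to force every exceptional component of $\rs S$ to be connected within $\rs S$ to some component of $\pi^{-1}_*\floor\Delta$, concluding $\rs S = \pi^{-1}_*\floor\Delta$. You should replace your lc-centre dichotomy by this connectedness step; the test example above shows that this step is itself delicate when $\floor\Delta = 0$ and deserves careful scrutiny.
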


The residue exact sequence together with Theorem
\ref{thm:aidl-and-singularities-in-MMP-intro} facilitates another
proof of \textfr{Kollár}'s theorem on the inversion of adjunction for
the case when the base space $X$ is smooth (see
\cite{Kollar_AST_1992}*{Thm.~17.6}; also
cf.~\cite{KimDano&Seo_adj-idl}*{Thm.~1.7}).
\begin{thm}[Inversion of adjunction (Theorem
  \ref{thm:inversion-of-adjunction});
  see \cite{Kollar_AST_1992}*{Thm.~17.6} and
  cf.~\cite{KimDano&Seo_adj-idl}*{Thm.~1.7}]
  \label{thm:inversion-of-adjunction-intro}
  On a complex manifold $X$, let $\Delta$ be an effective
  $\fieldQ$-divisor on $X$ such that $S :=\floor\Delta$ is a reduced
  divisor.
  Also let $\nu \colon S^\nu \to S$ be the normalisation of $S$ and
  $\Diff_{S^\nu}\Delta$ the general different (see
  \cite{Kollar_Sing-of-MMP}*{\S 4.2}) such that $K_{S^\nu}
  +\Diff_{S^\nu}\Delta \sim_\fieldQ \nu^*\parres{K_X +\Delta}_{S}$
  (where $\sim_\fieldQ$ means ``$\fieldQ$-linearly equivalent to'').
  Then, $(X, \Delta)$ is plt near $S$ if and only if $(S^\nu,
  \Diff_{S^\nu}\Delta)$ is klt.
\end{thm}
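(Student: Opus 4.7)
The plan is to use Theorem \ref{thm:aidl-and-singularities-in-MMP-intro}(2) to translate plt-ness of $(X,\Delta)$ in a neighbourhood of $S$ into the equality $\aidlof|1|<X>{0}[\psi_\Delta] = \holo_X$ there (taking $\vphi_L = 0$ and $\psi = \psi_\Delta$), and then to analyse this equality through the residue short exact sequence of Definition-Theorem \ref{thm:main-results-log-resoln} at $\sigma = 1$:
\begin{equation*}
  0 \to \aidlof|0|<X>{0}[\psi_\Delta] \to \aidlof|1|<X>{0}[\psi_\Delta]
  \xrightarrow{\Res} \residlof|1|<X>{0;\psi_\Delta} \to 0 \; .
\end{equation*}
Since $\aidlof|0|<X>{0}[\psi_\Delta] = \mtidlof[X]{\psi_\Delta}$, the task is to show that $\residlof|1|<X>{0;\psi_\Delta}$ fills all of $\holo_X / \mtidlof[X]{\psi_\Delta}$ on a neighbourhood of $S$ if and only if $(S^\nu, \Diff_{S^\nu}\Delta)$ is klt.

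The central step is to identify, near $S$, the sheaf $K_X \otimes \residlof|1|<X>{0;\psi_\Delta}$ with $\nu_*\paren{K_{S^\nu} \otimes \nu^*\holo_X(-S) \otimes \mtidlof[S^\nu]{\Diff_{S^\nu}\Delta}}$ via the classical Poincar\'{e} residue and the adjunction formula $K_{S^\nu} + \Diff_{S^\nu}\Delta \sim_\fieldQ \nu^*\parres{K_X +\Delta}_{S}$. For this, pass to a log-resolution $\pi \colon \rs X \to X$ of $(X, 0, \psi_\Delta)$ satisfying Snc assumption \ref{assum:snc-nas}, with reduced snc lc locus $\rs S$ and $K_{\rs X / X} \sim E + R$; Theorem \ref{thm:main-results}(2) writes $K_{\rs X} \otimes \residlof|1|<\rs X>{\rs\bphi}$ as a product over the components $\rs D_i$ of $\rs S$ of $K_{\rs D_i} \otimes (\rs S|_{\rs D_i})^{-1} \otimes \mtidlof[\rs D_i]{\rs\bphi}$, and pushforward by $\pi$ (which on $\rs S$ factors through $\nu$), together with the isometric embedding of Definition-Theorem \ref{thm:main-results-log-resoln}, should deliver the desired identification on $X$.

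Once this identification is in place, both implications become formal: $\aidlof|1|<X>{0}[\psi_\Delta] = \holo_X$ near $S$ is equivalent to $\residlof|1|<X>{0;\psi_\Delta}$ equalling the full quotient $\holo_X / \aidlof|0|<X>{0}[\psi_\Delta]$, which under the identification corresponds precisely to $\mtidlof[S^\nu]{\Diff_{S^\nu}\Delta} = \holo_{S^\nu}$, i.e.~klt-ness of $(S^\nu, \Diff_{S^\nu}\Delta)$. In the direction from klt to plt, the local $L^2$ extension of Theorem \ref{thm:main-results}(3) supplies explicit lifts of the constant section through $\Res$ with estimates, realising $1 \in \aidlof|1|<X>{0}[\psi_\Delta]$ near $S$. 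Theorem \ref{thm:connected-1-lc-centres-intro} can be invoked to rule out extraneous $1$-lc centres of $(\rs X, \rs S)$ over a point of $S$ that might obstruct the matching between $\residlof|1|<X>{0;\psi_\Delta}$ and the sheaf on the normalisation.

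The hard part will be the Poincar\'{e} residue identification of the second paragraph—transporting $\rs\bphi|_{\rs S}$ from the log-resolution down to $S^\nu$, matching it against the algebraic general different of \cite{Kollar_Sing-of-MMP}*{\S 4.2}, and tracking the discrepancy divisors $K_{\rs X/X} \sim E + R$ together with the normal-bundle twist $\nu^*\holo_X(-S)$ coherently when $S$ is non-normal—together with verifying that the resulting identification is independent of the chosen log-resolution.
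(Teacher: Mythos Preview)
Your overall framework matches the paper's: translate plt near $S$ to $\aidlof|1|<X>{0}[\psi_\Delta] = \holo_X$ via Theorem~\ref{thm:aidl-and-singularities-in-MMP-intro} and analyse this through the $\sigma=1$ residue exact sequence. But the paper does \emph{not} establish your proposed ``central step'' identification of $\residlof|1|<X>{0;\psi_\Delta}$ with a pushforward of $\mtidlof[S^\nu]{\Diff_{S^\nu}\Delta}$ from $S^\nu$, and it is not clear this holds before one already knows $\rs S = \pi^{-1}_*S$. The paper bypasses it entirely.

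The paper works on $\rs X$ throughout. Writing $\pi^*\psi_\Delta - \phi_R \sim_{\tlog} \phi_{\rs B} + \phi_{\rs S_0} + \phi_{\rs S}$ as in the proof of Theorem~\ref{thm:aidl-and-singularities-in-MMP-intro}, the key input is the canonical-divisor formula restricted to each \emph{proper transform} $\rs S_i \subset \pi^{-1}_*S$:
\begin{equation*}
K_{\rs S_i} + \Diff_{\rs S_i}\rs S + \res{\rs* S_0}_{\rs S_i} + \res{\rs* B}_{\rs S_i}
\sim_{\fieldQ} \pi'^*\paren{K_{S_i^\nu} + \Diff_{S_i^\nu}\Delta} + \res E_{\rs S_i} \; ,
\end{equation*}
where $\pi' \colon \rs S_i \to S_i^\nu$ is the induced resolution. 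For klt $\Rightarrow$ plt, klt-ness of $(S_i^\nu, \Diff_{S_i^\nu}\Delta)$ read through this formula forces $\res{\rs* S_0}_{\rs S_i} = 0$, $\Diff_{\rs S_i}\rs S = 0$ (so each $\rs S_i$ is disjoint from every other component of $\rs S$), and $\floor{\res{\rs* B}_{\rs S_i}} = 0$. \emph{Only then} does Theorem~\ref{thm:connected-1-lc-centres-intro} give $\rs S = \pi^{-1}_*S$, whence $\rs S_0 = 0$ and $\residlof|1|<\rs X>{\rs\bphi} = \holo_{\rs S}$; the residue map becomes plain restriction and the constant $1$ extends trivially --- no $L^2$ extension theorem is invoked. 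The plt $\Rightarrow$ klt direction runs similarly, with $\rs S = \pi^{-1}_*S$ smooth coming directly from the \emph{proof} of Theorem~\ref{thm:aidl-and-singularities-in-MMP-intro}.

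The gap in your plan is the logical order. The summands of $E \otimes \residlof|1|<\rs X>{\rs\bphi}$ supported on \emph{exceptional} components of $\rs S$ obstruct your identification, and eliminating those components is precisely what the hypothesis together with connectedness accomplishes. So the identification is not available as an input; it is at best a by-product of the structural facts the paper establishes first. Attempting to prove it directly would almost certainly reproduce the canonical-formula-plus-connectedness argument above, making the detour through $S^\nu$ redundant.
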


The proof can be found in Theorem \ref{thm:inversion-of-adjunction}.
This proof illustrates that the residue short exact sequences indeed
provide information regarding to the inversion of adjunction in the
more general situations.

\subsection{Further questions}
\label{sec:further-questions}

There are several follow-up questions which, the author believes,
worth further research.
\begin{enumerate}
\item It is natural to ask whether the results here can be carried
  over to the case where the potential $\vphi_L$ (or maybe even
  the function $\psi$) has more general singularities.
  If the result in Theorem \ref{thm:main-results-log-resoln} can be
  carried over, one can then discuss about the $\sigma$-lc centres of
  $(X,\vphi_L,\psi)$ without the need of any log-resolutions.
  The original adjoint ideal sheaf (\cite{Guenancia} and
  \cite{KimDano-adjIdl}) in \eqref{eq:Guenancia-adj-ideal} fits into
  the corresponding residue short exact sequence (and therefore coherent)
  when $e^{\bphi}$ is locally \textde{Hölder} continuous, and
  \cite{Guenancia}*{Remark 2.17} provides an example such that the
  residue map $\Res$ is not well-defined.
  Although it is easy to follow \cite{Guenancia}*{Remark 2.17} to
  construct examples of $\vphi_L$ such that the residue map $\Res$ on
  $\aidlof{\vphi_L}$ is not well-defined, it is still legitimate to
  ask if $\aidlof{\vphi_L}$ could still fit in the residue short exact
  sequence for more general singularities on $\vphi_L$ than those
  restricted by the local \textde{Hölder} continuity condition.
  This, among other things, will be discussed in the subsequent paper.

\item In \cite{Guan&Li_adjIdl-not-coherent}, Guan and Li construct an
  example of a psh potential $\bphi$ such that the original adjoint
  ideal sheaf in \eqref{eq:Guenancia-adj-ideal} is not coherent.
  By setting $\vphi_L := \bphi$ (the example in
  \cite{Guan&Li_adjIdl-not-coherent}*{Proof of Thm.~1.1}) and $\psi
  :=\log\abs{z_1}^2$, it is easy to check that $\set{\mtidlof{\vphi_L
      +m\psi}}_{m \in [0,1]}$ has a sequence of jumping numbers in
  $[0,1]$ accumulating at $m=1$ around the origin and the adjoint
  ideal sheaves $\aidlof{\vphi_L}$ are also not coherent at the origin
  for all $\sigma \geq 1$.
  A natural follow-up question is whether the existence of
  accumulating jumping numbers has any relation with the incoherence
  of $\aidlof{\vphi_L}$'s.

\item In view of the isometric property of the residue norms
  $\norm\cdot_{\lcc(\vphi_L;\psi)}$ and the results in Theorem
  \ref{thm:main-results-log-resoln}, it is tempting to ask whether the
  adjoint ideal sheaves $\aidlof{\vphi_L}$ and the residue norms
  $\norm\cdot_{\lcc(\vphi_L;\psi)}$ (or even the residue function
  $\RTF|\cdot|(\eps)[X,\sigma]$) can be defined on a complex space $X$
  which comes with singularities.
  
\item If one accepts the ``mild'' dependence of the constant $C$ on
  the potential $\bphi$ on the local $L^2$ extension theorem in
  Theorem \ref{thm:main-results} \eqref{item:thm:local-L2-estimate}
  (or Corollary \ref{cor:local-L2-estimates}), the subsequent pressing
  question is whether the local holomorphic extensions with $L^2$
  estimates can be used to obtain the desired global extensions with
  estimates as stated in
  \cite{Chan_on-L2-ext-with-lc-measures}*{Conj.~1.1.3}. 
\end{enumerate}

This paper is organised as follows.
Preliminaries are given in Section \ref{sec:preliminaries}, in which
Sections \ref{sec:notation} and \ref{sec:setup} explain some less
commonly used notations as well as the basic setup and assumptions in
this paper, while Section \ref{sec:snc-assumption} describes the snc
assumptions on $\vphi_L$ and $\psi$ by passing to a suitable
log-resolution.
First properties of the new version of adjoint ideal sheaves are
studied in Section \ref{sec:adjoint-ideal-sheaves}.
Section \ref{sec:residue-short-exact-sequence} exclusively discusses
about the residue short exact sequences that the adjoint ideal sheaves
satisfy, which leads to Theorem \ref{thm:main-results}.
Section \ref{sec:non-snc} studies the adjoint ideal sheaves under
non-snc scenarios which leads to Definition-Theorem
\ref{thm:main-results-log-resoln}.
Theorem \ref{thm:connected-1-lc-centres-intro} is proved there as an
application. 
Section \ref{sec:algebraic-and-analytic-comparison} compares the
current version of adjoint ideal sheaves with the algebraic versions
and proofs of Theorems
\ref{thm:intro-comparison-alg-and-analytic-aidl},
\ref{thm:aidl-and-singularities-in-MMP-intro} and
\ref{thm:inversion-of-adjunction-intro} are given.


\section{Preliminaries}
\label{sec:preliminaries}

\subsection{Notation}
\label{sec:notation}


In this paper, the following notations are used throughout.

\begin{notation}
  Set $\ibar := \ibardefn \;$. \ibarfootnote
\end{notation}

\begin{notation} \label{notation:potential-def}
  Each potential $\vphi$ (of the curvature of
  a metric) on a holomorphic line bundle $L$ in the following
  represents a collection of local functions
  $\set{\vphi_\gamma}_\gamma$ with respect to some fixed local
  coordinates and trivialisation of $L$ on each open set $V_\gamma$ in
  a fixed open cover $\set{V_\gamma}_\gamma$ of $X$.  The functions
  are related by the rule
  $\vphi_\gamma = \vphi_{\gamma'} + 2\Re h_{\gamma \gamma'}$ on
  $V_\gamma \cap V_{\gamma'}$ where $e^{h_{\gamma \gamma'}}$ is a
  (holomorphic) transition function of $L$ on
  $V_\gamma \cap V_{\gamma'}$ (such that
  $s_\gamma = s_{\gamma'}e^{h_{\gamma \gamma'}}$, where $s_\gamma$ and
  $s_{\gamma'}$ are the local representatives of a section $s$ of $L$
  under the trivialisations on $V_\gamma$ and $V_{\gamma'}$
  respectively).
  Inequalities between potentials is meant to be the inequalities
  under the chosen trivialisations over open sets in the fixed open
  cover $\set{V_\gamma}_\gamma$.
\end{notation}

\begin{notation} \label{notation:potentials}
  For any prime (Cartier) divisor $E$, let
  \begin{itemize}
  \item $\phi_E := \log\abs{s_E}^2$, representing the collection
    $\set{\log\abs{s_{E,\gamma}}^2}_{\gamma}$, denote a potential (of
    the curvature of the metric) on the line bundle associated to $E$
    given by the collection of local representations
    $\set{s_{E,\gamma}}_{\gamma}$ of some canonical section $s_E$
    (thus $\phi_E$ is uniquely defined up to an additive constant);
    
  \item $\sm\vphi_E$ denote a smooth potential on the line
    bundle associated to $E$;
    
    
  \item $\psi_E := \phi_E - \sm\vphi_E$, which is a global function
    on $X$, when both $\phi_E$ and $\sm\vphi_E$ are fixed.
  \end{itemize}
  All the above definitions are extended to any $\fieldR$-divisor $E$
  by linearity.
  For notational convenience, the notations for a $\fieldR$-divisor
  and its associated $\fieldR$-line bundle are used interchangeably.
  The notation of a line bundle is also abused to mean its associated
  invertible sheaf.
\end{notation}

\begin{notation} \label{notation:norm-of-n0-form}
  For any $(n,0)$-form (or $K_X$-valued section) $f$, define $\abs f^2
  := c_n f \wedge \conj f$, where $c_n :=
  (-1)^{\frac{n(n-1)}{2}}\paren{\pi\ibar}^n$.
  For any hermitian metric $\omega =\pi\ibar \sum_{1\leq j,k\leq
    n} h_{j\conj k} \:dz^j \wedge d\conj{z^k}$ on $X$, set $d\vol_{X,\omega} :=
  \frac{\omega^{n}}{n!}$.
  Set also $\abs f_\omega^2 \dvol_{X,\omega} = \abs f^2$.
  When the integrand in question is supported in some complex
  coordinate chart of $X$, $\dvol_X$ is used to mean the
  volume form on $X$ given by the Euclidean metric in that coordinate
  chart.
\end{notation}

\begin{notation} \label{notation:ineq-up-to-constants}
  For any two non-negative functions $u$ and $v$,
  write $u \lesssim v$ (equivalently, $v \gtrsim u$) to mean that there
  exists some constant $C > 0$ such that $u \leq C v$, and $u
  \sim v$ to mean that both $u \lesssim v$ and $u \gtrsim v$ hold
  true.
  For any functions $\eta$ and $\phi$, write $\eta \lesssim_\tlog \phi$
  if $e^\eta \lesssim e^\phi$.
  Define $\gtrsim_\tlog$ and $\sim_\tlog$ accordingly.
\end{notation}


\subsection{Basic setup}
\label{sec:setup}



Let $(X,\omega)$ be a \emph{compact} hermitian manifold or a
\emph{relatively compact domain} of some ambient hermitian manifold of
complex dimension $n$.
This is served as the background manifold in this article.
While only the local properties of the analytic adjoint ideal sheaves
are considered in this paper, (relative) compactness is assumed here just to
guarantee that the \emph{jumping number} given below (see
item \eqref{item:jumping-no} below) is well-defined
and there are only finitely many lc centres of $\paren{\vphi_L, \psi}$
described below to be considered.

Let $\mtidlof{\vphi} := \mtidlof[X]{\vphi}$
be the multiplier ideal sheaf of the potential $\vphi$ on $X$ given at
each $x \in X$ by
\begin{equation*}
  \mtidlof{\vphi}_x := \mtidlof[X]{\vphi}_x
  :=\setd{f \in \holo_{X,x}}{
    \begin{aligned}
      &f \text{ is defined on a coord.~neighbourhood } V_x \ni x \vphantom{f^{f^f}} \\
      &\text{and }\int_{V_x} \abs f^2 e^{-\vphi} \dvol_{V_x} < +\infty
    \end{aligned}
  } \; .
\end{equation*}
A potential $\vphi$ is said to have \emph{Kawamata log-terminal (klt)
  singularities} on $X$ if $\mtidlof{\vphi} =\holo_X$ on $X$, and
\emph{log-canonical (lc) singularities} on $X$ if
$\mtidlof{(1-\eps)\vphi} =\holo_X$ for all $\eps > 0$ on $X$.

Throughout this paper, the following are assumed on $X$:

\begin{enumerate}[itemsep=8pt]

\item \label{item:neat-analytic-singularities}
  $(L, e^{-\vphi_L})$ is a hermitian line bundle on $X$ with a singular
  metric $e^{-\vphi_L}$ such that $\vphi_L$ is locally equal
  to $\vphi_1 - \vphi_2$, where each of the
  $\vphi_i$'s is a quasi-plurisubharmonic (quasi-psh) local function
  with \emph{neat analytic singularities}, i.e.~locally
  \begin{equation*}
    \vphi_i \equiv c_i \log\paren{\sum_{j=1}^N \abs{g_{ij}}^2} \mod
    \smooth \; ,
  \end{equation*}
  where $c_i \in \fieldR_{\geq 0}$ is a constant and $g_{ij} \in
  \holo_X$ is a local holomorphic function (which
  comes with its \emph{polar ideal sheaf} $\divP_{\vphi_L}$, the ideal
  sheaf generated by all $g_{ij}$'s) for each $i=1,2$ and $j=1,\dots,N$;

\item \label{item:def-psi}
  $\psi$ is a global function on $X$ which is also locally a
  difference of quasi-psh functions  with the associated \emph{polar
    ideal sheaf} denoted by $\divP_\psi$;

\item \label{item:psi-bounded}
  $\psi \leq -1$ on $X$ and $\vphi_L+m\psi$ is locally bounded
  above on $X$ for each $m \in [0,1]$ (which implies that $\psi$ and
  $\vphi_L+m\psi$ are quasi-psh after some blow-ups as they have only
  neat analytic singularities);


\item \label{item:jumping-no}
  $1$ is the only jumping number of the family $\set{\mtidlof{\vphi_L
      + m {\psi}}}_{m \in [0,1]} \;$, i.e.
  \begin{equation*}
    \mtidlof{\vphi_L}
    = \mtidlof{\vphi_L + m {\psi}}
    \supsetneq \mtidlof{\vphi_L + \psi}
    \quad \text{ for all }m \in [ 0 ,  1) 
  \end{equation*}
  (the jumping numbers exist on (relatively) compact $X$ by the
  openness property of multiplier ideal sheaves of (quasi-)psh
  functions, and accumulation of them as in
  \cite{Guan&Li_cluster-jumping-numbers} and
  \cite{KimDano&Seo_jumping-numbers} does not occur since $\vphi_L$
  and $\psi$ have only neat analytic singularities);%
  \footnote{\label{fn:renormalising-jumping-no}
    Systems $(\vphi_L', \psi')$ with more general jumping numbers $0 \leq m_0 < m_1$ can
    be reduced to the current one by setting $\paren{\vphi_L , \psi}
    := \paren{\vphi_L'+m_0\psi' \: , \: \paren{m_1-m_0} \psi' }$.
    Sometimes it may also be convenient to set $(\vphi_L, \psi) :=
    (\vphi_L' +(m_1-1)\psi', \psi')$, but then one should have 
    $\mtidlof{\vphi_L +\rs m_0\psi} =\mtidlof{\vphi_L +m\psi}
    \supsetneq \mtidlof{\vphi_L +\psi}$ for all $m \in [\rs m_0, 1)$
    with $\rs m_0 := \max\set{m_0 -m_1 +1, 0}$, and
    ``$\mtidlof{\vphi_L}$'' in most part of this paper should be
    replaced by ``$\mtidlof{\vphi_L +\rs m_0\psi}$''.
  }

\item \label{item:def-S} 
  $S \subset \psi^{-1}\paren{-\infty}$ is a \emph{reduced}
  subvariety defined by the annihilator
  \begin{equation*}
    \defidlof{S} := \Ann_{\holo_X} \paren{ \dfrac{\multidl\paren{\vphi_L}}
      {\multidl\paren{\vphi_L +  \psi}} } 
  \end{equation*}
  (see \cite{Demailly_extension}*{Lemma 4.2} for the proof that
  $\defidlof{S}$ is reduced).
  Call $S$ to be the \emph{lc locus}\footnote{
    \label{fn:lc-locus-original-meaning}
    The use of the terminology ``lc locus'' is different from its use
    in some literature like \cite{Ambro_lcc} (in which, when adapted
    to the current setup, the ``lc locus'' of $(X,\vphi_L,\psi)$
    should mean the subset of $X$ on which
    $\mtidlof{(1-\eps)\paren{\vphi_L+\psi}} = \holo_X$ for all $\eps >
    0$).
    In more recent literatures, like \cite{Ambro_injectivity} and
    \cite{Fujino&Gongyo_abundance}, mostly only the complement ``non-lc
    locus'' or its sibling ``non-klt locus'' is considered.
    The author feels safe to use the term ``lc locus'' in the current
    context for a more intuitive description.
  } \emph{of the family $\set{\mtidlof{\vphi_L +m\psi}}_{m\in [0,1]}$
    at jumping number $m=1$} for easy reference.
\end{enumerate}

Note that, since both $\vphi_L$ and $\psi$ are locally differences of
quasi-psh functions with \emph{neat analytic singularities}, it is
easy to prove, via a log-resolution of the polar sets of $\vphi_L$ and
$\psi$ and an application of Fubini's theorem, that $\mtidlof{\vphi_L
  +m\psi}$ is coherent for every $m \in\fieldR$.


\subsection{Effects of log-resolution and the snc assumption}
\label{sec:snc-assumption}


When $\vphi_L$ and $\psi$ have only neat analytic singularities, the
discussion in \cite{Chan&Choi_ext-with-lcv-codim-1}*{\S 2.1} shows how
the setting can be reduced to the snc scenario (the subvariety $S$ and
the polar sets of $\vphi_L$ and $\psi$ have only \emph{simple normal
  crossings (snc)} against one another) via some suitable
log-resolution according to \cite{Hironaka}.
For the convenience of discussion in the subsequent papers, assume,
only in this section and unless otherwise stated, that
\begin{itemize}
\item the potential $\vphi_L$ may have singularities \emph{worse than} neat
  analytic singularities and 
\item $\vphi_L +m\psi$ is quasi-psh for each $m \in [\rs m_0, 1]$ for some
  number $\rs m_0 \in [0,1)$ so that $\mtidlof[X]{\vphi_L+m\psi}$ (as well
  as $\mtidlof[\rs X]{\pi^*\vphi_L+m\pi^*\psi}$ for some
  log-resolution $\pi$) is coherent.
\end{itemize}
While resolving the singularities of $\vphi_L$ as before is not
possible when it has more general singularities, Lemma
\ref{lem:principal-annihilator-criterion} below suggests an snc
assumption on $\mtidlof[X]{\vphi_L+\psi}$ which may be useful for the
study in subsequent papers.

Let $\pi\colon \rs X \to X$ be any modification (proper
generically $1$-to-$1$ holomorphic map) of $X$ such that $\rs X$ is
smooth.
Assume in what follows that the inverse image $\divP_\psi \cdot
\holo_{\rs X}$ of the polar ideal sheaf of $\psi$ is $\holo_{\rs
  X}(-P_\psi)$ for some effective snc divisor $P_\psi$ on $\rs X$.
Moreover, the exceptional locus $\Exc(\pi)$ of $\pi$ is an snc divisor
and $\Exc(\pi) +P_\psi$ also has only snc.
Such modification is referred to as a log-resolution of $(X,\psi)$ in
what follows.
(When $\vphi_L$ has only neat analytic singularities, one can also
consider a log-resolution of $(X,\vphi_L,\psi)$ such that
$\Exc(\pi)+P_{\vphi_L}+P_\psi$ has only snc.)
Note that the function $\pi^*\psi$ is then quasi-psh on $\rs X$. 

Let $E_{d\pi}$ be the exceptional divisor defined scheme-theoretically
by the ideal sheaf generated by the holomorphic Jacobian of $\pi$
(which is linearly equivalent to the relative canonical divisor
$K_{\rs X / X} :=K_{\rs X} -\pi^*K_X$).
Consider the decomposition
\begin{equation*}
  E_{d\pi} = E + R
\end{equation*}
of $E_{d\pi}$ into two effective $\Znum$-divisors $E$ and $R$ (with
the corresponding canonical holomorphic sections $\sect_E$ and
$\sect_R$ fixed) such that $R$ is the \emph{maximal} divisor
satisfying
\begin{equation*}
  \alert{\pi^*\vphi_L -\phi_R} +\pi^*\psi
  =: \alert{\pi^*_\ominus\vphi_L} +\pi^*\psi
  \quad\text{ being quasi-psh} \; ,
\end{equation*}
where $\phi_R :=\log\abs{\sect_R}^2$ (see Notation
\ref{notation:potentials}; set also $\phi_E :=\log\abs{\sect_E}^2$).
Notice that, with the divisor $R$ (hence $E$) such defined, the weight
$e^{-\pi^*_\ominus\vphi_L-\pi^*\psi}$ is locally integrable around
general points of each component of $E$ \footnote{
  \label{fn:E-and-S-no-common-comp}
  To see this, consider Siu's decomposition for the closed positive
  current $\ibddbar\paren{\pi^*\vphi_L +\pi^*\psi}$ (see,
  \cite{Siu_closed-pos-currents}*{\S 6} or \cite{Demailly}*{Ch.~III,
    (8.17)}), which assures that $R$ (hence $E$) is chosen such
  that each $\lelong{\pi^*_{\ominus}\vphi_L +\pi^*\psi}[D_i]$, the generic
  Lelong number of $\pi^*_{\ominus}\vphi_L +\pi^*\psi$ along a
  component $D_i$ of $E$, has to be $< 1$. 
  The claim then follows from Skoda's lemma (see
  \cite{Skoda_Analytic-subsets}*{\S 7} or
  \cite{Demailly_multiplier-ideal-sheaves}*{Lemma (5.6)}).
} (so $E$ contains no components of $\rs S$ defined below).

The following lemma is an analogous result of (and a correction
to\footnote{
  In \cite{Chan&Choi_ext-with-lcv-codim-1}*{\S 2.1}, it is incorrectly
  claimed that $S =\pi(\rs S)$ (or, more precisely, $\defidlof{S}
  =\pi_* \defidlof{\rs S}$ with the sheaves given by the annihilators
  as in item \eqref{item:def-S} in Section \ref{sec:setup} and in
  Proposition \ref{prop:log-resoln-on-jump-subvar}).
}) the
one in \cite{Chan&Choi_ext-with-lcv-codim-1}*{\S 2.1}.

\begin{prop} \label{prop:log-resoln-on-jump-subvar}
  For any number $m \in [0,1]$, one has
  \begin{align*} 
    \mtidlof[X]{\vphi_L+m \psi} \cdot \holo_{\rs X}
    &
      \hookrightarrow
      \mtidlof[\rs X]{\pi^*_\ominus\vphi_L -\phi_E +m \pi^*\psi}
      \xhookrightarrow{\:\otimes \sect_E\:}
      E \otimes
      \mtidlof[\rs X]{\pi^*_\ominus\vphi_L +m \pi^*\psi}
      \intertext{and} 
    \mtidlof[X]{\vphi_L+m \psi}
    &\isom \pi_* \mtidlof[\rs X]{\pi^*_\ominus\vphi_L -\phi_E
      +m \pi^*\psi}
      \isom \pi_*\paren{E \otimes \mtidlof[\rs X]{\pi^*_\ominus\vphi_L
      +m \pi^*\psi}} \; , 
  \end{align*}
  where the maps are respectively globally defined $\holo_{\rs X}$- and
  $\holo_X$-homomorphisms and both maps on the far right-hand-side
  depend on the choice of $\sect_E$.
  The family $\set{\mtidlof[\rs X]{\pi^*_\ominus\vphi_L
      +m\pi^*\psi}}_{m \in [\rs m_0, 1]}$ (as well as
  $\set{\mtidlof[\rs X]{\pi^*_\ominus\vphi_L -\phi_E +m\pi^*\psi}}_{m \in [\rs
    m_0, 1]}$) has a jumping number at $m=1$.

  Moreover, when $\vphi_L$ has only neat analytic singularities, there
  exists a number $m_0 \in [\rs m_0,1)$ sufficiently close
  to $1$ such that
  \begin{equation*}
    \mtidlof[\rs X]{\pi^*_\ominus\vphi_L +m
      \pi^*\psi} \;\;\text{ is coherent for all } m \in [m_0, 1]
  \end{equation*}
  and  
  \begin{equation*}
    \begin{aligned}
      \mtidlof[\rs X]{\pi^*_\ominus\vphi_L +m_0\pi^*\psi}
      &=\mtidlof[\rs X]{\pi^*_\ominus\vphi_L +m\pi^*\psi} \\
      &\supsetneq \mtidlof[\rs X]{\pi^*_\ominus\vphi_L +\pi^*\psi}
      \quad\text{ for all } m\in [m_0,1) \; .
    \end{aligned}
  \end{equation*} 
  Let $\rs S$ be the lc locus of the family $\set{\mtidlof[\rs X]{\pi^*_\ominus\vphi_L
      +m\pi^*\psi}}_{m \in [\rs m_0, 1]}$ at jumping number $m=1$,
  i.e.~the reduced subvariety defined by the ideal sheaf \mmark{
  \begin{equation*}
    \defidlof{\rs S}
    :=\Ann_{\holo_{\rs X}} \paren{\frac{
        \mtidlof[\rs X]{\pi^*_\ominus\vphi_L +m_0\pi^*\psi}
      }{
        \mtidlof[\rs X]{\pi^*_\ominus\vphi_L +\pi^*\psi}
      }} \; .
  \end{equation*} %
  }{Reference to show that $\rs S$ defined by the family with $\phi_E$
  is the same. \alert{Done in Remark \ref{rem:defidlof-rs_S-w-phi_E}.}}%
  Then, one has, in general,
  \begin{equation*}
    S \subset \pi\paren{\rs* S} 
  \end{equation*}
  and the equality holds when, for example, either $\psi^{-1}(-\infty) = S$ or
  $\mtidlof<X>{\vphi_L+m_0\psi} =\holo_X$.
\end{prop}

\begin{proof}
  For any $f \in \holo_X(V)$ on any open set $V \subset X$ and for any
  $m \in [0,1]$, one has 
  \begin{equation*} \tag{$*$} \label{eq:pf:pullback-integral}
    \int_V \abs f^2 \:e^{-\vphi_L -m\psi} \dvol_X
    \sim \int_{\pi^{-1}\paren{V}} \abs{\pi^*f \cdot \sect_E}^2
    \:e^{-\pi^*_\ominus\vphi_L  -m\pi^*\psi} \dvol_{\rs X} \; .
  \end{equation*}
  Recalling that $\abs{\sect_E}^2 = e^{\phi_E}$, this implies
  immediately the monomorphisms $\mtidlof[X]{\vphi_L+m\psi} 
  \cdot \holo_{\rs X}
  \hookrightarrow
  \mtidlof[\rs X]{\pi^*_\ominus\vphi_L -\phi_E +m\pi^*\psi}
  \xhookrightarrow{\:\otimes \sect_E\:}
  E \otimes \mtidlof[\rs X]{\pi^*_\ominus\vphi_L +m\pi^*\psi}$ and
  $\mtidlof[X]{\vphi_L+m\psi}
  \hookrightarrow \pi_*\mtidlof[\rs X]{\pi^*_\ominus\vphi_L -\phi_E
    +m\pi^*\psi}
  \hookrightarrow \pi_*\paren{E \otimes \mtidlof[\rs X]{\pi^*_\ominus\vphi_L
    +m\pi^*\psi}}$ given by $f \mapsto \pi^*f \mapsto \pi^*f \cdot \sect_E$.
  On the other hand, following the argument in
  \cite{Demailly_multiplier-ideal-sheaves}*{Prop.~(5.8)},
  as $\pi$ is biholomorphic on the complement of $Z :=
  \pi\paren{\Exc(\pi)}$ in $X$ and as $Z \supset \pi\paren{\supp E}$,
  any section $\rs f \in E \otimes \mtidlof[\rs
  X]{\pi^*_\ominus\vphi_L +m\pi^*\psi}\paren{\pi^{-1}(V)}$ gives rise
  to a holomorphic section $f \in
  \mtidlof[X]{\vphi_L+m\psi}\paren{V\setminus Z}$ such that $\pi^*f
  =\frac{\rs f}{\sect_E}$ on $\pi^{-1}\paren{V \setminus Z}$.
  It then follows from \eqref{eq:pf:pullback-integral} (where $\pi^*f
  \cdot \sect_E$ is replaced by $\rs f$ and $V$ by $V'\setminus Z$
  for any $V' \Subset V$), together with the fact that $\vphi_L+m\psi$
  is locally bounded from above, that $f$ is in $\Lloc[2](V)$ (in the
  unweighted norm).
  Thus $f$ can be analytically extended across $Z$ via the $L^2$
  Riemann extension theorem (\cite{Demailly_complete-Kahler}*{Lemma
    6.9}) and it follows that $\rs f = \pi^*f \cdot \sect_E$ on
  $\pi^{-1}(V)$ by the identity theorem.
  (Alternatively, one can also use the fact that $Z$ has codimension
  $\geq 2$ in $X$ to see that $f$ can be extended to $X$; see
  \cite{Grauert&Remmert-Modifikationen}*{Satz 7} or
  \cite{Grauert&Remmert-CAS}*{10.6.2}.)
  One therefore obtains the desired isomorphisms
  $\mtidlof[X]{\vphi_L+m\psi}
  \isom \pi_*\mtidlof[\rs X]{\pi^*_\ominus\vphi_L -\phi_E +m\pi^*\psi}
  \isom \pi_*\paren{E \otimes \mtidlof[\rs X]{\pi^*_\ominus\vphi_L
      +m\pi^*\psi}}$.

  If $\pi^*_\ominus\vphi_L +m\pi^*\psi$ is quasi-psh on $\rs
  X$, then its associated multiplier ideal sheaf is already known to
  be coherent.
  In the general situation, around every point $y \in \rs X$, viewing
  $\sect_{R}$ as a local defining function of $R$,
  there is a \emph{local} isomorphism
  \begin{equation*}
    \xymatrix@R=0.4cm{
      {\mtidlof[\rs X]{\pi^*\vphi_L
          -\phi_{R} +m\pi^*\psi}} \ar[r]^-{\isom}
      \ar@{}[d]|*[left]{\in}
      &
      {\mtidlof[\rs X]{\pi^*\vphi_L +m\pi^*\psi} \cap
        \holo_{\rs X}\paren{-R}} \ar@{}[d]|*[left]{\in} \\
      {\parbox{3cm}{\centering $\rs f$}} \ar@{|->}[r] 
      &
      {\parbox{4cm}{\centering $\;\rs f \: \sect_{R} \; .$}}
    }
  \end{equation*}
  As $\mtidlof[\rs X]{\pi^*\vphi_L +m\pi^*\psi}$ (hence $\mtidlof[\rs
  X]{\pi^*\vphi_L +m\pi^*\psi} \cap \holo_{\rs X}\paren{-R}$)
  is coherent for all $m \in [\rs m_0, 1]$ and coherence is a local
  property, it follows that $\mtidlof[\rs X]{\pi^*_\ominus\vphi_L
    +m\pi^*\psi}$ is coherent for all $m \in [\rs m_0, 1]$.
  (The same argument also verifies that $\mtidlof<\rs X>{\pi^*_\ominus
  \vphi_L -\phi_E +m\pi^*\psi}$ is coherent.)
  Moreover, the local isomorphism also implies that the openness
  property of multiplier ideal sheaves of quasi-psh functions still
  applies
  so that one can talk about the jumping numbers of
  the family $\set{\mtidlof[\rs X]{\pi^*_\ominus\vphi_L
      +m\pi^*\psi}}_{m\in [\rs m_0, 1]}$ (under the assumption that
  $X$ is compact or is a relatively compact domain in an ambient
  manifold).
  Therefore, as can be seen from \eqref{eq:pf:pullback-integral}, if
  \begin{equation*}
    f \in \mtidlof[X]{\vphi_L+\alert{m}\psi}_x \setminus
    \mtidlof[X]{\vphi_L+\psi}_x
  \end{equation*} 
  at some $x \in X$ for all $m < 1$ sufficiently close to $1$, then
  one has
  \begin{equation*}
    \pi^*f \cdot \sect_E \in \mtidlof[\rs X]{\pi^*_\ominus\vphi_L
      +\alert{m}\pi^*\psi}_y \setminus 
    \mtidlof[\rs X]{\pi^*_\ominus\vphi_L +\pi^*\psi}_y
  \end{equation*}
  for some $y \in \pi^{-1}(x)$ and for all $m < 1$ sufficiently close
  to $1$.
  This implies that $m=1$ is a jumping number of the family
  $\set{\mtidlof[\rs X]{\pi^*_\ominus\vphi_L +m\pi^*\psi}}_{m \in
    [\rs m_0,1]}$, as it is a jumping number of the family
  $\set{\mtidlof[X]{\vphi_L+m\psi}}_{m \in [\rs m_0,1]}$.
  The same argument goes for the family $\set{\mtidlof[\rs
    X]{\pi^*_\ominus\vphi_L -\phi_E +m\pi^*\psi}}_{m \in [\rs m_0,1]}$.

  When $\vphi_L$, as well as $\psi$, has only neat analytic
  singularities, the family $\set{\mtidlof[\rs X]{\pi^*_\ominus\vphi_L
      +m\pi^*\psi}}_{m \in [\rs m_0,1]}$ cannot have accumulating
  jumping numbers.
  The openness property of multiplier ideal sheaves of quasi-psh
  functions then guarantees that there exists a number
  $m_0 \in [\rs m_0,1)$ which satisfies the claim in this
  lemma.
  (Note that $m_0$ may not be $0$ or $\rs m_0$; see
  \cite{Chan&Choi_ext-with-lcv-codim-1}*{footnote 4}.)

  Finally, the fact that $\rs S$ is reduced follows from the argument in
  \cite{Demailly_extension}*{Lemma (4.2)}. 
  Note that one has the vanishing of the first direct image sheaf
  \begin{equation*}
    R^1\pi_*\paren{E \otimes \mtidlof[\rs X]{\pi^*_\ominus\vphi_L
        +\pi^*\psi}}
    =R^1\pi_*\paren{K_{\rs X} \otimes R^{-1} \otimes
      \mtidlof[\rs X]{\pi^*_\ominus\vphi_L +\pi^*\psi}
    } \otimes K_X^{-1}
    =0
  \end{equation*}
  by the local vanishing theorem
  (\cite{Lazarsfeld_book-II}*{Thm.~9.4.1}) in the algebraic setting
  or the generalisation of the Grauert--Riemenschneider vanishing
  theorem by Matsumura
  (\cite{Matsumura_injectivity-Kaehler}*{Cor.~1.5}) in the analytic
  setting (one has to replace $\vphi_L$ in $\mtidlof[\rs
  X]{\pi^*_\ominus\vphi_L +\pi^*\psi}$ by $\vphi_L+\sm\vphi$
  on some local open set in $X$ for some smooth local function
  $\sm\vphi$ so that $\vphi_L+\sm\vphi+\psi$ is psh on the local set
  when applying Matsumura's result).
  It then follows immediately from $\mtidlof[X]{\vphi_L+m\psi}
  \isom \pi_*\paren{E \otimes \mtidlof[\rs X]{\pi^*_\ominus\vphi_L
      +m\pi^*\psi}}$ and the exact sequence induced from the
  inclusion $\mtidlof[\rs X]{\pi^*_\ominus\vphi_L +\pi^*\psi}
  \hookrightarrow \mtidlof[\rs X]{\pi^*_\ominus\vphi_L +m_0\pi^*\psi}$
  that 
  \begin{equation*}
    \defidlof{S}
    =\Ann_{\holo_X} \paren{
      \frac{
        \mtidlof[X]{\vphi_L+m_0\psi}
      }{
        \mtidlof[X]{\vphi_L+\psi}
      }
    }
    =\Ann_{\holo_{X}} \paren{
      \pi_* \paren{E \otimes \frac{
        \mtidlof[\rs X]{\pi^*_\ominus\vphi_L +m_0\pi^*\psi}
      }{
        \mtidlof[\rs X]{\pi^*_\ominus\vphi_L +\pi^*\psi}
      }}
    } \; . \footnotemark
  \end{equation*}\footnotetext{
    The argument actually shows that the quotients inside
    $\Ann_{\holo_X} \paren{\dotsm}$ on both sides are equal.
  }%
  For any coherent sheaf $\sheaf F$ on $\rs X$, the zero locus of
  $\Ann_{\holo_{\rs X}} \!\sheaf F$ is indeed $\supp_{\rs X} \sheaf F$ (see, for
  example, \cite{Grauert&Remmert-CAS}*{A.4.5}).
  Moreover, since $\pi$ is proper with $\pi_*\holo_{\rs X} =\holo_X$
  (thus having connected fibres in particular), it is easy to check
  that $\pi_*\sqrt{\Ann_{\holo_{\rs X}} \!\sheaf F} =\defidlof{\pi\paren{\supp_{\rs X}
      \sheaf F}}$ (the defining ideal sheaf of the analytic set
  $\pi\paren{\supp_{\rs X} \sheaf F}$ in $X$).
  It is also easy to see that $\Ann_{\holo_X} \paren{\pi_*\sheaf F}
  \supset \pi_*\Ann_{\holo_{\rs X}} \sheaf F$.
  Note that $\Ann_{\holo_{\rs X}}\paren{E \otimes \frac{
      \mtidlof[\rs X]{\pi^*_\ominus\vphi_L +m_0\pi^*\psi}
    }{
      \mtidlof[\rs X]{\pi^*_\ominus\vphi_L +\pi^*\psi}
    }} =\Ann_{\holo_{\rs X}}\paren{\frac{
      \mtidlof[\rs X]{\pi^*_\ominus\vphi_L +m_0\pi^*\psi}
    }{
      \mtidlof[\rs X]{\pi^*_\ominus\vphi_L +\pi^*\psi}
    }} = \defidlof{\rs S}$ (since $E$, as a sheaf, is locally free)
  which is radical (this is why $\rs S$ is reduced).
  It follows that $\defidlof{S} \supset \pi_*\defidlof{\rs S}
  =\defidlof{\pi\paren{\rs* S}}$ and therefore $S \subset \pi(\rs S)$. 

  Since $\rs S \subset \pi^{-1}\circ \psi^{-1}(-\infty)$, the above
  result implies that $S =\pi\paren{\rs* S}$ when $\psi^{-1}(-\infty)
  = S$.
  To see that $\mtidlof<X>{\vphi_L+m_0\psi} =\holo_X$ also implies $S
  = \pi\paren{\rs* S}$, notice that the assumption implies that
  $\mtidlof<\rs X>{\pi^*_\ominus \vphi_L +m_0\pi^*\psi} =\holo_{\rs
    X}$, so (using the fact that
  $\Ann_{\holo_X}\paren{\frac{\holo_X}{\mtidlof<X>{\dotsm}}}
  =\mtidlof<X>{\dotsm}$)
  \begin{equation*}
    \defidlof S =\mtidlof<X>{\vphi_L+\psi}
    =\pi_*\paren{
      E \otimes \mtidlof<\rs X>{\pi^*_\ominus \vphi_L +\pi^*\psi}
    } =\pi_*\defidlof{\rs S} =\defidlof{\pi(\rs S)} \; ,
  \end{equation*}
  which gives $S =\pi(\rs S)$.
\end{proof}


\begin{remark} \label{rem:defidlof-rs_S-w-phi_E}
  When $\vphi_L$ has only neat analytic singularities and $\pi \colon
  \rs X \to X$ is a log-resolution of $(X,\vphi_L,\psi)$, it follows
  from the choice of $E$ (as well as $R$; see footnote
  \ref{fn:E-and-S-no-common-comp} on page
  \pageref{fn:E-and-S-no-common-comp}) and Fubini's theorem that
  $\mtidlof<\rs X>{\pi^*_\ominus \vphi_L -\phi_E +m\pi^*\psi }
  =\mtidlof<\rs X>{\pi^*_\ominus \vphi_L +m\pi^*\psi}$ for $m \geq 0$.
  In this case, $\rs S$ can also be defined by $\defidlof{\rs S}
  :=\Ann_{\holo_{\rs X}} \paren{\frac{
      \mtidlof[\rs X]{\pi^*_\ominus\vphi_L -\phi_E +m_0\pi^*\psi}
    }{
      \mtidlof[\rs X]{\pi^*_\ominus\vphi_L -\phi_E +\pi^*\psi}
    }}$.
\end{remark}

\begin{remark}
  One may follow
  \cite{Demailly_multiplier-ideal-sheaves}*{Prop.~(5.8)} to see that
  $K_X \otimes \mtidlof[X]{\vphi_L +m\psi} = \pi_*\paren{K_{\rs X}
    \otimes \mtidlof[\rs X]{\pi^*\vphi_L +m \pi^*\psi}}$ and obtain
  \begin{equation*}
    \Ann_{\holo_X} \paren{
      \frac{
        \mtidlof[X]{\vphi_L+m_0\psi}
      }{
        \mtidlof[X]{\vphi_L+\psi}
      }
    }
    =\Ann_{\holo_{\rs X}} \paren{ \pi_*\paren{
        K_{\rs X} \otimes 
        \frac{
          \mtidlof[\rs X]{\pi^*\vphi_L +m_0\pi^*\psi}
        }{
          \mtidlof[\rs X]{\pi^*\vphi_L +\pi^*\psi}
        }
      }} \; . 
  \end{equation*}
  However, the lc locus $\rs S'$ of the family $\set{\mtidlof[\rs
    X]{\pi^*\vphi_L +m\pi^*\psi}}_{m \in [m_0, 1]}$ at $m=1$ may
  contain more components than $\rs S$ defined in Proposition
  \ref{prop:log-resoln-on-jump-subvar} does.
  The advantage of considering $(\rs X, \rs S)$ over $(\rs X,
  \rs S')$ as the resolved model of $(X,\vphi_L,\psi)$ is that,
  when $\mtidlof<X>{\vphi_L +m_0\psi} =\holo_X$ and if the pair
  $(X,S)$ is \emph{lc and log-smooth}, the codimension of the minimal
  lc centres (mlc) of $(X,S)$ is preserved by $(\rs X, \rs S)$ but may
  not be so for $(\rs X, \rs S')$ (more generally, when
  $\mtidlof<X>{\vphi_L +m_0\psi} =\holo_X$, the index $\sigma =\sigma_{\mlc}$
  at which the increasing sequence $\set{\aidlof<X>{\vphi_L}}_{\sigma
    \in \Nnum}$ of adjoint ideal sheaves introduced in Definition
  \ref{def:adjoint-ideal-sheaves} stabilises is equal to the
  codimension of the mlc of $(\rs X, \rs S)$; see \mmark{Proposition
    \ref{prop:mlc=rs_mlc-in-lc-system}}{Reference needed. \alert{Done.}}).
  For example, consider the unit $2$-disc $X := \Delta^2 \subset
  \fieldC^2$ centred at the origin under the holomorphic coordinate
  system $(z_1,z_2)$ and take
  \begin{equation*}
    \vphi_L := 0 \quad\text{ and }\quad
    \psi := \log\abs{z_1}^2 -1 \; .
  \end{equation*}
  Then, $m=1$ is a jumping number of the family
  $\set{\mtidlof[X]{m\psi}}_{m\in [0,1]}$ and the corresponding lc
  locus $S$ is $\set{z_1 = 0}$.
  Let $\pi \colon \rs X \to X$ be the blow-up of the origin with
  exceptional divisor $R$.
  Then the subvariety $\rs S$ defined as in Proposition
  \ref{prop:log-resoln-on-jump-subvar} is the proper transform
  $\pi^{-1}_*S$ of $S$, while the lc locus of $\set{\mtidlof[\rs X]{m
      \pi^*\psi}}_{m \in [0,1]}$ at the jumping number $m=1$ is
  $\pi^{-1}_*S + R$.
  Note that $\pi^{-1}_*S$ and $R$ have non-empty intersection.
  Note also that there exists $f \in \mtidlof<X>{m_0\psi}$ such that
  $\res{\pi^*f}_{\pi^{-1}_*S \cap R} \not\equiv 0$ (thus also
  $\res{\pi^*f}_{R} \not\equiv 0$) for the comparison
  with Examples \ref{example:S-neq-pi-rs-S} and
  \ref{example:lcc-neq-pi_lcc-w-S=pi_S}.
\end{remark}

\begin{example} \label{example:S-neq-pi-rs-S}
  This is an example where $S \neq \pi(\rs S)$ (and the codimension of
  the mlc is not preserved in this case).
  Let $X := \Delta^3 \subset \fieldC^3$ be the unit $3$-disc centred
  at the origin under the holomorphic coordinate system
  $(z_1,z_2,z_3)$.
  Take
  \begin{equation*}
    \vphi_L := 0 \quad\text{ and }\quad
    \psi := \log\abs{z_1}^2 +\frac 32 \log\abs{z_2}^2 +\frac 12
    \log\paren{\abs{z_2}^2 +\abs{z_3}^2} -1 
  \end{equation*}
  and consider the blow-up $\pi \colon \rs X \to X$ of the line
  $\set{z_2 = z_3 = 0}$ which gives an exceptional divisor $R$.
  It follows that
  \begin{equation*}
    K_{\rs X} \sim \pi^*K_X +R \; .
  \end{equation*}
  Set $S_j := \set{z_j = 0}$ and let $\rs S_j$ be the proper transform
  of $S_j$ for $j =1,2,3$.
  Using the notation in Notation \ref{notation:potentials}, one then
  has
  \begin{equation*}
    \pi^*\psi \sim_{\tlog} \phi_{\rs S_1} +\frac 32 \phi_{\rs S_2}
    +2\phi_R
    \quad\text{ and }\quad
    \pi^*_\ominus \vphi_L =-\phi_R \; .
  \end{equation*}
  Set $m_0 := \frac 23$.
  It can be seen that the family $\set{\mtidlof<\rs
    X>{\pi^*_\ominus\vphi_L +m\pi^*\psi}}_{m \in [m_0, 1]}$ has the
  only jumping number at $m=1$ with the lc locus $\rs S = \rs S_1 +R$.

  On the other hand, one has $\mtidlof<X>{\psi} = \genby{z_1 z_2}$
  since
  \begin{equation*}
    e^{-\psi}
    \sim \frac{1}{
      \abs{z_1}^2 \:\abs{z_2}^3 \paren{\abs{z_2}^2
        +\abs{z_3}^2}^{\frac 12}
    }
    \gtrsim \frac{1}{\abs{z_1}^2 \:\abs{z_2}^3} \not\in \Lloc(X) \; ,
  \end{equation*}
  which implies that $\mtidlof<X>{\psi} \subset \genby{z_1 z_2}$, and,
  for any open set $V \Subset X$
  (note that $\frac 12 \pi^*\log \abs{z_2}^2 \sim_\tlog \frac 12
  \paren{\phi_{\rs S_2} +\phi_R}$ and $\frac 12
  \pi^*\log\paren{\abs{z_2}^2 +\abs{z_3}^2} \sim_\tlog \frac 12 \phi_R$),
  \begin{align*}
    \int_V \abs{z_1z_2}^2 \:e^{-\psi} \dvol_X
    &\sim
    \int_V \frac{\dvol_X}{
      \abs{z_2} \paren{\abs{z_2}^2 +\abs{z_3}^2}^{\frac 12}
    } \\
    &\sim \int_{\mathrlap{\pi^{-1}(V)}} \;\;\;e^{-\frac 12
      \paren{\phi_{\rs S_2} +\phi_R} -\frac 12 \phi_R}
      \:e^{\phi_R} \dvol_{\rs X}
      =\int_{\mathrlap{\pi^{-1}(V)}}
      \;\;\;e^{-\frac 12 \phi_{\rs S_2}}\dvol_{\rs X} < +\infty \; ,
  \end{align*}
  which implies $\genby{z_1z_2} \subset \mtidlof<X>{\psi}$.
  A similar analysis with $e^{-m_0\psi} \sim
  \frac{1}{\abs{z_1}^{\frac 43} \:\abs{z_2}^2 \paren{\abs{z_2}^2
      +\abs{z_3}^2}^{\frac 13}}$ yields $\mtidlof<X>{m_0\psi}
  =\genby{z_2}$ ($\neq \holo_X$).
  Therefore, the lc locus $S$ of the family $\set{\mtidlof<X>{m
      \psi}}_{m \in [m_0,1]}$ at jumping number $m=1$ is $S = S_1
  =\set{z_1 = 0}$, but $\pi(\rs S) = S_1 \cup \paren{S_2 \cap S_3}
  \supsetneq S$.
  Note that $R$ is the ``extra'' component in $\rs S$ which is not
  mapped into $S$ and $\res{\pi^*f}_R \equiv 0$ for all $f \in
  \mtidlof<X>{m_0\psi}$.
  See \mmark{Examples \ref{example:lcc-neq-pi_lcc-w-S-neq-pi_S} and
    \ref{example:lcc-neq-pi_lcc-w-S=pi_S}}{Reference
    needed. \alert{Done.}} for the similar phenomenon when $\sigma$-lc 
  centres are considered.
\end{example}

For the moment, it is not clear to the author whether there exists a
log-resolution $\pi \colon \rs X \to X$ such that the corresponding
$\rs S$ has to be a divisor when $\vphi_L$ has arbitrary
singularities.
However, there is the following lemma.


\begin{lemma} \label{lem:principal-annihilator-criterion}
  If the ideal sheaf $\mtidlof[X]{\vphi_L+\psi}$ is principal with an
  snc generator (i.e.~the generator is locally a monomial in some
  holomorphic coordinate system), then so is $\Ann_{\holo_X}
  \paren{\frac{
      \mtidlof[X]{\vphi_L+m_0\psi}
    }{
      \mtidlof[X]{\vphi_L+\psi}
    }}$.
\end{lemma}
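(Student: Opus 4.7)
The plan is to reduce the statement to a local computation in the unique factorisation domain $\holo_{X,x}$ at an arbitrary point $x\in X$, and to exploit the monomial structure of the generator of $\mtidlof[X]{\vphi_L+\psi}$ together with unique factorisation to write the annihilator as a single monomial ideal.

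First I would fix a point $x \in X$ and choose holomorphic coordinates $(z_1,\dotsc,z_n)$ centred at $x$ in which the hypothesis provides $\mtidlof[X]{\vphi_L+\psi}_x = (g)$ with $g = z_1^{a_1}\dotsm z_n^{a_n}$ a monomial. Write $J_1 := \mtidlof[X]{\vphi_L+\psi}$ and $J_0 := \mtidlof[X]{\vphi_L+m_0\psi}$, so that the annihilator of $J_0/J_1$ is the colon ideal $(J_1 : J_0) = \{h \in \holo_X : h\, J_0 \subset J_1\}$. Since $J_0$ is coherent (multiplier ideals of quasi-psh functions being coherent), its stalk at $x$ admits a finite generating set $f_1,\dotsc,f_k$, and hence
\[
  (J_1 : J_0)_x \;=\; \bigcap_{i=1}^k \bigl((g):(f_i)\bigr).
\]

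Next, since $\holo_{X,x}$ is a UFD in which $z_1,\dotsc,z_n$ are distinct primes, one has $(g):(f_i) = \bigl(g/\gcd(g,f_i)\bigr)$. Because $g$ is a pure monomial in the $z_j$'s, its gcd with any $f_i$ is automatically a monomial $\gcd(g,f_i) = z_1^{m_{1,i}}\dotsm z_n^{m_{n,i}}$, with $m_{j,i} = \min\{a_j,\, \operatorname{ord}_{z_j}(f_i)\}$. Thus each $(g):(f_i)$ is principal with monomial generator $z_1^{a_1-m_{1,i}}\dotsm z_n^{a_n-m_{n,i}}$. A finite intersection of principal ideals in a UFD is principal, generated by the lcm of the individual generators, and the lcm of monomials in $z_1,\dotsc,z_n$ is again such a monomial; this gives
\[
  (J_1 : J_0)_x \;=\; \bigl(z_1^{a_1 - \min_i m_{1,i}}\dotsm z_n^{a_n - \min_i m_{n,i}}\bigr),
\]
which is principal with snc (monomial) generator in the very coordinates supplied by the hypothesis.

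The argument is essentially commutative-algebraic and I do not foresee a serious obstacle: the points to verify are merely that $J_0$ is coherent (so that a finite generating set of the stalk exists) and that the coordinate system witnessing the snc structure for $g$ can be chosen uniformly in a small neighbourhood of $x$; both are standard. Notably, no analytic input such as an Ohsawa--Takegoshi-type extension theorem, integral closure properties, or log-resolutions is required at this stage.
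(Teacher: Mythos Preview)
Your proof is correct and, like the paper's, is a purely local UFD argument in $\holo_{X,x}$ exploiting that the generator $g$ is a monomial. The organisation differs, however. The paper first argues principality by contradiction (if a minimal generating set of the annihilator had two distinct elements $h_1,h_2$, their $\gcd$ would already lie in the annihilator, contradicting minimality), and then separately shows the single generator $h$ divides $g$ via an inductive $\gcd$ chain, whence it is snc. Your route is more direct: you write the annihilator as the colon ideal $(g):J_0=\bigcap_i\bigl((g):(f_i)\bigr)$, compute each $(g):(f_i)=(g/\gcd(g,f_i))$ explicitly as a monomial ideal, and take the lcm. Both arguments rest on the same UFD facts; yours packages them into a single computation and gives the generator explicitly, while the paper's two-step argument makes the role of the minimal generating set more visible but is slightly longer.
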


\begin{proof}
  This is a property of unique factorisation domains.
  Suppose, at some $x \in X$, that $\mtidlof[X]{\vphi_L+\psi}_x =
  \genby{g}$ and that $\set{f_j}_{j \in J} \subset
  \mtidlof[X]{\vphi_L+m_0\psi}_x$ is a generating set of
  $\mtidlof[X]{\vphi_L+m_0\psi}_x$ (coherence assures that the index
  set $J$ is finite).
  Assume that there exist two distinct generators $h_1$ and $h_2$ in a
  \emph{minimal} generating set of the annihilator at $x$ (which is
  finitely generated by coherence; see, for example,
  \cite{Grauert&Remmert-CAS}*{Prop.~A.4.5}).
  If $\mu = \gcd\paren{h_1,h_2}$ (the \emph{greatest common divisor} of $h_1$
  and $h_2$, uniquely determined up to a multiple of unit in
  $\holo_{X,x}$) is an element of the annihilator in 
  question, then $h_1$ and $h_2$ can be replaced by $\mu$ in the
  generating set of the annihilator, contradicting minimality.
  Therefore, $\mu$ is not in the annihilator.
  By definition of an annihilator, there exist $\alpha_{ij}
  \in \holo_{X,x}$ for $i=1,2$ and $j \in J$ such that
  \begin{equation*}
    h_i f_j = g \alpha_{ij} \; .
  \end{equation*}
  Writing $h_i = h_i' \mu$
  for $i=1,2$,
  this implies that 
  \begin{equation*}
    \frac{h_1'}{h_2'} =\frac{h_1}{h_2}
    =\frac{\alpha_{1j}}{\alpha_{2j}}
    \quad\text{ for all } j \in J \; .
  \end{equation*}
  Since $h_1'$ and $h_2'$ are relatively prime, one has $h_i' |
  \alpha_{ij}$ for $i=1,2$ and $j \in J$, which implies that $\mu$ lies
  in the annihilator as
  \begin{equation*}
    \mu f_j = g \frac{\alpha_{ij}}{h_i'} \quad\text{ for all } j \in J
    \; ,
  \end{equation*}
  giving a contradiction.
  Therefore, the minimal generating set of the annihilator can have
  only one element $h$, i.e.~the annihilator is principal at $x$.
  
  To prove that $h$ is an snc generator, write $J = \set{1,\dots,r}$
  and 
  \begin{equation*}
    h f_j = g \alpha_j \quad\text{ for } j=1,\dots, r \; .
  \end{equation*}
  Note that the generator $h$ satisfies the relation
  $\gcd\paren{h,\alpha_1,\dots,\alpha_r} = 1$.
  Let $\mu_1 :=\gcd\paren{h,\alpha_1}$ and write $h = h_1 \mu_1$.
  One can see that $h_1 | g$.
  Further set $\mu_2 := \gcd\paren{\mu_1, \alpha_2}$ and write $h =
  h_1 h_2 \mu_2$.
  It follows that $h_2 | \frac{g}{h_1}$, thus $h_1 h_2 | g$.
  Proceed inductively to obtain $\mu_j :=
  \gcd\paren{\mu_{j-1},\alpha_j}$ and $h = h_1\dotsm h_j \mu_j$ for
  $j=2,\dots,r$, where $h_1 \dotsm h_j | g$ for all $j = 2, \dots, r$.
  Since $\mu_r$ is a common factor of $\alpha_1, \dots, \alpha_r$ and
  $h$, it is a unit.
  This means that $h | g$.
  As $g$ is an snc generator, $h$ is also an snc generator of the
  annihilator at $x$.
  It is a generator on a neighbourhood of $x$ by the coherence of the
  annihilator.
  This completes the proof.
\end{proof}

\begin{remark}
  The proof of Lemma \ref{lem:principal-annihilator-criterion} does
  not use the fact whether the family
  $\set{\mtidlof[X]{\vphi_L+m\psi}}_{m \in [m_0,1]}$ has any
  jumping numbers other than $m=1$ or not.
  It uses only the coherence of $\mtidlof[X]{\vphi_L+ m_0\psi}$ and
  $\mtidlof[X]{\vphi_L+\psi}$ (which implies the coherence of the
  annihilator), so the conclusion still holds true when $m_0$ in the
  statement is replaced by any $m \in [\rs m_0, 1)$.
\end{remark}

In view of Lemma \ref{lem:principal-annihilator-criterion}, the
following assumption is made to assure that $\rs S$ is an snc
divisor.

\begin{SNCassumption}\label{assum:snc}
  Given $\vphi_L$ and $\psi$ on $X$ described in Section
  \ref{sec:setup} (except that $\vphi_L$ may possibly have more general
  singularities) such that $m=1$ is a jumping number of the family
  $\set{\mtidlof[X]{\vphi_L+m\psi}}_{m\in [0,1]}$, there exists a
  log-resolution $\pi \colon \rs X \to X$ of $(X,\psi)$ such that
  \begin{equation*}
    \mtidlof[\rs X]{\pi^*_\ominus\vphi_L +\pi^*\psi}
    \text{ is principal with an snc generator.}
  \end{equation*} 
\end{SNCassumption}


When $\vphi_L$ has only neat analytic singularities, such assumption
can be achieved via a log-resolution of $(X,\vphi_L,\psi)$, as in
\cite{Chan&Choi_ext-with-lcv-codim-1}*{\S 2.1}.
Since most of the computations and constructions in this paper are
done on $\rs X$, given the Snc assumption \ref{assum:snc}, it is
assumed that the log-resolution $\pi$ provided by the assumption is
indeed $\id$, the identity map.
Moreover, the number $m_0$ provided by Proposition
\ref{prop:log-resoln-on-jump-subvar} is assumed to be $0$ by
``renormalising'' $\vphi_L$ and $\psi$ as in footnote
\ref{fn:renormalising-jumping-no}. 
This assumption is summarised as follows.
\begin{SNCassumptionx} \label{assum:snc-nas}
  Both $\vphi_L$ and $\psi$ have only neat analytic singularities such
  that the polar ideal sheaves $\divP_{\vphi_L}$ and $\divP_{\psi}$ as
  well as the product $\divP_{\vphi_L} \cdot \divP_{\psi}$ are all
  principal with snc generators.
  In particular, one has
  \begin{equation*}
    \mtidlof[X]{\vphi_L+\psi}
    \text{ being principal with an snc generator.}
  \end{equation*}
  Moreover, $m=1$ is the only jumping number of the family
  $\set{\mtidlof{\vphi_L + m {\psi}}}_{m \in [0,1]}$.
\end{SNCassumptionx}
For the sake of convenience, some consequences of the snc assumption
are collected as follows.
\begin{remark}[Consequences of Snc assumptions \ref{assum:snc-nas} or
  \ref{assum:snc} with $\pi
  =\id$] \label{rem:consequences-of-snc-assumptions} \
  \begin{itemize}[labelindent=0em, leftmargin=*]
  \item The function $\psi$ is quasi-psh and its polar set
    $\psi^{-1}(-\infty)$ is an \emph{snc divisor}.
  
  \item According to Lemma \ref{lem:principal-annihilator-criterion},
    the reduced subvariety $S$ is an \emph{snc divisor}.

  \item When $\paren{X,\vphi_L,\psi}$ satisfies the Snc assumption
    \ref{assum:snc-nas}, since $S$ is an snc divisor, one can talk
    about the \emph{lc centres} of $\paren{X,\vphi_L,\psi}$ or $(X,S)$
    as those defined in
    \cite{Kollar_Sing-of-MMP}*{Def.~4.15}.\footnote{
      The concepts of lc centres of $\paren{X,\vphi_L,\psi}$ and those
      of $(X,S)$ are used interchangeably only when
      $\paren{X,\vphi_L,\psi}$ satisfies the Snc assumption
      \ref{assum:snc-nas}.
      See Definition \ref{def:sigma-lc-centres} for the definition of
      lc centres of $\paren{X,\vphi_L,\psi}$ and Example
      \ref{example:compare-alg-ana-adj} for an example on how the two
      concepts differ from each other.
    }
    More explicitly, in this case, an \emph{lc centre of $(X,S)$ of
      codimension $\sigma$ in $X$} is an irreducible component of any
    intersections of $\sigma$ irreducible components of $S$ in $X$.
    Define $\lcc$ to be the \emph{union of all lc centres of $(X,S)$
      of codimension $\sigma$ in $X$}.

  \item There is an effective $\Znum$-divisor $F$ with snc such that
    $\mtidlof{\vphi_L+\psi} = \holo_X\paren{-F}$.  If
    $S =\sum_{i\in I} D_i$, where $D_i$'s are the irreducible
    components of $S$, then $F$ is decomposed into
    \begin{equation*}
      F = G +\underbrace{\sum_{i \in I} \mu_i D_i}_{=: \: S_0}
      +S \; ,
    \end{equation*}
    where $G$ is an effective divisor containing no components of $S$
    and the coefficients $\mu_i \geq 0$ are \emph{integers} ($F$ must
    contain $S$ as the quotient sheaf
    $\frac{\mtidlof{\vphi_L}}{\mtidlof{\vphi_L+\psi}}$ is supported
    precisely on $S$ by item \eqref{item:def-S} in Section
    \ref{sec:setup}).  Let $\sect_{0}$ be a canonical section of $S_0$
    (which is set to $1$ if $S_0$ is the zero divisor), which induces
    a potential on $S_0$ given by
    \begin{equation*}
      \phi_{S_0} :=\log\abs{\sect_0}^2 \; .
    \end{equation*}
    Note that $\sect_0$ locally divides every
    $f \in \mtidlof{\vphi_L}$, because, for every
    $h \in \defidlof{S} =\holo_X\paren{-S}$, one has
    $hf \in \mtidlof{\vphi_L+\psi} \subset \holo_X\paren{-S_0-S}$.
  \end{itemize}
\end{remark}

According to Siu's decomposition for closed positive currents (see,
\cite{Siu_closed-pos-currents}*{\S 6} or \cite{Demailly}*{Ch.~III,
  (8.17)}), given $S =\sum_{i\in I} D_i$ with $D_i$'s being the
irreducible components, one has
\begin{equation*}
  \ibddbar\paren{\vphi_L+\psi}
  =\underbrace{\sum_{i\in I} \lambda_i \currInt{D_i}}_{=: \:
    \currInt{\lambda\cdot S}}
  +\mathfrak R
  =\ibddbar \phi_{\lambda \cdot S} +\mathfrak R \; ,
\end{equation*}
where $\lambda_i = \lelong{\vphi_L+\psi}[D_i] :=\inf_{x\in D_i}
\lelong{\vphi_L+\psi}$, the generic Lelong number of $\vphi_L+\psi$
(or $\ibddbar\paren{\vphi_L+\psi}$) along $D_i$, $\currInt{D_i}$ is
the current of integration along $D_i$ and $\mathfrak R$ is a closed
$(1,1)$-current such that $\mathfrak R \geq \ibddbar\sm\vphi$ for some
smooth potential $\sm\vphi$ ($\mathfrak R \geq 0$ when $\vphi_L+\psi$
is psh) and $\lelong{\mathfrak R}[D_i] = 0$ for each $i\in I$.
Note that the last equality follows from the \textfr{Poincaré}--Lelong
formula $\ibddbar\phi_{\lambda\cdot S} =\currInt{\lambda \cdot S}$
(see Notation \ref{notation:potentials} for the definition of
$\phi_{\lambda\cdot S}$).
($S_0$ would have been denoted by $\mu \cdot S$ if it were not for
notational convenience.)
Therefore,
\begin{equation*}
  \bphi :=\vphi_L+\psi -\phi_{\lambda\cdot S}
\end{equation*}
is a quasi-psh potential (thus locally bounded from above).
Let $\sect$ be a canonical section of $S$ such that $\phi_S
=\log\abs{\sect}^2$.
\emph{The multiplier ideal sheaf $\mtidlof{\vphi_L+m\psi}$ having $m=1$ as a
jumping number implies the following lemma.}
\begin{lemma} \label{lem:Siu-decomposition-S_0-divisor}
  $\lambda \cdot S = S_0 + S \; $.
\end{lemma}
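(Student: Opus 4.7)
The plan is to reduce $\lambda \cdot S = S_0 + S$ to the coefficient-wise identity $\lambda_i = \mu_i + 1$ for every irreducible component $D_i$ of $S$, and to derive each such identity by examining the local behaviour of the family $\set{\mtidlof[X]{\vphi_L + m\psi}}_{m \in [0,1]}$ at a generic point of $D_i$. The ingredients are the Snc assumption \ref{assum:snc-nas} (which supplies explicit monomial local models for $\vphi_L$ and $\psi$); the snc decomposition $F = G + S_0 + S$ with $G$ containing no component of $S$; the annihilator description $\defidlof{S} = \Ann_{\holo_X}\paren{\mtidlof[X]{\vphi_L} / \mtidlof[X]{\vphi_L + \psi}}$; and the hypothesis that $m = 1$ is the only jumping number of the family.

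First I fix $i \in I$ and a generic point $x \in D_i$ lying outside every other component of $S$ and of $\divP_{\vphi_L}, \divP_{\psi}$. The Snc assumption provides local holomorphic coordinates $(z_1, \ldots, z_n)$ centred at $x$ in which $D_i = \set{z_1 = 0}$, so that, setting $\alpha_i := \lelong{\vphi_L}[D_i]$ and $\nu_i := \lelong{\psi}[D_i]$,
\begin{equation*}
  \vphi_L \equiv \alpha_i \log\abs{z_1}^2 \mod \smooth \; , \qquad
  \psi \equiv \nu_i \log\abs{z_1}^2 \mod \smooth \; .
\end{equation*}
Since $D_i \subset \psi^{-1}(-\infty)$ one has $\nu_i > 0$, and the generic Lelong number of $\vphi_L + \psi$ along $D_i$ is $\lambda_i = \alpha_i + \nu_i$.

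Let $k^*(m)$ denote the vanishing order of $\mtidlof[X]{\vphi_L + m\psi}_x$ along $D_i$. The standard integrability test, combined with the rotational invariance of the weight $\abs{z_1}^{-2(\alpha_i + m\nu_i)}$, shows that the stalk is principal and that $k^*(m)$ is the smallest non-negative integer strictly greater than $\alpha_i + m\nu_i - 1$; equivalently $k^*(m) = \lfloor \alpha_i + m\nu_i \rfloor$ when $\alpha_i + m\nu_i \notin \Znum$, and $k^*(m) = \alpha_i + m\nu_i$ when it is an integer. Two pieces of global data pin $k^*$ down. At $m = 1$, the decomposition $F = G + S_0 + S$ gives $\mtidlof[X]{\vphi_L + \psi}_x = \holo_{X,x}\paren{-(\mu_i + 1) D_i}$, hence $k^*(1) = \mu_i + 1$. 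For $m \in [0, 1)$ the jumping-number hypothesis gives $\mtidlof[X]{\vphi_L + m\psi}_x = \mtidlof[X]{\vphi_L}_x$; combined with the reducedness of $\defidlof{S}$, which localises at $x$ to the ideal $(z_1)$, the annihilator description forces $\mtidlof[X]{\vphi_L}_x = \paren{z_1^{\mu_i}}$, so that $k^*(m) = \mu_i$ for every $m \in [0, 1)$.

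Letting $m \to 1^-$ closes the argument. Were $\lambda_i = \alpha_i + \nu_i$ not an integer, the floor $\lfloor \alpha_i + m\nu_i \rfloor$ would stabilise at $\lfloor \lambda_i \rfloor = \mu_i + 1$ for all $m$ sufficiently close to $1$, contradicting $k^*(m) = \mu_i$. Hence $\lambda_i \in \Znum_{> 0}$, and the integer formula yields $\lambda_i = k^*(1) = \mu_i + 1$. Summing over $i \in I$ gives $\lambda \cdot S = S_0 + S$. I expect the most delicate step to be the local identification $\mtidlof[X]{\vphi_L}_x = \paren{z_1^{\mu_i}}$ via the reducedness of $\defidlof{S}$, because this is precisely where the global jumping-number hypothesis gets converted into the local constraint that rules out $\lambda_i \notin \Znum$.
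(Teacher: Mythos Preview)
Your proof is correct. The paper takes a somewhat different route: it works with the quasi-psh potential $\bphi := \vphi_L + \psi - \phi_{\lambda\cdot S}$ coming from the Siu decomposition and extracts two inequalities. From the integrability of sections of $\mtidlof{\vphi_L+\psi} = \holo_X(-G-S_0-S)$ against $e^{-\vphi_L-\psi}$ it obtains $\ceil{\mu_i + 1 - \lambda_i} \geq 0$; from Skoda's lemma applied to $\bphi$ (whose generic Lelong number along each $D_i$ vanishes by construction) it obtains $\mu_i + 1 - \lambda_i \leq 0$; it then argues integrality of $\lambda_i$ separately from the jumping-number hypothesis. Your argument bypasses $\bphi$ entirely and instead tracks the vanishing order $k^*(m)$ of $\mtidlof{\vphi_L+m\psi}$ along $D_i$ as a function of $m$, pinning down $k^*(1) = \mu_i + 1$ from the decomposition of $F$ and $k^*(m) = \mu_i$ for $m < 1$ from the annihilator description of $\defidlof{S}$, then reading off both the integrality and the value of $\lambda_i$ from the discontinuity of $\floor{\alpha_i + m\nu_i}$ at $m = 1$. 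Your approach is more elementary and self-contained (no explicit appeal to Skoda or the Siu decomposition), while the paper's argument ties in naturally with the potential $\bphi$ used throughout the rest of the paper; both reach the same conclusion with comparable effort.
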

\begin{proof}
  It follows from the definition of multiplier ideal sheaves and the
  equality $\mtidlof{\vphi_L+\psi} = \holo_X\paren{-G-S_0-S}$
  that, for every $f \in \mtidlof{\vphi_L+\psi}$, written
  as $g \sect_0 \sect$ for some $g \in \holo_X\paren{-G}$, one has
  \begin{equation*}
    \abs g^2 e^{\phi_{S_0} +\phi_S -\phi_{\lambda\cdot S}}
    \lesssim \abs{g \sect_0 \sect}^2 e^{-\phi_{\lambda\cdot S} -\bphi}
    =\abs{f}^2 e^{-\vphi_L-\psi}
    \in \Lloc \; ,
  \end{equation*}
  which, as there exists $g \in \holo_X\paren{-G}$ which does not
  vanish on any of the components of $S$, implies that
  \begin{equation*} \tag{$*$} \label{eq:pf:lambda-up-bbd}
    \ceil{\mu_i +1 - \lambda_i} \geq 0 \quad\text{ for all } i\in I \; ,
  \end{equation*}
  where $\ceil\cdot$ is the ceiling function.
  Moreover, as $\lelong{\bphi}[D_i] =\lelong{\mathfrak R}[D_i] =0$ for
  all $i\in I$, $e^{-\bphi}$ is locally integrable at general points
  of $S$ (Skoda's lemma, see \cite{Skoda_Analytic-subsets}*{\S 7} or
  \cite{Demailly_multiplier-ideal-sheaves}*{Lemma (5.6)}),
  which forces the inequalities
  \begin{equation*} \tag{$**$} \label{eq:pf:lambda-low-bbd}
    \mu_i +1 -\lambda_i \leq 0  \quad\text{ for all } i \in I 
  \end{equation*}
  (otherwise, $\mtidlof{\vphi_L+\psi}$ would be strictly containing
  $\holo_X\paren{-S_0-S}$ at some point in $S$).

  Since $1$ is a jumping number of the family
  $\set{\mtidlof{\vphi_L+m\psi}}_{m\in\fieldR_{\geq 0}}$ and $S$ is
  the subvariety resulted from the jump by assumption, it follows that
  every $\lambda_i$ is a \emph{positive integer}.
  Indeed, \eqref{eq:pf:lambda-low-bbd} implies that all $\lambda_i$'s
  are positive.
  If some $\lambda_i$ is not an integer, since $\psi$ has
  logarithmic poles along $S$, there is then some $\eps > 0$ such
  that $\mtidlof{\vphi_L+(1-\eps)\psi}_x =\mtidlof{\vphi_L+\psi}_x$
  for some point $x \in D_i$, a contradiction.

  As a result, it follows from \eqref{eq:pf:lambda-up-bbd} and
  \eqref{eq:pf:lambda-low-bbd} that $\lambda_i =\mu_i +1$ for all $i
  \in I$, which concludes the proof.
\end{proof}

In summary, under the Snc assumption \ref{assum:snc-nas} and the
assumption that $m=1$ is a jumping number of
$\set{\mtidlof{\vphi_L+m\psi}}_{m\in [0,1]}$, there is a
\emph{quasi-psh} potential $\bphi$ on $L \otimes S_0^{-1} \otimes
S^{-1}$ (which is psh when $\vphi_L+\psi$ is) such that
\begin{equation} \label{eq:define-bphi}
  \bphi +\phi_{S_0} +\phi_S = \vphi_L +\psi
  \quad\text{ and }\quad
  \bphi < 0 \quad\text{ locally on } X \; ,
\end{equation}
which is defined for later use.


\section{The new definition of adjoint ideal sheaves}
\label{sec:adjoint-ideal-sheaves}

From this section onward, $\aidlof{\vphi_L}$ denotes the analytic
adjoint ideal sheaf given by Definition
\ref{def:adjoint-ideal-sheaves}. 

\subsection{Local expression of germs in $\aidlof{\vphi_L}$}
\label{sec:local-expression-of-germs}


Suppose the Snc assumption \ref{assum:snc-nas} holds in this section.
Recall the canonical section $\sect_0$ on $S_0$ and the quasi-psh
potential $\bphi$ on $L\otimes S_0^{-1} \otimes S^{-1}$ defined in
Remark \ref{rem:consequences-of-snc-assumptions} and \eqref{eq:define-bphi}.
Let $V \subset X$ be  an open set sitting inside a coordinate chart
with a holomorphic coordinate system $(z_1,\dots, z_n)$, biholomorphic
to a polydisc centred at the origin, such that $\res L_V$ is
trivialised and
\begin{equation*}
  S\cap V =\set{z_1 \dotsm z_{\sigma_V} =0} \quad\text{ for some integer }
  \sigma_V \leq n
  \; .
\end{equation*}
The goal of this section is to describe the sections of
$\aidlof{\vphi_L}(V)$ for any integers $\sigma=1,\dots,\sigma_V$ and
to prove an openness property related to $\aidlof{\vphi_L}$.

Let $\symmgp_m$ be the group of permutations on a set of $m$ elements
for any $m\in\Nnum$ and set 
\begin{equation*}
  \cbn := \symmgp_{\sigma_V} / \paren{\symmgp_{\sigma} \times
  \symmgp_{\sigma_V-\sigma}} \; ,
\end{equation*}
which is the set of choices of $\sigma$ elements in a set of
$\sigma_V$ elements.
Any element $p \in \cbn$ is abused to mean a permutation on the set of
integer $\set{1,\dots,\sigma_V}$ such that, if $p, p' \in \cbn$ and $p
\neq p'$, then $p\paren{\set{1,\dots, \sigma}} \neq
p'\paren{\set{1,\dots, \sigma}}$ (and one also has
$p\paren{\set{\sigma+1,\dots, \sigma_V}} \neq
p'\paren{\set{\sigma+1,\dots, \sigma_V}}$).
Then, the set of subvarieties
\begin{equation*}
  \lcS :=\set{z_{p(1)} = z_{p(2)} = \dotsm = z_{p(\sigma)} = 0}
  \quad\text{ for } p \in \cbn
\end{equation*}
are precisely the set of all of the \emph{lc centres of codimension
  $\sigma$} (or \emph{$\sigma$-lc centres} for short) in $V$.
Therefore,
\begin{equation*}
  \lcc<V> =\lcc \cap V = \bigcup_{p \in\cbn} \lcS \; .
\end{equation*}
When orientations of the lc centres are in concern, assume also that
$p(1) < p(2) < \dotsm < p(\sigma)$ and $p(\sigma+1) < p(\sigma+2) < \dotsm <
p(\sigma_V)$ for each $p \in \cbn$ for convenience.

For any $\sigma=1,\dots,\sigma_V$ fixed, consider any $f \in
\aidlof{\vphi_L}(V)$.
It follows that
\begin{equation*}
  \abs{\frac{f}{\sect_0}}^2 \frac{e^{-\phi_S}}{\logpole}
  \lesssim \frac{\abs{f}^2  \:e^{-\bphi-\phi_{S_0}-\phi_S}}{\logpole}
  =\frac{\abs{f}^2 \:e^{-\vphi_L-\psi}}{\logpole} \in \Lloc(V)
\end{equation*}
for $\eps > 0$.
Recall from Section \ref{sec:snc-assumption} that $\frac{f}{\sect_0}$
is holomorphic.
By considering the Taylor expansion of $\frac{f}{\sect_0}$ about the
origin, there is a \emph{minimal} integer $\sigma_f \in [0, \sigma_V]$ such that
\begin{equation*}
  \frac{f}{\sect_0}
  =\sum_{p \in \cbn[\alert{\sigma_f}]} g_p \: z_{p(\alert{\sigma_f}+1)} \dotsm
  z_{p(\sigma_V)} \; ,
  \quad\text{ where } g_p \in \holo_X(V) \; .
\end{equation*}
(Note that one has $\sigma_f =\sigma_V$ if $\frac{f}{\sect_0}$ does
not vanish identically on $\lcS|\sigma_V|[\id]$, where $\id$ is the
identity permutation, while $\sigma_f =0$ if
$\res{\frac{f}{\sect_0}}_S \equiv 0$.
The integer $\sigma_f$ is the \emph{codimension of mlc of $(V,0,\phi_S)$
with respect to $f$} defined in
\cite{Chan&Choi_ext-with-lcv-codim-1}*{Def.~2.2.5}.) 
Since
\begin{equation} \label{eq:logpole-xlogx-estimate}
  \frac{1}{\abs\psi^{\sigma+\delta}}
  \leq e^\delta \paren{\frac{1+\eps}{\delta e}}^{1+\eps} \frac{1}{\logpole}  
\end{equation}
for any $\delta > 0$ via the $x\log x$-inequality (see, for example,
\cite{Chan_on-L2-ext-with-lc-measures}*{\S 1.4.2}), the fact that
$\abs{\frac{f}{\sect_0}}^2
\frac{e^{-\phi_S}}{\abs\psi^{\sigma+\delta}} \in \Lloc(V)$,
aided by the computation in \cite{Chan&Choi_ext-with-lcv-codim-1}*{\S
  2.2}, implies that
\begin{equation*}
  \sigma_f \leq \sigma \; .
\end{equation*}

In summary, every $f \in \aidlof{\vphi_L}(V)$ can be expressed as a
finite sum
\begin{equation*}
  f = \sum_{p\in\cbn} g_p \:\sect_0 \:z_{p(\sigma+1)} \dotsm z_{p(\sigma_V)}
\end{equation*}
for some $g_p \in \holo_X(V)$.
Note that the functions $g_p$ are not uniquely determined, but the
restricted functions
\begin{equation} \label{eq:uniqueness-of-g_p}
  \res{g_p}_{\lcS} = \res{\frac{f}{\sect_0}}_{\lcS} \cdot
  \frac{1}{z_{p(\sigma+1)} \dotsm z_{p(\sigma_V)}}
\end{equation}
are (provided that the section $\sect_0$ and the coordinate system are
fixed).


\subsection{ An openness property}
\label{sec:openness}


Assume in this section the Snc assumption \ref{assum:snc-nas} in Section
\ref{sec:snc-assumption} and recall the notation for the canonical
section $\sect_0$ on $S_0$ and the quasi-psh potential $\bphi$ defined
there.

Let $\sect$ be the canonical section of $S$ such that $\phi_S =
\log\abs\sect^2$, which therefore belongs to $\defidlof{S}$ locally.
For every $f \in \aidlof{\vphi_L}(V) \subset \mtidlof{\vphi_L}(V)$ on
some open set $V \subset X$, one has
$\sect f \in \mtidlof{\vphi_L+\psi}$ by the definition of $S$ (see
Section \ref{sec:setup}) and $g :=\frac f{\sect_0} \in \holo_X$ (see
Section \ref{sec:snc-assumption}).
It follows that
\begin{equation*}
  \abs g^2 \:e^{-\bphi}
  =\abs{\frac{\sect f}{\sect_0}}^2 \:e^{-\bphi-\phi_S}
  =\abs{\sect f}^2 \:e^{-\bphi-\phi_{S_0}-\phi_S} \in \Lloc(V) \; .
\end{equation*}
After shrinking $V$ if necessary, there exists some number $\lambda >
1$ such that
\begin{equation*}
  \abs g^2 e^{-\lambda \bphi} \in \Lloc(V)
\end{equation*}
by the strong openness property.
From the facts that
\begin{equation*}
  \frac{\abs{f}^2 e^{-\bphi -\phi_{S_0} -\phi_S}}{\logpole}
  \in \Lloc(V) \quad\text{ for all } \eps > 0
\end{equation*}
and that $\bphi$ has only \emph{neat analytic singularities} with snc
singular loci which contain no components of $S$, the computation in
\cite{Chan_on-L2-ext-with-lc-measures}*{Prop.~2.2.1} shows that
\begin{equation*}
  \frac{\abs{f}^2 e^{-\lambda \bphi -\phi_{S_0} -\phi_S}}{\logpole}
  =\frac{\abs g^2 e^{-\lambda \bphi -\phi_S}}{\logpole}
  \in \Lloc(V)
  \quad\text{ for all } \eps > 0 \; .
\end{equation*}

The existence of such $\lambda > 1$ for each section $f$ is required
in the definitions of analytic adjoint ideal sheaves by Guenancia
(\cite{Guenancia}) and Dano Kim (\cite{KimDano-adjIdl}).
Yet their definitions allow the involving quasi-psh function ($\bphi$
in the current notation) to have more general singularities, namely,
$e^{\bphi}$ need only be locally \textde{Hölder} continuous in their
expositions.
The strong openness property of multiplier ideal sheaves of quasi-psh
functions does not imply directly that such $\lambda > 1$ exists for
each section $f$.


\section{The residue  short exact sequences}
\label{sec:residue-short-exact-sequence}



In this section, assume that $\vphi_L$ (as well as $\psi$) has as only
neat analytic singularities and satisfies the Snc assumption
\ref{assum:snc-nas} in Section \ref{sec:snc-assumption}.
Recall that the potentials $\phi_{S_0}$ and $\bphi$ are defined such
that $\bphi$ is a quasi-psh potential on $L \otimes S_0^{-1} \otimes
S^{-1}$ satisfying
\begin{equation*}
  \bphi +\phi_{S_0} +\phi_S =\vphi_L +\psi 
\end{equation*}
with the polar set of $\bphi +\phi_S$ being an snc divisor and
$\bphi^{-1}(-\infty)$ containing no irreducible components of $S$.
Having only neat analytic singularities, this also implies that
$\bphi^{-1}(-\infty)$ contains no lc centres of $(X,S)$ (see
the local expression of $\bphi$ in
\eqref{eq:local-expression-bphi-psi} below).

\subsection{Admissible open sets and results on residue functions in
  \cite{Chan_on-L2-ext-with-lc-measures}}
\label{sec:residue-fcts-with-nas}


Consider an open set $V \subset X$, sitting inside a coordinate chart
with a holomorphic coordinate system $(z_1,\dots, z_n)$, biholomorphic
to a polydisc centred at the origin such that
$L$ is trivialised on $V$,
\begin{equation} \label{eq:local-expression-bphi-psi}
  \begin{gathered}
    S\cap V =\set{z_1 \dotsm z_{\sigma_V} =0} \quad\text{ for some
      integer }
    \sigma_V \leq n \; , \\
    \res{\phi_S}_V =\sum_{j=1}^{\sigma_V} \log\abs{z_j}^2 \; , \qquad
    \res\bphi_V = \smashoperator[r]{\sum_{k=\sigma_V+1}^n} b_k
    \log\abs{z_k}^2 +\beta
    \quad\text{ and} \\
    \res\psi_V =\underbrace{\sum_{j = 1}^{\sigma_V} \nu_j
      \log\abs{z_j}^2}_{=: \: \phi_{\nu\cdot S}} ~
    +\underbrace{\smashoperator[r]{\sum_{k=\sigma_V+1}^n} a_k
      \log\abs{z_k}^2}_{=: \: \phi_{T}} ~ +\ \alpha \; ,
  \end{gathered}
\end{equation} 
where, after shrinking $V$ if necessary, 
\begin{itemize}
\item $\sup_V \log\abs{z_j}^2 < 0$ for $j=1,\dots, n \;$,
  
\item $\alpha$ and $\beta$ are smooth functions such that
  $\sup_V\alpha \leq -1$ and $\sup_V\beta \leq 0$,

\item $\nu_j$'s are constants such that $\nu_j > 0$ for $j=1,\dots,
  \sigma_V \;$,
  
\item $a_k$'s and $b_k$'s are constants such that $a_k, b_k \geq 0$
  for $k=\sigma_V+1,\dots,n \;$, and
  
\item $\sup_V r_j \fdiff{r_j} \psi =2\nu_j +\sup_V r_j \fdiff{r_j} \alpha >
  0$ for $j=1,\dots, \sigma_V$, where $r_j = \abs{z_j}$ is the radial
  component of the polar coordinates.
\end{itemize} 
Call such open set $V$ as an \emph{admissible open set with respect
  to the data $(\vphi_L,\psi)$ in the coordinate system
  $(z_1,\dots,z_n)$} for the ease of reference.
It is simply said to be \emph{admissible} if the data $(\vphi_L,\psi)$
are understood and the above criteria are satisfied in some
holomorphic coordinate system.
Note that such admissible open set is the kind of open sets on which
the computations in \cite{Chan_on-L2-ext-with-lc-measures}*{\S 2.2}
are valid.
The family of all such admissible open sets forms a basis of the
topology of $X$.



For any $f \in \aidlof{\vphi_L}(V)$, $f$ can be decomposed into a
finite sum (see Section \ref{sec:local-expression-of-germs})
\begin{equation} \label{eq:decomposition-of-f}
  f = \sum_{p\in \cbn} g_p \:\sect_0 \: z_{p(\sigma+1)} \dotsm z_{p(\sigma_V)}
  \; .
\end{equation}
Note that, while the holomorphic functions $g_p$ are not uniquely
determined, $\res{g_p}_{\lcS}$ is uniquely determined (under
the current coordinate system) by $f$ and
$\sect_0$ for each $p\in\cbn$.
An integrability property of the $g_p$'s is collected as follows.
\begin{lemma} \label{lem:g_p-regularity}
  Given the Snc assumption \ref{assum:snc-nas}, every $f \in
  \mtidlof{\vphi_L}$ admits a decomposition of the form in
  \eqref{eq:decomposition-of-f} for some integer $\sigma \geq 0$ on an
  admissible open set $V$ such that the $g_p$'s satisfy
  \begin{equation*}
    g_p \in \mtidlof{\bphi}
    \quad\text{ and }\quad
    g_p \sect_0 \in \mtidlof{\bphi +\phi_{S_0}}
    = \mtidlof{\vphi_L}
    \quad\text{ on } V \text{ for every } p \in \cbn \; ,
  \end{equation*}
  and thus
  \begin{equation*}
    f \in \mtidlof{\bphi +\phi_{S_0}} \cdot \defidlof{\lcc[\sigma+1]}
    =
    \mtidlof{\vphi_L} \cdot \defidlof{\lcc[\sigma+1]}
    \quad\text{ on } V \; ,
  \end{equation*}
  where $\defidlof{\lcc[\sigma+1]}$ is the defining ideal sheaf of
  $\lcc[\sigma+1]$, which is generated as an ideal by the set
  $\setd{z_{p(\sigma+1)} \dotsm z_{p(\sigma_V)}}{p \in \cbn}$ on $V$.
\end{lemma}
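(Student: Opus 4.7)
The plan is to Taylor-expand $f$ in the admissible coordinates adapted to $S$, distribute the resulting monomials among the $g_p$'s via a combinatorial selection, and verify the integrability conditions for each $g_p$ by Fubini and orthogonality.

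On the admissible open set $V$ with coordinates $(z_1,\dots,z_n)$, expand
\[
  f = \sum_\alpha c_\alpha(z'')\, z'^\alpha
\]
as a Taylor series in $z':=(z_1,\dots,z_{\sigma_V})$ with coefficients holomorphic in $z'':=(z_{\sigma_V+1},\dots,z_n)$. By Lemma \ref{lem:Siu-decomposition-S_0-divisor} the section $\sect_0$ is the monomial $z_1^{\mu_1}\dotsm z_{\sigma_V}^{\mu_{\sigma_V}}$, and the hypothesis that $f$ admits the decomposition \eqref{eq:decomposition-of-f} forces every nonzero monomial $c_\alpha z'^\alpha$ to satisfy $\alpha_j\geq \mu_j$ for all $j\leq\sigma_V$, with strict inequality $\alpha_j\geq \mu_j+1$ for at least $\sigma_V-\sigma$ indices $j$. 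For each such $\alpha$ pick $I(\alpha)\subset\{j\leq\sigma_V:\alpha_j\geq\mu_j+1\}$ with $|I(\alpha)|=\sigma_V-\sigma$, let $p(\alpha)\in\cbn$ be the permutation sending $\{\sigma+1,\dots,\sigma_V\}$ to $I(\alpha)$, and define
\[
  g_p := \sum_{\alpha:\, p(\alpha)=p} c_\alpha(z'')\, z'^{\alpha-\mu-e_{I(\alpha)}}.
\]
Then $g_p\in \holo_X(V)$ and by construction $f=\sum_p g_p\sect_0\, z_{p(\sigma+1)}\dotsm z_{p(\sigma_V)}$.

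To verify the integrability conditions, the snc assumption together with the uniqueness of the jumping number $m=1$ forces the Lelong number $b_k$ of $\bphi$ along $\{z_k=0\}$, for each $k>\sigma_V$ with $a_k>0$, to be strictly less than $1$: otherwise $\mtidlof{\vphi_L+m\psi}$ would jump on $\{z_k=0\}$ at some $m\in(0,1]$, placing that component into $S$ and contradicting the count $\sigma_V$. In such directions both $\mtidlof{\bphi}$ and $\mtidlof{\vphi_L}$ impose no local constraint on $V$; in the remaining directions ($a_k=0$) one has $\vphi_L\equiv\bphi$ modulo a smooth bounded function. Hence $\mtidlof{\vphi_L}(V)=\mtidlof{\bphi}(V)$ as ideals in $\holo_X(V)$. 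Applying Fubini and the orthogonality of monomials in $z'$ (under the weights $e^{-\vphi_L}$ and $e^{-\bphi}$, which depend on $z''$ only up to bounded smooth factors) to $\int_V|f|^2\, e^{-\vphi_L}<\infty$ yields the monomial-wise bounds
\[
  \Bigl(\textstyle\int_{V'}|z'^\alpha|^2\Bigr)\int_{V''}|c_\alpha|^2\, e^{-\vphi_L|_{V''}}<\infty
\]
for every $\alpha$ with $c_\alpha\not\equiv 0$. Since $\alpha_j\geq \mu_j+e_{I(\alpha),j}$ and $\alpha_j\geq 1$ for $j\in I(\alpha)$, the elementary comparison $\int_{V'}|z'^{\alpha-\mu-e_{I(\alpha)}}|^2\lesssim \int_{V'}|z'^\alpha|^2$ holds uniformly in $\alpha$, and transfers this finiteness to the corresponding monomial-wise decompositions of $\int_V|g_p|^2\, e^{-\bphi}$ and $\int_V|g_p\sect_0|^2\, e^{-\vphi_L}$, giving $g_p\in\mtidlof{\bphi}(V)$ and $g_p\sect_0\in\mtidlof{\vphi_L}(V)$.

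The final assertion follows by rewriting $f=\sum_p(g_p\sect_0)\cdot(z_{p(\sigma+1)}\dotsm z_{p(\sigma_V)})$ with each $g_p\sect_0\in\mtidlof{\vphi_L}$ and each $z_{p(\sigma+1)}\dotsm z_{p(\sigma_V)}$ a generator of $\defidlof{\lcc[\sigma+1]}$. The main obstacle is the Lelong-number bookkeeping that extracts $b_k<1$ whenever $a_k>0$ and hence the local coincidence $\mtidlof{\vphi_L}(V)=\mtidlof{\bphi}(V)$; with this in place the combinatorial assignment of monomials to the $g_p$'s and the Fubini/orthogonality estimate are routine.
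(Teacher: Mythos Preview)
Your argument contains two incorrect claims that make the transfer step fail as written.

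First, the assertion that $b_k<1$ whenever $a_k>0$ is false. Take $X=\Delta^2$, $\vphi_L=1.2\log|z_2|^2$, $\psi=\log|z_1|^2+0.3\log|z_2|^2-1$. Then $\sigma_V=1$, $S=\{z_1=0\}$, $a_2=0.3>0$, and $\bphi=1.5\log|z_2|^2-1$ so $b_2=1.5$. There is no jump of $\mtidlof{\vphi_L+m\psi}$ along $\{z_2=0\}$ on $[0,1]$ (the coefficient stays in $[1.2,1.5]$), so $\{z_2=0\}\not\subset S$, yet $b_2\geq 1$ and $\mtidlof{\bphi}$ \emph{does} impose the constraint ``divisible by $z_2$''. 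What the no-jump hypothesis actually gives is only that the floor-type quantity $N(b_k-a_k)=N(b_k)$ (equality of \emph{divisibility orders}), not $b_k<1$.

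Second, the identification $\mtidlof{\vphi_L}(V)=\mtidlof{\bphi}(V)$ is wrong in general: the two ideals differ by $\sect_0$. In the simplest case $X=\Delta$, $\vphi_L=\log|z|^2$, $\psi=\log|z|^2$, one has $S_0=\{z=0\}$, $\sect_0=z$, $\bphi\equiv 0$, so $\mtidlof{\bphi}=\holo_X$ while $\mtidlof{\vphi_L}=\langle z\rangle$. Relatedly, $e^{-\vphi_L}$ is \emph{not} a function of $z''$ only: in the $z_j$ directions ($j\le\sigma_V$) it carries the factor $|z_j|^{-2(\mu_j+1-\nu_j)}$, which is unbounded unless $\nu_j=\mu_j+1$. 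So your displayed monomial-wise bound, as written without this weight, is not what Fubini on $\int|f|^2e^{-\vphi_L}$ actually produces.

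These errors break your transfer: even after correcting the $z'$-weights, the $z''$-integrals you obtain are $\int_{V''}|c_\alpha|^2\prod_k|z_k|^{-2(b_k-a_k)}$, whereas for $g_p\in\mtidlof{\bphi}$ you need them against the \emph{larger} weight $\prod_k|z_k|^{-2b_k}$. Since $b_k\geq b_k-a_k$, the inequality points the wrong way and summation over~$\alpha$ need not converge. The paper avoids this by first showing $\bigl|\tfrac{f}{\sect_0}\bigr|^2e^{-\bphi}\in L^1_{\mathrm{loc}}$ via the annihilator definition of $S$ (namely $\sect f\in\mtidlof{\vphi_L+\psi}$), which immediately gives that $f/\sect_0$ is divisible by $\prod_{k>\sigma_V}z_k^{m_k}$ with $m_k-b_k>-1$; one can then simply arrange each $g_p$ to carry this same monomial factor, whence $g_p\in\mtidlof{\bphi}$ and, after a direct decomposition of $\vphi_L$, also $g_p\sect_0\in\mtidlof{\vphi_L}$. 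Your combinatorial selection is not needed, and your integral comparison can be replaced by this divisibility argument.
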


\begin{proof}
  The discussion in Section \ref{sec:openness} indeed shows that, for
  every $f \in \mtidlof{\vphi_L}$, one has
  \begin{equation*}
    \abs{\frac{f}{\sect_0}}^2 e^{-\bphi} \in \Lloc(V) \; .
  \end{equation*}
  Thanks to the Snc assumption \ref{assum:snc-nas} on $\vphi_L+\psi$,
  one can see from Fubini's theorem and the above integrability that
  $\frac{f}{\sect_0}$ is divisible by some monomial
  $\prod_{k=\sigma_V+1}^n z_{k}^{m_k}$ such that $m_k -b_k >-1$ for
  $k=\sigma_V+1, \dots, n$ (where $b_k$'s are the coefficients in the
  local expression of $\res\bphi_V$ in
  \eqref{eq:local-expression-bphi-psi}).
  Since $\frac{f}{\prod_{k=\sigma_V+1}^n z_{k}^{m_k}}$ admits a
  decomposition of the form in \eqref{eq:decomposition-of-f} for some
  integer $\sigma \in [0, \sigma_V]$,
  one can assume that $f$ admits the decomposition
  \eqref{eq:decomposition-of-f} (with the same $\sigma$) such that
  $g_p$ is divisible by $\prod_{k=\sigma_V+1}^n z_{k}^{m_k}$ for each
  $p\in\cbn$, and thus 
  \begin{equation*}
    \abs{g_p}^2 e^{-\bphi} \in\Lloc(V) \; .
  \end{equation*}
  This also implies that $g_p \sect_0 \in \mtidlof{\bphi +\phi_{S_0}}$
  and $f \in \mtidlof{\bphi+\phi_{S_0}} \cdot
  \defidlof{\lcc[\sigma+1]}$, and therefore $\mtidlof{\vphi_L} \subset
  \mtidlof{\bphi +\phi_{S_0}}$.
  Note that every germ in $\mtidlof{\bphi +\phi_{S_0}}$ can be
  decomposed into the form $g \sect_0$ such that $g \in \mtidlof{\bphi}$.
  Since 
  \begin{equation*}
    \vphi_L = \bphi +\phi_{S_0} +\phi_S -\psi
    =\phi_{S_0} +\phi_S-\phi_{\nu\cdot S}
    +\underbrace{\smashoperator[r]{\sum_{k=\sigma_V+1}^n} \paren{b_k -a_k}
      \log\abs{z_k}^2
      +\beta}_{= \:\bphi -\phi_T}
    -\alpha \quad\text{ on }V 
  \end{equation*}
  and $\sect_0$ is locally $L^2$ with respect to
  $e^{-\phi_{S_0}-\phi_S+\phi_{\nu\cdot S}}$ while $g$ is locally
  $L^2$ with respect to $e^{-\bphi +\phi_T}$,
  one sees, in view of Fubini's theorem, that
  \begin{equation*}
    \abs{g \sect_0}^2 e^{-\vphi_L} \in \Lloc(V) \quad\text{ and thus }\quad
    \mtidlof{\bphi +\phi_{S_0}} \subset \mtidlof{\vphi_L} \; .
  \end{equation*}
  This completes the proof.
\end{proof}

Lemma \ref{lem:g_p-regularity} also implies that $\aidlof{\vphi_L} \subset
\mtidlof{\vphi_L} \cdot \defidlof{\lcc[\sigma+1]}$ under the Snc
assumption \ref{assum:snc-nas}.


Recall that the \emph{residue function} $\eps \mapsto
\RTF[G](\eps)[V,\sigma]$ defined in
\cite{Chan_on-L2-ext-with-lc-measures} is given by
\begin{equation*}
  \RTF[G](\eps)[V,\sigma]
  := \RTF[G](\eps)[\sigma]
  := \eps \int_V \frac{G \:e^{-\vphi_L-\psi}
    \dvol_{V}}{\logpole}
  \quad\text{ for any } G \in \smooth_X(V) \text{ and } \eps > 0 \; .
\end{equation*}
The computations in
\cite{Chan_on-L2-ext-with-lc-measures}*{Prop.~2.2.1, Thm.~2.3.1 and
  Cor.~2.3.3} assure the following statements.
\begin{thm} \label{thm:results-on-residue-fct}
  Under the Snc assumption \ref{assum:snc-nas}, for every $f \in
  \mtidlof{\vphi_L}(V'')$ and for every admissible open set $V \Subset V''$
  described above, there exists a unique integer $\sigma_f \in
  [0,\sigma_V]$ such that
  \begin{enumerate}[labelindent=0em, leftmargin=*, itemsep=1ex]
  \item \label{item:thm:res-fct}
    one has $f \in \mtidlof{\bphi +\phi_{S_0}} \cdot
    \defidlof{\lcc[\alert{\sigma_f}+1]}
    =\mtidlof{\vphi_L} \cdot
    \defidlof{\lcc[\alert{\sigma_f}+1]}$ on $V$ and, for every integer
    $\sigma$ and real number $\eps > 0$, 
    \begin{equation*}
      \RTF|f|(\eps)[V,\sigma] \;\;
      \begin{dcases}
        < +\infty & \text{when } \sigma \geq \sigma_f \; , \\
        =+\infty & \text{when } \sigma < \sigma_f \; ,
      \end{dcases} 
    \end{equation*}
    and it therefore follows that
    \begin{equation*}
      \aidlof{\vphi_L}
      =\mtidlof{\vphi_L} \cdot \defidlof{\lcc[\sigma+1]}
      \quad\text{ for all integers } \sigma \geq 0 \; ,
    \end{equation*}
    which implies, in particular, that
    \begin{alignat*}{2}
      \aidlof|0|{\vphi_L}
      &=\mtidlof{\vphi_L}\cdot \defidlof{S}
        =\mtidlof{\vphi_L+\psi}
      & \quad &\text{and} \\
      \aidlof{\vphi_L}
      &=\mtidlof{\vphi_L} \cdot
      \defidlof{\emptyset} =\mtidlof{\vphi_L}
      &&\text{for all } \sigma \geq \sigma_{\mlc} \; ,
    \end{alignat*}
    where $\sigma_{\mlc} $ is the codimension of the minimal lc
    centres (mlc) of $(X,S)$;

  \item \label{item:thm:res-norm}
    for any smooth cut-off function $\rho \colon V' \to [0,1]$
    which is compactly supported in an admissible open set $V'$
    in the same coordinate system on $V$ such that $V
    \Subset V' \Subset V''$ and that $\rho \equiv 1$ on $V$,
    the function $\eps \mapsto \RTF[\rho]|f|(\eps)[V',\sigma]$
    can be analytically continued across $\eps =0$ when $\sigma \geq
    \sigma_f$ and one has the \emph{residue (squared) norm} given by
    \begin{equation*}
      \RTF[\rho]|f|(0)[V',\sigma]
      =\lim_{\eps \tendsto 0^+} \RTF[\rho]|f|(\eps)[V',\sigma]
      =
      \begin{dcases}
        \int_{\lcc<V'>} \rho \abs f^2 \lcV
        <+\infty
        & \text{when } \sigma \geq \sigma_f \; , \\
        +\infty & \text{when } \sigma < \sigma_f \; ,
      \end{dcases} 
    \end{equation*}
    in which, when $f$ admits the decomposition in
    \eqref{eq:decomposition-of-f}, the integral with respect to the
    $\sigma$-lc-measure is given by
    \begin{equation*}
      \int_{\lcc<V'>} \rho \abs f^2 \lcV
      =\sum_{p \in\cbn/\sigma_{V'}/} \frac{\pi^\sigma}{(\sigma-1)! \:
        \vect\nu_{p}}
      \int_{\lcS} \rho \abs{g_p}^2 \:e^{-\bphi}
      \dvol_{\lcS} \; ,
    \end{equation*}
    where $\vect\nu_p :=\prod_{j=1}^\sigma \nu_{p(j)}$ (product of the
    coefficients in the local expression of $\psi$ in
    \eqref{eq:local-expression-bphi-psi}, or, equivalently, product of
    the generic Lelong numbers $\lelong{\psi}[D_i]$ of $\psi$ along the
    irreducible components $D_i$ of $S$ which contain $\lcS$), and it
    thus follows that $\RTF[\rho]|f|(0)[V',\sigma] = 0$ when $\sigma >
    \sigma_f$.
  \end{enumerate} 
\end{thm}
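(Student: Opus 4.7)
My plan is to define $\sigma_f$ from the Taylor expansion of $f/\sect_0$ transverse to $\lcc[\sigma_V]<V>$, as was sketched in Section \ref{sec:local-expression-of-germs}. Since $f \in \mtidlof{\vphi_L}(V'')$ makes $f/\sect_0$ holomorphic (cf.~Section \ref{sec:snc-assumption}), one takes $\sigma_f \in [0,\sigma_V]$ to be the minimal integer for which $f/\sect_0$ lies on $V$ in the ideal generated by $\{z_{p(\sigma_f+1)}\dotsm z_{p(\sigma_V)}\}_{p\in\cbn[\sigma_f]}$. Uniqueness is automatic from this minimality, and applying Lemma \ref{lem:g_p-regularity} with $\sigma := \sigma_f$ simultaneously produces a decomposition $f = \sum_{p\in\cbn[\sigma_f]} g_p \sect_0 z_{p(\sigma_f+1)}\dotsm z_{p(\sigma_V)}$ with $\abs{g_p}^2 e^{-\bphi} \in \Lloc(V)$ and proves $f \in \mtidlof{\vphi_L}\cdot\defidlof{\lcc[\sigma_f+1]}$.

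The core step is a direct computation of $\RTF|f|(\eps)[V,\sigma]$: substituting \eqref{eq:decomposition-of-f} together with the local expression \eqref{eq:local-expression-bphi-psi}, and using $e^{-\vphi_L-\psi} = e^{-\bphi}\abs{\sect_0}^{-2}\prod_{j=1}^{\sigma_V}\abs{z_j}^{-2}$, each diagonal term ($p=p'$) in $\abs f^2 e^{-\vphi_L-\psi}$ reduces to an integrand $\abs{g_p}^2 e^{-\bphi}\prod_{j=1}^{\sigma}\abs{z_{p(j)}}^{-2}$, while each off-diagonal term ($p\neq p'$) carries surplus factors $\abs{z_j}^{\eta_j}$ with $\eta_j > 0$ indexed by the symmetric difference of $p(\{1,\dots,\sigma\})$ and $p'(\{1,\dots,\sigma\})$. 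Passing to polar coordinates and applying Fubini reduces each diagonal contribution to a product of $\sigma$ one-dimensional radial model integrals $\eps\int_0^\delta\frac{dr}{r(\log r^{-1})^{1+\eps}}$ transverse to $\lcS$, multiplied by the absolutely convergent transverse integral $\int_{\lcS}\rho\abs{g_p}^2 e^{-\bphi}\dvol_{\lcS}$. This is precisely the model computation of \cite{Chan_on-L2-ext-with-lc-measures}*{Prop.~2.2.1 and Thm.~2.3.1}: the radial factors jointly admit an analytic continuation across $\eps = 0$ whose limit is the stated constant $\frac{\pi^\sigma}{(\sigma-1)!\,\vect\nu_p}$.

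The main obstacle, which is what forces the cut-off $\rho$ into item \eqref{item:thm:res-norm}, is showing that every off-diagonal term contributes $0$ to $\RTF[\rho]|f|(0)[V',\sigma]$: the surplus factor $\abs{z_j}^{\eta_j}$ takes one radial integrand out of the critical form and leaves an $\eps$-family that tends to $0$ as $\eps \to 0^+$. This is exactly what \cite{Chan_on-L2-ext-with-lc-measures}*{Cor.~2.3.3} establishes, and after invoking it the residue formula in item \eqref{item:thm:res-norm} follows for $\sigma \geq \sigma_f$; the vanishing $\RTF[\rho]|f|(0)[V',\sigma] = 0$ for $\sigma > \sigma_f$ is then automatic because each $g_p$ must carry at least one factor $z_{p(j)}$ vanishing on $\lcS$. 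For $\sigma < \sigma_f$, minimality of $\sigma_f$ produces some $g_p$ with $p \in \cbn[\sigma_f]$ that does not vanish identically on $\lcS|\sigma_f|$, so the corresponding diagonal model integral already diverges, giving $\RTF|f|(\eps)[V,\sigma] = +\infty$.

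Finally, the identification $\aidlof{\vphi_L} = \mtidlof{\vphi_L}\cdot\defidlof{\lcc[\sigma+1]}$ follows by two-way inclusion: ``$\subset$'' is the dichotomy just established (finiteness of $\RTF|f|(\eps)[V,\sigma]$ for all $\eps > 0$ forces $\sigma_f \leq \sigma$, hence $f \in \mtidlof{\vphi_L}\cdot\defidlof{\lcc[\sigma+1]}$), and ``$\supset$'' follows by applying the same model computation to each generator $h\cdot z_{p(\sigma+1)}\dotsm z_{p(\sigma_V)}$ with $h \in \mtidlof{\vphi_L}$. The boundary identifications $\aidlof|0|{\vphi_L} = \mtidlof{\vphi_L+\psi}$ and $\aidlof{\vphi_L} = \mtidlof{\vphi_L}$ for $\sigma \geq \sigma_{\mlc}$ then fall out by specialising $\sigma = 0$ (using $\mtidlof{\vphi_L}\cdot\defidlof{S} = \mtidlof{\vphi_L+\psi}$ in the snc model) and $\sigma \geq \sigma_{\mlc}$ (since then $\defidlof{\lcc[\sigma+1]} = \holo_X$ by definition of the codimension of an mlc).
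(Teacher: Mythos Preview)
Your proposal is correct and follows essentially the same route as the paper: define $\sigma_f$ via the minimal Taylor decomposition of $f/\sect_0$, invoke Lemma~\ref{lem:g_p-regularity} for the membership $f \in \mtidlof{\vphi_L}\cdot\defidlof{\lcc[\sigma_f+1]}$, and cite the model computations in \cite{Chan_on-L2-ext-with-lc-measures}*{Prop.~2.2.1, Thm.~2.3.1, Cor.~2.3.3} for finiteness, analytic continuation, and the residue formula. The one imprecision is your divergence argument for $\sigma < \sigma_f$: saying ``the corresponding diagonal model integral already diverges'' is misleading, since the diagonal/off-diagonal splitting of $\abs f^2 = \abs{\sum_p \cdots}^2$ does not directly bound $\abs f^2$ from below (cross terms can be negative). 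The paper handles this instead via the $x\log x$ inequality \eqref{eq:logpole-xlogx-estimate}, which reduces divergence of $\RTF|f|(\eps)[V,\sigma]$ to divergence of $\int_V \abs f^2 e^{-\vphi_L-\psi}\abs\psi^{-\sigma-\delta}\dvol_V$, and then cites \cite{Chan&Choi_ext-with-lcv-codim-1}*{\S 2.2}; in effect this localises near a generic point of some $\lcS|\sigma_f|$ where $g_p$ is nonvanishing and the other summands vanish, which is presumably what you intend.
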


\begin{proof}
  Take $\sigma_f \in [0,\sigma_V]$ to be the minimal integer such that
  the conclusion of Lemma \ref{lem:g_p-regularity}, namely, $f \in
  \mtidlof{\bphi +\phi_{S_0}} \cdot \defidlof{\lcc[\sigma_f+1]} 
  =\mtidlof{\vphi_L} \cdot \defidlof{\lcc[\sigma_f+1]}$, holds true on
  $V$ (then $g_p$ in the decomposition is non-trivial on
  $\lcS|\sigma_f|$ for some $p \in \cbn[\sigma_f]$).
  The inequality \eqref{eq:logpole-xlogx-estimate} and the computation
  in \cite{Chan&Choi_ext-with-lcv-codim-1}*{\S 2.2} implies that
  $\RTF|f|(\eps)[V,\sigma] =+\infty$ for all $\eps > 0$ when $\sigma <
  \sigma_f$.
  On the other hand, given a cut-off function $\rho$ described in
  \eqref{item:thm:res-norm}, the computation in
  \cite{Chan_on-L2-ext-with-lc-measures}*{Prop.~2.2.1} shows that
  \begin{equation*}
    \RTF|f|(\eps)[V,\sigma] \leq \RTF[\rho]|f|(\eps)[V',\sigma]
    < +\infty
    \quad\text{ for all } \eps >0 \text{ when } \sigma \geq \sigma_f \; .
  \end{equation*}
  Therefore, $f \in \aidlof{\vphi_L}$ on $V$ for $\sigma \geq \sigma_f$.
  This gives \eqref{item:thm:res-fct}.

  The claims in \eqref{item:thm:res-norm} follow from the computations
  in \cite{Chan_on-L2-ext-with-lc-measures}*{Thm.~2.3.1 and
    Cor.~2.3.3}.
  The expression of the integral under the $\sigma$-lc-measure follows
  from the computation in \cite{Chan&Choi_ext-with-lcv-codim-1}*{\S
    2.2}. 
\end{proof}

\begin{remark}
  The integer $\sigma_f$ is the \emph{codimension of the mlc of
    $(V,\vphi_L,\psi)$ (or of $(V,S)$) with respect to $f$} defined in
  \cite{Chan&Choi_ext-with-lcv-codim-1}*{Def.~2.2.5}.
  The proof of Theorem \ref{thm:results-on-residue-fct} also shows
  that it is the same as the codimension of the mlc of $(V,0,\phi_S)$ with
  respect to $f$ stated in Section \ref{sec:local-expression-of-germs}.
\end{remark}

\begin{remark} \label{rem:residue-fct-result-summary}
  For the convenience of computations in practice, the essence of the
  convergence result for the integral $\RTF|f|(\eps)[V,\sigma]$ in
  Theorem \ref{thm:results-on-residue-fct} \eqref{item:thm:res-fct} is
  stated in terms of local coordinates.
  Suppose $\psi$ is given as in \eqref{eq:local-expression-bphi-psi}
  on $V$, in which $\nu_j > 0$ for $j = 1, \dots, \sigma_V$.
  For the purpose of illustration, assume that $\sigma_V < n$ while
  \alert{$a_{n-1} > 0$ and $a_n=0$} ($a_k$ is the coefficient of
  $\log\abs{z_k}^2$ in $\psi$).
  The computation in
  \cite{Chan_on-L2-ext-with-lc-measures}*{Prop.~2.2.1} as well as
  \cite{Chan&Choi_ext-with-lcv-codim-1}*{Prop.~2.2.1} (hence Theorem
  \ref{thm:results-on-residue-fct} \eqref{item:thm:res-fct}) implies
  that, for any $\eps > 0$ and $\sigma_f \leq \sigma_V$ and $\sigma_f
  < n-1$, one has
  \begin{equation*}
    \int_V \frac{\dvol_V}{\abs{z_1\dotsm z_{\sigma_f}}^2
      \abs{z_{n-1}}^{2\ell_{n-1}} \abs{z_n}^{2\ell_n} \logpole}
    \;
    \begin{dcases}
      < +\infty &
        \text{when } \sigma > \sigma_f  ,\: \ell_{n-1} \leq 1
        , \: \ell_n < 1 \; ,
      \\
      < +\infty & \text{when } \sigma = \sigma_f \text{ and }
      \ell_{n-1}, \ell_n < 1 \; , \\
      = +\infty & \text{when } \sigma =\sigma_f \text{ and }
      \ell_{n-1} \geq 1 \; , \\
      =+\infty &  \text{when } \sigma < \sigma_f  \; , \\
      = +\infty & \text{when } \ell_{n-1} > 1 \text{ or } \ell_n \geq
      1 \; .
    \end{dcases} 
  \end{equation*} 
\end{remark}

\begin{remark} \label{rem:residue-fct-result-jumping-no}
  When the family $\set{\mtidlof{\vphi_L+m\psi}}_{m \in [0,1]}$ has
  another jumping number $m_0 \in (0,1)$ besides $m=1$ such that
  $\mtidlof{\vphi_L} \supsetneq \mtidlof{\vphi_L+m_0\psi}
  =\mtidlof{\vphi_L+m\psi}$ for all $m \in [m_0, 1)$, all the
  multiplier ideal sheaves $\mtidlof{\vphi_L}$ appearing in the
  statement of Theorem \ref{thm:results-on-residue-fct} have to be
  replaced by $\mtidlof{\vphi_L+m_0\psi}$.
  In particular, Theorem \ref{thm:results-on-residue-fct}
  \eqref{item:thm:res-fct} should conclude that $\aidlof{\vphi_L}
  =\mtidlof{\vphi_L+m_0\psi} \cdot \defidlof{\lcc[\sigma+1]}$ for all
  integers $\sigma \geq 0$.
  Indeed, assuming the existence of $m_0 \in (0,1)$, one has
  $\RTF|f|(\eps)[V,\sigma] =+\infty$ for any $f \in
  \mtidlof{\vphi_L}_x \setminus
  \mtidlof{\vphi_L+m_0\psi}_x$, $\sigma \geq 0$ and $\eps >
  0$ and for any admissible open neighbourhood $V$ of $x$ in $X$ on
  which $f$ is defined.
\end{remark}

\begin{remark} \label{rem:RTF-thm-on-smooth-sections}
  The computation of $\RTF[\rho]|f|(\eps)[V,\sigma]$ invokes only
  Fubini's theorem and integration by parts on each variables in the
  admissible open set $V$ (see
  \cite{Chan_on-L2-ext-with-lc-measures}*{Prop.~2.2.1, Thm.~2.3.1 and
    Cor.~2.3.3}, also
  cf.~\cite{Chan&Choi_ext-with-lcv-codim-1}*{Prop.~2.2.1}).
  The holomorphicity of $f$ plays little role in the computation.
  Indeed, it suffices to have $f \in \mtidlof{\vphi_L} \cdot
  \smooth_X$ such that $f$ can be written in the form
  \eqref{eq:decomposition-of-f} via Taylor's theorem (in which case
  $g_p$'s are smooth local functions on $V$) and the computation of
  $\RTF[\rho]|f|(\eps)[V,\sigma]$ is still valid.
  It turns out that both Lemma \ref{lem:g_p-regularity} and Theorem
  \ref{thm:results-on-residue-fct} remain true when the ideal sheaves
  $\mtidlof{\vphi_L}$ and $\aidlof{\vphi_L}$ are extended to
  $\mtidlof{\vphi_L} \cdot \smooth_X$ and $\aidlof{\vphi_L} \cdot
  \smooth_X$ respectively (via the injection $\holo_X \hookrightarrow
  \smooth_X$) and the involving functions $f$ and $g_p$'s are only
  smooth.
\end{remark}

Theorem \ref{thm:results-on-residue-fct} \eqref{item:thm:res-fct}
shows, in particular, that, for any admissible open set $V \Subset X$,
the maps $f \mapsto \RTF|f|(\eps)[V,\sigma]$ for $\eps > 0$ define a
family of $L^2$ norms on $\aidlof{\vphi_L}\paren{\cl V}$ and that
\begin{equation*}
  \aidlof{\vphi_L}\paren{\cl V}
  =\setd{f \in \mtidlof{\vphi_L}\paren{\cl V}}{ \exists~\eps > 0
    \text{ such that }
    \RTF|f|(\eps)[V,\sigma] < +\infty } \; .
\end{equation*} 

For the sake of notational convenience, for any $f \in \smooth_X
\cdot\! \aidlof{\vphi_L}(\cl V)$ on some open set $V \Subset X$ such
that $f$ \emph{does not have compact support in $V$}, define
\begin{equation}
  \label{eq:lc-measure-for-non-cpt-supp}
  \RTF|f|(0)[V,\sigma]
  :=\int_{\lcc} \abs f^2 \lcV
  :=\lim_{\rho \descendsto \charfct_{\cl V}}
  \RTF[\rho]|f|(0)[V',\sigma] \; ,
\end{equation}
where $\rho \colon V' \to [0,1]$ is a compactly supported smooth
cut-off function on an open set $V' \Supset V$ with $\res\rho_V \equiv
1$ such that $f$ is defined on $V'$ and $\lim_{\rho \descendsto
  \charfct_{\cl V}}$ indicates taking limit by considering a sequence
of the functions $\rho$ which descends pointwisely to the
characteristic function $\charfct_{\cl V}$ of $\cl V$ on $X$.
Then, Theorem \ref{thm:results-on-residue-fct}
\eqref{item:thm:res-norm} implies, in particular, that, for any
admissible open set $V \Subset X$,
\begin{equation*}
  \RTF|f|(0)[V,\sigma]
  =\sum_{p \in\cbn/\sigma_{V}/} \frac{\pi^\sigma}{(\sigma-1)! \:
    \vect\nu_{p}}
  \int_{\lcS} \abs{g_p}^2 \:e^{-\bphi}
  \dvol_{\lcS} 
\end{equation*}
given the decomposition \eqref{eq:decomposition-of-f} of $f$, and thus
\begin{equation*}
  \aidlof|\sigma-1|{\vphi_L}\paren{\cl V}
  =\setd{f \in \aidlof{\vphi_L}\paren{\cl V}}{ \RTF|f|(0)[V,\sigma] =
    0 } \; .
\end{equation*}




\subsection{The residue morphism}
\label{sec:residue-map}


For a given integer $\sigma \geq 1$, let $\nu \colon \rs{\lcc} \to
\lcc$ be the normalisation of $\lcc$ (thus $\rs{\lcc}$ is the disjoint
union of all $\sigma$-lc centres of $(X,S)$) and $\iota
\colon \lcc \hookrightarrow X$ the natural inclusion.
By a slight abuse of notation, write temporarily that $\rs{\lcc}
=\bigsqcup_{q \in \Iset./X/} \lcS[q]$, where $\set{\lcS[q]}_{q\in \Iset./X/}$ is
the set of all $\sigma$-lc centres of $(X,S)$ indexed by
$q\in \Iset./X/$.
As $S$ is an snc divisor by assumption, in view of the adjunction
formula, there is a divisor $\Diff_{q}S$ on $\lcS[q]$ (the
\emph{general different} of $S$ on $\lcS[q]$; see
\cite{Kollar_Sing-of-MMP}*{\S 4.2}) such that (in terms of their
associated line bundles)
\begin{equation*}
  K_{\lcS[q]} \otimes \res{S^{-1}}_{\lcS[q]} 
  =\res{K_X}_{\lcS[q]} \otimes \paren{\Diff_{q} S}^{-1} \; .
\end{equation*}
Indeed, on any admissible open set $V$ such that $\lcS[q] \cap V =
\lcS$ for some $p \in \cbn$ (following the notation in Section
\ref{sec:local-expression-of-germs}), the line bundle
$\Diff_{q} S$ is isomorphic to the line bundle associated to the
divisor $\res{\sum_{k=\sigma+1}^{\sigma_V} \set{z_{p(k)}=0}}_{\lcS}$.
Again by an abuse of notation, let $\paren{\Diff_{q} S}^{-1}$ to mean
the \emph{extension to $\rs{\lcc}$ by zero} of its associated invertible
sheaf.

Now set
\begin{align*}
  \residlof{\bphi}
  :=\residlof<X>{\bphi}
  &:=S_0^{-1} \otimes \iota_*\nu_*\paren{\paren{\bigoplus_{q \in \Iset./X/}
      \paren{\Diff_{q} S}^{-1}} \otimes
    \mtidlof[\rs{\lcc}]{\nu^*\iota^*\bphi}} \\
  &=\bigoplus_{q \in \Iset./X/}
    \iota_*\nu_*\paren{
    \res{S_0^{-1}}_{\lcS[q]} \otimes 
    \paren{\Diff_{q} S}^{-1} \otimes
    \mtidlof[\lcS[q]]{\nu^*\iota^*\bphi}
    } \; ,
\end{align*}
which is a coherent sheaf on $X$ supported on $\lcc$ (note that the
supports of $\paren{\Diff_{q} S}^{-1}$ for $q \in \Iset./X/$ are
mutually disjoint in $\rs{\lcc}$).
Let $V$ be an admissible open set and let $\set{\lcS}_{p \in
\cbn}$ be the set of $\sigma$-lc centres of $(V,S \cap
V)$ as described in Section \ref{sec:local-expression-of-germs}.
One then has
\begin{equation*}
  \nu^{-1}\paren{\lcc \cap V} = \bigsqcup_{p\in\cbn} \lcS
\end{equation*}
and, by writing $\Diff_{p} S := \Diff_{q}S$ when $\lcS[q] \cap V =
\lcS$,
\begin{equation*}
  \residlof{\bphi}(V)
  =\prod_{p\in\cbn} \res{S_0^{-1}}_{\lcS} \otimes \paren{\Diff_p S}^{-1} \otimes
  \mtidlof[\lcS]{\bphi} \paren{\lcS} \; . 
\end{equation*}

It therefore follows from Theorem \ref{thm:results-on-residue-fct}
that the residue map
\begin{equation*}
  \xymatrix@C=1cm@R=0.2cm{
    {\aidlof{\vphi_L}(V)} \ar[r]^-{\Res} \ar@{}[d]|*[left]{\in}
    & {\residlof{\bphi}(V)} \ar@{}[d]|*[left]{\in} \\
    {\quad f \quad} \ar@{|->}[r] & 
    {\paren{\res{g_p}_{\lcS}}_{\mathrlap{p\in\cbn}}} 
  }
\end{equation*}
is well-defined on every admissible open set $V$, where $f$ and the
$g_p$'s are related by the decomposition
\eqref{eq:decomposition-of-f} and $\res{g_p}_{\lcS}$ is viewed as a
section of $S_0^{-1} \otimes \paren{\Diff_p S}^{-1}$ (see
\eqref{eq:uniqueness-of-g_p}).
It can also be seen that the map is independent of the choice of
the coordinate system and induces a morphism of sheaves (which depends
on the choice of the canonical sections $\sect_0$ and $z_{p(\sigma+1)}
\dotsm z_{p(\sigma_V)}$ of $S_0$ and of $\Diff_p S$ respectively).

Sometimes it may be convenient to describe the residue morphism in
terms of holomorphic $n$-forms.
With the adjunction formula in mind, one has
\begin{equation*}
  K_X \otimes \residlof{\bphi}(V)
  =\prod_{p\in\cbn} K_{\lcS} \otimes \parres{S_0^{-1} \otimes
    S^{-1}}_{\lcS} 
  \otimes \mtidlof[\lcS]{\bphi}\paren{\lcS} \; .
\end{equation*}
Let $\sect$ be a canonical section of $S$ such that $\res\sect_V
=z_1 \dotsm z_{\sigma_V}$.
Identifying $f$ with the $(n,0)$-form $f \:dz_1\wedge \dotsm \wedge
dz_n$ on $V$ and each $g_p$ 
with the 
$(n-\sigma, 0)$-form 
$g_p \:\sgn{p} \:dz_{p(\sigma+1)} \wedge
\dotsm \wedge dz_{p(\sigma_V)} \wedge dz_{\sigma_V+1} \wedge \dotsm
\wedge dz_n$ on $V$ (where $\sgn p$ is the sign of the permutation
representing $p$ described in Section
\ref{sec:local-expression-of-germs}), also writing $u_p := dz_{p(1)}
\wedge \dotsm \wedge dz_{p(\sigma)}$, one then has
\begin{equation*}
  f =\sum_{p\in\cbn} u_p \wedge g_p \:\sect_0
  \:z_{p(\sigma+1)} \dotsm z_{p(\sigma_V)}
  =\sum_{p\in\cbn} \frac{u_p}{z_{p(1)} \dotsm z_{p(\sigma)}}
  \wedge g_p \:\sect_0 \sect
\end{equation*}
(cf.~\eqref{eq:decomposition-of-f}).
With a suitable choice of orientation on the normal bundle of $\lcS$
in $X$ which determines the sign on the \textfr{Poincaré} residue map
$\PRes$ (see \cite{Kollar_Sing-of-MMP}*{\S 4.18}), it follows that
\begin{equation} \label{eq:g_p-from-Poincare-residue}
  \res{g_p}_{\lcS}
  = \PRes(\frac{f}{\sect_0 \sect}) 
\end{equation}
for all $p \in \cbn$ (cf.~\eqref{eq:uniqueness-of-g_p}).
The residue morphism
is then given by
\begin{equation*}
  \xymatrix@C=1cm@R=0.2cm{
    {K_X \otimes \aidlof{\vphi_L}(V)} \ar[r]^-{\Res} \ar@{}[d]|*[left]{\in}
    & {K_X \otimes \residlof{\bphi}(V)}
    \ar@{}[d]|*[left]{\in} \\ 
    {\quad f \quad} \ar@{|->}[r] & 
    {\quad \paren{\res{g_p}_{\lcS}}_{\mathrlap{p\in\cbn}} \quad} 
  } 
\end{equation*}
on each admissible open set $V$, which depends on the choice of
$\sect_0$ and $\sect$.
It is well-defined as assured by Theorem
\ref{thm:results-on-residue-fct}. 


\subsection{A local $L^2$ extension theorem}
\label{sec:exact-sequence}


In the versions of analytic adjoint ideal sheaves of Guenancia
(\cite{Guenancia}) and Dano Kim (\cite{KimDano-adjIdl}), surjectivity
of the residue morphism is obtained via an application of the
Ohsawa--Takegoshi--Manivel extension theorem (see \cite{Manivel} and
\cite{Demailly_on_OTM-extension}), which is not sufficient for proving
the surjectivity of the residue morphism in the present setting. 
In this section, a local extension which belongs to $\aidlof{\vphi_L}$
(i.e.~a preimage of a germ in $\residlof{\bphi}$ under $\Res$) is
constructed directly instead.

Given an admissible open set $V \subset X$, for any fixed integer
$\sigma \in [1, \sigma_V]$ and any $\sigma$-lc 
centre $\lcS \subset \lcc<V>$ where $p \in\cbn$ (see Section
\ref{sec:local-expression-of-germs}), 
consider the maps given in the diagram
\begin{equation*}
  \xymatrix{
    {\lcS \isom \set 0 \times \lcS\; } \ar@{^{(}->}[r]^-{\iota_p} &
    {U_p \times \lcS }  \ar[d]^-{\pr_p} 
    & **[l]{\quad = U_p \times W_p \isom V}  \\ 
    & {\lcS} &
  }
\end{equation*}
where $U_p \subset \fieldC^\sigma$ is an $\sigma$-disc centred at the
origin with coordinates given by $z_{p(1)}, \dots, z_{p(\sigma)}$,
and the maps $\iota_p$ and $\pr_p$ are, respectively, the natural
inclusion and the projection to the second factor.
$\lcS$ in the product $U_p \times \lcS$ is written as $W_p$ so that
the symbol $\lcS$ is reserved for the subvariety $\set{0} \times W_p$.
Set
\begin{equation*}
  \bphip := \pr_p^*\iota_p^*\bphi
  \quad\;\;\text{(and } \bphip :=\bphi \text{ when } \sigma =0 \text{)}
\end{equation*}
and notice that, from the smoothness of $\beta$ in the local
expression of $\res\bphi_V$ in \eqref{eq:local-expression-bphi-psi}
and that $\bphi^{-1}(-\infty)$ is in snc with $S$ but contains no
components of $S$, one has, for any integer $\sigma \geq 0$,
\begin{equation} \label{eq:bdds-on-bphi}
  \bphi \:\sim_{\tlog} \bphip
  \quad\text{ on }V \;\;\text{ for all } p \in\cbn \; .
\end{equation}

With the above preparation, the following key property of
adjoint ideal sheaves can be proved.
\begin{thm} \label{thm:short-exact-seq}
  Given the Snc assumption \ref{assum:snc-nas}, the sequence
  \begin{equation*}
    \xymatrix{
      0 \ar[r]
      & {\aidlof|\sigma-1|{\vphi_L}} \ar[r]
      & {\aidlof{\vphi_L}} \ar[r]^-{\Res}
      & {\residlof{\bphi}} \ar[r]
      & 0
    } \; ,
  \end{equation*}
  where the map between $\aidlof|\sigma-1|{\vphi_L}$ and
  $\aidlof{\vphi_L}$ is the natural inclusion, is exact.
\end{thm}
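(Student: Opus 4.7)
The plan is to prove the theorem in three parts: injectivity of the inclusion $\aidlof|\sigma-1|{\vphi_L} \hookrightarrow \aidlof{\vphi_L}$ (immediate from Theorem~\ref{thm:results-on-residue-fct}), exactness at the middle, and surjectivity of $\Res$. I would verify the last two claims on the basis of admissible open sets $V$, since both properties are local in nature, and then conclude globally by sheafification (or stalkwise).

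For exactness at the middle, I would work with the decomposition \eqref{eq:decomposition-of-f} on $V$. By the uniqueness formula \eqref{eq:uniqueness-of-g_p}, a section $f \in \aidlof{\vphi_L}(V)$ lies in $\ker\Res$ if and only if each coefficient $g_p$ vanishes on $\lcS$, i.e., $g_p = \sum_{j=1}^{\sigma} z_{p(j)} h_{p,j}$ for holomorphic $h_{p,j}$. Substituting back into \eqref{eq:decomposition-of-f} exhibits $f$ as a sum of terms each divisible by a squarefree monomial of length $\sigma_V-\sigma+1$ in $z_1,\dotsc,z_{\sigma_V}$, i.e.~a decomposition of $f$ at level $\sigma-1$; Theorem~\ref{thm:results-on-residue-fct}\eqref{item:thm:res-fct} then yields $f \in \aidlof|\sigma-1|{\vphi_L}(V)$. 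The reverse inclusion $\aidlof|\sigma-1|{\vphi_L} \subset \ker\Res$ follows by rewriting any level-$(\sigma-1)$ decomposition as a level-$\sigma$ decomposition in which every coefficient $g_p$ acquires at least one factor $z_{p(j)}$ with $j\leq \sigma$.

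The main task is surjectivity of $\Res$. Fix an admissible $V$ and a section $(g_p)_{p\in\cbn} \in \residlof{\bphi}(V)$. Using the projection $\pr_p \colon U_p \times W_p \to W_p = \lcS$ from the diagram in Section~\ref{sec:exact-sequence}, I would define the holomorphic extension $\tilde g_p := \pr_p^* g_p$ on $V$ and set
\begin{equation*}
  f := \sum_{p\in\cbn} \tilde g_p \,\sect_0\, z_{p(\sigma+1)}\dotsm z_{p(\sigma_V)} \; .
\end{equation*}
Since $\tilde g_p$ is independent of $z_{p(1)},\dotsc,z_{p(\sigma)}$, one has $\res{\tilde g_p}_{\lcS} = g_p$, and \eqref{eq:uniqueness-of-g_p} gives $\Res(f) = (g_p)_p$. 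What remains is to verify $f \in \aidlof{\vphi_L}(V)$, which via Theorem~\ref{thm:results-on-residue-fct}\eqref{item:thm:res-fct} (see also Remark~\ref{rem:RTF-thm-on-smooth-sections}) amounts to checking that $\RTF|f|(\eps)[V,\sigma] < +\infty$ for some $\eps > 0$.

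This integrability verification is the principal obstacle, and here the Snc assumption~\ref{assum:snc-nas} is essential: because $\bphi^{-1}(-\infty)$ is in snc with $S$ and contains no components of $S$, one has the comparison $\bphi \sim_{\tlog} \bphip$ of \eqref{eq:bdds-on-bphi}, which lets one replace $\bphi$ by the variable-separated potential $\bphip = \pr_p^*\iota_p^*\bphi$ in the integrand. Unwinding using $\vphi_L+\psi = \bphi+\phi_{S_0}+\phi_S$ and the local form \eqref{eq:local-expression-bphi-psi}, Fubini's theorem then splits each term into a $W_p$-integral, bounded by $\int_{\lcS}\abs{g_p}^2 e^{-\bphi}\dvol_{\lcS}$ which is finite since $g_p \in \mtidlof[\lcS]{\bphi}(\lcS)$, times a $U_p$-integral of the form $\int_{U_p} \frac{\dvol_{U_p}}{\abs{z_{p(1)}\dotsm z_{p(\sigma)}}^{2(1-\nu_{p(j)})}\logpole}$ (up to smooth bounded factors), which is finite by Remark~\ref{rem:residue-fct-result-summary} thanks to the positivity $\nu_{p(j)} > 0$ that results from $m=1$ being a jumping number. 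This local extension argument — avoiding any appeal to Ohsawa--Takegoshi-type $L^2$ extension — gives surjectivity on $V$, and the sheaf statement follows.
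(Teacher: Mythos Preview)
Your overall strategy matches the paper's: prove the kernel statement via the decomposition \eqref{eq:decomposition-of-f}, and construct a local preimage for surjectivity by pulling back each $g_p$ along $\pr_p$ and summing. The kernel argument is fine. The gap is in the integrability verification for surjectivity.

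Two concrete problems. First, after replacing $\bphi$ by $\bphip$ you claim Fubini factorises the integral into a $W_p$-part and a $U_p$-part, but $\psi$ still depends on the $W_p$-variables (the $z_{p(\sigma+1)},\dots,z_{p(\sigma_V)}$ and any $z_k$ with $a_k>0$), so the factor $\logpole$ does not separate. The paper fixes this by the elementary bound $\abs{\psi_p}:=\bigl|\sum_{j=1}^{\sigma}\nu_{p(j)}\log\abs{z_{p(j)}}^2\bigr|+1\leq\abs{\psi}$, which replaces $\psi$ by a function of the $U_p$-variables alone before Fubini; you need this step (or an equivalent uniform bound on the inner integral). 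Second, the displayed $U_p$-integral is not the one that arises: unwinding $e^{-\vphi_L-\psi}=e^{-\bphi-\phi_{S_0}-\phi_S}$ and cancelling $\abs{\sect_0}^2 e^{-\phi_{S_0}}$ and $\abs{z_{p(\sigma+1)}\dotsm z_{p(\sigma_V)}}^2 e^{-\phi_S}$ leaves the exponent on $\abs{z_{p(1)}\dotsm z_{p(\sigma)}}$ equal to $2$, not $2(1-\nu_{p(j)})$. The coefficients $\nu_{p(j)}$ enter only through $\psi$ (hence through $\psi_p$) inside $\abs{\psi_p}^{\sigma}\bigl(\log\abs{e\psi_p}\bigr)^{1+\eps}$, and it is precisely $\nu_{p(j)}>0$ that makes $\abs{\psi_p}$ blow up at $z_{p(j)}=0$ so that this factor compensates the pole $\abs{z_{p(j)}}^{-2}$. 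With these corrections the computation in \cite{Chan_on-L2-ext-with-lc-measures}*{Prop.~2.2.1} (cf.~Remark~\ref{rem:residue-fct-result-summary}) gives the finiteness, and your proof goes through as in the paper.
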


\begin{proof}
  To see that $\aidlof|\sigma-1|{\vphi_L}$ is the kernel
  of $\Res$, note that, according to Theorem
  \ref{thm:results-on-residue-fct}, for any $f \in 
  \aidlof{\vphi_L}$, one has $\RTF[\rho]|f|(0)[\sigma] =0$ (for some
  smooth cut-off function $\rho$ which is $\equiv 1$ on some
  admissible open set $V$ as in Theorem
  \ref{thm:results-on-residue-fct} \eqref{item:thm:res-norm}) if and
  only if $\res{g_p}_{\lcS} \equiv 0$ on $\lcS \subset \lcc<V>$ for
  all $p \in \cbn$, which is equivalent to $f \in 
  \mtidlof{\vphi_L} \cdot \defidlof{\lcc}
  =\aidlof|\sigma-1|{\vphi_L}$ according to
  \eqref{eq:decomposition-of-f} and Lemma \ref{lem:g_p-regularity}.

  For the surjectivity of the residue morphism $\Res$, first note that
  $\residlof{\bphi}_x = 0$ for any $x \in X \setminus \lcc$ and thus
  $\Res_x$ is automatically surjective for such $x$.
  Next, consider any $x \in \lcc \subset X$ and any germ $\paren{g_p}_{p\in\cbn}
  \in \residlof{\bphi}_x$ such that $V$ is an admissible open set in
  $X$ centred at $x$ and
  \begin{equation*}
    \sum_{p \in\cbn} 
    \int_{\lcS} \abs{g_p}^2 \:e^{-\bphi} \dvol_{\lcS}  < +\infty
    \quad\text{ (where $\lcS \subset \lcc<V> \subset V$)} \; .
  \end{equation*}
  Note that
  \begin{equation*}
    \abs{\psi_p} := \abs{\sum_{j=1}^\sigma
      \nu_{p(j)}\log\abs{z_{p(j)}}^2} +1
    \leq \abs{\psi} \quad\text{ on }V
  \end{equation*}
  as seen from \eqref{eq:local-expression-bphi-psi}.
  Abusing the notation $g_p$ to mean also its pullback $\pr_p^*g_p$ to
  $V$,
  it then follows that, for each $p\in\cbn$ and for
  any $\eps > 0$, 
  \begin{align*}
    &~\int_V \frac{
      \abs{g_p \:z_{p(\sigma+1)} \dotsm z_{p(\sigma_V)}}^2
      \:e^{-\bphi-\phi_S} \dvol_V
      }{\logpole} \\ 
    \overset{\mathclap{\eqref{eq:bdds-on-bphi}}}\lesssim
    &\quad
      \int_V \frac{
      \abs{g_p \:z_{p(\sigma+1)} \dotsm z_{p(\sigma_V)}}^2
      \:e^{-\alert{\bphip}-\phi_S} \dvol_V
      }{\logpole}
      \leq
      \int_V \frac{
      \abs{g_p \:z_{p(\sigma+1)} \dotsm z_{p(\sigma_V)}}^2
      \:e^{-\bphip-\phi_S} \dvol_V
      }{\logpole|\alert{\psi_p}|} \\ 
    =
    &~\int_{U_p \times W_p} \frac{
      \abs{g_p}^2 \:e^{-\bphip} \dvol_{U_p} \dvol_{W_p}
      }{\abs{z_{p(1)} \dotsm z_{p(\sigma)}}^2 \:\logpole|\psi_p|} \\
    =
    &~\int_{U_p} \frac{\dvol_{U_p}} {\abs{z_{p(1)} \dotsm
      z_{p(\sigma)}}^2 \:\logpole|\psi_p|}
      \cdot \int_{\lcS} \abs{g_p}^2 \:e^{-\bphi} \dvol_{\lcS} \quad < +\infty \; .
  \end{align*}
  Therefore, the holomorphic function
  \begin{equation*}
    f := \sum_{p\in\cbn} g_p \:\sect_0
    \:z_{p(\sigma+1)} \dotsm z_{p(\sigma_V)} \quad\text{ on } V
  \end{equation*}
  satisfies $\RTF|f|(\eps)[V,\sigma] < +\infty$ for all $\eps > 0$,
  and thus $f \in \aidlof{\vphi_L}_x$.
  One has $\Res_x(f) = \paren{g_p}_{p\in\cbn}$ by construction.
  This completes the proof.
\end{proof}

The following can be considered as a local $L^2$ extension theorem for
extending holomorphic sections on $\sigma$-lc centres locally with
estimates.
The constant of the estimate depends only ``mildly'' on the given
metric.

\begin{cor} \label{cor:local-L2-estimates}
  For any given $\paren{g_p}_{p\in\cbn} \in \residlof{\bphi}
  \paren{\cl V}$ on an admissible open set $V$, the extension $f \in
  \aidlof{\vphi_L}\paren{\cl V}$ constructed in the proof of Theorem
  \ref{thm:short-exact-seq} satisfies the estimate
  \begin{equation*}
    \RTF|f|(\eps)[V,\sigma] 
    \leq C \sum_{p\in\cbn} \frac{\pi^\sigma}{\paren{\sigma-1}! \:\vect\nu_p}
    \int_{\lcS} \abs{g_p}^2 \:e^{-\bphi}
    \dvol_{\lcS}
    =C \:\RTF|f|(0)[V,\sigma]
  \end{equation*}
  for all $\eps > 0$ and for some constant $C > 0$
  which depends only on the constants involved in
  $\lesssim_\tlog$ in the inequality $\bphip \lesssim_\tlog \bphi$ for
  every $p\in\cbn$ given in \eqref{eq:bdds-on-bphi}.\footnote{
    In Theorem \ref{thm:main-results}
    \eqref{item:thm:local-L2-estimate}, the constant $C$ is described
    as depending only ``mildly'' on $\bphi$ because such kind of
    constants is ``robust'' under the approximation of
    quasi-psh functions.
    Indeed, if $\bphi$ has more general singularities and if $\bphip
    \leq \bphi + C$ for some constant $C \geq 0$, then the Bergman kernel
    approximations $\dep[k]{\bphip}$ and $\dep[k]{\bphi}$ (with neat
    analytic singularities) of the quasi-psh potentials $\bphip$ and
    $\bphi$, respectively, satisfy $\dep[k]{\bphip} \leq \dep[k]{\bphi}
    + C$ for the same $C$.
    The variables of $\dep[k]{\bphip}$ can be separated relatively
    easily on admissible open sets in favour of the computation using
    Fubini's theorem as in the proof of Theorem
    \ref{thm:short-exact-seq}.
    These will be discussed in details in the subsequent paper.
  }
  (See \eqref{eq:lc-measure-for-non-cpt-supp} for the
  definition of $\RTF|f|(0)[V,\sigma]$ when $f$ is not compactly
  supported in $V$ and see Theorem \ref{thm:results-on-residue-fct}
  \eqref{item:thm:res-norm} for the definition of $\vect\nu_p$.)
\end{cor}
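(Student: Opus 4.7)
The extension $f = \sum_{p \in \cbn} g_p \sect_0 \: z_{p(\sigma+1)} \dotsm z_{p(\sigma_V)}$ has already been constructed in the proof of Theorem \ref{thm:short-exact-seq}, and the equality $\sum_{p \in \cbn} \frac{\pi^\sigma}{(\sigma-1)!\,\vect\nu_p} \int_{\lcS} \abs{g_p}^2 e^{-\bphi} \dvol_{\lcS} = \RTF|f|(0)[V,\sigma]$ is immediate from Theorem \ref{thm:results-on-residue-fct}\eqref{item:thm:res-norm} applied to this $f$. The actual content of the corollary is therefore an upper estimate of $\RTF|f|(\eps)[V,\sigma]$ by a multiple of $\RTF|f|(0)[V,\sigma]$ whose constant does not depend on $\eps$.

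The plan is to recycle the chain of estimates already appearing in the proof of Theorem \ref{thm:short-exact-seq}, but now tracking the constants quantitatively. First, bound $\abs{f}^2 \leq \abs{\cbn}\sum_{p \in \cbn} \abs{g_p\,\sect_0\,z_{p(\sigma+1)} \dotsm z_{p(\sigma_V)}}^2$ (cross terms cost only the combinatorial factor $|\cbn|$, which is finite). To each single-monomial summand one then applies, in order: the pointwise comparison $e^{-\bphi} \lesssim e^{-\bphip}$ from \eqref{eq:bdds-on-bphi}; the weakening of $1/\logpole$ to $1/\logpole|\psi_p|$ coming from $|\psi_p| \leq |\psi|$ together with the monotonicity of $x \mapsto x^\sigma(\log ex)^{1+\eps}$; and Fubini along the product decomposition $V = U_p \times W_p$. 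The $W_p$-integration contributes exactly $\int_{\lcS} \abs{g_p}^2 e^{-\bphi} \dvol_{\lcS}$, so the whole task collapses to the estimate
\[
\eps \int_{U_p} \frac{\dvol_{U_p}}{\abs{z_{p(1)} \dotsm z_{p(\sigma)}}^2 \logpole|\psi_p|} \leq \frac{\pi^\sigma}{(\sigma-1)!\,\vect\nu_p}
\]
(up to a universal constant, uniformly in $\eps > 0$).

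The last bound is the heart of the argument. Passing to polar coordinates $r_j = |z_{p(j)}|$ and then substituting $u_j = -\log r_j^2$ followed by $v_j = \nu_{p(j)} u_j$ converts the $U_p$-integral into $\frac{\pi^\sigma}{\vect\nu_p}$ times an integral of $(1+\sum v_j)^{-\sigma}(\log(e(1+\sum v_j)))^{-(1+\eps)}$ over a corner $\set{v_j \geq v_0}$. Foliating by $w := 1 + \sum v_j$ — whose fibres inside the corner are simplices of $(\sigma-1)$-dimensional Lebesgue measure at most $(w-1)^{\sigma-1}/(\sigma-1)!$ — reduces this to the one-dimensional integral $\int_1^\infty \frac{dw}{w(\log(ew))^{1+\eps}} = 1/\eps$, and the outside $\eps$ cancels this $1/\eps$ cleanly. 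Summing over $p \in \cbn$ yields the claimed estimate with a constant $C$ accumulating $|\cbn|$, the implicit constant in $\bphi \sim_\tlog \bphip$, and a universal factor from the final elementary integral; none of these depend on $\eps$, nor on $\bphi$ beyond its $\sim_\tlog$-comparison with $\bphip$. The main technical point — rather than a serious obstacle — is precisely this cancellation of the $1/\eps$ divergence of the inner one-dimensional integral against the explicit $\eps$ in the definition of $\RTF|f|(\eps)$; everything else is already available from Theorems \ref{thm:short-exact-seq} and \ref{thm:results-on-residue-fct}.
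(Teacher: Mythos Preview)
Your argument is correct and leads to the same bound as the paper, but the route you take for the key integral over $U_p$ is genuinely different and worth contrasting.

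In the paper, the integral
\[
\RTI[1](\eps)
:=\eps \int_{U_p} \frac{\dvol_{U_p}}{\abs{z_{p(1)} \dotsm z_{p(\sigma)}}^2 \logpole|\psi_p|}
\]
is handled by introducing a smooth radial cut-off $\rho$, applying an integration-by-parts formula (namely \cite{Chan_on-L2-ext-with-lc-measures}*{(eq$\,$2.2.4)}) to $\RTI[\rho](\eps)$, and using the monotonicity of $\rho$ together with $\slog|\psi_p|\geq 1$ to peel off the $\eps$-dependent factor; one then lets the cut-off descend to the characteristic function of the polydisc. The combinatorial factor $\frac{1}{(\sigma-1)!}$ arises there from the repeated integration by parts encoded in the cited formula. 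Your approach bypasses both the cut-off and the cited identity: after the change of variables $v_j=-\nu_{p(j)}\log\abs{z_{p(j)}}^2$ and the foliation by $w=1+\sum_j v_j$, the integral is reduced by the coarea formula (or, equivalently, by the linear change of coordinates $(v_1,\dots,v_\sigma)\mapsto(w,v_2,\dots,v_\sigma)$ with Jacobian $1$) to the elementary $\int_1^\infty w^{-1}(\log(ew))^{-1-\eps}\,dw=1/\eps$, with the $(\sigma-1)!$ appearing as the volume of the simplex fibre.

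Your argument is more self-contained; the paper's has the (minor) advantage of tying into the same residue-function machinery used throughout. One small point: your initial bound $\abs f^2\leq\abs{\cbn}\sum_p\abs{\,\cdot\,}^2$ is the sharp Cauchy--Schwarz constant, whereas the paper writes the cruder factor~$2$ at the corresponding step; either is harmless since $\abs{\cbn}$ is absorbed into $C$.
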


\begin{proof}
  This essentially follows from the estimates of integrals in the
  proof of Theorem \ref{thm:short-exact-seq} and the computation in
  \cite{Chan_on-L2-ext-with-lc-measures}*{Proof of Prop.~2.2.1}.
  Following the notation in the proof of Theorem
  \ref{thm:short-exact-seq}, one has
  \begin{align*}
    \RTF|f|(\eps)[V,\sigma] 
    &\leq  
      2 \sum_{p\in\cbn} \eps \int_V \frac{
      \abs{g_p \:z_{p(\sigma+1)} \dotsm z_{p(\sigma_V)}}^2
      \:e^{-\bphi-\phi_S} \dvol_V
      }{\logpole} \\ 
    &\overset{\mathclap{\text{pf.~of
      Thm.~\ref{thm:short-exact-seq}}}}\lesssim
      \qquad
      \sum_{p\in\cbn} \eps \int_{U_p} \frac{\dvol_{U_p}} {\abs{z_{p(1)} \dotsm
      z_{p(\sigma)}}^2 \:\logpole|\psi_p|}
      \cdot \int_{\lcS} \abs{g_p}^2 \:e^{-\bphi} \dvol_{\lcS} \; ,
  \end{align*}
  where the constant involved in $\lesssim$ depends only on the
  constants involved in $\lesssim_{\tlog}$ in the inequalities $\bphip
  \lesssim_{\tlog} \bphi$ for all $p \in \cbn$ given in
  \eqref{eq:bdds-on-bphi}.

  It remains to estimate, for each $p \in\cbn$, the integral
  \begin{equation*}
    \RTI[1](\eps)
    :=\eps \int_{U_p} \frac{\dvol_{U_p}} {\abs{z_{p(1)} \dotsm
        z_{p(\sigma)}}^2 \:\logpole|\psi_p|} \; .
  \end{equation*}
  (Note that the symbol $\RTI$ is chosen to match with the one in
  \cite{Chan_on-L2-ext-with-lc-measures}*{Proof of Prop.~2.2.1} so
  that \cite{Chan_on-L2-ext-with-lc-measures}*{(eq$\,$2.2.4)} can be
  applied directly.)
  For simplicity, assume that $U_p$ is a $\sigma$-disc with a uniform
  poly-radius $R$ (adjust the function $\rho$ defined below suitably
  when the poly-radius is not uniform).
  For any $\delta \in (0, R)$, let $\rs\rho :=\rs\rho_\delta \colon
  \fieldR_{\geq 0} \to [0,1]$ be a \emph{non-increasing} smooth
  function such that
  \begin{equation*}
    \rs\rho(t) =
    \begin{dcases}
      1 & \text{for } t \in [0,R-\delta] \; , \\
      0 & \text{for } t \geq R \; .
    \end{dcases} 
  \end{equation*}
  Then, set
  \begin{equation*}
    \rho :=\prod_{j=1}^{\sigma} \rho_{p(j)}
    :=\prod_{j=1}^\sigma \rs\rho \circ \abs{z_{p(j)}}^2 \; .
  \end{equation*}
  Notice that one has
  \begin{equation*}
    \RTI[\rho](\eps)
    =\eps \int_{U_p} \frac{\rho \:\dvol_{U_p}} {\abs{z_{p(1)} \dotsm
        z_{p(\sigma)}}^2 \:\logpole|\psi_p|}
    \;\;\tendsto\; \RTI[1](\eps)
    \quad\text{ as }\quad
    \delta \tendsto 0^+
  \end{equation*}
  for each $\eps > 0$ by the dominated convergence theorem.

  Let $(r_j, \theta_j)$ be the polar coordinate system of the
  $z_j$-plane and write $\fdiff{r_j}$ as $\diff_{r_j}$ for
  convenience. 
  Now, applying
  \cite{Chan_on-L2-ext-with-lc-measures}*{(eq$\,$2.2.4)} to
  $\RTI[\rho](\eps)$ and noting the deliberate choice of $\psi_p$ in
  the denominator of the integrand of $\RTI[\rho](\eps)$, one obtains
  \begin{align*}
    \RTI[\rho](\eps)
    &\leq
    \frac{(-1)^\sigma}{(\sigma -1)!}
    \int_{U_p} \frac{\diff_{r_{p(\sigma)}} \dotsm \diff_{r_{p(1)}}
      \rho }{\paren{\slog|\psi_p|}^\eps} \prod_{j=1}^\sigma
      \frac{dr_{p(j)} d\theta_{p(j)}}{2\nu_{p(j)}} \\
    &=\frac{\pi^\sigma}{(\sigma -1)! \:\vect\nu_p}
      \int_{[0,R]^\sigma}
      \frac{1}{\paren{\slog|\psi_p|}^\eps}
      \prod_{j=1}^\sigma \fdiff{r_{p(\sigma)}}[\paren{-\rho_{p(j)}}]
      \:dr_{p(j)} \\
    &\leq
      \frac{\pi^\sigma}{(\sigma -1)! \:\vect\nu_p}
      \int_{[0,R]^\sigma}
      \prod_{j=1}^\sigma \fdiff{r_{p(\sigma)}}[\paren{-\rho_{p(j)}}]
      \:dr_{p(j)}
      \;\;
      =\; \frac{\pi^\sigma}{(\sigma -1)! \:\vect\nu_p} 
  \end{align*}
  for all $\eps > 0$.
  Note that the last inequality above makes use of the facts that
  $\slog|\psi_p| \geq 1$ and $\fdiff{r_{p(j)}}[\paren{-\rho_{p(j)}}]
  \geq 0$ on $U_p$ for $j=1,\dots,\sigma$.
  Finally, letting $\delta \tendsto 0^+$ yields
  \begin{equation*}
    \RTI[1](\eps) \leq \frac{\pi^\sigma}{(\sigma -1)! \:\vect\nu_p}
  \end{equation*}
  for all $\eps > 0$.
  This results in the desired estimate for $\RTF|f|(\eps)[V,\sigma]$.
\end{proof}


\section{The non-snc scenarios}
\label{sec:non-snc}


Suppose in this section that $\vphi_L$ and $\psi$ are given as in
Section \ref{sec:setup} (thus having only neat analytic singularities
in particular) and satisfy the Snc assumption \ref{assum:snc} (but
need not satisfy \ref{assum:snc-nas}).
Let $\pi \colon \rs X \to X$ be the log-resolution of $(X,\vphi_L,\psi)$
given in Snc assumption \ref{assum:snc}.
Recall from Section \ref{sec:snc-assumption} the decomposition
$E_{d\pi} = E + R$ 
and the notation of the canonical sections $\sect_E$ and $\sect_R$ and
the potential $\pi^*_\ominus\vphi_L :=\pi^*\vphi_L -\phi_R$.
Let $\rs S$ be the lc locus of $\set{\mtidlof[\rs
  X]{\pi^*_\ominus\vphi_L +m\pi^*\psi}}_{m \in [0,1]}$ at the jumping
number $m=1$, which is a reduced snc divisor. 
Recall also that $E$ and $\rs S$ have no common irreducible
components.

Let $\rs S_0$ be the divisor on $\rs X$ corresponding to
the divisor $S_0$ described in Remark
\ref{rem:consequences-of-snc-assumptions} with $(\rs
X,\pi^*_\ominus\vphi_L,\pi^*\psi)$ in place of
$(X,\vphi_L,\psi)$.
Also let $\rs\bphi$ be the quasi-psh potential on $\pi^*L \otimes
R^{-1} \otimes \rs S_0^{-1} \otimes \rs S^{-1}$ corresponding to the
potential $\bphi$ in \eqref{eq:define-bphi}
such that
\begin{equation*}
  \rs\bphi +\phi_{\rs S_0} +\phi_{\rs S}
  := \pi^*_\ominus\vphi_L +\pi^*\psi \; ,
\end{equation*}
where $\phi_{\rs S_0} =\log\abs{\rs\sect_0}^2$ and $\phi_{\rs S}
=\log\abs{\rs\sect}^2$, in which $\rs\sect_0$ and $\rs\sect$ are
respectively fixed canonical holomorphic sections of $\rs S_0$ and
$\rs S$ on $\rs X$.
Moreover, let $m_0 \in [0,1)$ be the number provided by Proposition
\ref{prop:log-resoln-on-jump-subvar} such that
\begin{equation*}
  \mtidlof[\rs X]{\pi^*_\ominus\vphi_L +m_0\pi^*\psi}
  =\mtidlof[\rs X]{\pi^*_\ominus\vphi_L +m\pi^*\psi}
  \supsetneq
  \mtidlof[\rs X]{\pi^*_\ominus\vphi_L +\pi^*\psi}
  \quad\text{ for all } m\in [m_0,1) \; .
\end{equation*}

For definiteness, define the volume form $\dvol_{\rs X,
  \paren{\pi^*\omega}_{>0}}$ on $\rs X$ given by
\begin{equation} \label{eq:induced-volume-via-log-resoln}
  \pi^*\dvol_{X,\omega} =\frac{\pi^*\omega^{n}}{n!}
  =: \abs{\sect_E}^2 \:e^{\phi_R} \dvol_{\rs X, \paren{\pi^*\omega}_{>0}}
  \; .
\end{equation} 
In this section, for the sake of notational convenience, the
subscripts ``$\omega$'' and ``$\paren{\pi^*\omega}_{>0}$'' are made
implicit in the notation for volume forms, including those induced on
the lc centres.
(One can also avoid considering the volume form induced from $\omega$
if $(n,0)$-forms are considered in the following discussion;
see Remark \ref{rem:residue-norm-on-hol-n-forms-intrinsic}.)

\subsection{The direct image of the residue short exact sequence}
\label{sec:short-exact-seq-push-forward}

Following the arguments in the proof of Proposition
\ref{prop:log-resoln-on-jump-subvar}, since
\begin{equation*}
  \int_V \frac{\abs f^2 \:e^{-\vphi_L-\psi} \dvol_X}{\logpole}
  = \int_{\mathrlap{\pi^{-1}\paren{V}}}
  \quad\frac{\abs{\pi^*f \cdot \sect_E}^2
    \:e^{-\pi^*_\ominus\vphi_L -\pi^*\psi} \dvol_{\rs X}}{\logpole|\pi^*\psi|}
\end{equation*}
for any $f \in \holo_X\paren{V}$ on any open set $V \subset X$ and for any
$\sigma \geq 0$ and $\eps > 0$ (notice also that
$\frac{e^{-\vphi_L-\psi}}{\logpole} \geq C >0$ locally in $X$ for some
constant $C$), one immediately obtains 
\begin{gather}
  \notag
  \aidlof<X>{\vphi_L} \cdot \holo_{\rs X}
  \hookrightarrow 
    \aidlof<\rs X>{\pi^*_\ominus\vphi_L -\phi_E}[\pi^*\psi]
    \xhookrightarrow{\:\otimes \sect_E\:} 
    E \otimes \aidlof<\rs X>{\pi^*_\ominus\vphi_L}[\pi^*\psi]
    \quad\mathrlap{\text{ and}} \\
  \label{eq:log-resoln-aidl}
  \aidlof<X>{\vphi_L}
  \isom \pi_*\aidlof<\rs X>{\pi^*_\ominus\vphi_L -\phi_E}[\pi^*\psi]
    \isom \pi_*\paren{E \otimes \aidlof<\rs X>{\pi^*_\ominus\vphi_L}[\pi^*\psi]}
\end{gather}
for any integer $\sigma \geq 0$, in which both globally defined
sheaf-homomorphisms on the far right-hand-side depend on the choice of
$\sect_E$.
Recall that $\lcc<\rs X>(\rs* S)$ is defined via the definition of lc
centres of $(\rs X,\rs S)$ in \cite{Kollar_Sing-of-MMP}*{Def.~4.15}
according to Remark \ref{rem:consequences-of-snc-assumptions}.


Theorem \ref{thm:short-exact-seq} provides the residue short exact
sequences 
\begin{equation*}
  \xymatrix@R=0.5cm@C=0.65cm{
    0 \ar[r]
    & {\aidlof|\sigma-1|<\rs X>{\pi^*_\ominus\vphi_L -\phi_E}[\pi^*\psi]}
    \ar[r] \ar@{^(->}[d]^-{\otimes \sect_E}
    & {\aidlof<\rs X>{\pi^*_\ominus\vphi_L -\phi_E}[\pi^*\psi]}
    \ar[r]^-{\rs\Res} \ar@{^(->}[d]^-{\otimes \sect_E}
    & {\residlof<\rs X>{\rs \bphi -\phi_E}} \ar[r] \ar[d]^-{t_E}
    & 0
    \\
    0 \ar[r]
    & {E \otimes \aidlof|\sigma-1|<\rs X>{\pi^*_\ominus\vphi_L}[\pi^*\psi]}
    \ar[r]
    & {E \otimes \aidlof<\rs X>{\pi^*_\ominus\vphi_L}[\pi^*\psi]}
    \ar[r]^-{\rs\Res}
    & {E \otimes \residlof<\rs X>{\rs \bphi}}
    \ar[r]
    & 0
  }
\end{equation*}
for any integer $\sigma \geq 1$.
From the facts that all involving divisors are in the snc
configuration and that $E$ contains no lc centres of $(\rs X, \rs S)$
and no components of $\rs\bphi^{-1}(-\infty)$,
one sees that the map $t_E$ is injective.
Applying $\pi_*$ to the above diagram and taking
\eqref{eq:log-resoln-aidl} into account, a diagram-chasing argument
then yields the exact sequence
\begin{equation} \label{eq:isom-of-2models-of-residl}
  \begin{aligned}
    \xymatrix@C+0.3cm@R=0.01cm{ &&& {\pi_*\residlof<\rs X>{\rs \bphi
          -\phi_E}} \ar[dd]^-{\isom}
      \\
      0 \ar[r] & {\aidlof|\sigma-1|<X>{\vphi_L}} \ar[r] &
      {\aidlof<X>{\vphi_L}} \ar[ur]^-{\pi_*\rs\Res}
      \ar[dr]+<-46pt, 13pt>_-{\pi_*\rs\Res} &
      \\
      &&& {\pi_*\paren{E \otimes \residlof<\rs X>{\rs \bphi}} \; .}  }
  \end{aligned}
\end{equation}
In what follows, the sheaf $E \otimes \aidlof<\rs
X>{\pi^*_\ominus\vphi_L}[\pi^*\psi]$ is used as a model of
$\aidlof<X>{\vphi_L}$ on $\rs X$ via the log-resolution, but all
results still hold true when $\aidlof<\rs X>{\pi^*_\ominus\vphi_L
  -\phi_E}[\pi^*\psi]$ is used as the model instead.

The homomorphism $\pi_*\rs\Res$ need not be surjective (so
$R^1\pi_*\paren{E \otimes \aidlof|\sigma-1|<\rs
  X>{\pi^*_\ominus\vphi_L}[\pi^*\psi]} \neq 0$ in general), as 
can be seen in the following example.
\begin{example} \label{example:the-cross-in-2-disc}
  Let $X :=\Delta^2 \subset \fieldC^2$ be the unit $2$-disc centred at
  the origin $\vect 0 =(0,0)$ with holomorphic coordinates $z_1$ and
  $z_2$.
  Let also
  \begin{equation*}
    \vphi_L:=0 \quad\text{ and }\quad
    \psi = \log\abs{z_1}^2 +\log\abs{z_2}^2 -1 \; .
  \end{equation*} 
  Then the family $\set{\mtidlof[X]{m\psi}}_{m\in [0,1]}$ has only
  $m=1$ as a jumping number, with $\mtidlof[X]{m\psi} =\holo_X$ for
  $m\in[0,1)$ and $\mtidlof[X]{\psi} =\genby{z_1 z_2}$.
  Therefore, $S = S_1 + S_2$, where $S_j :=\set{z_j = 0}$ for
  $j=1,2$.
  It can also be seen that
  \begin{equation*}
    \lcc[1] = S \quad\text{ and }\quad
    \lcc[2] = S_1 \cap S_2 = \set{\vect 0} \; .
  \end{equation*}

  Let $\pi \colon \rs X \to X$ be the blow-up at the origin with an
  exceptional divisor $R$ (and $E = 0$).
  Write $\rs S_j :=\pi^{-1}_* S_j$ as the proper transform of $S_j$ on
  $\rs X$.
  Note that
  \begin{equation*}
    \pi^*\psi \sim_{\tlog} \phi_{\rs S_1} +\phi_{\rs S_2} +2\phi_R
  \end{equation*}
  (see Notation \ref{notation:potentials}), so the family
  $\set{\mtidlof[\rs X]{-\phi_R+m\pi^*\psi}}_{m\in [0,1]}$ jumps only
  at $m=1$ and $\mtidlof[\rs X]{-\phi_R +m\pi^*\psi} =\holo_{\rs X}$
  for all $m \in [0,1)$.
  The subvariety corresponding to
  $\mtidlof[\rs X]{-\phi_R+\pi^*\psi}$ is $\rs S_1
  +\rs S_2 +R$, so $\rs S =\rs S_1 +\rs S_2 +R$
  and thus
  \begin{equation*}
    \lcc[1]<\rs X>(\rs* S) = \rs S \quad\text{ and }\quad
    \lcc[2]<\rs X>(\rs* S) = \paren{\rs* S_1 \cap R} \sqcup
    \paren{\rs* S_2 \cap R} =\set{p_1, p_2} \; ,
  \end{equation*}
  where $\set{p_j} := \rs S_j \cap R$ for $j=1,2$ and $\lcc[2]<\rs
  X>(\rs* S)$ is the union of two distinct points.
  Note also that $\pi\paren{\set{p_1, p_2}} =\set{\vect 0}$.
  Moreover, $\rs S_0 =0$.
  From a local computation, $\rs\bphi$ can be chosen to be the
  constant $-1$. 

  The sheaf $\residlof|2|<\rs X>{\rs\bphi} \isom \fieldC_{p_1} \oplus
  \fieldC_{p_2}$ is thus a skyscraper sheaf supported at $p_1$ and
  $p_2$.
  For any $2$-disc $V =\Delta_r^2 \subset X$ of radius $r \in (0,1)$
  centred at $\vect 0$ and for any section $f \in 
  \aidlof|2|<X>{0}[\psi](V) =\pi_*\aidlof|2|<\rs
    X>{-\phi_R}[\pi^*\psi](V)$, the image $\pi_*\rs\Res\paren{f}
  =\rs\Res\paren{\pi^*f}$ has to lie inside the diagonal of
  $\pi_*\residlof|2|<\rs X>{\rs\bphi}(V) \isom\fieldC \times
  \fieldC$, so $\pi_*\rs\Res$ cannot be surjective at the origin
  $\vect 0$.
\end{example}

The following Theorem implies that the image of $\pi_*\rs\Res$ is
indeed independent of the log-resolution $\pi$ up to isomorphisms.

\begin{thm} \label{thm:direct-image-residl}
  Suppose that $(X,\vphi_L,\psi)$ itself satisfies the Snc assumption
  \ref{assum:snc-nas}, and let $\Res$ and $\residlof<X>{\bphi}$ be the
  residue morphism and the target sheaf of $\Res$ given in the residue
  short exact sequence in Theorem \ref{thm:short-exact-seq}.
  Then, there is an isomorphism $\tau$ between $\residlof<X>{\bphi}$
  and $\im\pi_*\rs\Res \subset \pi_*\paren{E \otimes \residlof<\rs
    X>{\rs\bphi}}$ such that the diagram
  \begin{equation*}
    \xymatrix{
      &
      {\aidlof<X>{\vphi_L}} \ar[dl]_-{\Res}
      \ar[dr]^-{\pi_*\rs\Res} \ar@{}[d]|*+{\circlearrowleft}
      &
      \\
      {\residlof<X>{\bphi}} \ar[rr]^-{\tau}_-{\isom}
      &
      &
      {\im\pi_*\rs\Res}
    }
  \end{equation*}
  commutes.
  Moreover, $\tau$ is locally an isometry in the sense that, for any
  admissible open set $V \Subset X$
  and for any
  $\paren{g_p}_{p\in \cbn} \in \residlof<X>{\bphi}(\cl V)$ and any $f
  \in \aidlof<X>{\vphi_L}(\cl V)$ such that $\paren{g_p}_{p\in \cbn}
  =\Res\paren f$, one has $\tau \paren{g_p}_{p\in \cbn}
  =\pi_*\rs\Res\paren{f} =\rs\Res\paren{\pi^*f \cdot \sect_E}
  =\paren{\rs g_q \cdot \sect_E}_{q\in \Iset}$ (where $\lcS/\rs
  S/[q]$ for $q \in \Iset$ are the $\sigma$-lc centres of
  $\paren{\pi^{-1}\paren{V}, \rs S \cap \pi^{-1}\paren{V}}$ such that
  $\lcc<\pi^{-1}\paren{V}>(\rs* S) = \bigcup_{q \in \Iset} \lcS/\rs
  S/[q]$) and 
  \begin{equation*}
    \int_{\lcc<V>} \abs f^2 \lcV
    =\int_{\lcc<\pi^{-1}\paren V>(\rs* S)} \abs{\pi^*f \cdot
      \sect_E}^2 \lcV/\pi^*_\ominus\vphi_L/[\pi^*\psi] 
  \end{equation*}
  (see \eqref{eq:lc-measure-for-non-cpt-supp}), or, more explicitly
  (according to Theorem \ref{thm:results-on-residue-fct}
  \eqref{item:thm:res-norm}),
  \begin{equation*}
    \sum_{p \in\cbn/\sigma_{V}/} \frac{\pi^\sigma}{(\sigma-1)! \:
      \vect\nu_{p}}
    \int_{\lcS} \abs{g_p}^2 \:e^{-\bphi}
    \dvol_{\lcS}
    =\sum_{q \in \Iset} \frac{\pi^\sigma}{(\sigma-1)! \:
      \vect\nu_{q}}
    \int_{\lcS/\rs S/[q]} \abs{\rs g_q \cdot \sect_E}^2 \:e^{-\rs\bphi}
    \dvol_{\lcS/\rs S/[q]} \; , 
  \end{equation*}
  where $\vect\nu_{p}$'s (resp.~$\vect\nu_{q}$'s) are the
  products of the generic Lelong numbers $\lelong{\psi}[D_i]$ of
  $\psi$ (resp.~$\lelong{\pi^*\psi}[\rs* D_i]$ of $\pi^*\psi$) along
  irreducible components $D_i$ of $S$ (resp.~$\rs D_i$ of $\rs S$)
  which contain $\lcS$ (resp.~$\lcS/\rs S/[q]$).
\end{thm}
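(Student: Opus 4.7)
The plan is to derive the theorem by applying $\pi_*$ to the residue short exact sequence on $\rs X$ and comparing with the one on $X$. Since $(X,\vphi_L,\psi)$ satisfies the Snc assumption \ref{assum:snc-nas} by hypothesis and $(\rs X, \pi^*_\ominus\vphi_L, \pi^*\psi)$ does so by the choice of the log-resolution $\pi$, Theorem \ref{thm:short-exact-seq} furnishes residue short exact sequences on both spaces. Tensoring the one on $\rs X$ by the line bundle $E$ and applying the left-exact functor $\pi_*$, then invoking \eqref{eq:log-resoln-aidl} to identify the first two terms, I will obtain the exact sequence
\begin{equation*}
  0 \to \aidlof|\sigma-1|<X>{\vphi_L} \to \aidlof<X>{\vphi_L}
  \xrightarrow{\:\pi_*\rs\Res\:} \pi_*\paren{E \otimes \residlof<\rs X>{\rs\bphi}} \; .
\end{equation*}
In particular, $\ker\paren{\pi_*\rs\Res} = \aidlof|\sigma-1|<X>{\vphi_L}$, which by the residue short exact sequence on $X$ also equals $\ker\Res$. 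The first isomorphism theorem will then produce a canonical isomorphism $\tau \colon \residlof<X>{\bphi} \isom \im\paren{\pi_*\rs\Res}$ satisfying $\tau\circ\Res = \pi_*\rs\Res$ by construction, yielding the commutative diagram.

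For the isometric property of $\tau$, the key is the elementary change of variables
\begin{equation*}
  \pi^*\dvol_X
  =\abs{\sect_E}^2 \:e^{\phi_R} \dvol_{\rs X}
\end{equation*}
from \eqref{eq:induced-volume-via-log-resoln}, combined with $\pi^*\vphi_L -\phi_R =\pi^*_\ominus\vphi_L$ and $\pi^*\psi =\psi\circ\pi$. These will yield, for any $f \in \aidlof<X>{\vphi_L}(\cl V)$ on an admissible open set $V$, any $\eps > 0$, and any smooth cut-off $\rho$ compactly supported on an admissible open set $V'$ with $V \Subset V'$ and $\rho\equiv 1$ on $V$, the identity
\begin{equation*}
  \RTF[\rho]|f|(\eps)[V',\sigma]
  =\RTF[\pi^*\rho]|\pi^*f \cdot \sect_E|(\eps)[\pi^{-1}(V'),\sigma] \; .
\end{equation*}
The left-hand side admits analytic continuation across $\eps=0$ by Theorem \ref{thm:results-on-residue-fct} \eqref{item:thm:res-norm}, while the right-hand side, after decomposition via a partition of unity $\set{\chi_\alpha}$ subordinate to a finite cover of $\pi^{-1}\paren{\supp\rho}$ by admissible open sets of $(\rs X, \pi^*_\ominus\vphi_L, \pi^*\psi)$, becomes a finite sum of terms each of which admits analytic continuation by the same theorem. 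Hence the identity persists at $\eps=0$, and letting $\rho \descendsto \charfct_{\cl V}$ (so that $\pi^*\rho \descendsto \charfct_{\pi^{-1}(\cl V)}$) will give the asserted equality of residue norms. The more explicit sum formulas involving $\vect\nu_p$ and $\vect\nu_{(q)}$ follow by evaluating the residue norms on each side using Theorem \ref{thm:results-on-residue-fct} \eqref{item:thm:res-norm}.

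The main obstacle will be the partition-of-unity bookkeeping on $\rs X$: since $\pi^{-1}(V')$ is in general not itself admissible, the analytic continuation of the $\rs X$-side integral has to be obtained piece by piece from admissible neighbourhoods of the components of $\rs S \cap \pi^{-1}(V')$. Once this routine but careful combinatorial step is handled, everything reduces to the left-exactness of $\pi_*$ together with the elementary change of variables above, so no deeper input beyond Theorems \ref{thm:short-exact-seq} and \ref{thm:results-on-residue-fct} is required.
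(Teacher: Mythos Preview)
Your proposal is correct and follows essentially the same approach as the paper: both obtain $\tau$ by comparing the kernels of $\Res$ and $\pi_*\rs\Res$ via the two residue short exact sequences and \eqref{eq:log-resoln-aidl}, and both establish the isometry by the change-of-variables identity for the residue functions, analytic continuation to $\eps=0$ (using a partition of unity on $\pi^{-1}(V')$ for the $\rs X$-side), and then letting $\rho\descendsto\charfct_{\cl V}$. The paper's proof is slightly more terse but the substance is identical.
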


\begin{proof}
  The well-definedness and bijectivity of the homomorphism $\tau$ follow
  immediately from the residue short exact sequence on $X$ (see
  Theorem \ref{thm:short-exact-seq}) and the direct image of the one
  on $\rs X$ (see the beginning of Section
  \ref{sec:short-exact-seq-push-forward}).

  To see that $\tau$ is an isometry, suppose that $V' \Supset V$ is an
  admissible open set in the same coordinate system on
  $V$ and that $\paren{g_p}_{p\in \cbn} \in \residlof<X>{\bphi}(V')$.
  Let $\rho \colon V' \to [0,1]$ be any compactly supported smooth
  cut-off function with $\res{\rho}_V \equiv 1$.
  For any $f \in \aidlof<X>{\vphi_L}(V')$ such that
  $\paren{g_p}_{p\in \cbn} =\Res\paren f$ (which exists by the proof
  of Theorem \ref{thm:short-exact-seq}, or from the fact that $V'$ is
  Stein and $\aidlof|\sigma-1|<X>{\vphi_L}$ is coherent
  such that $\cohgp 1[V']{\aidlof|\sigma-1|<X>{\vphi_L}}
  =0$) and $\pi_*\rs\Res\paren{f} =\tau\paren{g_p}_{p\in\cbn}
  =\paren{\rs g_q \cdot \sect_E}_{q\in \Iset}$, one has
  \begin{align*}
    \RTF[\rho]|f|(\eps)[V',\sigma]
    &=\eps \int_{V'}  \frac{
      \rho \abs f^2 \:e^{-\vphi_L-\psi} \dvol_X
    }{\logpole} \\
    &=\eps \int_{\mathrlap{\pi^{-1}\paren{V'}}} \;\quad \frac{
      \pi^*\rho \abs{\pi^*f \cdot \sect_E}^2
      \:e^{-\pi^*_\ominus\vphi_L-\pi^*\psi} \dvol_{\rs X}
    }{\logpole|\pi^*\psi|}
    =: \RTF{\rs*{\RTFsym}}[\pi^*\paren{\rho\abs
      f^2}](\eps)[\pi^{-1}\paren{V'},\sigma] 
  \end{align*}
  for any $\eps > 0$.
  By Theorem \ref{thm:results-on-residue-fct}
  \eqref{item:thm:res-norm} (one may need to apply an argument with a
  partition of unity on $\pi^{-1}\paren{V'}$ when handling the
  integral on the right-hand-side), the above functions in $\eps$ can
  be continued analytically across $\eps = 0$ and thus it follows that
  \begin{align*}
    \int_{\lcc<V'>} \rho \abs f^2 \lcV
    &=\RTF[\rho]|f|(0)[V',\sigma] \\
    &=\RTF{\rs*{\RTFsym}}[\pi^*\paren{\rho\abs
      f^2}](0)[\pi^{-1}\paren{V'},\sigma] 
    =\int_{\lcc<\pi^{-1}\paren{V'}>(\rs* S)} \pi^*\rho \abs{\pi^*f
      \cdot \sect_E}^2
    \lcV/\pi^*_\ominus\vphi_L/[\pi^*\psi] \; .
  \end{align*}
  The proof is completed by taking a sequence of $\rho$'s such that
  $\rho$ descends pointwisely to the characteristic function of $\cl
  V$ on $X$.
\end{proof}

\begin{remark}
  The result that $\tau$ is locally an isometry on admissible open
  sets can be extended to general relatively compact open sets $V$,
  although one can in general only obtain a smooth extension $f \in
  \aidlof<X>{\vphi_L} \cdot \smooth_X (V)$ when following
  the proof above (cover the open set $V$ by admissible open sets
  $\set{V_\gamma}_\gamma$, obtain a holomorphic extension on each $V_\gamma$,
  then glue the extensions together via a partition of unity).
  This is because Theorem \ref{thm:results-on-residue-fct}
  \eqref{item:thm:res-norm} is still valid for smooth $f$ (see Remark
  \ref{rem:RTF-thm-on-smooth-sections}).
\end{remark}

\begin{remark}
  Under the same setting and given the same notation as in Theorem
  \ref{thm:direct-image-residl}, 
  if all components of $\paren{g_p}_{p\in\cbn}$ are $0$ except for
  $g_{p'}$ for some $p' \in\cbn$, then all components of $\paren{\rs
    g_q}_{q \in \Iset}$ are $0$ possibly except for those $q\in \Iset$
  such that $\pi\paren{\lcS*[q]} \subset \lcS[p']$.
  Indeed, by considering any (admissible) open set $V' \Subset V
  \setminus \lcS[p']$, it follows from the equation in Theorem
  \ref{thm:direct-image-residl} that $\rs g_q \equiv 0$ on $\lcS*[q]
  \cap \pi^{-1}(V')$, therefore on $\lcS*[q]$, for all $q \in \Iset$
  such that $\lcS*[q] \cap \pi^{-1}(V') \neq \emptyset$.
  Theorem \ref{thm:sigma-lc-centres} below implies that
  $\pi\paren{\lcS*[q]} \subset \lcc<V>(\vphi_L;\psi) =\lcc<V>(S)$ if
  $\rs g_q \not\equiv 0$ on $\lcS*[q]$ (note that $\lcS[p'] \subset
  \lcc<V>$), hence the claim. 
\end{remark}

In view of Theorem \ref{thm:direct-image-residl}, the sheaves
$\residlof<X>{\cdot}$ and the residue norms can be defined even for
the system $(X,\vphi_L,\psi)$ which does \emph{not} satisfy the Snc
assumption \ref{assum:snc-nas}.
\begin{definition} \label{def:non-snc-residl-residue-norm}
  Without the Snc assumption \ref{assum:snc-nas} on
  $(X,\vphi_L,\psi)$, one can define the sheaf
  $\residlof<X>{\vphi_L;\psi}$ and the residue morphism $\Res$ by
  \begin{equation*}
    \residlof<X>{\vphi_L;\psi}
    := \im \pi_*\rs\Res
    \;\subset \pi_*\paren{E \otimes \residlof<\rs
      X>{\rs\bphi}}
    \quad\text{ and }\quad
    \Res :=\pi_*\rs\Res \; .
  \end{equation*}
  Moreover, an $L^2$ norm with respect to the $\sigma$-lc measure
  $\lcV$, which is referred to as the \emph{residue norm}
  $\norm\cdot_{\lcc<V>(\vphi_L;\psi)}$, on
  $\residlof<X>{\vphi_L;\psi}(\cl V)$ for any open set $V \Subset X$ can
  be defined by setting, for any $g \in \residlof<X>{\vphi_L;\psi}(\cl V)$ which
  has a smooth lifting $f \in \aidlof<X>{\vphi_L} \cdot \smooth_X(V')$
  via $\Res$ on an open set $V' \Supset V$,
  \begin{equation*}
    \norm g^2_{\lcc<V>(\vphi_L;\psi)}
    :=\lim_{\rho \descendsto \charfct_{\cl V}} \:
    \lim_{\eps \tendsto 0^{\mathrlap{+}}} \:
    \RTF[\rho]|f|(\eps)[V',\sigma]
    \paren{=: \int_{\mathrlap{\lcc(\vphi_L;\psi)}} \qquad \abs f^2 \lcV =: \RTF|f|(0)[V,\sigma]}
    \; ,
  \end{equation*}
  where the limit on the compactly supported smooth cut-off functions
  $\rho \colon V' \to [0,1]$ is defined as in
  \eqref{eq:lc-measure-for-non-cpt-supp}.
\end{definition} 
By considering a common log-resolution of two given log-resolutions of
$(X,\vphi_L,\psi)$, it follows from Theorem
\ref{thm:direct-image-residl} that $\residlof<X>{\vphi_L;\psi}$ and
$\Res$ such defined are independent of the log-resolution $\pi$ up to
$\holo_X$-isomorphisms.

The result of this section is summarised as follows.
\begin{cor} \label{cor:residue-exact-seq-for-non-snc}
  For $(X,\vphi_L,\psi)$ given as in the beginning of Section
  \ref{sec:non-snc}, 
  the adjoint ideal sheaves $\aidlof<X>{\vphi_L}$ satisfy the
  residue short exact sequences
  \begin{equation*}
    \xymatrix{
      0 \ar[r]
      & {\aidlof|\sigma-1|<X>{\vphi_L}} \ar[r]
      & {\aidlof<X>{\vphi_L}} \ar[r]^-{\Res}
      & {\residlof<X>{\vphi_L;\psi}} \ar[r]
      & 0
    }
  \end{equation*}
  for all integers $\sigma \geq 1$, even when $S$ is not an snc
  divisor.
  Moreover, for any open set $V \Subset X$ and for any log-resolution
  $\pi \colon \rs X \to X$ of $(X,\vphi_L,\psi)$, if
  $\residlof<X>{\vphi_L;\psi}\paren{\cl V}$ is equipped with the $L^2$
  residue norm $\norm\cdot_{\lcc<V>(\vphi_L;\psi)}$ with
  respect to the $\sigma$-lc measure while $E \otimes \residlof<\rs
  X>{\pi^*_\ominus\vphi_L; \pi^*\psi}\paren{\pi^{-1}(\cl V)}$ (where
  $\residlof<\rs X>{\pi^*_\ominus\vphi_L; \pi^*\psi} :=\residlof<\rs
  X>{\rs\bphi}$) is equipped with $\norm\cdot_{\lcc<\pi^{-1}(V)>(\rs*
    S)}$ with respect to the corresponding $\sigma$-lc measure (which
  is induced from the volume form on $\rs X$ described in
  \eqref{eq:induced-volume-via-log-resoln}), then the monomorphism
  \begin{equation*}
    \tau \colon \residlof<X>{\vphi_L;\psi}\paren{\cl V}
    \hookrightarrow
    E \otimes \residlof<\rs X>{\pi^*_\ominus\vphi_L;
      \pi^*\psi}\paren{\pi^{-1}(\cl V)}
  \end{equation*}
  is an isometric embedding.
\end{cor}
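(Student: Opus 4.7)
The first assertion (residue short exact sequence) is essentially formal once one uses the results already assembled. Start from the snc residue short exact sequence on $\rs X$ from Theorem \ref{thm:short-exact-seq}, tensor with the locally free sheaf $E$, and apply $\pi_*$ to obtain the left-exact sequence
\begin{equation*}
  0 \to \pi_*\paren{E \otimes \aidlof|\sigma-1|<\rs X>{\pi^*_\ominus\vphi_L}[\pi^*\psi]}
  \to \pi_*\paren{E \otimes \aidlof<\rs X>{\pi^*_\ominus\vphi_L}[\pi^*\psi]}
  \xrightarrow{\pi_*\rs\Res} \pi_*\paren{E \otimes \residlof<\rs X>{\rs\bphi}} \; .
\end{equation*}
The isomorphism \eqref{eq:log-resoln-aidl} turns the first two terms into $\aidlof|\sigma-1|<X>{\vphi_L}$ and $\aidlof<X>{\vphi_L}$, and by Definition \ref{def:non-snc-residl-residue-norm} the morphism $\pi_*\rs\Res$ is precisely $\Res$ with image $\residlof<X>{\vphi_L;\psi}$. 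Restricting the third term to the image yields the claimed short exact sequence.

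The second assertion (isometric embedding) is the substance of the corollary. Fix $g \in \residlof<X>{\vphi_L;\psi}\paren{\cl V}$ and, on a slightly larger open neighbourhood $V' \Supset V$, choose a smooth lift $f \in \aidlof<X>{\vphi_L}\cdot \smooth_X\paren{V'}$ with $\Res(f)=g$ (existence via a partition of unity on a cover of $V'$ by admissible open sets, using Theorem \ref{thm:short-exact-seq} locally; Remark \ref{rem:RTF-thm-on-smooth-sections} guarantees that the residue computations remain valid for such smooth $f$). For any compactly supported cut-off $\rho \colon V' \to [0,1]$ with $\res\rho_V \equiv 1$ and any $\eps>0$, one has the change-of-variables identity
\begin{equation*}
  \eps \int_{V'} \frac{\rho \abs f^2 e^{-\vphi_L-\psi}\dvol_X}{\logpole}
  = \eps \int_{\pi^{-1}(V')} \frac{\pi^*\rho \abs{\pi^*f \cdot \sect_E}^2
    e^{-\pi^*_\ominus\vphi_L - \pi^*\psi}\dvol_{\rs X}}{\logpole|\pi^*\psi|} \; ,
\end{equation*}
using the definition \eqref{eq:induced-volume-via-log-resoln} of the volume form on $\rs X$ and the relation $\pi^*_\ominus\vphi_L + \pi^*\psi = \pi^*\vphi_L + \pi^*\psi - \phi_R$.

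The next step is to pass to the limit $\eps \to 0^+$. On the left-hand side this gives $\RTF[\rho]|f|(0)[V',\sigma]$, and on the right-hand side Theorem \ref{thm:results-on-residue-fct}\eqref{item:thm:res-norm} (applied on admissible open sets covering $\pi^{-1}(V')$, combined with a partition-of-unity argument and Remark \ref{rem:RTF-thm-on-smooth-sections} to handle smoothness of the lift) gives the residue integral $\int_{\lcc<\pi^{-1}(V')>(\rs* S)} \pi^*\rho \abs{\pi^*f \cdot \sect_E}^2 \lcV/\pi^*_\ominus\vphi_L/[\pi^*\psi]$. Finally, letting $\rho$ decrease pointwise to $\charfct_{\cl V}$ yields, on the left, $\norm g^2_{\lcc<V>(\vphi_L;\psi)}$ by Definition \ref{def:non-snc-residl-residue-norm}, and on the right, $\norm{\tau(g)}^2_{\lcc<\pi^{-1}(V)>(\rs* S)}$, since $\pi_*\rs\Res(f) = \rs\Res(\pi^*f \cdot \sect_E) = \tau(g)$.

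The main technical point to verify carefully is the independence of the residue norm on $\residlof<X>{\vphi_L;\psi}\paren{\cl V}$ from the particular smooth lift $f$ chosen: two lifts differ by a section of $\aidlof|\sigma-1|<X>{\vphi_L}\cdot \smooth_X$, whose residue integral vanishes by Theorem \ref{thm:results-on-residue-fct}\eqref{item:thm:res-norm} (extended to smooth sections via Remark \ref{rem:RTF-thm-on-smooth-sections}). A secondary subtlety is that the limit in $\eps$ must commute with the limit in $\rho$; this is automatic here because the analytic continuation of $\eps \mapsto \RTF[\rho]|f|(\eps)[V',\sigma]$ across $\eps=0$ gives a value that depends continuously on $\rho$ in the monotone sense used. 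Once these points are handled, the computation above delivers both well-definedness of $\norm\cdot_{\lcc<V>(\vphi_L;\psi)}$ and the isometric property of $\tau$ simultaneously.
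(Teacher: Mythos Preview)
Your proposal is correct and follows essentially the same route as the paper. The paper presents this corollary as a summary of the preceding material rather than with an explicit proof: the short exact sequence is immediate from Definition~\ref{def:non-snc-residl-residue-norm} (where $\residlof<X>{\vphi_L;\psi}$ is \emph{defined} as $\im\pi_*\rs\Res$) together with \eqref{eq:log-resoln-aidl}, and the isometric embedding is the content of Theorem~\ref{thm:direct-image-residl} (done there for admissible $V$ with holomorphic lifts) extended to general $V$ via the remark following it---exactly the smooth-lift-plus-partition-of-unity argument you outline. Your comment about commuting the $\eps$- and $\rho$-limits is a slight red herring, since Definition~\ref{def:non-snc-residl-residue-norm} already fixes the order (first $\eps\to 0^+$, then $\rho\downarrow\charfct_{\cl V}$), but this does not affect correctness.
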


\begin{remark} \label{rem:residue-norm-on-hol-n-forms-intrinsic}
  Using the isomorphism
  \begin{equation*}
    K_X \otimes \aidlof<X>{\vphi_L} \isom
    \pi_*\paren{K_{\rs X} \otimes R^{-1} \otimes \aidlof<\rs
      X>{\pi^*_\ominus\vphi_L}[\pi^*\psi]}
  \end{equation*}
  given by $f \mapsto \frac{\pi^*f}{\sect_{R}}$ in place of
  \eqref{eq:log-resoln-aidl}, Theorem \ref{thm:direct-image-residl}
  and Corollary \ref{cor:residue-exact-seq-for-non-snc} can be
  reformulated for $K_X \otimes \aidlof<X>{\vphi_L}$.
  The volume form on $\rs X$ described in
  \eqref{eq:induced-volume-via-log-resoln} is not needed and the
  residue norm $\norm\cdot_{\lcc<V>(\vphi_L;\psi)}$ thus constructed on
  $K_X \otimes \residlof<X>{\vphi_L;\psi}\paren{\cl V}$ is independent
  of $\omega$.
  The isometry between $K_X \otimes \residlof<X>{\vphi_L;\psi}$ and
  its image in $\pi_*\paren{K_{\rs X} \otimes R^{-1} \otimes
    \residlof<\rs X>{\rs\bphi}}$ is therefore a more intrinsic
  property.
\end{remark}

\begin{remark}\label{rem:1-residue-sheaf}
  Using the vanishing of the higher direct image $R^1\pi_*\paren{E
    \otimes \mtidlof<\rs X>{\pi^*_\ominus \vphi_L +\pi^*\psi}} = 0$
  (see \cite{Lazarsfeld_book-II}*{Thm.~9.4.1} in the algebraic setting
  and \cite{Matsumura_injectivity-Kaehler}*{Cor.~1.5} in the analytic
  setting), one indeed always has $\residlof|1|<X>{\vphi_L;\psi} \isom
  \pi_*\paren{E\otimes \residlof|1|<\rs X>{\rs\bphi}}$.
\end{remark}

\subsection{Definition of $\sigma$-lc centres via adjoint ideal sheaves}
\label{sec:sigma-lc-centres}

Under the assumption that both $\vphi_L$ and $\psi$ have
only neat analytic singularities and according to Theorem
\ref{thm:results-on-residue-fct} \eqref{item:thm:res-fct} and Remark
\ref{rem:residue-fct-result-jumping-no}, one has
\begin{equation*}
  \begin{aligned}[b]
    \aidlof<\rs X>{\pi^*_\ominus\vphi_L}[\pi^*\psi]
    &=\mtidlof[\rs X]{\pi^*_\ominus\vphi_L +m_0\pi^*\psi} \cdot
    \defidlof{\lcc[\sigma+1]<\rs X>(\rs* S)} \\
    &=\mtidlof[\rs X]{\rs*\bphi +\phi_{\rs S_0}} \cdot
    \defidlof{\lcc[\sigma+1]<\rs X>(\rs* S)}
  \end{aligned}
  \;\;\text{ for all integers $\sigma \geq 0$} \; .
\end{equation*}
Recall again that $\lcc<\rs X>(\rs* S)$ is defined via the definition
of lc centres of $(\rs X,\rs S)$ in
\cite{Kollar_Sing-of-MMP}*{Def.~4.15} according to Remark
\ref{rem:consequences-of-snc-assumptions}, so every irreducible component
of $\lcc<\rs X>(\rs* S)$ is precisely a connected component of the
intersection of some choice of $\sigma$ irreducible components of $\rs
S$, as $\rs S$ is a reduced snc divisor.
Moreover, it is easy to see from a direct computation that
\begin{equation*}
  \Ann_{\holo_{\rs X}}\paren{\frac{
      \aidlof<\rs X>{\pi^*_\ominus\vphi_L}[\pi^*\psi]
    }{
      \aidlof|\sigma-1|<\rs X>{\pi^*_\ominus\vphi_L}[\pi^*\psi]
    }}
  =\Ann_{\holo_{\rs X}}\paren{\frac{
      \mtidlof[\rs X]{\rs*\bphi +\phi_{\rs S_0}}
      \cdot \defidlof{\lcc[\sigma+1]<\rs X>(\rs* S)}
    }{
      \mtidlof[\rs X]{\rs*\bphi +\phi_{\rs S_0}}
      \cdot \defidlof{\lcc<\rs X>(\rs* S)}
    }}
  =\defidlof{\lcc<\rs X>(\rs* S)} \; .
\end{equation*}
This indeed makes sense as the residue short exact sequence implies
that $\frac{ 
  \aidlof<\rs X>{\pi^*_\ominus\vphi_L}[\pi^*\psi]
}{
  \aidlof|\sigma-1|<\rs X>{\pi^*_\ominus\vphi_L}[\pi^*\psi]
} \isom \residlof<\rs X>{\pi^*_\ominus\vphi_L;\pi^*\psi}$ and
$\Ann_{\holo_{\rs X}} \paren{\residlof<\rs
  X>{\pi^*_\ominus\vphi_L;\pi^*\psi}}$ defines the support of
$\residlof<\rs X>{\pi^*_\ominus\vphi_L;\pi^*\psi}$ (at least
set-theoretically), which is exactly $\lcc<\rs X>(\rs* S)$ (note that
the zero locus of $\Ann \sheaf F$ of a coherent sheaf $\sheaf F$ is
exactly the support of $\sheaf F$; see, for example,
\cite{Grauert&Remmert-CAS}*{A.4.5}).
Notice also that $\Ann_{\holo_{\rs X}}\paren{\frac{
    \aidlof<\rs X>{\pi^*_\ominus\vphi_L}[\pi^*\psi]
  }{
    \aidlof|\sigma-1|<\rs X>{\pi^*_\ominus\vphi_L}[\pi^*\psi]
  }} =\Ann_{\holo_{\rs X}} \paren{\residlof<\rs
  X>{\pi^*_\ominus\vphi_L;\pi^*\psi}}$ is a radical ideal sheaf since
so is $\defidlof{\lcc<\rs X>(\rs* S)}$.

The following proposition essentially shows that the union of
$\sigma$-lc centres of $(X,\vphi_L,\psi)$ can be defined independent
of log-resolutions.

\begin{thm} \label{thm:sigma-lc-centres}
  Under the assumption that $\vphi_L$ has only neat analytic
  singularities such that the Snc assumption \ref{assum:snc} is
  satisfied (but $(X,\vphi_L,\psi)$ may not satisfy the Snc assumption
  \ref{assum:snc-nas}),
  for any log-resolution $\pi \colon \rs X \to X$ of
  $(X,\vphi_L,\psi)$, one has
  \begin{equation*}
    \Ann_{\holo_X}\paren{\frac{
        \aidlof<X>{\vphi_L} 
      }{
        \aidlof|\sigma-1|<X>{\vphi_L}
      }}
    =\Ann_{\holo_X}\paren{\pi_*\paren{ E \otimes
        \frac{
          \aidlof<\rs X>{\pi^*_\ominus\vphi_L}[\pi^*\psi]
        }{
          \aidlof|\sigma-1|<\rs X>{\pi^*_\ominus\vphi_L}[\pi^*\psi]
        }
      }} 
  \end{equation*}
  and the above ideal sheaf is a radical ideal sheaf.
  This implies that the sheaves $\residlof<X>{\vphi_L;\psi}$ and
  $\pi_*\paren{E \otimes \residlof<\rs X>{\pi^*_\ominus\vphi_L;
      \pi^*\psi}}$ have the same support in particular.
  Let $\lcc(\vphi_L;\psi)$ be the analytic subset defined by the above
  ideal sheaf.
  Then one has, in general,
  \begin{equation*}
    \lcc(\vphi_L;\psi) \subset \pi\paren{\lcc<\rs X>(\rs* S)} \; .
  \end{equation*}
  If $\lcS*[q]$ is a $\sigma$-lc centre in $\lcc<\rs X>(\rs* S)$ such
  that $\pi(\lcS*[q]) \not\subset \lcc(\vphi_L;\psi)$, then
  $\res{\frac{\pi^*f}{\rs \sect_0}}_{\lcS*[q]} \equiv 0$ on $\lcS*[q] \cap
  \pi^{-1}(V)$ (or, equivalently, the component of $\rs\Res(\pi^*f)$
  on $\lcS*[q] \cap \pi^{-1}(V)$ vanishes identically) for all $f \in
  \aidlof<X>{\vphi_L}(V)$ and any open set $V \Subset X$ such that
  $\lcS*[q] \cap \pi^{-1}(V) \neq \emptyset$.
  
  Note that if $(X,\vphi_L,\psi)$ already satisfies the Snc assumption
  \ref{assum:snc-nas}, the above ideal sheaf is indeed
  $\defidlof{\lcc}$ and $\lcc(\vphi_L;\psi) = \lcc$.
\end{thm}

\begin{proof}
  It follows from the residue short exact sequence (see Theorem
  \ref{thm:short-exact-seq} or Corollary
  \ref{cor:residue-exact-seq-for-non-snc}) that the question under
  consideration is reduced to proving the claim
  \begin{equation*}
    \Ann_{\holo_X}\paren{
      \residlof<X>{\vphi_L ; \psi}
    }
    =\Ann_{\holo_X}\paren{\pi_*\paren{ E \otimes
        \residlof<\rs X>{\rs\bphi}
      }}
    \overset{\text{~\eqref{eq:isom-of-2models-of-residl}}}=
    \Ann_{\holo_X}\paren{\pi_*
        \residlof<\rs X>{\rs\bphi -\phi_E}
      } \; .
  \end{equation*}
  It follows immediately from Remark \ref{rem:1-residue-sheaf} (a
  consequence of the vanishing of the higher direct images) that
  the above equality (on the left) always holds true when $\sigma =1$.
  The proof below deals with the cases where $\sigma \geq 2$ (although
  it is also applicable to the case $\sigma =1$).
  
  Recall that the sheaf $\residlof<\rs X>{\rs\bphi -\phi_E}$ is given as a
  direct sum $\bigoplus_{q \in \Iset/X/} \sheaf S_{\lcS*[q]}$,
  where each $\sheaf S_{\lcS*[q]} = \paren{\rs*\iota_q}_*
  \paren{\res{\rs* S_0^{-1}}_{\lcS*[q]} \otimes 
  \paren{\Diff_{q} \rs* S}^{-1} \otimes \mtidlof<\lcS*[q]>{\rs\bphi -\phi_E}}$
  is, before taking the direct image $\paren{\rs*\iota_q}_*$ via the
  inclusion $\rs \iota_q \colon \lcS*[q] \hookrightarrow \rs X$, a
  torsion-free $\holo_{\lcS*[q]}$-sheaf (so its support is 
  precisely $\lcS*[q]$). 
  By setting
  \begin{equation} \label{eq:def-of-lcc'}
    \lcc<\rs X>(\rs* S)'
    :=\smashoperator[r]{\bigcup_{q \in \Iset/X/ \colon 
        \pi_*\sheaf S_{\lcS*[q]} \neq 0}} \;\;\;\lcS*[q]
    \qquad\paren{
      \subset
      \bigcup_{q \in \Iset/X/} \lcS*[q] =\lcc<\rs X>(\rs* S)
    }\; ,
  \end{equation}
  which is the union of $\sigma$-lc centres in $\lcc<\rs X>(\rs* S)$
  which contribute to the zero locus of the annihilator $\Ann_{\holo_{X}}
  \paren{\pi_*\residlof<\rs X>{\rs\bphi -\phi_E}}$.
  Take the annihilator as the defining ideal sheaf
  $\defidlof{\pi\paren{\lcc<\rs X>(\rs* S)'}}$ of $\pi\paren{\lcc<\rs
    X>(\rs* S)'}$ in $X$ (it is shown below that the annihilator is
  indeed radical).
  If the equality between the annihilators in the claim holds true, it
  then implies that $\lcc(\vphi_L;\psi) = \pi\paren{\lcc<\rs X>(\rs*
    S)'} \subset \pi\paren{\lcc<\rs X>(\rs* S)}$.
  This also implies that, if $\lcS*[q]$ is a $\sigma$-lc centre in
  $\lcc<\rs X>(\rs* S)$ such that $\pi\paren{\lcS*[q]} \not\subset
  \lcc(\vphi_L;\psi)$, one then has $\lcS*[q] \not\subset \lcc<\rs
  X>(\rs* S)'$ and thus $\pi_* \sheaf S_{\lcS*[q]} = 0$.
  The claim on the component of $\rs\Res(\pi^*f)$ on $\lcS*[q] \cap
  \pi^{-1}(V)$ for any $f \in \aidlof<X>{\vphi_L}(V)$ then follows.

  Back to the proof of the equality between the annihilators.
  Recall from Definition \ref{def:non-snc-residl-residue-norm}
  (together with the isomorphism in \eqref{eq:isom-of-2models-of-residl}) that
  $\residlof<X>{\vphi_L;\psi} :=\im \pi_*\rs\Res \subset \pi_*
  \residlof<\rs X>{\rs\bphi -\phi_E}$.
  Therefore, one immediately has
  \begin{equation*}
    \Ann_{\holo_X}\paren{
      \residlof<X>{\vphi_L ; \psi}
    }
    \supset \Ann_{\holo_X}\paren{\pi_*
      \residlof<\rs X>{\rs\bphi -\phi_E}
    } \; . 
  \end{equation*}
  If $\Ann_{\holo_X}\paren{\pi_* \residlof<\rs X>{\rs\bphi -\phi_E}}
  =\holo_X$, the reverse inclusion follows automatically.
  It remains to prove the reverse inclusion under the assumption that
  $\Ann_{\holo_X}\paren{\pi_* \residlof<\rs X>{\rs\bphi -\phi_E}} \neq
  \holo_X$ (so $\lcc<\rs X>(\rs* S)' \neq \emptyset$).
  For that, take any open polydisc $V \Subset X$ (in some coordinate
  chart) and any $h \in
  \Ann_{\holo_X}\paren{\residlof<X>{\vphi_L;\psi}}\paren{\cl V}$.
  Suppose $\lcc<\rs X>(\rs* S) \cap \pi^{-1}(V) =\bigcup_{q \in \Iset}
  \lcS/\rs S/[V,q]$ is the union of the $\sigma$-lc centres in
  $\pi^{-1}(V)$ and let $g
  =\tau^{-1} \rs g = \tau^{-1}(\rs g_q)_{q \in \Iset}
  \in \residlof<X>{\vphi_L ; \psi}\paren{\cl V}
  =\im \pi_*\rs\Res \paren{\cl V}
  \subset \residlof<\rs X>{\rs\bphi -\phi_E}\paren{\pi^{-1}(\cl V)}$ (in
  the notation in Theorem \ref{thm:direct-image-residl} and Corollary
  \ref{cor:residue-exact-seq-for-non-snc}) be an element such that each $\rs
  g_q$ is defined on the $\sigma$-lc centre $\lcS/\rs S/[V,q]$.
  Considering the residue norm (given in Corollary
  \ref{cor:residue-exact-seq-for-non-snc}) of $h \cdot g = \tau^{-1}(\pi^*h
  \:\rs g_q)_{q \in \Iset}$, it yields
  \begin{equation*} \tag{$*$} \label{eq:pf:res-norm-h.g}
    \begin{multlined}[c][0.92\textwidth]
      0 =\norm{h \cdot g}_{\lcc<V>(\vphi_L;\psi)}^2
      =\smashoperator{\sum_{q \in \Iset}}
      \frac{\pi^\sigma}{(\sigma-1)! \: \vect\nu_{q}} \int_{\lcS/\rs
        S/[V,q]} \abs{\pi^*h \: \rs g_q \cdot \sect_E}^2 \:e^{-\rs\bphi}
      \dvol_{\lcS/\rs S/[V,q]} \\
      =\smashoperator{\sum_{q \in \Iset}}
      \frac{\pi^\sigma}{(\sigma-1)! \: \vect\nu_{q}} \int_{\lcS/\rs
        S/[V,q]} \abs{\pi^*h \: \rs g_q}^2 \:e^{-\rs\bphi +\phi_E}
      \dvol_{\lcS/\rs S/[V,q]} 
      = \norm{\pi^*h \cdot \rs g}_{\lcc<\pi^{-1}(V)>(\rs* S)}^2 \; .
    \end{multlined}
  \end{equation*}
  It thus follows that, if $\rs g_q \not\equiv 0$ on $\lcS/\rs
  S/[V,q]$, then $\pi^*h \equiv 0$ on $\lcS/\rs S/[V,q]$; and if this
  holds true for all $\sigma$-lc centres $\lcS*[V,q] \subset \lcc<\rs
  X>(\rs* S)' \cap \pi^{-1}(V)$, then $h \in \Ann_{\holo_X}
  \paren{\pi_* \residlof<\rs X>{\rs\bphi -\phi_E}} \paren{V}$ and the
  desired inclusion follows.

  (The formula \eqref{eq:pf:res-norm-h.g} also shows that both of the
  annihilators $\Ann_{\holo_X}\paren{\pi_* \residlof<\rs X>{\rs\bphi
      -\phi_E}}$ and
  $\Ann_{\holo_X}\paren{\residlof<X>{\vphi_L;\psi}}$ are radical ideal
  sheaves.
  One sees this by replacing $h$ by $h^r$ for some integer $r > 0$ in
  \eqref{eq:pf:res-norm-h.g} and assuming $\paren{\rs
    g_q}_{q\in\Iset}$ being an element in $\pi_* \residlof<\rs
  X>{\rs\bphi -\phi_E}$ or $\residlof<X>{\vphi_L;\psi}$ (and $h^r$ being
  an element in the corresponding annihilator). 
  Since $\pi^*h^r \: \rs g_q \equiv 0$ on $\lcS/\rs S/[V,q]$ implies
  that $\pi^*h \: \rs g_q \equiv 0$ on $\lcS/\rs S/[V,q]$, the
  equality $\norm{h^r \cdot g}_{\lcc<V>(\vphi_L;\psi)}^2 =0$ then
  implies that $\norm{h \cdot g}_{\lcc<V>(\vphi_L;\psi)}^2 =0$, and
  thus the corresponding annihilator is a radical ideal sheaf.)

  Write $\aidlof{}*$ for $\aidlof<X>{\vphi_L}$ and
  $\rs{\aidlof{}*}$ for $\aidlof<\rs X>{\pi_\ominus^*\vphi_L
    -\phi_E}[\pi^*\psi]$ for convenience, and
  suppose that $\xi_1, \dots, \xi_r
  \in \aidlof<X>{\vphi_L}(\cl V)$ are generators of
  $\aidlof{}*$ on a neighbourhood of $\cl V$ for the given
  open set $V \Subset X$.
  Note that $\pi^*\xi_1 ,\dots, \pi^*\xi_r$ are then generators of
  $\rs{\aidlof{}*}$ on a neighbourhood of $\pi^{-1}(\cl V)$.
  To complete the proof, it suffices to show that, for any 
  $q' \in \Iset$ such that $\lcS*[V,q'] \subset \lcc<\rs X>(\rs* S)'
  \cap \pi^{-1}(V)$, there exists $\xi_i$ among the generators $\xi_1,
  \dots, \xi_r$ of $\aidlof{}*$ on a neighbourhood of $\cl V$ such
  that, by setting $g_i :=\Res\paren{\xi_i}$ (thus
  $\rs\Res\paren{\pi^*\xi_i} =\rs g_i =\paren{\rs g_{i, q}}_{q \in
    \Iset}$), one has $\rs g_{i,q'} \not\equiv 0$ on $\lcS*[V,q']$.

  Suppose, on the contrary, that $\rs g_{i,q'} \equiv 0$ on
  $\lcS*[V,q']$ for all $i =1, \dots, r$.
  Recall the direct sum decomposition
  $\pi_*\residlof<\rs X>{\rs\bphi -\phi_E}\paren{\cl V} = \bigoplus_{q
    \in \Iset} \pi_*\sheaf S_{\lcS*[V,q]} \paren{\cl V}$ (such that
  each $\pi_*\sheaf S_{\lcS*[V,q]}$ is supported on $\lcS*[V,q]$).
  Note that $\pi_*\sheaf S_{\lcS*[V,q']} (\cl V)$ is non-trivial as
  $\lcS*[V,q'] \subset \lcc<\rs X>(\rs* S)' \cap \pi^{-1}(V)$.
  Take any \emph{non-trivial} holomorphic section $\rs g =\paren{\rs
    g_q}_{q \in\Iset}$ in $\bigoplus_{q \in \Iset} \pi_*\sheaf
  S_{\lcS*[V,q]} \paren{\cl V}$ \emph{whose support contains
    $\lcS*[V,q']$}
  (one may conveniently take $\rs g_q \equiv 0$ on $\lcS*[V,q]$ for
  all $q \in \Iset \setminus \set{q'}$ and pick any non-trivial $\rs
  g_{q'} \in \sheaf S_{\lcS*[V,q']}\paren{\pi^{-1}(\cl V)}$).
  Let $\set{U_\alpha}_{\alpha \in \Gamma}$ be an open cover of
  $\pi^{-1}(\cl V)$ and $\set{\rho_\alpha}_{\alpha \in \Gamma}$ be a
  partition of unity subordinate to this cover.
  The cover is assumed to be sufficiently fine such that every
  section of $\frac{\rs{\aidlof{}*}}{\rs{\aidlof|\sigma-1|{}*}}$ on
  each $U_\alpha$ has a lifting to $\rs{\aidlof{}*}$ on $U_\alpha$.
  The section $\rs g$
  can then be expressed as a \v Cech cocycle $\set{\eqcls{\rs
      f_\alpha}}_{\alpha \in \Gamma}$, where $\rs f_\alpha \in
  \rs{\aidlof{}*}(U_\alpha)$ and $\eqcls{\rs f_\alpha} :=\rs f_\alpha
  \bmod \rs{\aidlof|\sigma-1|{}*}(U_\alpha)$ for each $\alpha \in
  \Gamma$ such that $\rs f_\beta -\rs f_\alpha \in
  \rs{\aidlof|\sigma-1|{}*}(U_\alpha \cap U_\beta)$ for any $\alpha,
  \beta \in \Gamma$ with $U_\alpha \cap U_\beta \neq \emptyset$.
  Set also $\rs F :=\sum_{\alpha \in \Gamma} \rho_\alpha \rs
  f_\alpha \in \smooth_{\rs X} \cdot \rs{\aidlof{}*}
  \paren{\pi^{-1}(\cl V)}$.




  Note that $\rs g =\paren{\rs g_q}_{q \in\Iset} =\rs\Res\paren{\rs*
    F}$ (after extending $\rs\Res$ to a $\smooth_{\rs
    X}$-homomorphism).
  Pick any admissible open set $U \Subset \pi^{-1}(V)$ with respect to
  $\paren{\pi^*_\ominus\vphi_L,\pi^*\psi}$ such that $U \cap \lcc<\rs
  X>(\rs* S) = U \cap \lcS*[V,q']$ (i.e.~$U$ intersects no $\sigma$-lc
  centres other than $\lcS*[V,q']$).
  The computation of residue norm (see Theorem
  \ref{thm:results-on-residue-fct} \eqref{item:thm:res-norm}, Remark
  \ref{rem:RTF-thm-on-smooth-sections} and
  \eqref{eq:lc-measure-for-non-cpt-supp}) yields
  \begin{equation*}
    \RTF{\rs*{\RTFsym}}|\rs* F|(0)[U,\sigma]
    = \lim_{\rho \descendsto \charfct_{\cl U}}
    \lim_{\eps \tendsto 0^+}
    \RTF{\rs*{\RTFsym}}[\rho]|\rs* F|(\eps)[U',\sigma]
    =
    \frac{\pi^\sigma}{(\sigma-1)! \:\vect\nu_{q'}}
    \int_{\lcS*[V,q'] \cap U} \abs{\rs g_{q'}}^2 \:e^{-\rs\bphi +\phi_E}
    \dvol_{\lcS*[V,q']} \; ,
  \end{equation*}
  where $U' \Supset U$ is some relatively compact subset in
  $\pi^{-1}(V)$ and $\rho$ runs through a decreasing sequence of
  compactly supported, non-negative, smooth cut-off functions on $U'$
  which are $\equiv 1$ on $U$.
  It suffices to show that $\rs g_{q'} \equiv 0$ on $U \cap
  \lcS*[V,q']$, which implies that $\rs g_{q'} \equiv 0$ on the whole
  of $\lcS*[V,q']$ by the identity theorem, to obtain a contradiction.

  The function $\rs F$
  can be written as a sum $\sum_{i=1}^r \rs a_i \pi^*\xi_i$ with $\rs
  a_i$'s being smooth functions on a neighbourhood of $\pi^{-1}(\cl
  V)$ thanks to the facts that $\pi^*\xi_1, \dots, \pi^*\xi_r$
  generate $\rs{\aidlof{}*}$ (which induce epimorphisms $\holo_{\rs
    X}^{\oplus r} \to \rs{\aidlof{}*} \to 0$ and ${\smooth_{\rs
      X}}^{\oplus r} \to \smooth_{\rs X} \cdot \rs{\aidlof{}*} \to 0$)
  on the neighbourhood and all $\smooth_{\rs X}$-modules are soft
  (see, for example, \cite{Grauert&Remmert}*{A.4.1 and A.4.4}; the
  neighbourhood of $\pi^{-1}(\cl V)$ can, of course, be taken to be
  paracompact).
  As all coefficients $\rs a_i$ in the sum $\rs F = \sum_{i=1}^r \rs
  a_i \pi^*\xi_i$ are bounded on a neighbourhood $U'$ of $\cl U$, it
  follows that
  \begin{equation*}
    \RTF{\rs*{\RTFsym}}[\rho]|\rs F|(\eps)[U',\sigma]
    \lesssim~
    \sum_{i=1}^r
    \RTF{\rs*{\RTFsym}}[\rho]|\pi^*\xi_i|(\eps)[U',\sigma]
    \xrightarrow{\eps \tendsto 0^+} 0
  \end{equation*}
  by the assumption on $\xi_i$'s that $\rs g_{i,q'}
  =\res{\rs\Res\paren{\pi^*\xi_i}}_{\lcS*[V,q']} \equiv 0$.
  This gives the contradiction and thus concludes the proof.
\end{proof}

The sets $\lcc<\rs X>(\vphi_L;\psi)$ and $\pi\paren{\lcc<\rs X>(\rs*
  S)}$ are different in general, as is illustrated in the following
examples.
\begin{example} \label{example:lcc-neq-pi_lcc-w-S-neq-pi_S}
  Example \ref{example:S-neq-pi-rs-S} (which is an example having $S
  \subsetneq \pi\paren{\rs* S}$) also provides an example with
  $\lcc<X>(\vphi_L;\psi) \subsetneq \pi\paren{\lcc<\rs X>(\rs*
    S)}$ for $\sigma =1,2$.
  Following the notation and the computation there, one already has
  $\lcc[1]<\rs X>(\rs* S) =\rs S = \rs S_1 + R$, $\lcc[2]<\rs X>(\rs*
  S) =\rs S_1 \cap R$, $\aidlof|0|<X>{0} =\mtidlof<X>{\psi} =
  \genby{z_1 z_2}$ and $\aidlof|2|<X>{0} =\mtidlof<X>{m_0 \psi} =
  \genby{z_2}$. 
  Furthermore, it can also be seen that
  \begin{gather*}
    \divsr{\pi^*z_1} = \rs S_1 \; , \quad
    \divsr{\pi^*z_2} = \rs S_2 + R \; , \quad
    \divsr{\pi^*z_3} = \rs S_3 +R \quad\text{ and } \\
    \aidlof|1|<X>{0} =\pi_*\aidlof|1|<\rs X>{-\phi_R}[\pi^*\psi]
    \qquad
    \begin{aligned}[t]
      &\overset{\mathclap{\text{Thm.~\ref{thm:results-on-residue-fct}
          \eqref{item:thm:res-fct}}}}= \qquad
      \pi_*\paren{\mtidlof<\rs X>{\frac 32 \phi_{\rs S_2}} \cdot
        \defidlof{\rs S_1 \cap R}} \\
      &=\pi_*\paren{\holo_{\rs X} \paren{-\rs* S_2} \cdot
        \defidlof{\rs S_1 \cap R}} =\genby{z_1 z_2 , z_2} =\genby{z_2}
      \; .
    \end{aligned}
  \end{gather*}
  As a result, $\defidlof{\lcc[2]<X>(0;\psi)} =\Ann_{\holo_X}
  \paren{\frac{\aidlof|2|<X>{0}}{\aidlof|1|<X>{0}}} =\holo_X$ and
  $\defidlof{\lcc[1]<X>(0;\psi)} =\Ann_{\holo_X}
  \paren{\frac{\aidlof|1|<X>{0}}{\aidlof|0|<X>{0}}} =\genby{z_1}$,
  i.e.~$\lcc[2]<X>(0;\psi) = \emptyset \subsetneq \pi\paren{\rs* S_1
    \cap R} =\pi\paren{\lcc[2]<\rs X>(\rs* S)}$ and 
  $\lcc[1]<X>(0;\psi) = S_1 \subsetneq \pi\paren{\rs* S_1
    \cup R} =\pi\paren{\lcc[1]<\rs X>(\rs* S)}$.
  Note that $\pi(R) =\set{z_2 =z_3 =0}$ is lying in the ``base locus'' of
  $\aidlof|1|<X>{0}$, i.e.~$\res{\pi^*f}_R \equiv 0$ for all $f \in
  \aidlof|1|<X>{0}$. 
\end{example}

\begin{example} \label{example:lcc-neq-pi_lcc-w-S=pi_S}
  This example provides a simple instance which has $S =\pi(\rs* S)$
  but $\lcc[2]<X>(\vphi_L;\psi) \subsetneq \pi\paren{\lcc[2]<\rs
    X>(\rs* S)}$.
  Let $X := \Delta^2 \subset \fieldC^2$ be the unit $2$-disc centred
  at the origin under the holomorphic coordinate system $(z_1,z_2)$.
  Take
  \begin{equation*}
    \vphi_L :=\log\abs{z_1}^2 +\log\abs{z_2}^2
    \quad\text{ and }\quad
    \psi := \log\paren{\abs{z_1}^2 +\abs{z_2}^2} \; ,
  \end{equation*}
  and consider the modification $\pi \colon \rs X \to X$ which is the
  blow-up at the origin followed by another blow-up at a general point
  on the exceptional divisor (i.e.~a point away from the proper
  transforms of $S_1 :=\set{z_1=0}$ and $S_2 :=\set{z_2=0}$).
  Let $R_1$ be the proper transform of the exceptional divisor of the
  first blow-up and $R_2$ be the exceptional divisor of the second
  blow-up, which gives
  \begin{equation*}
    K_{\rs X} \sim \pi^*K_X +R_1 +2R_2 \; .
  \end{equation*}
  Also let $\rs S_1$ and $\rs S_2$ be the proper transforms of
  $S_1$ and $S_2$ respectively.
  One then has
  \begin{equation*}
    \divsr{\pi^*z_1} =\rs S_1 +R_1 +R_2
    \quad\text{ and }\quad
    \divsr{\pi^*z_2} =\rs S_2 +R_1 +R_2
  \end{equation*}
  and
  \begin{equation*}
    \pi^*_{\ominus}\vphi_L \sim_{\tlog} \phi_{\rs S_1} +\phi_{\rs S_2}
    +\phi_{R_1}
    \quad\text{ and }\quad
    \pi^*\psi \sim_{\tlog} \phi_{R_1} +\phi_{R_2}
  \end{equation*}
  in the notation in Notation \ref{notation:potentials}.
  Note also that $E = 0$.
  This shows that the family $\set{\mtidlof<\rs
    X>{\pi^*_\ominus\vphi_L+m\pi^*\psi}}_{m \in \fieldR_{\geq 0}}$ has
  jumping numbers $m = \mu \in \Nnum$ (which need not be the jumping
  numbers of $\set{\mtidlof<X>{\vphi_L+m\psi}}_{m \in \fieldR_{\geq
      0}}$) with lc locus
  \begin{equation*}
    \rs S =R_1 +R_2
  \end{equation*}
  for every jumping number $\mu$.
  Using Theorem \ref{thm:results-on-residue-fct}
  \eqref{item:thm:res-fct} together with the equality
  $\aidlof<X>{\vphi_L}[\mu\cdot\psi] =\pi_*\aidlof<\rs
  X>{\pi^*_\ominus\vphi_L}[\mu\cdot\pi^*\psi]$ and the fact that the
  ideal sheaves in question are toric (thus generated by monomials),
  the adjoint ideal sheaves for each jumping number $m=\mu$ are
  computed as follows:
  \begin{align*}
    \aidlof|2|<X>{\vphi_L}[\mu\cdot\psi] &
    \begin{aligned}[t]
      &=\pi_*\holo_{\rs X}\paren{-\rs S_1 -\rs S_2 -\mu R_1
        -(\mu-1)R_2} \\
      &=\genby{z_1^p z_2^q \: | \: p \geq 1 \; , \; q \geq 1 \; , \;
        p+q \geq \mu } \; ,
    \end{aligned}
    \\
    \aidlof|1|<X>{\vphi_L}[\mu\cdot\psi] &
    \begin{aligned}[t]
      &=\pi_*\paren{\holo_{\rs X}\paren{-\rs S_1 -\rs S_2 -\mu R_1
        -(\mu-1)R_2} \cdot \defidlof{R_1 \cap R_2}} \\
      &=\genby{z_1^p z_2^q \: | \: p \geq 1 \; , \; q \geq 1 \; , \;
        p+q \geq \mu } \; ,
    \end{aligned}
    \\
    \aidlof|0|<X>{\vphi_L}[\mu\cdot\psi] &
    \begin{aligned}[t]
      &=\pi_*\holo_{\rs X}\paren{-\rs S_1 -\rs S_2 -(\mu+1) R_1
        -\mu R_2} \\
      &=\genby{z_1^p z_2^q \: | \: p \geq 1 \; , \; q \geq 1 \; , \;
        p+q \geq \mu +1 } \; .
    \end{aligned}
  \end{align*}
  It follows that $m=\mu$ is a jumping number of
  $\set{\mtidlof<X>{\vphi_L+m\psi}}_{m \in \fieldR_{\geq 0}}$ if and
  only if $\mu \geq 2$ (note that $\mtidlof<X>{\vphi_L+\psi}
  =\aidlof|0|<X>{\vphi_L} =\aidlof|2|<X>{\vphi_L}
  =\mtidlof<X>{\vphi_L}$).
  Moreover, for $\mu \geq 2$, one has $\defidlof{S}
  =\Ann_{\holo_X}\paren{\frac{
      \aidlof|2|<X>{\vphi_L}[\mu \cdot\psi]
    }{
      \aidlof|0|<X>{\vphi_L}[\mu \cdot\psi]
    }} =\genby{z_1, z_2}$ (which is also equal to
  $\defidlof{\lcc[1]<X>(\vphi_L; \mu\cdot\psi)} =\Ann_{\holo_X}\paren{\frac{
      \aidlof|1|<X>{\vphi_L}[\mu \cdot\psi]
    }{
      \aidlof|0|<X>{\vphi_L}[\mu \cdot\psi]
    }}$) and $\defidlof{\lcc[2]<X>(\vphi_L; \mu\cdot\psi)} =\holo_X$,
  thus $S =\set{(0,0)} =\pi\paren{R_1 \cup R_2} =\pi\paren{\rs* S}$ and
  $\lcc[2]<X>(\vphi_L; \mu\cdot\psi) =\emptyset \subsetneq \set{(0,0)}
  =\pi\paren{R_1 \cap R_2} =\pi\paren{\lcc[2]<\rs X>(\rs* S)}$.
  Note also that $\rs S_0 =\rs S_0^\mu = \mu R_1 +(\mu-1)R_2$ for
  every jumping number $\mu$.
  Writing $\rs\sect_0^\mu$ as a canonical section of $\rs S_0^\mu$,
  one has $\res{\frac{\pi^*f}{\rs\sect_0^\mu}}_{R_1 \cap R_2} \equiv
  0$ for all $f \in \aidlof|2|<X>{\vphi_L}[\mu\cdot\psi]$.
\end{example}

With the above understanding, now it makes sense to give the following definition.

\begin{definition}[$\sigma$-lc centres] \label{def:sigma-lc-centres}
  Suppose that $\vphi_L$ has only neat analytic singularities and that
  $(X,\vphi_L,\psi)$ satisfies the Snc assumption \ref{assum:snc} (but
  need not satisfy \ref{assum:snc-nas}).
  A \emph{$\sigma$-lc centre} $\lcS$ of $(X,\vphi_L,\psi)$
  is an irreducible component of the (reduced) closed
  analytic subset $\lcc(\vphi_L;\psi)$
  of $X$ defined by the ideal sheaf 
  \begin{equation*}
    \defidlof{\lcc(\vphi_L;\psi)}
    :=\Ann_{\holo_X}\paren{
      \frac{\aidlof<X>{\vphi_L}}{\aidlof|\sigma-1|<X>{\vphi_L}}
    } \; .
  \end{equation*}
  For any log-resolution $\pi \colon \rs X \to X$ of
  $(X,\vphi_L,\psi)$ with $\rs S$ being the corresponding snc lc
  locus on $\rs X$, a $\sigma$-lc centre $\lcS*[q] \subset \lcc<\rs
  X>(\rs* S)$ of $(\rs X, \rs S)$ is said to be
  \emph{$\pi$-supportive}\footnote{Such $\sigma$-lc centre in
    $\lcc<\rs X>(\rs* S)$ is ``supporting''
    $\residlof<X>{\vphi_L;\psi}$ via $\pi$.} if it is a subset of
  the closed analytic subset $\lcc<\rs X>(\rs* S)'$ defined as in
  \eqref{eq:def-of-lcc'}, i.e.~there exists $f \in
  \aidlof<X>{\vphi_L}$ such that $\res{\rs\Res(\pi^*f)}_{\lcS*[q]}
  \not\equiv 0$, or equivalently, $\frac{\pi^*f}{\rs\sect_0}
  \not\equiv 0$ on $\lcS*[q]$.
  Theorem \ref{thm:sigma-lc-centres} asserts that 
  \begin{equation*}
    \lcc(\vphi_L;\psi) =\pi\paren{\lcc<\rs X>(\rs* S)'}
    \subset \pi\paren{\lcc<\rs X>(\rs* S)} 
  \end{equation*}
  and $\frac{\pi^* f}{\rs\sect_0} \equiv 0$ on the closure of
  $\lcc<\rs X>(\rs* S) \setminus \lcc<\rs X>(\rs* S)'$ for all $f \in
  \aidlof<X>{\vphi_L}$.
  Note that each $\sigma$-lc centre $\lcS \subset
  \lcc<X>(\vphi_L;\psi)$ must have $\codim_X \lcS \geq \sigma$.
  Define also the \emph{index of the mlc of $(X,\vphi_L,\psi)$},
  denoted by $\sigma_{\mlc} := \sigma_{\mlc}\paren{X,\vphi_L,\psi}$,
  to be the smallest non-negative integer such that $\lcc[\sigma
  +1]<X>(\vphi_L;\psi) =\emptyset$ for all $\sigma \geq
  \sigma_{\mlc}$, which is upper-bounded by, but can possibly be
  different from, the codimension of the mlc of $(\rs X, \rs S)$.
  (The latter is denoted by $\rs\sigma_{\mlc}$ or
  $\rs\sigma_{\mlc}\paren{\rs* X, \rs* S}$ in the rest of this section.)
\end{definition}

\begin{remark} \label{rem:sigma-lcc-explain}
  It is deliberate to call it ``$\sigma$-lc centre'' instead of ``lc
  centre of codimension $\sigma$'', as a $\sigma$-lc centre in $X$ may
  not necessarily be of codimension $\sigma$ in $X$, as indicated in
  \cite{Chan&Choi_ext-with-lcv-codim-1}*{Remark 1.4.2}.
  See also \cite{Chan&Choi_ext-with-lcv-codim-1}*{Example 3.5.1}.
  Note also that the above definition is \emph{different} from (yet a
  refined version of) the ad hoc definition given in
  \cite{Chan&Choi_ext-with-lcv-codim-1}*{Def.~1.4.1} as well as
  \cite{Chan_on-L2-ext-with-lc-measures}*{Def.~1.3.3}.
\end{remark}

Theorem \ref{thm:sigma-lc-centres} shows that the sequence
\begin{equation*}
  \mtidlof[X]{\vphi_L+\psi} 
  =\aidlof|0|<X>{\vphi_L} 
  \subset \aidlof|1|<X>{\vphi_L} 
  \subset \dots \subset \aidlof|\sigma_{\mlc}|<X>{\vphi_L} 
  =\mtidlof[X]{\vphi_L} \; ,
\end{equation*}
contains bimeromorphic information of the $\sigma$-lc centres of
$(X,\vphi_L,\psi)$ for various integers $\sigma \geq 0$.
It is tempting to use this definition to define $\sigma$-lc centres in
the more general situations (for example, when the singularities of
$\vphi_L$ are worse than neat analytic singularities).
However, the coherence of $\aidlof{\vphi_L}$ and the existence of the
residue short exact sequence will then be in question.
This will be studied in subsequent papers.

With such definition of $\sigma$-lc centres, their relation with the
lc locus $S$ of $(X,\vphi_L,\psi)$ is the expected one.
\begin{prop} \label{prop:relation-of-S-and-lcc}
  Under the notation and assumptions in Definition
  \ref{def:sigma-lc-centres} and given the lc locus $S$ of
  $(X,\vphi_L,\psi)$ defined by the ideal sheaf $\defidlof{S}
  =\Ann_{\holo_X} \paren{\frac{
      \aidlof|\sigma_{\mlc}|<X>{\vphi_L}
    }{
      \aidlof|0|<X>{\vphi_L}
    }}$ as in item \eqref{item:def-S} in Section \ref{sec:setup}, one
  has
  \begin{equation*}
    \defidlof{S} = \defidlof{\lcc[1]<X>(\vphi_L;\psi)}
    \cap \defidlof{\lcc[2]<X>(\vphi_L;\psi)} \cap \dotsm
    \cap \defidlof{\lcc[\sigma_{\mlc}]<X>(\vphi_L;\psi)} \; ,
  \end{equation*}
  which is translated to $S =\lcc[1]<X>(\vphi_L;\psi) \cup
  \lcc[2]<X>(\vphi_L;\psi) \cup \dotsm \cup
  \lcc[\sigma_{\mlc}]<X>(\vphi_L;\psi) \;$. 
\end{prop}

\begin{proof}
  \NewDocumentCommand{\fAnn}{
    D<>{X}
    m
    m
  }{\Ann_{\holo_{#1}}\paren{\frac{#2}{#3}}}
  
  Write $\aidlof{}*$ for $\aidlof<X>{\vphi_L}$ and $\lc^{\sigma}$ for
  $\lcc<X>(\vphi_L;\psi)$ for convenience.
  Consider the short exact sequence
  \begin{equation*}
    \renewcommand{\objectstyle}{\displaystyle}
    \xymatrix{
      {0} \ar[r]
      &{\frac{\aidlof|\sigma|{}*}{\aidlof|\sigma-1|{}*}} \ar[r]
      &{\frac{\aidlof|\sigma'|{}*}{\aidlof|\sigma-1|{}*}} \ar[r]
      &{\frac{\aidlof|\sigma'|{}*}{\aidlof|\sigma|{}*}} \ar[r]
      &{0}
    }
  \end{equation*}
  for any $\sigma' \geq \sigma \geq 1$, which is induced from the
  inclusions of the adjoint ideal sheaves.
  As all the maps in the exact sequence are $\holo_X$-homomorphisms,
  it can be checked readily that
  \begin{align*}
    \fAnn{\aidlof|\sigma'|{}*}{\aidlof|\sigma-1|{}*}
    \subset \fAnn{\aidlof|\sigma|{}*}{\aidlof|\sigma -1|{}*}
      \cap \fAnn{\aidlof|\sigma'|{}*}{\aidlof|\sigma|{}*}
    =\defidlof{\lc^{\sigma}}
      \cap \fAnn{\aidlof|\sigma'|{}*}{\aidlof|\sigma|{}*} \; .
  \end{align*}
  By induction, one also sees that
  \begin{equation*}
    \fAnn{\aidlof|\sigma'|{}*}{\aidlof|\sigma-1|{}*}
    \subset \defidlof{\lc^{\sigma}}
    \cap \defidlof{\lc^{\sigma+1}}
    \cap \dotsm \cap \defidlof{\lc^{\sigma'}} \; .
  \end{equation*}
  Putting $\sigma' = \sigma_{\mlc}$ and $\sigma =1$ yields
  $\defidlof{S} \subset \defidlof{\lc^{1}}
  \cap \defidlof{\lc^{2}}
  \cap \dotsm \cap \defidlof{\lc^{\sigma_{\mlc}}}$.

  To obtain the reverse inclusion, take any $h \in \defidlof{\lc^{1}}
  \cap \dotsm \cap \defidlof{\lc^{\sigma_{\mlc}}}$, which means $h \in
  \defidlof{\lc^\sigma} =\fAnn{\aidlof{}*}{\aidlof|\sigma-1|{}*}$ for
  each $\sigma =1, \dots , \sigma_{\mlc}$.
  Therefore, $h^{\sigma_{\mlc}} \in
  \fAnn{\aidlof|\sigma_{\mlc}|{}*}{\aidlof|0|{}*} = \defidlof{S}$.
  As $\defidlof{S}$ is radical by \cite{Demailly_extension}*{Lemma
    4.2}, it follows that $h \in \defidlof{S}$.
  This completes the proof.
\end{proof}

Here is a sufficient condition for the equalities $\sigma_{\mlc} =
\rs\sigma_{\mlc}$ and $\lcc[\sigma_{\mlc}]<X>(\vphi_L;\psi) =
\pi\paren{\lcc[\sigma_{\mlc}]<\rs X>(\rs* S)}$ to hold true, which may
be useful in practice.

\begin{prop} \label{prop:mlc=rs_mlc-in-lc-system}
  Under the notation and assumptions in Definition
  \ref{def:sigma-lc-centres}, suppose that
  $\aidlof<X>{\vphi_L} =\holo_X$ for some $\sigma \geq 0$, or
  equivalently, $\aidlof|\sigma_{\mlc}|<X>{\vphi_L} =\holo_X$.
  Then, for any log-resolution $\pi \colon \rs X \to X$ of
  $(X,\vphi_L,\psi)$, one has $\sigma_{\mlc}\paren{X,\vphi_L,\psi} =
  \rs\sigma_{\mlc}(\rs X, \rs S)$.
  Moreover, one also has $\lcc<X>(\vphi_L;\psi) \setminus \pi\paren{
    \lcc[\sigma+1]<\rs X>(\rs* S)
  }
  =\pi\paren{
    \lcc[\sigma]<\rs X>(\rs* S)
  } \setminus \pi\paren{
    \lcc[\sigma+1]<\rs X>(\rs* S)
  }$, and thus
  $\pi\paren{\lcc<\rs X>(\rs* S)} = \bigcup_{s =
    \sigma}^{\sigma_{\mlc}} \lcc[s]<X>(\vphi_L;\psi)$
  for all integers $\sigma \geq 1$.
\end{prop}

\begin{proof}
  Write $\aidlof{}*$ for $\aidlof<X>{\vphi_L}$, 
  $\rs{\aidlof{}*}$ for $\aidlof<\rs X>{\pi_\ominus^*\vphi_L
    -\phi_E}[\pi^*\psi]$, $\lc^{\sigma}$ for $\lcc<X>(\vphi_L;\psi)$
  and $\rs*{\lc^{\sigma}}$ for $\lcc<\rs X>(\rs* S)$ for convenience.
  Since $\holo_X = \aidlof|\sigma_{\mlc}|{}*
  =\pi_*\rs{\aidlof|\sigma_{\mlc}|{}*}$ implies that $\holo_{\rs X}
  =\rs{\aidlof|\sigma_{\mlc}|{}*} =\rs{\aidlof|\rs\sigma_{\mlc}|{}*}$,
  this forces the equality $\sigma_{\mlc} =\rs\sigma_{\mlc}$ to hold
  true (notice that one must have $\rs{\aidlof|\sigma-1|{}*}
  \subsetneq \rs{\aidlof{}*}$ for all $\sigma =1,\dots,
  \rs\sigma_{\mlc}$). 
  Moreover, it follows that
  \begin{align*}
    \defidlof{\lc^{\sigma_{\mlc}}}
    =\Ann_{\holo_X} \paren{\frac{
        \aidlof|\sigma_{\mlc}|{}*
      }{
        \aidlof|\sigma_{\mlc} -1|{}*
      }}
    &=\aidlof|\sigma_{\mlc}-1|{}* \\
    &=\pi_*\rs{\aidlof|\sigma_{\mlc}-1|{}*}
    \overset{\text{Thm.~\ref{thm:results-on-residue-fct} \eqref{item:thm:res-fct}}}=
    \pi_* \defidlof{\rs*{\lc^{\sigma_{\mlc}}}}
    =\defidlof{\pi\paren{\rs*{\lc^{\sigma_{\mlc}}}}} \; ,
  \end{align*}
  and therefore $\lc^{\sigma_{\mlc}}
  =\pi\paren{\rs*{\lc^{\sigma_{\mlc}}}}$.


  The rest is proceeded via induction.
  Suppose it is known that
  \begin{equation*}
    \pi\paren{\rs{\lc^{\sigma'}}}
    =\lc^{\sigma'} \cup \lc^{\sigma'+1} \cup \dotsm \cup
    \lc^{\sigma_{\mlc}}
    \quad\text{ for } \sigma' =\sigma+1, \sigma+2, \dots,
    \sigma_{\mlc} \; .
  \end{equation*}
  To prove the equality $\lc^{\sigma} \setminus
      \pi\paren{\rs*{\lc^{\sigma+1}}}
  =\pi\paren{\rs*{\lc^{\sigma}}} \setminus
      \pi\paren{\rs*{\lc^{\sigma+1}}}$ (which will then imply
  $\pi\paren{\rs*{\lc^\sigma}} =\bigcup_{s=\sigma}^{\sigma_{\mlc}} \lc^s$),
  consider the short exact sequence
  \begin{equation*}
    \renewcommand{\objectstyle}{\displaystyle}
    \xymatrix{
      {0} \ar[r]
      &{\frac{\aidlof|\sigma'|{}*}{\aidlof|\sigma-1|{}*}} \ar[r]
      &{\frac{\aidlof|\sigma'+1|{}*}{\aidlof|\sigma-1|{}*}} \ar[r]
      &{\frac{\aidlof|\sigma'+1|{}*}{\aidlof|\sigma'|{}*}} \ar[r]
      &{0}
    }
  \end{equation*}
  for any $\sigma' \geq \sigma$ (which is induced from the inclusions
  of the adjoint ideal sheaves).
  Take any point $x \in X \setminus \pi\paren{\rs*{\lc^{\sigma+1}}}$, which implies
  that $x \not\in \lc^{\sigma' +1}$ for all $\sigma' \geq \sigma$ by
  the inductive assumption.
  Since $\paren{\frac{\aidlof|\sigma'+1|{}*}{\aidlof|\sigma'|{}*}}_x =
  0$ for all $\sigma' \geq \sigma$ by the choice of $x$, the short
  exact sequence 
  yields
  $\paren{\frac{\aidlof|\sigma'|{}*}{\aidlof|\sigma-1|{}*}}_x =
  \paren{\frac{\aidlof|\sigma'+1|{}*}{\aidlof|\sigma-1|{}*}}_x$ for
  all $\sigma' \geq \sigma$, which results in
  $\paren{\frac{\aidlof|\sigma|{}*}{\aidlof|\sigma-1|{}*}}_x = 
  \paren{\frac{\aidlof|\sigma_{\mlc}|{}*}{\aidlof|\sigma-1|{}*}}_x$.
  Therefore, following the computation for
  $\defidlof{\lc^{\sigma_{\mlc}}}$ above (which makes use of the
  assumption $\aidlof|\sigma_{\mlc}|{}* =\holo_X$), one has
  \begin{equation*}
    \res{\defidlof{\lc^{\sigma}}}_{X \setminus \pi\paren{\rs*{\lc^{\sigma +1}}}}
    =\res{\defidlof{\pi\paren{\rs*{\lc^{\sigma}}}}}_{X \setminus
      \pi\paren{\rs*{\lc^{\sigma +1}}}} \; ,
  \end{equation*}
  which is translated to $\lc^\sigma \setminus \pi(\rs{\lc^{\sigma+1}})
  =\pi(\rs{\lc^{\sigma}}) \setminus
  \pi(\rs{\lc^{\sigma+1}})$, as desired.
  The claim is thus proved by induction.
\end{proof}

With the terminology of $\sigma$-lc centres, the following theorem can
be stated, which is needed in Theorem
\ref{thm:comparison-alg-and-analytic-aidl}.
Set $\rs S' := \lcc[1]<\rs X>(\rs* S)'$, defined as in
\eqref{eq:def-of-lcc'}, to be the union of all $\pi$-supportive $1$-lc
centres in $\lcc[1]<\rs X>(\rs* S)$ in what follows.

\begin{thm} \label{thm:pi-supportive-1-lc-centres}
  Under the notation and assumptions in Definition
  \ref{def:sigma-lc-centres}, given any $1$-lc centre $\lcS|1|$ of
  $(X,\vphi_L,\psi)$ (which may not be a divisor), the divisor
  $\pi^{-1}\paren{\lcS|1|} \cap \rs S'$ is irreducible, that is, there
  is one and only one $\pi$-supportive $1$-lc centre $\rs D$
  of $(\rs X, \rs S)$ such that $\rs D$ is mapped into, and thus onto,
  $\lcS|1|$ by $\pi$.
\end{thm}

\begin{proof}
  According to Remark \ref{rem:1-residue-sheaf} and
  \eqref{eq:isom-of-2models-of-residl}, one has 
  $\residlof|1|<X>{\vphi_L;\psi} \isom \pi_* \residlof|1|<\rs
  X>{\rs\bphi -\phi_E}$ (which is a consequence of the
  local vanishing theorem (\cite{Lazarsfeld_book-II}*{Thm.~9.4.1}) in
  the algebraic situation or the generalisation of the
  Grauert--Riemenschneider vanishing theorem by Matsumura
  (\cite{Matsumura_injectivity-Kaehler}*{Cor.~1.5}) in the analytic
  situation).
  Let $\rs D \subset \rs S'$ be a $\pi$-supportive $1$-lc centre of
  $(\rs X, \rs S)$ such that $\pi(\rs D) =\lcS|1|$.
  Suppose that $\rs D_1 \subset \rs S'$ is another $\pi$-supportive
  $1$-lc centre such that $\pi(\rs D_1) \subset \lcS|1|$.
  Since $\residlof|1|<\rs X>{\rs\bphi -\phi_E}$ is a direct sum where each of
  its summands is supported on an irreducible component of $\rs S'$, there
  exists a decomposition
  \begin{equation*}
    \residlof|1|<X>{\vphi_L;\psi}
    \isom \pi_* \residlof|1|<\rs X>{\rs\bphi -\phi_E}
    =\pi_* \sheaf S_1 \oplus \pi_* \sheaf S \; ,
  \end{equation*}
  where $\sheaf S_1$ is supported on $\rs D_1$ and $\sheaf S$ is
  supported on the components of $\rs S' -D_1$ (which include $\rs
  D$).
  Note that both summands $\pi_*\sheaf S_1$ and $\pi_*\sheaf S$ are
  non-trivial as both $\rs D_1$ and $\rs D$ are $\pi$-supportive.
  By restricting attention to a local open set in $X$, one can assume that
  $X$ is Stein.
  Let $\rs g \in \sheaf S_1\paren{\pi^{-1}(X)}$ be a
  \emph{non-trivial} holomorphic section and $\rs 0$ be the zero section
  in $\sheaf S\paren{\pi^{-1}(X)}$. Take also $\rs h \in \sheaf
  S\paren{\pi^{-1}(X)}$ such that $\rs h$ is non-trivial on
  \emph{every} irreducible component of $\rs S' -\rs D_1$.

  It follows from the surjectivity of the residue morphism on $X$ (see
  Corollary \ref{cor:residue-exact-seq-for-non-snc}) that there exist
  $f_{\rs 0}, f_{\rs h} \in \aidlof|1|<X>{\vphi_L}\paren{X}$ such
  that $\rs\Res(\pi^*f_{\rs 0}) = (\rs g, \rs 0)$ and $\rs\Res(\pi^*f_{\rs h}) =(\rs
  g, \rs h)$.
  As seen from the formula for $\rs\Res$, these functions satisfy
  \begin{equation*}
    \res{\frac{\pi^*f_{\rs 0}}{\rs\sect_0}}_{\rs
      D_1} =\res{\frac{\pi^*f_{\rs h}}{\rs\sect_0}}_{\rs D_1} \not\equiv
    0 \;\; (\text{as } \rs g \not\equiv 0)
    \; , \quad
    \res{\frac{\pi^*f_{\rs h}}{\rs\sect_0}}_{\rs D} \not\equiv 0
    \;\; (\text{as } \rs h \not\equiv 0)
    \quad\text{ and }\quad
    \res{\frac{\pi^*f_{\rs 0}}{\rs\sect_0}}_{\rs D} \equiv 0
  \end{equation*}
  simultaneously, which in turn yield
  \begin{equation*}
    \res{\pi^*\paren{\frac{f_{\rs 0}}{f_{\rs h}}}}_{\rs D_1} \equiv 1
    \quad\text{ and }\quad
    \res{\pi^*\paren{\frac{f_{\rs 0}}{f_{\rs h}}}}_{\rs D} \equiv 0 \; .
  \end{equation*}
  As $\frac{f_{\rs 0}}{f_{\rs h}}$ is a meromorphic function on $X$,
  the equalities above contradict the inclusion $\pi(\rs D_1) \subset
  \pi(\rs D) =\lcS|1|$.
  This completes the proof.
\end{proof}


\section{Comparison between various algebraic and analytic adjoint ideal
  sheaves}
\label{sec:algebraic-and-analytic-comparison}



\subsection{Ein--Lazarsfeld and Hacon--\McKernan\ adjoint ideal
  sheaves}
\label{sec:def-of-alg-adjoint-ideal-sheaves}

Let $S$ be a reduced divisor (which need not be snc) on a complex manifold $X$.
Let $\pi \colon \rs X \to X$ be any log-resolution of $(X,S)$ such that
$\pi^*S +\Exc(\pi)$ is snc.
There exist a \emph{reduced} snc divisor $\Gamma$ and an exceptional
snc divisor $E^0$, which have no common irreducible
components, such that $E^0$ has no coefficients $=-1$ and
\begin{equation} \label{eq:canonical-div-log-resolution-formula}
  K_{\rs X} +\Gamma
  = K_{\rs X} +\pi^{-1}_*S +\Gamma_{\Exc}
  \sim \pi^*\paren{K_X +S} +E^0 \; ,
\end{equation}
where $\sim$ stands for the linear equivalence relation
between divisors and $\Gamma_{\Exc}$ is an effective reduced
exceptional divisor such that it shares no irreducible components with
the proper transform $\pi^{-1}_*S$ of $S$.
Readers are referred to \cite{Kollar_Sing-of-MMP}*{Def.~2.8} for the
definitions of $(X,S)$ being klt, plt, dlt and lc.
Note that $(X,S)$ being klt (resp.~lc) is equivalent to $\psi_S :=
\phi_S -\sm\vphi_S$ (see Notation \ref{notation:potentials}) having
klt (resp.~lc) singularities as described in Section \ref{sec:setup},
and $(X,S)$ being plt implies that $\Gamma =\pi^{-1}_*S$
(i.e.~$\Gamma_{\Exc} = 0$).
Note also that, \emph{when $(X,S)$ is lc}, since all the divisors
involved in \eqref{eq:canonical-div-log-resolution-formula} are
integral, one has $E^0 \geq 0$.


Let $\ideal a$ be a coherent ideal sheaf on $X$ and assume that $\pi$
is chosen such that $\ideal a \cdot \holo_{\rs X} = \holo_{\rs
  X}\paren{-F}$ for some effective divisors $F$ where $F +\pi^*S
+\Exc(\pi)$ is snc.
The \emph{Hacon--\McKernan\ adjoint ideal sheaf}
$\HMAdjidlof{\ideal a^c}$ for 
any given number $c \geq 0$ (following the definition in
\cite{Hacon&Mckernan}*{Def.-Lemma 4.2} as well as
\cite{Ein-Popa}*{Def.~2.4}) is given by
\begin{equation} \label{eq:definition-alg-HMAdjidl}
  \HMAdjidlof{\ideal a^c}
  \begin{aligned}[t]
    &:=\pi_*\holo_{\rs X} \paren{K_{\rs X} -\pi^*\paren{K_X +S}
      +\Gamma -\floor{c F}} \\
    &=\pi_*\holo_{\rs X} \paren{E^0 -\floor{c F}} \; ,
  \end{aligned}
\end{equation}
where $\floor\cdot$ is the floor function.
When $\ideal a$ is already a principal ideal $\holo_X\paren{-D}$, the
adjoint ideal sheaf $\HMAdjidlof{\ideal a^c}$ is written as
$\HMAdjidlof{c D}$.
It is independent of the choice of log-resolution $\pi$ of
$(X,S,\ideal a)$ as shown in \cite{Hacon&Mckernan}*{Def.-Lemma 4.2}.

Assume further that \emph{$\pi$ is chosen such that $\pi^{-1}_*S$ is
  smooth} (so all irreducible components are disjoint).
The \emph{Ein--Lazarsfeld adjoint ideal sheaf} $\ELAdjidlof{\ideal
  a^c}$ for any given $c \geq 0$ (see
\cite{Ein&Lazarsfeld_adjIdl}*{Prop.~3.1},
\cite{Lazarsfeld_book-II}*{Def.~9.3.47} as well as
\cite{Takagi_alg-adjoint-ideal}*{Def.~1.2 (ii)}) is given by
\begin{equation} \label{eq:definition-alg-ELAdjidl}
  \ELAdjidlof{\ideal a^c}
  \begin{aligned}[t]
    &:=\pi_*\holo_{\rs X} \paren{K_{\rs X} -\pi^*\paren{K_X +S}
      +\pi^{-1}_*S -\floor{c F}} \\
    &=\pi_*\holo_{\rs X} \paren{E^0 -\Gamma_{\Exc} -\floor{c F}} \; .
  \end{aligned}
\end{equation}
It is independent of the choice of log-resolution $\pi$ of $(X,S,\ideal a)$ such
that $\pi^{-1}_*S$ is smooth (see, for example,
\cite{Lazarsfeld_book-II}*{Thm.~9.2.18} or
\cite{Takagi_alg-adjoint-ideal}*{Lemma 1.7 (1)} for a proof).

It can be seen that $\ELAdjidlof{\ideal a^c} \subset
\HMAdjidlof{\ideal a^c}$ in general.
The two adjoint ideal sheaves coincide when $(X,S)$ is plt as seen
from the definitions.
One can construct simple examples to see that the two adjoint ideal
sheaves are different in more general setting.
For example, take $X =\fieldC^2$, $\ideal a = \holo_X$ and $S =
\set{z_2^2 = z_1^2 (z_1 +1)}$ a nodal curve with a node at the
origin (note that $S$ has only $1$ component, yet $(X,S)$ is lc but
not plt).
Take $\pi \colon \rs X \to X$ to be the blow-up of $X$ at the origin
and let $E$ be the exceptional divisor.
One can check that $K_{\rs X} = \pi^*K_X +E$ and $\pi^*S =\pi^{-1}_*S
+2E$, thus $\ELAdjidlof{\holo_X} = \pi_*\holo_{\rs X}\paren{-E} =
\maxidl_{X,0}$ (the defining ideal sheaf of the origin in $X$) while
$\HMAdjidlof{\holo_X} = \pi_*\holo_{\rs X} = \holo_{X}$.

Indeed, for any effective $\fieldQ$-divisor $D$ whose support does not
contain any lc centre of $(X,S)$, $\ELAdjidlof{D}$ is trivial if and
only if $(X, S+D)$ is plt with $\floor D = 0$
(cf.~\cite{Takagi_alg-adjoint-ideal}*{Remark 1.3 (2)} and see
\cite{Kollar_Sing-of-MMP}*{Def.~2.8} instead of
\cite{Takagi_alg-adjoint-ideal}*{Def.~1.1 (ii)} for the definition of
plt).
When $S$ is snc (thus $(X,S)$ is lc and log-smooth), $\HMAdjidlof{D}$
is trivial if and only if $(X, S+D)$ is dlt with $\floor D = 0$ (see
\cite{Hacon&Mckernan}*{Lemma 4.3 (1)}).

\subsection{Analytic and algebraic adjoint ideal sheaves in the lc
  case}
\label{sec:analytic-algebraic-adjIdl-in-lc-case}

To compare the algebraic adjoint ideal sheaves with the current
version of analytic adjoint ideal sheaves, now suppose that 
\emph{$(X,S)$ is lc} (but need not be log-smooth, i.e.~$S$ may not
have snc) and that the \emph{zero locus of $\ideal a$ contains no lc
centres of $(X,S)$ in the sense of
\cite{Kollar_Sing-of-MMP}*{Def.~4.15}}.
Take $L=\holo_X$ and let $\vphi_L := \vphi_{\ideal a^c}$ be a
quasi-psh function on $X$ associated to $\ideal a^c$ such that, given
an open cover $\set{V_\gamma}_\gamma$ of $X$, if $\ideal a$ is
generated on an open set $V_\gamma$ by $g_1, \dots, g_N \in
\holo_{X}\paren{V_\gamma}$, one then has
\begin{equation*}
  \res{\vphi_{\ideal a^c}}_{V_\gamma}
  =\res{c \:\vphi_{\ideal a}}_{V_\gamma}
  =c\log\paren{\abs{g_1}^2 + \dotsm
    +\abs{g_N}^2} +c\beta
\end{equation*}
for some $\beta \in \smooth_X\paren{V_\gamma}$ (so $\vphi_{\ideal a^c}$ has neat
analytic singularities for all $c \geq 0$).
Set also
\begin{equation*}
  \psi := \psi_S := \phi_S -\sm\vphi_S \leq -1
\end{equation*}
(see Notation \ref{notation:potentials}).
As $\vphi_{\ideal a}^{-1}(-\infty)$ contains no lc centres of $(X,S)$ by the
assumption on $\ideal a$, it follows that, for every given constant $c
\geq 0$, the number $m=1$ is a jumping number of the family
$\set{\mtidlof[X]{\vphi_{\ideal a^c} +m\psi}}_{m\in [0,1]}$.
Moreover, there is a number $m^c \in [0,1)$ for each $c \geq 0$ such
that
\begin{equation*}
  \mtidlof[X]{\vphi_{\ideal a^c} +m^c\psi}
  =\mtidlof[X]{\vphi_{\ideal a^c} +m\psi}
  \supsetneq \mtidlof[X]{\vphi_{\ideal a^c} +\psi}
  \quad\text{ for all } m \in [m^c, 1) 
\end{equation*}
and that
\begin{equation*}
  \Ann_{\holo_X}\paren{
    \frac{
      \mtidlof[X]{\vphi_{\ideal a^c} +m^c\psi}
    }{
      \mtidlof[X]{\vphi_{\ideal a^c} +\psi}
    }
  } =\defidlof{S} \; .
\end{equation*}
To see the inclusion $\Ann_{\holo_X}(\dotsm) \subset \defidlof{S}$
(the reverse inclusion is easy to see), on any sufficiently small open
set $V \Subset X$, take any $h \in \Ann_{\holo_X}(\dotsm)\paren{\cl
  V}$ and take for each irreducible component $D$ of $S \cap V$ a
section $f_D \in \mtidlof[X]{\vphi_{\ideal a^c}+m^c\psi}\paren{\cl V}$ such
that $f_D$ does not vanish on $D$ (the existence follows from
assumption on $\ideal a$ and the fact $m^c < 1$).
Since $\vphi_{\ideal a^c}$ is locally bounded from above, one then has
\begin{equation*}
  \int_V \abs{hf_D}^2 e^{-\psi_S} \dvol_X
  \lesssim \int_V \abs{hf_D}^2 e^{-\vphi_{\ideal a^c}-\psi} \dvol_X < +\infty \; ,
\end{equation*}
which implies $h \in \defidlof{S}$ after letting $D$ run through all
components in $S \cap V$.

Given the log-resolution $\pi \colon \rs X \to X$ of $(X,\vphi_{\ideal
  a^c},\psi)$ (also of $(X,S,\ideal a)$), which comes with the
exceptional ($\Znum$-)divisors $E^c$ \footnote{Coincidentally, $E^c$
  with $c=0$ is exactly $E^0$ defined in
  \eqref{eq:canonical-div-log-resolution-formula}.} and $R^c$ defined
as in Section \ref{sec:snc-assumption} (with $\vphi_{\ideal a^c}$ in
place of $\vphi_L$), and assuming that $m^c$ is sufficiently close to
$1$ in view of Proposition \ref{prop:log-resoln-on-jump-subvar}, let
$\rs S^c$ be the \emph{reduced} subvariety defined by
\begin{equation*}
  \defidlof{\rs S^c} := \Ann_{\holo_{\rs X}}\paren{
    \frac{
      \mtidlof[\rs X]{\pi^*_\ominus\vphi_{\ideal a^c} +m^c\pi^*\psi}
    }{
      \mtidlof[\rs X]{\pi^*_\ominus\vphi_{\ideal a^c} +\pi^*\psi}
    }
  }
  \quad\text{ (where $\pi^*_\ominus\vphi_{\ideal a^c}
    :=\pi^*\vphi_{\ideal a^c} -\phi_{R^c}$)}\; ,
\end{equation*}
which is an snc divisor as
\begin{equation}\label{eq:alg-mtidl-formula}
  \mtidlof[\rs X]{\pi^*_\ominus\vphi_{\ideal a^c} +\pi^*\psi}
  =\holo_{\rs X}\paren{-\floor{cF -R^c +\pi^*S}}
\end{equation}
is a principal ideal sheaf with an snc generator and Lemma
\ref{lem:principal-annihilator-criterion} applies.

The divisors $\Gamma$ and $\rs S^c$ are equal for all $c \geq 0$
on the complement of a discrete subset of $\fieldR_{> 0}$ (so $\Gamma
= \rs S^0$ in particular), which can be illustrated in the following
example.

\begin{example} \label{example:compare-alg-ana-adj}
  Let $X :=\Delta^3 \subset \fieldC^3$ be the unit $3$-disc centred at
  the origin in the holomorphic coordinate system $(z_1,z_2,z_3)$.
  Take
  \begin{equation*}
    \ideal a := \genby{z_1, z_2, z_3} \; , \quad
    \vphi_{\ideal a^c} := c \log\paren{\abs{z_1}^2 +\abs{z_2}^2 +\abs{z_3}^2}
    \quad\text{ and }\quad
    \psi :=\log\abs{z_1}^2 +\log\abs{z_2}^2 -1
  \end{equation*}
  such that $m=1$ is a jumping number for the family
  $\set{\mtidlof[X]{\vphi_{\ideal a^c} +m\psi}}_{m\in [0,1]}$ for any $c \geq 0$
  and the corresponding lc locus $S$ is given by $\set{z_1z_2 = 0} =:
  S_1 + S_2$.
  Let $\pi \colon \rs X \to X$ be a log-resolution of $(X,S,\ideal a)$
  obtained by first blowing up the line $\set{z_1=z_2=0}$ in $X$ (with
  exceptional divisor $E_1$) and then blowing up the intersection of
  $E_1$ and the proper transform of $\set{z_3=0} =: S_3$ (with exceptional
  divisor $E_2$).
  Abusing $E_1$ to mean also its proper transform via the second
  blow-up, it follows from a simple computation that
  \begin{gather*}
    K_{\rs X} \sim \pi^*K_X +E_1 +2E_2 \; , \quad
    \pi^*S = \pi^{-1}_*S +2E_1 +2E_2 \; , \\
    \pi^*\vphi_{\ideal a^c} \sim_{\tlog} c\:\phi_{E_2} 
    \quad\text{ and }\quad
    \pi^*\psi \sim_{\tlog} \phi_{\pi^{-1}_*S} +2\phi_{E_1}
    +2\phi_{E_2} \; ,
  \end{gather*}
  in which the notations $\phi_{\pi^{-1}_*S}$, $\phi_{E_1}$ and
  $\phi_{E_2}$ are explained in Notation \ref{notation:potentials},
  and thus
  \begin{gather*}
    R^c = R^0 = E_1+2E_2 \; , \quad
    E^0 = E^c = 0 \quad
    \text{ and } \\
    \Gamma = \pi^{-1}_*S +E_1  \quad(\text{thus }
    \Gamma_{\Exc} =E_1) \; , 
  \end{gather*}
  where $\Gamma$, $\Gamma_{\Exc}$, $E^0$, $E^c$ and $R^c$ are the
  divisors described above.
  Note also that the two irreducible components of $\pi^{-1}_*S = S_1'
  +S_2'$ (where $S_i'$ is the proper transform of $S_i =\set{z_i=0}$
  for $i=1,2,3$) are disjoint in $\rs X$.
  The weight in the $L^2$ norm associated to $\mtidlof[\rs
  X]{\pi^*_\ominus\vphi_{\ideal a^c}+m\pi^*\psi}$ is then of the form
  \begin{equation*}
    e^{-\pi^*_\ominus \vphi_{\ideal a^c} -m\pi^*\psi} \dvol_{\rs X}
    \sim e^{-(c+2m-2)\phi_{E_2} -(2m-1)\phi_{E_1} -m\phi_{\pi^{-1}_*S}} \dvol_{\rs X} \; .
  \end{equation*}
  It can then be seen that, for $c \geq 0$, by setting
  \begin{equation*}
    m^c :=
    \begin{dcases}
      1-\frac{c-\floor{c}}2 & \text{if } c \not\in \Nnum \cup [0,1] \; , \\
      \frac 12 & \text{if } c \in \Nnum \cup [0,1]\; , 
    \end{dcases} 
  \end{equation*}
  one has
  \begin{equation*}
    \mtidlof[\rs X]{\pi^*_\ominus\vphi_{\ideal a^c}+m^c\pi^*\psi}
    =\mtidlof[\rs X]{\pi^*_\ominus\vphi_{\ideal a^c}+m\pi^*\psi}
    \supsetneq \mtidlof[\rs X]{\pi^*_\ominus\vphi_{\ideal a^c}+\pi^*\psi}
    \quad\text{ for all } m \in [m^c,1) \; .
  \end{equation*} 
  Recall that $\defidlof{\rs S^c} =\Ann_{\holo_{\rs X}} \paren{\frac{
      \mtidlof[\rs X]{\pi^*_\ominus\vphi_{\ideal a^c}+m^c\pi^*\psi}
    }{
      \mtidlof[\rs X]{\pi^*_\ominus\vphi_{\ideal a^c}+\pi^*\psi}
    }}$.
  Therefore, one also sees that
  \begin{equation*}
    \rs S^c =
    \begin{dcases}
      \Gamma & \text{if } c \not\in\Nnum=\Nnum_{\geq 1} \; , \\
      \Gamma +E_2 & \text{if } c \in \Nnum \; .
    \end{dcases} 
  \end{equation*} 

  The various adjoint ideal sheaves can now be computed.
  Notice that the ideal sheaves in question
  are toric, so they are generated by monomials.
  A direct use of
  \eqref{eq:definition-alg-HMAdjidl}, \eqref{eq:definition-alg-ELAdjidl}
  and \eqref{eq:alg-mtidl-formula}, together with the equalities
  \begin{equation*}
    \divsr{\pi^*z_1} = S_1' +E_1 +E_2 \; , \quad
    \divsr{\pi^*z_2} = S_2' +E_1 +E_2 \quad\text{ and }\quad
    \divsr{\pi^*z_3} = S_3' +E_2 \; , 
  \end{equation*}
  shows that
  \begin{gather*}
    \begin{aligned}
      \HMAdjidlof{\ideal a^c} &=\pi_*\holo_{\rs X}\paren{-\floor c
        E_2} =\genbyd{z_1^p z_2^q z_3^r}{p,q,r \geq 0 \; , \; p+q+r
        \geq
        \floor c} \; , \\
      \ELAdjidlof{\ideal a^c} &=\pi_*\holo_{\rs X}\paren{-E_1 -\floor
        c E_2} =\genbyd{z_1^p z_2^q z_3^r}{p,q,r \geq 0 \; , \; p+q
        \geq 1 \; , \; p+q+r \geq \floor c} \; ,
    \end{aligned}
    \\
    \begin{aligned}
      \mtidlof<X>{\vphi_{\ideal a^c} +m^c\psi}
      &=\pi_*\paren{\mtidlof<\rs X>{\pi^*_\ominus \vphi_{\ideal a^c}
          +m^c \pi^*\psi}} =
      \begin{dcases}
        \pi_*\holo_{\rs X}\paren{-\floor c E_2}
        & \text{if } c \not\in\Nnum \; , \\
        \pi_*\holo_{\rs X}\paren{-(c-1) E_2} & \text{if } c \in\Nnum
        \; ,
      \end{dcases}
      \\
      \mtidlof<X>{\vphi_{\ideal a^c} +\psi} &=\pi_*\paren{\mtidlof<\rs
        X>{\pi^*_\ominus \vphi_{\ideal a^c} + \pi^*\psi}}
      =\pi_*\holo_{\rs X}\paren{-\Gamma -\floor c E_2} \; .
    \end{aligned}
  \end{gather*}
  As both $\Gamma$ and $\Gamma +E_2$ are reduced snc divisors in $\rs
  X$, their lc centres (in the sense of
  \cite{Kollar_Sing-of-MMP}*{Def.~4.15}, also in the sense of Definition
  \ref{def:sigma-lc-centres} when $(\rs X,0,\phi_\Gamma)$ or
  $(\rs X,0,\phi_{\Gamma+E_2})$ is considered) can be obtained directly from the
  intersections of their irreducible components.
  Recall that $\pi^{-1}_*S =S_1' + S_2'$.
  One sees that $\lcc[3]<\rs X>(\Gamma) =\emptyset$ while
  $\lcc[3]<\rs X>(\Gamma+E_2)$ consists of two distinct points ($S_1'
  \cap E_1 \cap E_2$ and $S_2' \cap E_1 \cap E_2$), and $\lcc[2]<\rs
  X>(\Gamma)$ consists of two disjoint lines ($S_1' \cap E_1$ and
  $S_2' \cap E_1$) while $\lcc[2]<\rs X>(\Gamma +E_2)$ is the union of
  five lines (the extra three lines are obtained from the intersections
  of $E_2$ with $S_1'$, $S_2'$ and $E_1$).
  Using Theorem \ref{thm:results-on-residue-fct} and
  \eqref{eq:log-resoln-aidl} (which says $\aidlof<X>{\vphi_{\ideal
      a^c}} =\pi_*\paren{\mtidlof<\rs X>{\pi^*_\ominus\vphi_{\ideal
        a^c} +m^c \pi^*\psi} \cdot \defidlof{\lcc<\rs X>(\rs*
      S^c)}}$), one obtains 
  \begin{align*}
    \aidlof|3|<X>{\vphi_{\ideal a^c}}
    &
      =
      \begin{dcases}
        \pi_*\holo_{\rs X}\paren{-\floor c E_2}
        =\genbyd{z_1^p z_2^q z_3^r }{ p,q,r \geq 0 \; , \; p+q+r
          \geq \floor c}
        &\text{if } c \not\in \Nnum \; , \\
        \pi_*\holo_{\rs X}\paren{-(c-1) E_2}
        =\genbyd{z_1^p z_2^q z_3^r }{ p,q,r \geq 0 \; , \; p+q+r \geq c-1}
        &\text{if } c \in \Nnum \; ,
      \end{dcases}
    \\
    \aidlof|2|<X>{\vphi_{\ideal a^c}}
    &
      =
      \begin{dcases}
        \pi_*\holo_{\rs X}\paren{-\floor c E_2}
        & \text{if } c \not\in
        \Nnum \; , \\
        \pi_*\paren{\holo_{\rs X}\paren{-(c-1) E_2} \cdot
          \defidlof{\lcc[3]<\rs X>(\Gamma+E_2)}}
        & \text{if } c \in \Nnum \; ,
      \end{dcases}
    \\
    &=
      \begin{dcases}
        \genbyd{z_1^p z_2^q z_3^r }{ p,q,r \geq 0 \; , \; p+q+r
          \geq \floor c}
        & \text{if } c \not\in
        \Nnum \; , \\
        \genbyd{z_1^p z_2^q z_3^r}{
          \begin{aligned}
            &~p,q,r \geq 0 \; , \; p+q \geq 1 \; , \;  p+q+r \geq c-1 \\
            \text{or }\;
            &(p,q,r) = (0,0,c)
          \end{aligned}
        }
        & \text{if } c \in \Nnum \; ,
      \end{dcases}
    \\
    \aidlof|1|<X>{\vphi_{\ideal a^c}}
    &
      =
      \begin{dcases}
        \pi_*\paren{\holo_{\rs X}\paren{-\floor c E_2} \cdot
        \defidlof{\lcc[2]<\rs X>(\Gamma)}}
        & \text{if } c \not\in
        \Nnum \; , \\
        \pi_*\paren{\holo_{\rs X}\paren{-(c-1) E_2} \cdot
          \defidlof{\lcc[2]<\rs X>(\Gamma+E_2)}}
        & \text{if } c \in \Nnum \; ,
      \end{dcases}
    \\
    &=
      \begin{dcases}
        \genbyd{z_1^p z_2^q z_3^r }{ p,q,r \geq 0 \; , \; p+q \geq
          1 \; , \; p+q+r \geq \floor c}
        & \text{if } c \not\in
        \Nnum \; , \\
        \genbyd{z_1^p z_2^q z_3^r}{
          \begin{aligned}
            &p \geq 1 \; , \; q \geq 1 \; , \; r \geq 0 \; , \;  p+q+r \geq c-1 \\
            \text{or }\;
            &p,q,r \geq 0 \; , \; p+q \geq 1 \; , \;  p+q+r \geq c
          \end{aligned}
        }
        & \text{if } c \in \Nnum \; ,
      \end{dcases}
    \\
    \aidlof|0|<X>{\vphi_{\ideal a^c}}
    &
      =\pi_*\holo_{\rs X}\paren{-\Gamma -\floor c E_2}
      =\genbyd{z_1^p z_2^q z_3^r }{ p \geq 1 \; , \; q \geq 1 \; ,
      \; r \geq 0 \; , \; p+q+r \geq \floor c} \; .
  \end{align*}
  It follows directly from Definition \ref{def:sigma-lc-centres} that
  \begin{equation*}
    \defidlof{\lcc[3](\vphi_{\ideal a^c} ; \psi)} =
    \begin{dcases}
      \holo_{\rs X} & \text{if } c \not\in\Nnum \; , \\
      \genby{z_1 , z_2 , z_3} & \text{if } c \in\Nnum \; ,
    \end{dcases}
  \end{equation*}
  and
  \begin{equation*}
    \defidlof{\lcc[2](\vphi_{\ideal a^c} ; \psi)} =\genby{z_1, z_2} 
    \quad\text{ and }\quad
    \defidlof{\lcc[1](\vphi_{\ideal a^c} ; \psi)} =\genby{z_1 z_2}
    \quad\mathrlap{\text{ for any } c \geq 0 \; .} 
  \end{equation*}
  Computing directly the images $\pi\paren{\lcc<\rs X>(\rs* S^c)}$, one
  sees that
  \begin{align*}
    &\left.
      \begin{aligned}
        &\lcc[1](\vphi_{\ideal a^c}; \psi) =\pi\paren{\lcc[1]<\rs
          X>(\rs* S^c)} =S
        \\
        &\lcc[2](\vphi_{\ideal a^c}; \psi) =\pi\paren{\lcc[2]<\rs
          X>(\rs* S^c)} =\set{z_1 =z_2 = 0}
      \end{aligned}
      \quad
    \right\}
          \;\text{ for any } c\geq 0
          \;\;\text{ and } \\
    &\lcc[3](\vphi_{\ideal a^c}; \psi)
      =\pi\paren{\lcc[3]<\rs X>(\rs* S^c)} =
      \begin{dcases}
        \emptyset & \text{if } c \not\in\Nnum \; , \\
        \set{(0,0,0)} & \text{if } c \in\Nnum \; .
      \end{dcases}
  \end{align*}
  This shows the difference between lc centres of $(X, \vphi_{\ideal
    a^c}, \psi)$ in the sense of Definition \ref{def:sigma-lc-centres}
  and those of $(X,S)$ in the sense of
  \cite{Kollar_Sing-of-MMP}*{Def.~4.15}.
  Notice that the zero locus of $\ideal a$ contains the $3$-lc centre
  of $(X,\vphi_{\ideal a^c}, \psi)$ when $c \in \Nnum$.

  Observe that
  \begin{gather*}
    \ELAdjidlof{\ideal a^c}
    \begin{dcases}
      = \aidlof|1|<X>{\vphi_{\ideal a^c}}
      & \text{if } c \not\in \Nnum \text{ or } c=1,2 \; , \\
      \subsetneq \aidlof|1|<X>{\vphi_{\ideal a^c}}
      & \text{if } c \in \Nnum \setminus \set{1,2} \; ,
    \end{dcases}
    \intertext{and}
    \HMAdjidlof{\ideal a^c}
    \begin{dcases}
      =\aidlof|2|<X>{\vphi_{\ideal a^c}}
      =\aidlof|3|<X>{\vphi_{\ideal a^c}}
      & \text{if } c \not\in \Nnum \; , \\ 
      =\aidlof|2|<X>{\vphi_{\ideal a^c}}
      \subsetneq \aidlof|3|<X>{\vphi_{\ideal a^c}}
      & \text{if } c =1 \; , \\
      \subsetneq \aidlof|2|<X>{\vphi_{\ideal a^c}}
      \subsetneq \aidlof|3|<X>{\vphi_{\ideal a^c}}
      & \text{if } c \in \Nnum \setminus \set{1} \; .
    \end{dcases}
  \end{gather*}
  Notice that $\sigma_{\mlc}(X,\vphi_{\ideal a^c},\psi)
  =\rs\sigma_{\mlc}(\rs X, \rs S^c) =
  \begin{dcases}
    2 & \text{if } c \in \fieldR_{\geq 0} \setminus \Nnum \\
    3 & \text{if } c \in \Nnum 
  \end{dcases}
  $ in this example (see Definition \ref{def:sigma-lc-centres} for the notation).
\end{example}


\begin{remark} \label{rem:why-no-lambda}
  In the version of analytic adjoint ideal sheaves given by
  Guenancia in \cite{Guenancia} (see \eqref{eq:Guenancia-adj-ideal}),
  the parameter $\lambda > 1$ (in \eqref{eq:Guenancia-adj-ideal}) is
  introduced precisely to force both the analytic and algebraic adjoint
  ideal sheaves to coincide for all $c \geq 0$.
  However, the description of the germs of sections of the current version of
  adjoint ideal sheaves relies on the jumping numbers (see Sections
  \ref{sec:snc-assumption} and \ref{sec:local-expression-of-germs}),
  while including $\lambda$ in the definition would complicate the
  description.
  Complication from the parameter $\lambda$ also arises when one wants
  to involve log-resolutions in order to handle the non-snc case as in
  Section \ref{sec:non-snc}, especially in cases where the polar set
  of $\pi^*_\ominus\vphi_L=\pi^*_\ominus\vphi_{\ideal a^c}$ contains
  some components of the divisor $\pi^* S$.
  As the residue exact sequence (see Theorem \ref{thm:short-exact-seq}
  and Corollary \ref{cor:residue-exact-seq-for-non-snc}, also cf.~the
  exact sequence in Section \ref{sec:brief-account-on-background}) is
  originally the defining property of adjoint ideal sheaves, and the
  above example (as well as Theorem
  \ref{thm:comparison-alg-and-analytic-aidl} below) shows that the
  algebraic and current versions of adjoint ideal sheaves already
  coincide in almost all cases under consideration, the author takes
  the liberty to remove the parameter $\lambda$ from the definition of
  Guenancia in order to simplify the analysis.
\end{remark}

It turns out that the example is reflecting the relation between the
analytic and algebraic versions of adjoint ideal sheaves in general.

\begin{thm} \label{thm:comparison-alg-and-analytic-aidl}
  Given an lc pair $(X,S)$ and a coherent ideal sheaf $\ideal a$ such
  that the zero locus of $\ideal a$ contains no lc centres of $(X,S)$
  in the sense of \cite{Kollar_Sing-of-MMP}*{Def.~4.15} and
  considering $\vphi_{\ideal a^c}$ and $\psi =\psi_S$ as described at
  the beginning of Section
  \ref{sec:analytic-algebraic-adjIdl-in-lc-case}, one has, for any $c
  \geq 0$,
  \begin{equation*}
    \ELAdjidlof{\ideal a^c} \subset
    \aidlof|1|<X>{\vphi_{\ideal a^c}}
    \quad\text{ and }\quad
    \HMAdjidlof{\ideal a^c} \;
    \subset
    \aidlof|\sigma_{\mlc}|<X>{\vphi_{\ideal a^c}} \; ,
  \end{equation*}
  and there exists a countable discrete subset $N := N(\ideal a, S)
  \subset \fieldR_{> 0}$ (excluding $0$) such that equalities hold for
  both inclusions for any $c \in \fieldR_{\geq 0} \setminus N$.
  Note that the integer $\sigma_{\mlc}$ depends on $c$ and is the
  smallest integer such that $\aidlof|\sigma_{\mlc}|<X>{\vphi_{\ideal
      a^c}} =\mtidlof<X>{\vphi_{\ideal a^c} +m^c \psi} $.

  More specifically, if, for a given $c \geq 0$, the zero locus of
  $\ideal a$ contains no lc centres of $(X,\vphi_{\ideal a^c},\psi)$
  in the sense of Definition \ref{def:sigma-lc-centres}, then $c
  \not\in N$ and therefore, in such case,
  \begin{equation*}
    \ELAdjidlof{\ideal a^c}
    =\aidlof|1|<X>{\vphi_{\ideal a^c}}
    \quad\text{ and }\quad
    \HMAdjidlof{\ideal a^c} \;
    =\aidlof|\sigma_{\mlc}|<X>{\vphi_{\ideal a^c}} \; .
  \end{equation*}
\end{thm}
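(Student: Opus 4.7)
The plan is to pass to a common log-resolution of $(X,S,\ideal a)$, express both adjoint ideal sheaves as pushforwards, characterize $N$ via an explicit jumping condition on exceptional divisors, and reduce the comparison to a coefficient-by-coefficient divisor calculation combined with the connectedness Theorem~\ref{thm:connected-1-lc-centres}. I would fix a log-resolution $\pi\colon\rs X\to X$ of $(X,S,\ideal a)$ with $\pi^{-1}_*S$ smooth and $\pi^*S+F+\Exc(\pi)$ in simple normal crossings, where $F$ is the effective divisor with $\ideal a\cdot\holo_{\rs X}=\holo_{\rs X}(-F)$; the hypothesis on the zero locus of $\ideal a$ forces $F$ to share no components with $\pi^{-1}_*S$. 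Such $\pi$ simultaneously realises the Ein--Lazarsfeld setup \eqref{eq:definition-alg-ELAdjidl} and the Snc assumption~\ref{assum:snc-nas} on $(\rs X,\pi^*_\ominus\vphi_{\ideal a^c},\pi^*\psi)$ for every $c\geq 0$. Combining \eqref{eq:definition-alg-HMAdjidl}, \eqref{eq:definition-alg-ELAdjidl}, the log-resolution formula \eqref{eq:log-resoln-aidl} and Theorem~\ref{thm:results-on-residue-fct} (together with Remark~\ref{rem:residue-fct-result-jumping-no}) yields
\begin{equation*}
  \aidlof|\sigma|<X>{\vphi_{\ideal a^c}}
  \isom \pi_*\bigl(E^c\otimes\mtidlof<\rs X>{\pi^*_\ominus\vphi_{\ideal a^c}+m^c\pi^*\psi}\cdot\defidlof{\lcc[\sigma+1]<\rs X>(\rs* S^c)}\bigr),
\end{equation*}
whereas $\ELAdjidlof{\ideal a^c}=\pi_*\holo_{\rs X}(E^0-\Gamma_{\Exc}-\floor{cF})$ and $\HMAdjidlof{\ideal a^c}=\pi_*\holo_{\rs X}(E^0-\floor{cF})$. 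The footnote identification $E^c|_{c=0}=E^0$ supplies the key identity $E^c-E^0=R^0-R^c$ with $R^0=(\pi^*S)_{\Exc}-\Gamma_{\Exc}$, which bridges the algebraic and analytic formulas.

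Direct inspection of the snc coefficients of $\pi^*\vphi_{\ideal a^c}+m\pi^*\psi-\phi_{R^c}$ shows that a prime divisor $D_i$ is a component of $\rs S^c$ iff (i) $D_i\subset\pi^{-1}_*S$, or (ii) $D_i\in\Gamma_{\Exc}$ (equivalently, exceptional of log discrepancy $-1$ for $(X,S)$), or (iii) $D_i$ is exceptional with $D_i\subset\pi^*S$, $D_i\notin\Gamma_{\Exc}$, and $cF_i\in\Znum_{>0}$. Accordingly I would set
\begin{equation*}
  N := \bigl\{c>0 : \text{condition (iii) holds for some such } D_i\bigr\},
\end{equation*}
which is a countable discrete subset of $\fieldR_{>0}$ since only finitely many $F_i$ are positive; one has $\Gamma\subset\rs S^c$ always, with equality precisely when $c\notin N$. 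The inclusions $\ELAdjidlof{\ideal a^c}\subset\aidlof|1|<X>{\vphi_{\ideal a^c}}$ and $\HMAdjidlof{\ideal a^c}\subset\aidlof|\sigma_{\mlc}|<X>{\vphi_{\ideal a^c}}$ then follow by comparing the divisorial sheaves on $\rs X$ coefficient by coefficient: using $E^c-E^0=R^0-R^c$ together with the explicit divisor of $\mtidlof<\rs X>{\pi^*_\ominus\vphi_{\ideal a^c}+m^c\pi^*\psi}$, the extra unit of vanishing supplied by $-\Gamma_{\Exc}$ on the algebraic side at each $D_i\in\Gamma_{\Exc}\subset\rs S^c$ is matched by the extra vanishing at codimension-two intersections $D_i\cap D_j\subset\rs S^c$ contributed by $\defidlof{\lcc[2]<\rs X>(\rs* S^c)}$ on the analytic side, after which pushforward gives the desired inclusions.

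For equality when $c\notin N$, I would use the analytic residue short exact sequence (Theorem~\ref{thm:short-exact-seq} and Corollary~\ref{cor:residue-exact-seq-for-non-snc}) together with the vanishing $R^1\pi_*\mtidlof<\rs X>{\pi^*_\ominus\vphi_{\ideal a^c}+\pi^*\psi}=0$ (Remark~\ref{rem:1-residue-sheaf}) to reduce the comparison to matching, for each irreducible component $S_0\subset S$ (which, by smoothness of $\pi^{-1}_*S$, corresponds bijectively to an irreducible component $\pi^{-1}_*S_0\subset\rs S^c=\Gamma$), the analytic residue data along $\pi^{-1}_*S_0$ plus the exceptional contributions from the components of $\Gamma_{\Exc}$ mapping into $S_0$, with the single algebraic term on the normalisation $\rs S_0\to S_0$. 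The key input is Theorem~\ref{thm:connected-1-lc-centres}: the connectedness of the fibres of $\pi^{-1}(\lcS|1|)\cap\rs S^c\to\lcS|1|$ for every $1$-lc centre $\lcS|1|\subset S$ identifies, via the Stein factorisation of $\pi|_{\rs S^c}$, the normal variety through which $\pi|_{\rs S^c}$ factors with the normalisation $\rs S\to S$, which in turn collapses the sum of analytic contributions from the various components of $\rs S^c\cap\pi^{-1}(\lcS|1|)$ to the single summand indexed by $\lcS|1|$ on the algebraic side. This residue matching is the main obstacle in the proof; once it is established, iteration along $\sigma=1,\dotsc,\sigma_{\mlc}$ handles $\HMAdjidlof$ in the same way.

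Finally, the ``more specifically'' statement is a short argument by contradiction. Suppose $V(\ideal a)$ contains no lc centre of $(X,\vphi_{\ideal a^c},\psi)$, and suppose $c\in N$. Then by the characterisation of $N$ in the second paragraph some exceptional $D_i\subset\pi^*S\setminus\Gamma_{\Exc}$ with $F_i>0$ and $cF_i\in\Znum_{>0}$ lies in $\rs S^c$; by Theorem~\ref{thm:sigma-lc-centres} and Definition~\ref{def:sigma-lc-centres}, its image $\pi(D_i)\subset S$ is then an lc centre of $(X,\vphi_{\ideal a^c},\psi)$, and $F_i>0$ forces $\pi(D_i)\subset\pi(\supp F)=V(\ideal a)$, contradicting the hypothesis; hence $c\notin N$ and the equalities in the second part follow from the first part.
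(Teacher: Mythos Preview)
Your setup (log-resolution, the identity $E^c-E^0=R^0-R^c$, the characterisation of $N$, and the appeal to Theorem~\ref{thm:connected-1-lc-centres}) matches the paper, but the execution diverges from the paper's in ways that leave real gaps.

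\textbf{Inclusions.} The paper does not compare divisorial data on $\rs X$; it checks the defining integrability condition directly. Pulling back yields the key identity
\[
\frac{\abs{\pi^*f\cdot\sect_{E^c}}^2 e^{-\pi^*_\ominus\vphi_{\ideal a^c}-\pi^*\psi}}{\abs{\pi^*\psi}^\sigma(\log\abs{e\pi^*\psi})^{1+\eps}}
=\frac{\abs{\pi^*f\cdot\sect_{E^0}}^2 e^{-c\phi_F-\phi_\Gamma}}{\abs{\pi^*\psi}^\sigma(\log\abs{e\pi^*\psi})^{1+\eps}},
\]
from which both inclusions follow immediately via Remark~\ref{rem:residue-fct-result-summary} (since $F$ and $\Gamma$ share no components). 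Your ``coefficient-by-coefficient'' comparison is not well posed: the analytic side on $\rs X$ is $E^c\otimes\mtidlof{\dotsb}\cdot\defidlof{\lcc[2]<\rs X>(\rs S^c)}$, which is \emph{not} divisorial, so it cannot be compared to $\holo_{\rs X}(E^0-\Gamma_{\Exc}-\floor{cF})$ by coefficients. Your heuristic that the vanishing along $\Gamma_{\Exc}$ ``matches'' membership in $\defidlof{\lcc[2]<\rs X>(\rs S^c)}$ requires justification (it hinges on every $2$-stratum of $\rs S^c$ meeting $\Gamma_{\Exc}$, which for $c\in N$ needs further argument).

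\textbf{Hacon--\McKernan\ equality.} No iteration is needed; you have the roles of HM and EL reversed in difficulty. The paper's argument is one line: for $c\notin N$ one has $\rs S^c=\Gamma$, so $\aidlof|\sigma_{\mlc}|=\mtidlof{\vphi_{\ideal a^c}+m^c\psi}$ means $\sect_\Gamma\cdot\pi^*f\cdot\sect_{E^c}\in\mtidlof<\rs X>{\pi^*_\ominus\vphi_{\ideal a^c}+\pi^*\psi}$, which gives $\abs{\pi^*f\cdot\sect_{E^0}}^2 e^{-c\phi_F}\in L^1_{\mathrm{loc}}$ and hence $f\in\HMAdjidlof{\ideal a^c}$.

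\textbf{Ein--Lazarsfeld equality.} The paper does not argue via Stein factorisation or residue matching. Instead, starting from $f\in\aidlof|1|\subset\HMAdjidlof{\ideal a^c}$, it works locally near a point of $D\cap\pi^{-1}_*S$ for $D\subset\Gamma_{\Exc}$: the $\sigma=1$ integrability of the displayed expression forces $\pi^*f$ to vanish on $D$ or on $\pi^{-1}_*S$; either way $\pi^*f|_D\equiv 0$. Theorem~\ref{thm:connected-1-lc-centres} is used only to guarantee the existence of such a $D$ meeting $\pi^{-1}_*S$ with $\pi(D)=\pi(\Gamma_{\Exc})$, which then propagates vanishing to all of $\Gamma_{\Exc}$. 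Your residue-matching route is plausible in spirit, but the crucial ``collapsing'' step---identifying $\pi_*\!\bigl(E^c\otimes\residlof|1|<\rs X>{\rs\bphi}\bigr)$ with the algebraic residue sheaf on the normalisation of $S$---is precisely where the work lies, and you have not carried it out.

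The ``more specifically'' argument you give is essentially the paper's: if $F$ and $\rs S^c$ share a component then some lc centre of $(X,\vphi_{\ideal a^c},\psi)$ lies in $V(\ideal a)$.
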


\begin{proof}
  Taking the log-resolution $\pi \colon \rs X \to X$ of $(X,S,\ideal
  a)$ described at the beginning of this section, it follows from
  the equalities \eqref{eq:canonical-div-log-resolution-formula}
  and $\ideal a \cdot \holo_{\rs X} = \holo_{\rs X}\paren{-F}$ and
  the linear equivalence $K_{\rs X / X} \sim E_{d\pi} = E^c + R^c$ (for any $c
  \geq 0$) that 
  \begin{equation*}
    \pi^*\vphi_{\ideal a^c} \sim_{\tlog} c \:\phi_F
    \quad\text{ and }\quad
    \pi^*\psi =\pi^*\psi_S \sim_{\tlog} \phi_\Gamma -\phi_{E^0} +\phi_{E^c}
    +\phi_{R^c} 
  \end{equation*}
  (see Notation \ref{notation:potentials}).
  Therefore, for any $m \in \fieldR$,
  \begin{equation*} \tag{$*$} \label{eq:pf:potential-at-m-leq-1}
    \pi^*_\ominus \vphi_{\ideal a^c} +m \pi^*\psi
    \sim_{\tlog}
    c \:\phi_F -(1-m)\phi_{R^c} +m\paren{\phi_\Gamma -\phi_{E^0}
      +\phi_{E^c}} \; .
  \end{equation*}
  In particular, one has
  \begin{equation*}
    \pi^*_\ominus \vphi_{\ideal a^c} +\pi^*\psi
    \sim_{\tlog}
    c \:\phi_F +\phi_\Gamma -\phi_{E^0} +\phi_{E^c} \; .
  \end{equation*}

  To determine the divisor $\rs S^c$, note that $R^0 =E^c + R^c -E^0 \geq
  0$ and that $\rs S^c$ is contained inside the polar set of
  $\pi^*\psi$.
  Therefore, $\rs S^c \subset \Gamma \cup \supp\paren{E^c + R^c
    -E^0}$.
  Recall that $R^c$ is, by definition, the maximal sub-divisor of $E_{d\pi}
  =E^c +R^c$ such that $\pi^*_\ominus\vphi_{\ideal a^c} +\pi^*\psi$ is
  quasi-psh,
  so $\rs S^c$ contains no components of $E^c$ (see footnote
  \ref{fn:E-and-S-no-common-comp} on page
  \pageref{fn:E-and-S-no-common-comp}).
  Recall also that the zero locus of $\ideal a$ contains no lc centres of
  $(X,S)$ by assumption, so $F$ and $\Gamma$ have no common
  irreducible components.
  Moreover, $E^0$ also contains no irreducible components of $\Gamma$
  by definition.
  Since a prime divisor $D_i$ in $\rs X$ is contained in $\rs S^c$ if
  and only if $D_i \subset \Gamma \cup \supp\paren{E^c+R^c-E^0}$
  and $\lelong{\pi^*_\ominus\vphi_{\ideal a^c} +\pi^*\psi}[D_i] \in
  \Nnum = \Nnum_{\geq 1}$ (as can be seen from
  \eqref{eq:pf:potential-at-m-leq-1}), it follows that $\Gamma \leq 
  \rs S^c$ and every prime divisor $D_i$ contained in $\rs S^c
  -\Gamma$ must satisfy
  \begin{equation*}
    \lelong{\phi_{E^c}}[D_i] = 0
    \quad\text{ and }\quad
    \begin{aligned}[t]
      &~\lelong{c \:\phi_F +\phi_\Gamma -\phi_{E^0} +\phi_{E^c}}[D_i]
      \\
      =&~\coef_{D_i}\paren{cF +\Gamma -E^0 +E^c} 
      =\coef_{D_i} \paren{c F -E^0}
      &\in \Nnum \; .
    \end{aligned}
  \end{equation*}
  As $\coef_{D_i} \paren{c F -E^0}$, the coefficient
  of $D_i$ in the divisor $c F -E^0$, is continuous in $c$, the
  above conditions hold only for those $c$ sitting inside some
  countable discrete subset $N \subset \fieldR_{> 0}$.
  Such $N$ is chosen such that $\rs S^c =\Gamma$ for $c \in
  \fieldR_{\geq 0} \setminus N$ and $\rs S^c \gneq \Gamma$ for $c \in N$.
  In either case, one can check at this point that
  \begin{equation*}
    \pi(\rs S^c) = S
  \end{equation*}
  as $\pi(\rs S^c) \subset \psi^{-1}(-\infty) =S \subset \pi(\Gamma)$.

  If the zero locus of $\ideal a$ contains no lc centres of
  $(X,\vphi_{\ideal a^c},\psi)$, then $F$ and $\rs S^c$ have no common
  components.
  In this case, $\coef_{D_i}\paren{c F} = 0$ for any component $D_i$
  of $\rs S^c$.
  Thus $c \not\in N$ according to the above criteria (and $\rs S^c
  =\Gamma$ in this case). 

  Recall that all the relevant divisors on $\rs X$ are in snc.
  In view of the isomorphism $\aidlof<X>{\vphi_{\ideal a^c}} \isom
  \pi_*\paren{E^c \otimes \aidlof<\rs X>{\pi^*_\ominus\vphi_{\ideal
        a^c}}[\pi^*\psi]}$, for any $f \in \holo_X$, one is led to
  consider
  \begin{equation*} \tag{$\dagger$} \label{eq:pf:ptwise-RTF-pullback}
    \frac{\abs{\pi^*f \cdot \sect_{E^c}}^2
      \:e^{-\pi^*_\ominus\vphi_{\ideal a^c}
        -\pi^*\psi}}{\logpole|\pi^*\psi|}
    \sim
    \frac{\abs{\pi^*f \cdot \sect_{E^c}}^2
      \:e^{-c\phi_F -\phi_\Gamma +\phi_{E^0}
        -\phi_{E^c}}}{\logpole|\pi^*\psi|} 
    =\frac{\abs{\pi^*f \cdot \sect_{\alert{E^0}}}^2
      \:e^{-c\phi_F -\phi_\Gamma}}{\logpole|\pi^*\psi|} \; .
  \end{equation*}
  This expression is in $\Lloc(\rs X)$ for all $\eps > 0$ if and
  only if $f \in \aidlof<X>{\vphi_{\ideal a^c}}$.

  For the claim concerning the Hacon--\McKernan\ adjoint ideal sheaf,
  let $\rs\sigma_{\mlc} :=\rs\sigma_{\mlc}(\rs X, \rs S^c)$ and
  $\sigma_{\mlc} :=\sigma_{\mlc}\paren{X,\vphi_{\ideal a^c},\psi}$
  (see Definition \ref{def:sigma-lc-centres} for the notation; note
  also that $\rs\sigma_{\mlc} \geq \sigma_{\mlc}$ and
  $\sigma_{\mlc}(\rs X, \rs S^c) \geq \sigma_{\mlc}(\rs X, \Gamma)$
  for general $c \geq 0$).
  Take $\sigma :=\rs\sigma_{\mlc}$.
  As $F$ and $\Gamma$ have no common irreducible components and
  $\frac{e^{-\phi_\Gamma}}{\logpole|\pi^*\psi|/\rs\sigma_{\mlc}/}$ is in
  $\Lloc(\rs X)$ for all $\eps > 0$, it follows from the definition of
  $\HMAdjidlof{\ideal a^c}$ in \eqref{eq:definition-alg-HMAdjidl} and
  the computation in
  \cite{Chan_on-L2-ext-with-lc-measures}*{Prop.~2.2.1} (see also 
  Remark \ref{rem:residue-fct-result-summary}) that $f \in
  \HMAdjidlof{\ideal a^c}$ implies that the expression
  \eqref{eq:pf:ptwise-RTF-pullback} is in $\Lloc(\rs X)$ for all $\eps
  > 0$, thus
  \begin{equation*}
    \HMAdjidlof{\ideal a^c}
    \subset \aidlof|\rs\sigma_{\mlc}|<X>{\vphi_{\ideal a^c}}
    = \aidlof|\sigma_{\mlc}|<X>{\vphi_{\ideal a^c}}
    \quad\text{ for any } c \geq 0 \; .
  \end{equation*}

  Suppose $c \in \fieldR_{\geq 0} \setminus N$ and $f \in
  \aidlof|\sigma_{\mlc}|<X>{\vphi_{\ideal a^c}}
  =\aidlof|\rs\sigma_{\mlc}|<X>{\vphi_{\ideal a^c}}$.
  Since $\Gamma = \rs S^c$ in this case,
  the ideal sheaf $\Ann_{\holo_{\rs X}}
  \paren{\frac{\aidlof|\rs\sigma_{\mlc}|<\rs
      X>{\pi^*_{\ominus}\vphi_{\ideal a^c} -\phi_{E^c}}[\pi^*\psi]
    }{\mtidlof<\rs X>{\pi^*_\ominus\vphi_{\ideal a^c} -\phi_{E^c}
        +\pi^*\psi}}}$ defines $\Gamma$ (Remark
  \ref{rem:defidlof-rs_S-w-phi_E}) and thus
  \begin{equation*}
    \abs{\pi^*f \cdot \sect_{E^0}}^2 \:e^{-c \phi_F}
    \sim \abs{\sect_{\Gamma} \cdot \pi^*f}^2
    \:e^{-\pi^*_\ominus \vphi_{\ideal a^c} +\phi_{E^c} -\pi^*\psi}
    \in \Lloc(\rs X) \; ,
  \end{equation*}
  where $\sect_\Gamma$ is a canonical section of $\Gamma$ and
  $\phi_\Gamma =\log\abs{\sect_\Gamma}^2$. 
  Therefore, one obtains $f \in \HMAdjidlof{\ideal a^c}$ according to
  \eqref{eq:definition-alg-HMAdjidl}.
  Combining with the previous result, it follows that
  \begin{equation*}
    \HMAdjidlof{\ideal a^c} = \aidlof|\sigma_{\mlc}|<X>{\vphi_{\ideal a^c}}
    \quad\text{ for any } c \in \fieldR_{\geq 0} \setminus N \; .
  \end{equation*}

  For the claim concerning the Ein--Lazarsfeld adjoint ideal sheaf,
  take $\sigma = 1$ and assume further that $\pi^{-1}_*S$ is smooth
  (thus having disjoint irreducible components).
  Following the previous arguments, it can be checked directly from
  \eqref{eq:definition-alg-ELAdjidl}, the expression in
  \eqref{eq:pf:ptwise-RTF-pullback} and the computation in
  \cite{Chan_on-L2-ext-with-lc-measures}*{Prop.~2.2.1} that 
  \begin{equation*}
    \ELAdjidlof{\ideal a^c} \subset \aidlof|1|<X>{\vphi_{\ideal a^c}}
    \quad\text{ for any } c \geq 0 \; .
  \end{equation*}
  It remains to show the equality for any $c \in \fieldR_{\geq 0}
  \setminus N$.
  Take any $f \in \aidlof|1|<X>{\vphi_{\ideal a^c}} \subset
  \aidlof|\sigma_{\mlc}|<X>{\vphi_{\ideal a^c}}$.
  As $f \in \HMAdjidlof{\ideal a^c}$ when $c \in \fieldR_{\geq 0}
  \setminus N$ by the previous result, it suffices to show that
  $\abs{\pi^*f \cdot \sect_{E^0}}^2 \:e^{-c\phi_F}$ (and hence
  $\pi^*f$) vanishes along $\Gamma_{\Exc} =\Gamma -\pi^{-1}_*S$
  according to \eqref{eq:definition-alg-ELAdjidl}
  (note that $\Gamma =\rs S^c$ has no common components with $F$ and
  $E^0$).
  Since $S$ is a divisor, Proposition
  \ref{prop:relation-of-S-and-lcc}, together with the fact that
  $\codim_X \lcc<X>(\vphi_{\ideal a^c};\psi) \geq \sigma$ for all
  $\sigma \geq 1$, implies that the irreducible components of $S$ are
  all $1$-lc centres of $(X,\vphi_{\ideal a^c}, \psi)$ in the sense of
  Definition \ref{def:sigma-lc-centres}.
  Being exceptional, each component of $\Gamma_{\Exc}$ must have its
  image under $\pi$ \emph{properly} sitting inside a component of
  $S$.
  According to Theorem \ref{thm:pi-supportive-1-lc-centres},
  components of $\Gamma_{\Exc}$ cannot be $\pi$-supportive $1$-lc
  centres (see Definition \ref{def:sigma-lc-centres}), which implies
  that $\pi^*f$ vanishes on $\Gamma_{\Exc}$, as desired. \qedhere

\end{proof}

\begin{remark}
  When $S$ is of higher pure codimension in $X$, one can still
  construct the corresponding global function $\psi$ such that $S
  \subset \psi^{-1}(-\infty)$ and such that $S$ is the lc locus of the
  family $\set{\mtidlof<X>{m\psi}}_{m\in [0,1]}$
  at jumping number $m=1$.
  Assuming the zero locus of the coherent ideal sheaf $\ideal a$
  contains no lc centres of $(X,0,\psi)$ (in the sense of Definition
  \ref{def:sigma-lc-centres}), one can carry out the same analysis to
  compare $\aidlof<X>{\vphi_{\ideal a^c}}$ with the adjoint ideal
  sheaf introduced by Takagi in \cite{Takagi_alg-adjoint-ideal}, which
  is a generalisation of the Ein--Lazarsfeld adjoint ideal sheaf.
\end{remark}

\begin{cor} \label{cor:aidl-1-and-plt}
  Given an lc pair $(X,S)$ and a $\fieldQ$-divisor $D$ in $X$ such
  that $\supp D$ contains no lc centres of $(X,S)$ in the sense of
  \cite{Kollar_Sing-of-MMP}*{Def.~4.15} and considering the potential
  $\phi_D$ (see Notation \ref{notation:potentials}) and the function
  $\psi = \psi_S$ as described at the beginning of Section
  \ref{sec:analytic-algebraic-adjIdl-in-lc-case}, the pair $(X,S+D)$
  is a plt pair with $\floor D =0$ if and only if $\aidlof|1|<X>{c
    \phi_D} =\holo_X$ for some $c > 1$.
  
  If $\supp D$ contains no lc centres of $(X,\phi_D,\psi)$ in the
  sense of Definition \ref{def:sigma-lc-centres}, then  $(X,S+D)$
  is a plt pair with $\floor D =0$ if and only if $\aidlof|1|<X>{\phi_D}
  =\holo_X$.
\end{cor}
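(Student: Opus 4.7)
The plan is to deduce the corollary from Theorem~\ref{thm:comparison-alg-and-analytic-aidl} via the classical characterisation ``$\ELAdjidlof{D} =\holo_X \iff (X, S+D) \text{ is plt with } \floor D =0$'' recorded in Section~\ref{sec:def-of-alg-adjoint-ideal-sheaves} (cf.~\cite{Takagi_alg-adjoint-ideal}*{Remark 1.3 (2)}). Writing the effective $\fieldQ$-divisor $D$ as $\frac 1k D'$ with $D'$ integral and effective, one has $c\phi_D \sim_\tlog \vphi_{\ideal a^{c/k}}$ for $\ideal a :=\holo_X(-D')$, and the hypothesis that $\supp D$ contains no lc centres of $(X,S)$ puts us squarely in the setting of Theorem~\ref{thm:comparison-alg-and-analytic-aidl}, which supplies a countable discrete set $N \subset \fieldR_{>0}$ such that $\ELAdjidlof{cD} =\aidlof|1|<X>{c\phi_D}$ for every $c \in \fieldR_{\geq 0}\setminus N$.

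For the forward direction of the first equivalence, I would rely on the standard fact that the plt condition together with trivial floor is \emph{open} in the scaling parameter $c$: on a fixed log-resolution $\pi\colon \rs X\to X$, the discrepancies $a(E,X,S+cD)=a(E,X,S+D)-(c-1)\coef_E(\pi^*D)$ depend linearly on $c$, and the coefficients of $cD$ are linear in $c$, so starting from a plt pair with $\floor D =0$ at $c=1$ one can find $\delta > 0$ such that $(X, S+cD)$ is plt with $\floor{cD} =0$ for all $c \in [1,1+\delta)$. Choosing $c$ in this interval but outside the discrete set $N$ gives $\aidlof|1|<X>{c\phi_D} =\ELAdjidlof{cD} =\holo_X$, as required.

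For the reverse direction, if $\aidlof|1|<X>{c_0\phi_D} =\holo_X$ for some $c_0 > 1$, monotonicity of $\aidlof|1|<X>{c\phi_D}$ in $c$ (larger $c$ yields a more singular weight, hence a smaller ideal sheaf) propagates the triviality to the whole interval $[1,c_0]$; picking $c \in (1,c_0] \setminus N$ and applying the comparison theorem yields $\ELAdjidlof{cD} =\holo_X$, so $(X, S+cD)$ is plt with $\floor{cD} =0$, and monotonicity of discrepancies in the boundary then transfers plt down to $(X, S+D)$, with $c > 1$ forcing $\floor D =0$. The second equivalence is immediate from the refined form of Theorem~\ref{thm:comparison-alg-and-analytic-aidl}: the hypothesis that $\supp D$ contains no lc centres of $(X,\phi_D,\psi)$ in the sense of Definition~\ref{def:sigma-lc-centres} ensures $1 \not\in N$, so $\ELAdjidlof{D} =\aidlof|1|<X>{\phi_D}$ on the nose and the algebraic characterisation closes the loop. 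The conceptual (rather than technical) hurdle is really just this: $c=1$ may or may not lie in the bad set $N$, and the trick is to detour through a scaling $c > 1$ at which one knows algebraic and analytic adjoint ideals coincide.
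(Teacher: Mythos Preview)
Your proposal is correct and follows essentially the same approach as the paper: both deduce the corollary from Theorem~\ref{thm:comparison-alg-and-analytic-aidl} together with the classical characterisation $\ELAdjidlof{D}=\holo_X \iff (X,S+D)$ plt with $\floor D=0$, exploit the openness of the plt condition in the scaling parameter $c$ to pass to some $c>1$ outside the discrete bad set $N$, and use monotonicity of $\aidlof|1|<X>{c\phi_D}$ in $c$ to close the loop.
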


\begin{proof}
  It is known that $(X,S+D)$ is a plt pair with $\floor D =0$ if and
  only if $\ELAdjidlof{D} =\holo_X$
  (cf.~\cite{Takagi_alg-adjoint-ideal}*{Remark 1.3 (2)} and notice the
  difference between the definitions of ``plt'' in
  \cite{Kollar_Sing-of-MMP}*{Def.~2.8} and in
  \cite{Takagi_alg-adjoint-ideal}*{Def.~1.1 (ii)}).
  When $\supp D$ contains no lc centres of $(X,\phi_D,\psi)$ (in the
  sense of Definition \ref{def:sigma-lc-centres}), Theorem
  \ref{thm:comparison-alg-and-analytic-aidl} implies that
  $\ELAdjidlof{D} =\aidlof|1|<X>{\phi_D}$ and thus the last claim
  follows.

  Now consider the case when $\supp D$ is only assumed to contain no
  lc centres of $(X,S)$ in the sense of
  \cite{Kollar_Sing-of-MMP}*{Def.~4.15}.
  Notice that $(X, S+D)$ is plt with $\floor D =0$ if and only if
  there exists some constant $c > 1$ such that $(X, S+c'D)$ is plt with
  $\floor{c'D} =0$ for all $c' \in [1,c]$.
  By Theorem \ref{thm:comparison-alg-and-analytic-aidl}, one can also
  choose $c > 1$ sufficiently close to $1$ such that $\ELAdjidlof{c'D}
  =\aidlof|1|<X>{c'\phi_D}$ for all $c' \in (1,c]$.
  Note also that $\aidlof|1|<X>{c\phi_D} \subset
  \aidlof|1|<X>{c'\phi_D}$ for any $c' \in (1, c]$ (in particular,
  $\aidlof|1|<X>{c'\phi_D} =\holo_X$ when $\aidlof|1|<X>{c\phi_D}
  =\holo_X$).
  The remaining claim then follows immediately.
\end{proof}

By altering the formulation a little bit, one can obtain a more direct
result for determining whether a pair is plt or lc.
\begin{thm} \label{thm:aidl-and-singularities-in-MMP}
  Let $\Delta$ be an effective $\fieldQ$-divisor on $X$ and consider
  the function $\psi_\Delta$ as described in Notation
  \ref{notation:potentials}.
  Then,
  \begin{enumerate}
  \item the pair $(X,\Delta)$ is klt if and only if
    $\aidlof|0|<X>{0}[\psi_\Delta] =\holo_X$;

  \item the pair $(X,\Delta)$ is plt if and only if
    $\aidlof|1|<X>{0}[\psi_\Delta] =\holo_X$ and
    $\aidlof|0|<X>{0}[\psi_\Delta] =\defidlof{\floor{\Delta}}$;
    
  \item the pair $(X,\Delta)$ is lc if and only if
    $\aidlof<X>{0}[\psi_\Delta] =\holo_X$ for some (sufficiently
    large) integer $\sigma \geq 0$ (and it suffices to consider only
    $\sigma \in [0,n]$).
  \end{enumerate}
\end{thm}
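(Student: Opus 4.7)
The plan is to pass to a log-resolution $\pi \colon \rs X \to X$ of $(X, \psi_\Delta)$, on which $\pi^*\psi_\Delta$ acquires only snc singularities, and to exploit the isomorphism
\begin{equation*}
  \aidlof<X>{0}[\psi_\Delta] \isom \pi_*\bigl(E \otimes \aidlof<\rs X>{-\phi_R}[\pi^*\psi_\Delta]\bigr)
\end{equation*}
from \eqref{eq:log-resoln-aidl}, valid for every integer $\sigma \geq 0$. On $\rs X$, writing $\pi^*\Delta = \sum_j a_j D_j$ and $K_{\rs X / X} = \sum_j c_j D_j$ in snc form, the discrepancy of $D_j$ with respect to $(X, \Delta)$ is $c_j - a_j$. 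The triviality $\aidlof<X>{0}[\psi_\Delta] = \holo_X$ is then equivalent to $\sect_E \in \aidlof<\rs X>{-\phi_R}[\pi^*\psi_\Delta]$ locally on $\rs X$, which by Theorem \ref{thm:results-on-residue-fct} and the integrability criterion of Remark \ref{rem:residue-fct-result-summary} reduces to a pointwise condition on the $c_j - a_j$.

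For part (1), combining Theorem \ref{thm:results-on-residue-fct} \eqref{item:thm:res-fct} with Proposition \ref{prop:log-resoln-on-jump-subvar} yields $\aidlof|0|<X>{0}[\psi_\Delta] = \mtidlof<X>{\psi_\Delta}$, and the latter equals $\holo_X$ if and only if $e^{-\psi_\Delta} \in \Lloc$, i.e., $(X, \Delta)$ is klt. For part (3), Theorem \ref{thm:results-on-residue-fct} \eqref{item:thm:res-fct} together with Remark \ref{rem:residue-fct-result-jumping-no} shows that the ascending filtration $\seq{\aidlof<X>{0}[\psi_\Delta]}_{\sigma \geq 0}$ stabilises at $\sigma = \sigma_{\mlc} \leq n$ to $\mtidlof<X>{m_0 \psi_\Delta}$ for some $m_0 \in [0, 1)$; this ideal equals $\holo_X$ precisely when $\mtidlof<X>{m \psi_\Delta} = \holo_X$ for all $m \in [0, 1)$, i.e., when $(X, \Delta)$ is lc. This also gives the bound $\sigma \in [0, n]$ stated in the theorem.

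Part (2) demands more care. Applying the integrability criterion of Remark \ref{rem:residue-fct-result-summary} with $\sigma = 1$ to the integrand $\prod_j \abs{z_j}^{2(c_j - a_j)} \big/ \bigl(\abs{\pi^*\psi_\Delta}(\log\abs{e\pi^*\psi_\Delta})^{1+\eps}\bigr)$, the triviality $\aidlof|1|<X>{0}[\psi_\Delta] = \holo_X$ translates to the following pointwise condition on $\rs X$: every $D_j$ passing through a given point satisfies $c_j - a_j \geq -1$; at most one such index $j$ satisfies $c_j - a_j = -1$ with $a_j > 0$; and that distinguished $D_j$ must be non-exceptional, for otherwise some exceptional divisor would realise discrepancy exactly $-1$. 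This is precisely the characterisation of $(X, \Delta)$ being plt from \cite{Kollar_Sing-of-MMP}*{Def.~2.8}, giving both implications simultaneously.

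The main obstacle will be the bookkeeping in part (2): one must carefully identify the jumping lc locus $\rs S$ on $\rs X$ (namely the $D_j$ with $a_j \in \Nnum$ and $c_j < a_j$) and verify that the ``at most one discrepancy equal to $-1$ at each point'' integrability condition is consistent with the plt requirement that components of $\floor\Delta$ be disjoint and that exceptional discrepancies be strictly $> -1$. This entails tracking precisely which divisors contribute to $\lcc[2]<\rs X>(\rs* S)$ and which cause non-triviality of $\mtidlof<\rs X>{-\phi_R + m_0 \pi^*\psi_\Delta}$, as these two obstructions together control $\aidlof|1|<\rs X>{-\phi_R}[\pi^*\psi_\Delta]$.
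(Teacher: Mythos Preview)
Your treatment of parts (1) and (3) is correct and matches the paper's, and the forward implication in (2) (plt $\Rightarrow \aidlof|1|<X>{0}[\psi_\Delta]=\holo_X$) goes through along your lines once $\pi$ is chosen with $\pi^{-1}_*\floor\Delta$ smooth.

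The genuine gap is in the converse of (2). The integrability criterion of Remark \ref{rem:residue-fct-result-summary} at $\sigma=1$ yields, at each point of $\rs X$, only that every discrepancy $c_j-a_j$ is $\geq -1$ and that at most one $D_j$ through the point attains $c_j-a_j=-1$. This is a purely local condition on $\rs X$ and cannot distinguish exceptional from non-exceptional divisors. Your sentence ``that distinguished $D_j$ must be non-exceptional, for otherwise some exceptional divisor would realise discrepancy exactly $-1$'' is circular: it restates the conclusion (no exceptional divisor has discrepancy $-1$) rather than deriving it. Nothing in the pointwise integrability prevents an isolated exceptional $D_j$ with $c_j-a_j=-1$ from occurring, provided it meets no other component of the lc locus $\rs S$.

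The paper fills this gap with a global ingredient you do not invoke: Theorem \ref{thm:connected-1-lc-centres}. After extracting from $\aidlof|1|<\rs X>{-\phi_R}[\pi^*\psi_\Delta]=\holo_{\rs X}$ that $\rs S_0=0$, $\floor{\rs B}=0$, and (via $\defidlof{\lcc[2]<\rs X>(\rs S)}=\holo_{\rs X}$) that the components of $\rs S$ are pairwise disjoint, the paper uses the connectedness of the fibres of $\pi^{-1}(\lcS|1|)\cap\rs S\to\lcS|1|$ over each $1$-lc centre to rule out exceptional components of $\rs S$: such a component would sit over some $S_i\subset S:=\floor\Delta$ while being disjoint from $\pi^{-1}_*S_i$, producing a disconnected fibre. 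This step ultimately rests on the vanishing $R^1\pi_*\paren{E\otimes\mtidlof<\rs X>{\pi^*_\ominus 0+\pi^*\psi_\Delta}}=0$ underlying Remark \ref{rem:1-residue-sheaf}, and your ``bookkeeping'' paragraph offers no substitute for it.
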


\begin{proof}
  The first claim is trivial as $\aidlof|0|<X>{0}[\psi_\Delta]
  =\mtidlof<X>{\psi_\Delta}$.

  For the rest of the claims, take any log-resolution $\pi \colon \rs
  X \to X$ of $(X,\Delta)$ and the corresponding effective divisors
  $E$ and $R$ as in Section \ref{sec:snc-assumption} such that $K_{\rs
    X / X} \sim E+R$ and $R$ is the maximal divisor with
  $\pi^*\psi_\Delta -\phi_R$ being quasi-psh.
  Following the notation at the beginning of Section
  \ref{sec:non-snc} and also Notation \ref{notation:potentials}, one
  has
  \begin{equation*}
    \pi^*\psi_\Delta -\phi_R =\rs\bphi +\phi_{\rs S_0} +\phi_{\rs S}
    \; ,
  \end{equation*}
  where $\rs S$ is the lc locus of the family $\set{\mtidlof<\rs
    X>{m\pi^*\psi_\Delta -\phi_R}}_{m\in [0,1]}$ at the jumping number
  $m=1$ (which is a reduced snc divisor, and $\rs S = 0$ if $m=1$ is
  not a jumping number), and $\rs S_0$ is a divisor with $\supp \rs
  S_0 \subset \rs S$ such that $e^{-\rs\bphi}$ is locally integrable at
  general points of $\rs S$.
  Indeed, it can be seen that, if $\pi^*\Delta -R =\sum_{i \in I} c_i
  D_i$ where $D_i$'s are distinct prime divisors and $c_i \in
  \fieldQ_{> 0}$ for all $i \in I$, one has
  \begin{equation} \label{eq:pf:rs_S-from-Delta}
    \rs S_0 + \rs S =\sum_{\substack{i \in I \colon \\ \alert{c_i \in \Nnum} }}
    c_i D_i \; .
  \end{equation}
  Writing $\rs B := \pi^*\Delta -R -\rs S_0 -\rs S$ (which is an
  effective $\fieldQ$-divisor whose coefficients are all
  non-integral), it follows that 
  \begin{equation*}
    \tag{$*$} \label{eq:pf:canonical-div-formula-general-bdry}
    K_{\rs X} +\rs S_0 +\rs S +\rs B \sim_\fieldQ \pi^*\paren{K_X
      +\Delta} + E
    \quad\text{ and }\quad \rs\bphi \sim_{\tlog} \phi_{\rs B} \; .
  \end{equation*}
  Note that $\rs S$ and $E$ have no common irreducible components.
  If $\rs B$ and $E$ have a common component $D_i$, then $\coef_{D_i}
  (\rs B) \in (0,1)$ by the definition of $E$ (and hence $R$).
  Let $m_0 \in [0,1)$ be a number such that $\mtidlof<\rs X>{m
    \pi^*\psi_\Delta -\phi_R} =\mtidlof<\rs X>{m_0
    \pi^*\psi_\Delta -\phi_R} \supsetneq \mtidlof<\rs
  X>{\pi^*\psi_\Delta -\phi_R}$ for all $m \in [m_0, 1)$. 

  Suppose $(X,\Delta)$ is plt (but not klt).
  From \eqref{eq:pf:canonical-div-formula-general-bdry} and the
  definition of plt in \cite{Kollar_Sing-of-MMP}*{Def.~2.8}, it
  follows that $\rs S_0 = 0$ and $\floor{\rs* B} =0$, which also
  implies that $S :=\floor\Delta$ is a reduced divisor and $\rs S
  =\pi^{-1}_*S$ (which is $\neq 0$, as $(X,\Delta)$ would then be klt
  otherwise).
  Assume that $\pi^{-1}_*S$ is smooth (possibly a union of disjoint
  components) by choosing $\pi$ suitably.
  It follows easily from Theorem \ref{thm:results-on-residue-fct} (and
  Remark \ref{rem:residue-fct-result-jumping-no}) that
  \begin{equation*}
    \aidlof|1|<\rs X>{-\phi_R}[\pi^*\psi_\Delta]
    =\mtidlof<\rs X>{m_0\pi^*\psi_\Delta -\phi_R} \cdot
    \underbrace{\defidlof{\lcc[2]<\rs X>(\rs* S)}}_{
      = \holo_{\rs X} 
      \mathrlap{\text{ as } \lcc[2]<\rs X>(\rs* S) =\emptyset}
    }
    =\mtidlof<\rs X>{m_0 \paren{\phi_{\rs B} +\phi_{\rs S}} -(1-m_0) \phi_R}
    =\holo_{\rs X} \; ,
  \end{equation*}
  which implies that $\aidlof|1|<X>{0}[\psi_\Delta]
  \isom \pi_*\paren{E \otimes \aidlof|1|<\rs X>{-\phi_R}[\pi^*\psi_\Delta]}
  \isom \holo_X$.
  Since $\pi(\rs S) =S$, Proposition
  \ref{prop:mlc=rs_mlc-in-lc-system} guarantees that $S
  =\lcc[1]<X>(0;\psi_\Delta)$ and, therefore,
  $\aidlof|0|<X>{0}[\psi_\Delta] =\Ann_{\holo_X}\paren{\frac{
    \aidlof|1|<X>{0}[\psi_\Delta]
  }{
    \aidlof|0|<X>{0}[\psi_\Delta]
  }} =\defidlof{S} =\defidlof{\floor\Delta}$.

  Conversely, suppose $\aidlof|1|<X>{0}[\psi_\Delta] =\holo_X$ and
  $\aidlof|0|<X>{0}[\psi_\Delta] =\defidlof{\floor\Delta}$, which
  implies immediately that $\sigma_{\mlc}
  =\sigma_{\mlc}(X,0,\psi_\Delta) \leq 1$ (and $=1$ if and only if
  $\defidlof{\floor\Delta} \neq \holo_X$).
  Furthermore, one has
  \begin{equation*}
    \holo_{\rs X} = \aidlof|1|<\rs X>{-\phi_R}[\pi^*\psi_\Delta]
    =\mtidlof<\rs X>{m\paren{\phi_{\rs B} +\phi_{\rs S_0} +\phi_{\rs
          S}} -(1-m) \phi_R} \cdot \defidlof{\lcc[2]<\rs X>(\rs* S)}
  \end{equation*}
  for any $m \in [m_0, 1)$, thus $\mtidlof<\rs X>{m\paren{\phi_{\rs B} +\phi_{\rs
        S_0} +\phi_{\rs S}} -(1-m) \phi_R} =\holo_{\rs X}$ (and
  $\defidlof{\lcc[2]<\rs X>(\rs* S)} =\holo_{\rs X}$).
  Choosing $m$ sufficiently close to $1$ from the left-hand-side, the
  latter equality implies that $\rs S_0 =0$ and $\floor{\rs* B}=0$
  (recall that $\supp\rs S_0 \subset \rs S$ and all coefficients in
  $\rs B$ are non-integral).
  Set again $S := \floor\Delta$, which is reduced.
  One then has $\pi(\rs S) \supset S$ as seen from the construction (see
  \eqref{eq:pf:rs_S-from-Delta}), and $\pi(\rs S) = S$ thanks to the
  assumption $\aidlof|0|<X>{0}[\psi_\Delta] =\defidlof{S}$.
  Proposition \ref{prop:mlc=rs_mlc-in-lc-system} guarantees that
  $S =\lcc[1]<X>(0;\psi_{\Delta})$.
  Since the constant function $1$, which is nowhere vanishing, belongs
  to $\aidlof|1|<X>{0}[\psi_\Delta] =\holo_X$, every irreducible
  component of $\rs S$ are $\pi$-supportive (see Definition
  \ref{def:sigma-lc-centres}).
  Theorem \ref{thm:pi-supportive-1-lc-centres} then guarantees
  that $\rs S =\pi^{-1}_*S$.
  Collecting these results, one observes from
  \eqref{eq:pf:canonical-div-formula-general-bdry} that $(X,\Delta)$
  is plt.

  When $(X,\Delta)$ is lc, one also has $\rs S_0 =0$ and $\floor{\rs*
    B}=0$.
  Choosing an integer $\sigma \in [0,n]$ such that $\lcc[\sigma+1]<\rs
  X>(\rs* S) =\emptyset$, the above computation yields
  $\aidlof<X>{0}[\psi_\Delta] =\holo_X$.
  Conversely, when $\aidlof<X>{0}[\psi_\Delta] =\holo_X$ for some
  $\sigma \geq 0$, the above argument shows again that $\rs S_0 =0$
  and $\floor{\rs* B}\vphantom{\rs B} =0$, which implies that
  $(X,\Delta)$ is lc as observed from
  \eqref{eq:pf:canonical-div-formula-general-bdry}.
  This completes the proof.
\end{proof}

Another proof of \textfr{Kollár}'s theorem on the inversion of
adjunction for the case when the base space $X$ is smooth via the use
of the residue exact sequences can now be given.
\begin{thm}[Inversion of adjunction (Theorem
  \ref{thm:inversion-of-adjunction-intro});
  see \cite{Kollar_AST_1992}*{Thm.~17.6} and
  cf.~\cite{KimDano&Seo_adj-idl}*{Thm.~1.7}]
  \label{thm:inversion-of-adjunction}
  On a complex manifold $X$, let $\Delta$ be an effective
  $\fieldQ$-divisor on $X$ such that $S :=\floor\Delta$ is a reduced
  divisor.
  Also let $\nu \colon S^\nu \to S$ be the normalisation of $S$ and
  $\Diff_{S^\nu}\Delta$ the general different such that $K_{S^\nu}
  +\Diff_{S^\nu}\Delta \sim_\fieldQ \nu^*\parres{K_X +\Delta}_{S}$.
  Then, $(X, \Delta)$ is plt near $S$ if and only if $(S^\nu,
  \Diff_{S^\nu}\Delta)$ is klt.
\end{thm}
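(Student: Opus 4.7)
The plan is to combine Theorem \ref{thm:aidl-and-singularities-in-MMP} with the residue short exact sequence in Corollary \ref{cor:residue-exact-seq-for-non-snc}, applied with $\vphi_L := 0$ and $\psi := \psi_\Delta$. By Theorem \ref{thm:aidl-and-singularities-in-MMP} item (2), $(X,\Delta)$ is plt near $S$ if and only if $\aidlof|1|<X>{0}[\psi_\Delta] = \holo_X$ in a neighbourhood of $S$; and by Theorem \ref{thm:aidl-and-singularities-in-MMP} item (1) applied on $S^\nu$ (which is smooth after passing to a suitable log-resolution of $(X,\Delta)$ on which $\pi^{-1}_*S$ is smooth), $(S^\nu, \Diff_{S^\nu}\Delta)$ is klt if and only if $\mtidlof<S^\nu>{\psi_{\Diff_{S^\nu}\Delta}} = \holo_{S^\nu}$.

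The core step will be to establish the identification
\begin{equation*}
  K_X \otimes \residlof|1|<X>{0;\psi_\Delta}
  \isom \nu_*\paren{K_{S^\nu} \otimes \mtidlof<S^\nu>{\psi_{\Diff_{S^\nu}\Delta}}} \; .
\end{equation*}
By Remark \ref{rem:1-residue-sheaf}, for any log-resolution $\pi\colon \rs X \to X$ one has $\residlof|1|<X>{0;\psi_\Delta} \isom \pi_*(E \otimes \residlof|1|<\rs X>{\rs\bphi})$. The local formula from Section \ref{sec:residue-map} expresses $K_{\rs X} \otimes \residlof|1|<\rs X>{\rs\bphi}$ as a direct sum over the components $\rs D_i$ of $\rs S$, with each summand of the form $K_{\rs D_i} \otimes \paren{\Diff_{\rs D_i}\rs S}^{-1} \otimes \mtidlof<\rs D_i>{\rs\bphi}$. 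Matching the divisors $E, R, \rs S_0$ and the fractional part $\rs B$ (such that $\rs\bphi \sim_\tlog \phi_{\rs B}$) against the adjunction formula $K_{S^\nu} + \Diff_{S^\nu}\Delta \sim_\fieldQ \nu^*\parres{K_X + \Delta}_S$, then pushing forward, produces the identification.

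Once the identification is available, the residue exact sequence
\begin{equation*}
  0 \to \mtidlof<X>{\psi_\Delta} \to \aidlof|1|<X>{0}[\psi_\Delta]
  \xrightarrow{\Res} \residlof|1|<X>{0;\psi_\Delta} \to 0
\end{equation*}
delivers both directions at once. In the forward direction, if $\aidlof|1|<X>{0}[\psi_\Delta] = \holo_X$ near $S$, then $\Res(1)$, computed by the \textfr{Poincaré} residue formula \eqref{eq:g_p-from-Poincare-residue}, is a nowhere-vanishing section of $\residlof|1|<X>{0;\psi_\Delta}$, which via the identification forces $\mtidlof<S^\nu>{\psi_{\Diff_{S^\nu}\Delta}} = \holo_{S^\nu}$. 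In the reverse direction, if $\mtidlof<S^\nu>{\psi_{\Diff_{S^\nu}\Delta}} = \holo_{S^\nu}$, the identification produces a nowhere-vanishing local section $g$ of $\residlof|1|<X>{0;\psi_\Delta}$ near each point of $S$; lifting $g$ via the local $L^2$ extension result (Corollary \ref{cor:local-L2-estimates}) gives $f \in \aidlof|1|<X>{0}[\psi_\Delta]$ with $\Res(f) = g$, and the non-vanishing of $g$ on $S$ forces $f$ to be a local unit in $\holo_X$, so $\aidlof|1|<X>{0}[\psi_\Delta] = \holo_X$ near $S$.

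The main obstacle will be the sheaf-theoretic identification of $\residlof|1|<X>{0;\psi_\Delta}$ with the pushforward of the multiplier ideal on $S^\nu$: this requires carefully tracking all the divisors introduced by the log-resolution and verifying that their combination, after adjunction, matches the general different $\Diff_{S^\nu}\Delta$. A secondary technical point is the smoothness of $S^\nu$, needed to apply Theorem \ref{thm:aidl-and-singularities-in-MMP} on $S^\nu$; this is handled by choosing the log-resolution so that $\pi^{-1}_*S$ is smooth and working locally near points of $S$ where the normalisation behaves well.
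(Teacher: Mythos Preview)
Your overall strategy is right, but the core identification you assert,
\begin{equation*}
  K_X \otimes \residlof|1|<X>{0;\psi_\Delta}
  \;\isom\; \nu_*\paren{K_{S^\nu} \otimes \mtidlof<S^\nu>{\psi_{\Diff_{S^\nu}\Delta}}} \; ,
\end{equation*}
does not hold in the generality you need, and this is the gap. The sheaf $\residlof|1|<\rs X>{\rs\bphi}$ decomposes as a direct sum indexed by \emph{all} irreducible components of $\rs S$, not only those of $\pi^{-1}_*S$. A priori $\rs S$ may contain $\pi$-exceptional components (and one may have $\rs S_0 \neq 0$); the summands supported on such components do not correspond to anything on $S^\nu$, so the push-forward you describe does not match $\nu_*\paren{K_{S^\nu} \otimes \mtidlof<S^\nu>{\cdot}}$. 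Consequently, in your reverse direction, knowing that $(S^\nu,\Diff_{S^\nu}\Delta)$ is klt controls only the summands on $\pi^{-1}_*S$, and your lifted section $f$ need not be a unit: if $\rs S_0 \neq 0$ then every germ of $\aidlof|1|<\rs X>{-\phi_R}[\pi^*\psi_\Delta]$ is divisible by $\rs\sect_0$, and if $\rs S$ has extra components then ``nowhere-vanishing on $S$'' tells you nothing about the restriction of $\pi^*f$ to those components.

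The missing ingredient is Theorem~\ref{thm:connected-1-lc-centres}. The paper's argument in the reverse direction first reads off from the klt hypothesis, via the adjunction formula $(\dagger)$ restricted to each $\rs S_i \subset \pi^{-1}_*S$, that $\Diff_{\rs S_i}\rs S = 0$ and $\res{\rs* S_0}_{\rs S_i} = 0$; the first of these says that $\rs S_i$ is disjoint from every other component of $\rs S$. Theorem~\ref{thm:connected-1-lc-centres} then forces $\rs S = \pi^{-1}_*S$ (no exceptional components survive), and together with $\supp\rs S_0 \subset \rs S$ this gives $\rs S_0 = 0$. Only at this point does the residue sheaf collapse to $\holo_{\rs S}$ with the residue map being plain restriction, so that the constant $1 \in \holo_{\rs X}$ trivially extends the constant section of $\holo_{\rs S}$ and witnesses $\aidlof|1|<\rs X>{-\phi_R}[\pi^*\psi_\Delta] = \holo_{\rs X}$ near $\rs S$. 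No appeal to Corollary~\ref{cor:local-L2-estimates} is needed (and that corollary is stated under the Snc assumption on $X$, not in the present generality). Your forward direction is morally fine, but there too one uses that plt forces $\rs S = \pi^{-1}_*S$ (this comes out of the proof of Theorem~\ref{thm:aidl-and-singularities-in-MMP}) before the residue sheaf simplifies.
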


\begin{proof}
  Write $S = \sum_{i \in I} D_i$ and $S^\nu =\sum_{i \in I} D_i^\nu$
  such that each $D_i$ is an irreducible divisor and
  $\res\nu_{D_i^\nu} \colon D_i^\nu \to D_i$ is the normalisation of
  $D_i$.
  One then has $\Diff_{D_i^\nu} \Delta =\parres{\Diff_{S^\nu}
    \Delta}_{D_i^\nu}$ for each $i \in I$.
  Take any log-resolution $\pi \colon \rs X \to X$ of $(X,\psi_\Delta)$.
  Following the notation in the proof of Theorem
  \ref{thm:aidl-and-singularities-in-MMP}, one has
  \begin{gather*}
    \pi^*\psi_\Delta -\phi_R =\rs\bphi +\phi_{\rs S_0} +\phi_{\rs S}
    \sim_{\tlog} \phi_{\rs B} +\phi_{\rs S_0} +\phi_{\rs S}
    \mathrlap{\quad\text{ and}} \\
    K_{\rs X} +\rs S_0 +\rs S +\rs B \sim_\fieldQ \pi^*\paren{K_X
      +\Delta} + E \; .
  \end{gather*}
  Recall that $\rs B$ is effective and all of its coefficients are
  non-integral, and $\supp\rs S_0 \subset \rs S$.
  Since $S =\floor\Delta$ is reduced, the divisor $\rs S_0$ is
  $\pi$-exceptional, and $\rs S \supset \pi^{-1}_* S$ as can be
  observed from \eqref{eq:pf:rs_S-from-Delta}.
  Therefore, by writing $\pi^{-1}_*S =\sum_{i \in I} \rs D_i$, one has
  \begin{gather*}
    \Diff_{D_i^\nu} \Delta
    =\pi'_*\paren{\Diff_{\rs D_i}\paren{\rs* S_0 +\rs* S +\rs* B}}
    =\pi'_*\paren{\Diff_{\rs D_i}\paren{\rs* S} +\res{\rs* B}_{\rs
        D_i}}
    \mathrlap{\quad\text{ and}} \\
    \label{eq:pf:canonical-formula-on-rs-D_i} \tag{$\dagger$}
    K_{\rs D_i} +\Diff_{\rs D_i} \rs S +\res{\rs* S_0}_{\rs D_i}
    +\res{\rs* B}_{\rs D_i}
    \:\sim_{\fieldQ}\:
    \pi'^*\paren{K_{D_i^\nu} +\Diff_{D_i^\nu} \Delta}
    +\res E_{\rs D_i} \; ,
  \end{gather*}
  where $\pi'$ is a log-resolution of $(S^\nu,
  \Diff_{S^\nu} \Delta)$ obtained from the factorisations $\res\pi_{\rs
    D_i} = \res\nu_{D_i^\nu} \circ \res{\pi'}_{\rs D_i}$.
  Moreover, consider the residue short exact sequence in Corollary
  \ref{cor:residue-exact-seq-for-non-snc} with $\sigma := 1$, $\vphi_L
  := 0$ and $\psi :=\psi_\Delta$ given as in Notation
  \ref{notation:potentials}.
  Write $\rs S =\lcc[1]<\rs X>(\rs* S) =\sum_{p \in \rs I} \lcS*|1|$.
  According to Remark \ref{rem:1-residue-sheaf},
  \eqref{eq:isom-of-2models-of-residl} and discussion in Section
  \ref{sec:residue-map}, one has
  \begin{align*}
    \residlof|1|<X>{0;\psi_\Delta} 
    &=\pi_* \residlof|1|<\rs X>{\rs\bphi -\phi_E}
    =\bigoplus_{p \in \rs I} \pi_*\sheaf S_{\lcS*|1|} \\
    &=\bigoplus_{p\in \rs I} \paren{\res\pi_{\lcS*|1|}}_*\paren{
      \res{\rs* S_0^{-1}}_{\lcS*|1|} \otimes
      \paren{\Diff_{\lcS*|1|} \rs S}^{-1} \otimes
      \mtidlof<\lcS*|1|>{\rs*\bphi -\phi_E}
      } \\
    &=\bigoplus_{p\in \rs I} \paren{\res\pi_{\lcS*|1|}}_*\paren{
      \res{\rs* S_0^{-1}}_{\lcS*|1|} \otimes
      \paren{\Diff_{\lcS*|1|} \rs S}^{-1} \otimes
      \mtidlof<\lcS*|1|>{\phi_{\rs B} -\phi_E}
      } \; .
  \end{align*}

  Suppose that $(X,\Delta)$ is plt near $S$.
  By shrinking $X$ to a neighbourhood of $S$, one can assume that
  $(X,\Delta)$ is plt everywhere. 
  The proof of Theorem \ref{thm:aidl-and-singularities-in-MMP} shows
  that $\rs S =\pi^{-1}_*S$ (thus $\setd{\rs D_i}{i \in I} =
  \setd{\lcS*|1|}{p \in \rs I}$), $\rs S_0 = 0$, $\floor{\rs* B} = 0$
  and $\Diff_{\lcS*|1|} \rs S = 0$ (as $\lcc[2]<\rs X>(\rs* S)
  =\emptyset$, i.e.~irreducible components of $\rs S$ are mutually
  disjoint).
  Therefore, $(S^\nu , \Diff_{S^\nu} \Delta)$ is klt, as can be seen
  from \eqref{eq:pf:canonical-formula-on-rs-D_i}.

  Now suppose that $(S^\nu , \Diff_{S^\nu} \Delta)$ is klt.
  It follows from \eqref{eq:pf:canonical-formula-on-rs-D_i} that
  $\res{\rs* S_0}_{\rs D_i} = 0$, $\Diff_{\rs D_i}\rs S = 0$ (note that
  $\rs S_0$ and $\rs S$ are effective $\Znum$-divisors) and
  $\floor{\res{\rs* B}_{\rs D_i}} = 0$ for all $i \in I$.
  The general different $\Diff_{\rs D_i}\rs S$ being trivial implies
  that $\rs D_i$ is disjoint from any other components of $\rs S$.
  Recall that $\rs D_i \in \setd{\lcS*|1|}{p \in \rs I}$ for all $i
  \in I$.
  It follows that the summand of $\residlof|1|<X>{0;\psi_\Delta}$
  supported on $\rs D_i$ is given by $\pi_*\sheaf S_{\rs D_i} =
  \paren{\res\pi_{\rs D_i}}_* \holo_{\rs D_i} \neq 0$, so $\rs D_i$ is
  $\pi$-supportive for each $i \in I$ (see Definition
  \ref{def:sigma-lc-centres}) and therefore $\lcc[1]<X>(0;\psi_\Delta)
  \supset S$. 
  On any open polydisc $V \Subset X$ (in some coordinate chart) such
  that $V \cap S \neq \emptyset$, take any $\rs g =\paren{\rs g_p}_{p
    \in \rs I} \in \residlof|1|<\rs X>{\rs\bphi -\phi_E}(\pi^{-1}(V))$
  such that the component $\rs g_{\rs D_i}$ of $\rs g$ on $\rs D_i$
  for each $i \in I$ (such that $D_i \cap V \neq \emptyset$) is a
  nowhere vanishing function (for example, a constant function), while
  the other components $\rs g_p$ are chosen to be $0$ on $\lcS*[p]
  \subset \rs S -\pi^{-1}_*S$.
  Surjectivity of the residue morphism guarantees there exists $f \in
  \aidlof|1|<X>{0}[\psi_\Delta](V)$ such that $\rs\Res(\pi^*f) =\rs g$.
  Note that, given $\res{\rs* S_0}_{\rs D_i} =0$ and $\Diff_{\rs
    D_i}\rs S = 0$, the image of the residue morphism $\rs\Res$ on
  $\rs D_i$ is given simply by restriction $\pi^*f \mapsto
  \res{\pi^*f}_{\rs D_i} =u_i \rs g_{\rs D_i} \not\equiv 0$ up to a multiple factor
  of a nowhere vanishing function $u_i$ on $\rs D_i$.
  This indeed implies that the components $D_i =\pi(\rs D_i)$ of $S$
  has to be disjoint from one another as well as from each $\pi(\lcS*|1|)$
  for $\lcS*|1| \subset \rs S -\pi^{-1}_*S$ (as one has
  $\res{\pi^*f}_{\lcS*|1|} \equiv 0$ on $1$-lc centres $\lcS*|1|
  \subset \rs S -\pi^{-1}_*S$ by the choice of $\rs g$, regardless of
  whether $\res{\rs* S_0}_{\lcS*|1|}$ and $\Diff_{\lcS*|1|} \rs S$ are
  trivial or not).
  This implies that there exists an open neighbourhood $W \subset X$ of $S$
  disjoint from $\pi(\rs S) \setminus S$ such that $f$ is
  nowhere vanishing on $W \cap V$, and hence
  $\aidlof|1|<X>{0}[\psi_\Delta] = \holo_X$ on such neighbourhood
  $W$.
  
  With such choice of $W$, one has $\res{\pi(\rs* S)}_W =S$ and that
  every $1$-lc centre in $\res{\rs* S}_{\pi^{-1}(W)}$ is
  $\pi$-supportive (since $\aidlof|1|<X>{0}[\psi_\Delta] =\holo_X$ on
  $W$).
  In view of Theorem \ref{thm:pi-supportive-1-lc-centres}, one has
  $\res{\rs* S}_{\pi^{-1}(W)} =\pi^{-1}_*S$. 
  It thus follows that $\aidlof|0|<X>{0}[\psi_\Delta] =\defidlof{S}$ on $W$.
  Theorem \ref{thm:aidl-and-singularities-in-MMP} then assures that
  $(X,\Delta)$ is plt on that neighbourhood.
\end{proof}

\subsection{Further examples}
\label{sec:aidl-Adjidl-different}

The first example is to show that the adjoint ideal sheaf introduced
by Guenancia is different from the Ein--Lazarsfeld adjoint ideal sheaf
in general.

\begin{example}[Guenancia's adjoint ideal sheaf] \label{example:Guenancia-neq-EL}
  Guenancia's adjoint ideal sheaf in its original form (see
  \cite{Guenancia}*{Def.~2.7 and Def.~2.10} or footnote
  \ref{fn:Guenancia-Adjidl} on page \pageref{fn:Guenancia-Adjidl})
  may not coincide with the Ein--Lazarsfeld adjoint ideal sheaf as
  claimed (note that \cite{Guenancia}*{Def.~2.1}, which is corrected
  in \cite{Guenancia_AdjIdl-Erratum}, is the definition of
  $\ELAdjidlof{\ideal a^c}$) when the reduced snc divisor $S$ has more than one
  component, as can be illustrated from the example in Example
  \ref{example:the-cross-in-2-disc}.
  Recall that $X = \Delta^2 \subset \fieldC^2$ is the unit $2$-disc
  centred at the origin and $S =\set{z_1 z_2 = 0}$ (thus one can
  set $\psi := \log\abs{z_1}^2 +\log\abs{z_2}^2 -1$).
  Take $\ideal a =\holo_X$ and $\vphi_{\ideal a^c} = 0$ for any
  $c \geq 0$.
  Taking $\pi$ to be the blow-up at the origin of $X$ with exceptional
  divisor $E$, it follows easily that $\ELAdjidlof{\holo_X} =\pi_*\holo_{\rs
    X} \paren{-E} = \maxidl_{X,\vect 0}$, the defining ideal sheaf of
  the origin $\vect 0$ in $X$.
  However, the integral
  \begin{equation*}
    \int_{\Delta^2} \frac{\dvol_{\Delta^2}}{\abs{z_1z_2}^2
      \paren{\log\frac{\abs{z_1}^{\mathrlap 2}}e
        \:\:\log\frac{\abs{z_2}^{\mathrlap 2}}e \:}^2}
    =\paren{\int_{\Delta} \frac{\pi\ibar dz\wedge d\conj z}{\abs z^2
        \paren{\log\frac{\abs z^2}e}^2} }^2
  \end{equation*}
  being convergent implies that Guenancia's adjoint ideal sheaf is
  simply $\holo_X$ in this case.
\end{example}

\begin{remark} \label{rem:flaw-in-Guenancia}
  The flaw in the claim \cite{Guenancia}*{Prop.~2.11} comes from the
  proof of \cite{Guenancia}*{Lemma 2.12}.
  The argument that there exists $0 < \epsilon' \leq \epsilon$ such
  that $\lambda_k(\epsilon') >-1$ for all $k > p$ (in the notation in
  \cite{Guenancia}*{Lemma 2.12}) can be seen false easily by
  considering the situation in Example
  \ref{example:Guenancia-neq-EL}.
  See the Erratum \cite{Guenancia_AdjIdl-Erratum} by Guenancia
  for the corrected proof of the claim when $S$ is smooth.
  While Guenancia's adjoint ideal sheaf does not coincide with the
  Ein--Lazarsfeld adjoint ideal sheaf in general when $S$ has more
  than one intersecting components, the two ideal sheaves coincide
  when $S$ has only one irreducible component, even if it is not
  smooth (a setup studied in \cite{Li_adj-idl-II} and
  \cite{KimDano&Seo_adj-idl}).
  A proof can be obtained by following the argument in the last part
  of the proof of Theorem \ref{thm:comparison-alg-and-analytic-aidl}.
\end{remark}

The second example gives an instance of the algebraic and analytic
adjoint ideal sheaves when $(X,S)$ is no longer lc.

\begin{example}[Non-lc case] \label{example:adjidl-neq-in-non-lc-case}
  Let $X = \Delta^2 \subset \fieldC^2$ again be the unit $2$-disc and
  $S = \set{z_2^2 =z_1^3}$ be the cuspidal curve.
  Let $\pi \colon \rs X \to X$ be the standard resolution of the singularity
  at the origin $\vect 0 = (0,0)$ such that
  \begin{equation*}
    K_{\rs X} \sim \pi^*K_X +E_1 +2E_2 +4E_3
    \quad\text{ and }\quad
    \pi^*S = \pi^{-1}_*S +2E_1 +3E_2 +6E_3 \; ,
  \end{equation*}
  where $E_1$, $E_2$ and $E_3$ are exceptional (prime) divisors.
  The pair $(X,S)$ is apparently not lc.
  It follows immediately from \eqref{eq:definition-alg-HMAdjidl} and
  \eqref{eq:definition-alg-ELAdjidl} that 
  \begin{equation*}
    \ELAdjidlof{\holo_X} =\pi_*\holo_{\rs X} \paren{-E_1 -E_2 -2E_3}
    \quad\text{ and }\quad
    \HMAdjidlof{\holo_X} =\pi_*\holo_{\rs X} \paren{-2E_3} \; .
  \end{equation*}

  Let $\psi =\psi_S =\log\abs{z_1^3 -z_2^2}^2 -1$.
  Notice that $S$ is the lc locus of the family
  $\set{\mtidlof<X>{m\psi}}_{m\in[0,1]}$ at jumping number $m=1$.
  Since, for $m \in [0,1]$,
  \begin{equation*}
    m\pi^*_\ominus\psi
    \sim_{\tlog} m\phi_{\pi^{-1}_*S} +(2m-1) \phi_{E_1}
    +(3m-2) \phi_{E_2} +(6m-4) \phi_{E_3} \; ,
  \end{equation*}
  one sees that $\rs S = \pi^{-1}_*S +E_1 +E_2 +E_3$
  and $m=\frac 56$ is the jumping number preceding $m=1$.
  Note that $\pi^{-1}_*S$, $E_1$ and $E_2$ are mutually disjoint,
  while $E_3$ intersects all 3 other lines exactly once.
  Therefore, $\lcc[1]<\rs X>(\rs* S) =\rs S$ and $\lcc[2]<\rs X>(\rs*
  S) =\set{p_0, p_1, p_2}$, a set of 3 distinct points.
  A computation as before shows that 
  \begin{align*}
    \mtidlof<X>{\frac 56 \psi}
    =&~\aidlof|2|<X>{0} =\pi_*\holo_{\rs X}\paren{-E_3} \; , \\
    &~\aidlof|1|<X>{0}
    =\pi_*\paren{\holo_{\rs X}\paren{-E_3} \cdot \defidlof{\set{p_0, p_1, p_2}}} \; , \\
    \mtidlof<X>{\psi}
    =&~\aidlof|0|<X>{0} =\pi_*\holo_{\rs X}\paren{-\pi^{-1}_*S -E_1
      -E_2 -2E_3} \; ,
  \end{align*}
  where $\defidlof{\set{p_0, p_1, p_2}}$ is the defining ideal sheaf
  of the 3 points in $\rs X$.
  Notice that one has $\HMAdjidlof{\holo_{X}} \subset \aidlof|1|{0}$ a
  priori
  (although the two sheaves are indeed equal by the computation below).
  Since
  \begin{equation*}
    \divsr{\pi^*z_1} = S_1' +E_1 +E_2 +2E_3
    \quad\text{ and }\quad
    \divsr{\pi^*z_2} = S_2' +E_1 +2E_2 +3E_3 \; ,
  \end{equation*}
  where $S_i'$ is the proper transform of $\set{z_i=0}$, it can be
  checked directly that
  \begin{gather*}
    \ELAdjidlof{\holo_X}
    =\HMAdjidlof{\holo_X}
    =\aidlof|1|<X>{0} 
    =\aidlof|2|<X>{0}
    =\genby{z_1, z_2}
    \\
    \text{and }\quad
    \aidlof|0|<X>{0} = \mtidlof<X>{\psi} = \defidlof{S} \; .
  \end{gather*}

  When the family $\set{\mtidlof<X>{m\psi}}_{m\geq 0}$ at jumping
  number $m=\frac 56$ is considered, one can find that, using the
  same calculation, $\rs S = E_3$ and the corresponding adjoint ideal
  sheaves are given by  
  \begin{equation*}
    \aidlof|0|<X>{0}[\frac 56 \psi] =\mtidlof<X>{\frac 56 \psi}
    =\genby{z_1, z_2}
    \quad\text{ and }\quad
    \aidlof|1|<X>{0}[\frac 56 \psi] =\mtidlof<X>{\frac 23 \psi}
    =\holo_X \; .
  \end{equation*}
\end{example}






\begin{bibdiv}
  \begin{biblist}
    \IfFileExists{references.ltb}{
      \bibselect{references}
    }{
      \bib{Ambro_lcc}{article}{
  author={Ambro, Florin},
  title={Basic properties of log canonical centers},
  conference={ title={Classification of algebraic varieties}, },
  book={ series={EMS Ser. Congr. Rep.}, publisher={Eur. Math. Soc., Z\"{u}rich}, },
  date={2011},
  pages={39--48},
  review={\MR {2779466}},
  doi={10.4171/007-1/2},
}

\bib{Ambro_injectivity}{article}{
  author={Ambro, Florin},
  title={An injectivity theorem},
  journal={Compos. Math.},
  volume={150},
  date={2014},
  number={6},
  pages={999--1023},
  issn={0010-437X},
  review={\MR {3223880}},
  doi={10.1112/S0010437X13007768},
}

\bib{Birkar_non-klt-connectedness}{article}{
  author={Birkar, Caucher},
  title={On connectedness of non-klt loci of singularities of pairs},
  date={2022},
  arxiv={2010.08226v2 [math.AG]},
  note={to appear in J. Differential Geom.},
}

\bib{Chan_on-L2-ext-with-lc-measures}{article}{
  author={Chan, Tsz On Mario},
  title={On an $L^2$ extension theorem from log-canonical centres with log-canonical measures},
  journal={Math. Z.},
  volume={301},
  date={2022},
  number={2},
  pages={1695--1717},
  issn={0025-5874},
  review={\MR {4418335}},
  doi={10.1007/s00209-021-02890-9},
  eprint={https://rdcu.be/cFDPA},
  arxiv={2008.03019 [math.CV]},
  note={Numbering of cited sections and theorems follows the arXiv version},
}

\bib{Chan&Choi_ext-with-lcv-codim-1}{article}{
  author={Chan, Tsz On Mario},
  author={Choi, Young-Jun},
  title={Extension with log-canonical measures and an improvement to the plt extension of Demailly-Hacon-P\u {a}un},
  journal={Math. Ann.},
  volume={383},
  date={2022},
  number={3-4},
  pages={943--997},
  issn={0025-5831},
  review={\MR {4458394}},
  doi={10.1007/s00208-021-02152-3},
  eprint={https://rdcu.be/cn5N6},
  arxiv={1912.08076 [math.CV]},
}

\bib{Chan&Choi_injectivity-I}{article}{
  author={Chan, Tsz On Mario},
  author={Choi, Young-Jun},
  title={On an injectivity theorem for log-canonical pairs with analytic adjoint ideal sheaves},
  arxiv={2205.06954 [math.CV]},
  date={2022},
  note={accepted by Trans. Amer. Math. Soc.},
}

\bib{Demailly_complete-Kahler}{article}{
  author={Demailly, Jean-Pierre},
  title={Estimations $L^{2}$\ pour l'op\'erateur $\bar \partial $\ d'un fibr\'e vectoriel holomorphe semi-positif au-dessus d'une vari\'et\'e k\"ahl\'erienne compl\`ete},
  language={French},
  journal={Ann.~Sci.~\'Ecole Norm.~Sup.~(4)},
  volume={15},
  date={1982},
  number={3},
  pages={457--511},
  issn={0012-9593},
  review={\MR {690650}},
}

\bib{Demailly_on_OTM-extension}{article}{
  author={Demailly, Jean-Pierre},
  title={On the Ohsawa-Takegoshi-Manivel $L^2$ extension theorem},
  language={English, with English and French summaries},
  conference={ title={Complex analysis and geometry}, address={Paris}, date={1997}, },
  book={ series={Progr. Math.}, volume={188}, publisher={Birkh\"{a}user, Basel}, },
  date={2000},
  pages={47--82},
  review={\MR {1782659}},
}

\bib{Demailly_multiplier-ideal-sheaves}{article}{
  author={Demailly, Jean-Pierre},
  title={Multiplier ideal sheaves and analytic methods in algebraic geometry},
  conference={ title={School on Vanishing Theorems and Effective Results in Algebraic Geometry}, address={Trieste}, date={2000}, },
  book={ series={ICTP Lect. Notes}, volume={6}, publisher={Abdus Salam Int. Cent. Theoret. Phys., Trieste}, },
  date={2001},
  pages={1--148},
  review={\MR {1919457}},
}

\bib{Demailly}{webpage}{
  author={Demailly, Jean-Pierre},
  title={Complex analytic and differential geometry},
  note={OpenContent Book},
  url={https://www-fourier.ujf-grenoble.fr/~demailly/manuscripts/agbook.pdf},
  date={2012},
}

\bib{Demailly_extension}{article}{
  author={Demailly, Jean-Pierre},
  title={Extension of holomorphic functions defined on non reduced analytic subvarieties},
  conference={ title={The legacy of Bernhard Riemann after one hundred and fifty years. Vol. I}, },
  book={ series={Adv. Lect. Math. (ALM)}, volume={35}, publisher={Int. Press, Somerville, MA}, },
  date={2016},
  pages={191--222},
  review={\MR {3525916}},
  arxiv={1510.05230 [math.CV]},
}

\bib{Eisenstein_thesis}{thesis}{
  author={Eisenstein, Eugene},
  title={Inversion of adjunction in high codimension},
  type={Ph.D.~Thesis, University of Michigan},
  pages={123},
  date={2011},
}

\bib{Ein&Lazarsfeld_adjIdl}{article}{
  author={Ein, Lawrence},
  author={Lazarsfeld, Robert},
  title={Singularities of theta divisors and the birational geometry of irregular varieties},
  journal={J. Amer. Math. Soc.},
  volume={10},
  date={1997},
  number={1},
  pages={243--258},
  issn={0894-0347},
  review={\MR {1396893}},
  doi={10.1090/S0894-0347-97-00223-3},
}

\bib{Ein-Popa}{article}{
  author={Ein, Lawrence},
  author={Popa, Mihnea},
  title={Extension of sections via adjoint ideals},
  journal={Math. Ann.},
  volume={352},
  date={2012},
  number={2},
  pages={373--408},
  issn={0025-5831},
  review={\MR {2874961}},
  doi={10.1007/s00208-011-0639-2},
}

\bib{Fujino_log-MMP}{article}{
  author={Fujino, Osamu},
  title={Fundamental theorems for the log minimal model program},
  journal={Publ. Res. Inst. Math. Sci.},
  volume={47},
  date={2011},
  number={3},
  pages={727--789},
  issn={0034-5318},
  review={\MR {2832805}},
  doi={10.2977/PRIMS/50},
}

\bib{Fujino&Gongyo_abundance}{article}{
  author={Fujino, Osamu},
  author={Gongyo, Yoshinori},
  title={Log pluricanonical representations and the abundance conjecture},
  journal={Compos.~Math.},
  volume={150},
  date={2014},
  number={4},
  pages={593--620},
  issn={0010-437X},
  review={\MR {3200670}},
  doi={10.1112/S0010437X13007495},
}

\bib{Grauert&Remmert-Modifikationen}{article}{
  author={Grauert, Hans},
  author={Remmert, Reinhold},
  title={Zur Theorie der Modifikationen. I. Stetige und eigentliche Modifikationen komplexer R\"{a}ume},
  language={German},
  journal={Math. Ann.},
  volume={129},
  date={1955},
  pages={274--296},
  issn={0025-5831},
  review={\MR {71085}},
  doi={10.1007/BF01362372},
}

\bib{Grauert&Remmert-CAS}{book}{
  author={Grauert, Hans},
  author={Remmert, Reinhold},
  title={Coherent analytic sheaves},
  series={Grundlehren der Mathematischen Wissenschaften [Fundamental Principles of Mathematical Sciences]},
  volume={265},
  publisher={Springer-Verlag, Berlin},
  date={1984},
  pages={xviii+249},
  isbn={3-540-13178-7},
  review={\MR {755331}},
  doi={10.1007/978-3-642-69582-7},
}

\bib{Grauert&Remmert}{book}{
  author={Grauert, Hans},
  author={Remmert, Reinhold},
  title={Theory of Stein spaces},
  series={Classics in Mathematics},
  note={Translated from the German by Alan Huckleberry; Reprint of the 1979 translation},
  publisher={Springer-Verlag, Berlin},
  date={2004},
  pages={xxii+255},
  isbn={3-540-00373-8},
  review={\MR {2029201}},
  doi={10.1007/978-3-642-18921-0},
}

\bib{Guan&Li_adjIdl-not-coherent}{article}{
  author={Guan, Qi'an},
  author={Li, Zhenqian},
  title={Analytic adjoint ideal sheaves associated to plurisubharmonic functions},
  journal={Ann. Sc. Norm. Super. Pisa Cl. Sci. (5)},
  volume={18},
  date={2018},
  number={1},
  pages={391--395},
  issn={0391-173X},
  review={\MR {3799407}},
}

\bib{Guan&Li_cluster-jumping-numbers}{article}{
  author={Guan, Qi'an},
  author={Li, Zhenqian},
  title={Cluster points of jumping coefficients and equisingularties of plurisubharmonic functions},
  journal={Asian J. Math.},
  volume={24},
  date={2020},
  number={4},
  pages={611--620},
  issn={1093-6106},
  review={\MR {4226663}},
  doi={10.4310/AJM.2020.v24.n4.a4},
}

\bib{Guan&Zhou_openness}{article}{
  author={Guan, Qi'an},
  author={Zhou, Xiangyu},
  title={A proof of Demailly's strong openness conjecture},
  journal={Ann. of Math. (2)},
  volume={182},
  date={2015},
  number={2},
  pages={605--616},
  issn={0003-486X},
  review={\MR {3418526}},
  doi={10.4007/annals.2015.182.2.5},
}

\bib{Guenancia}{article}{
  author={Guenancia, Henri},
  title={Toric plurisubharmonic functions and analytic adjoint ideal sheaves},
  journal={Math. Z.},
  volume={271},
  date={2012},
  number={3-4},
  pages={1011--1035},
  issn={0025-5874},
  review={\MR {2945594}},
  doi={10.1007/s00209-011-0900-0},
  note={Erratum in \cite {Guenancia_AdjIdl-Erratum}},
}

\bib{Guenancia_AdjIdl-Erratum}{article}{
  author={Guenancia, Henri},
  title={Erratum for the article ``Toric plurisubharmonic functions and analytic adjoint ideal sheaves''},
  date={2022},
  note={\url {https://hguenancia.perso.math.cnrs.fr/resources/Erratum.pdf}},
}

\bib{Hacon&Mckernan}{article}{
  author={Hacon, Christopher D.},
  author={M${}^{\text {c}}$Kernan, James},
  title={Existence of minimal models for varieties of log general type. II},
  journal={J. Amer. Math. Soc.},
  volume={23},
  date={2010},
  number={2},
  pages={469--490},
  issn={0894-0347},
  review={\MR {2601040}},
  doi={10.1090/S0894-0347-09-00651-1},
}

\bib{Hiep_openness}{article}{
  author={Hiep, Pham Hoang},
  title={The weighted log canonical threshold},
  language={English, with English and French summaries},
  journal={C. R. Math. Acad. Sci. Paris},
  volume={352},
  date={2014},
  number={4},
  pages={283--288},
  issn={1631-073X},
  review={\MR {3186914}},
  doi={10.1016/j.crma.2014.02.010},
}

\bib{Hironaka}{article}{
  author={Hironaka, Heisuke},
  title={Resolution of singularities of an algebraic variety over a field of characteristic zero. I, II},
  journal={Ann. of Math. (2) {\bf 79} (1964), 109--203; ibid. (2)},
  volume={79},
  date={1964},
  pages={205--326},
  issn={0003-486X},
  review={\MR {0199184}},
}

\bib{KimDano-adjIdl}{article}{
  author={Kim, Dano},
  title={Themes on non-analytic singularities of plurisubharmonic functions},
  conference={ title={Complex analysis and geometry}, },
  book={ series={Springer Proc. Math. Stat.}, volume={144}, publisher={Springer, Tokyo}, },
  date={2015},
  pages={197--206},
  review={\MR {3446757}},
  doi={10.1007/978-4-431-55744-9\texttt {\_}14},
}

\bib{KimDano&Seo_jumping-numbers}{article}{
  author={Kim, Dano},
  author={Seo, Hoseob},
  title={Jumping numbers of analytic multiplier ideals},
  note={With an appendix by S\'{e}bastien Boucksom},
  journal={Ann. Polon. Math.},
  volume={124},
  date={2020},
  number={3},
  pages={257--280},
  issn={0066-2216},
  review={\MR {4085107}},
  doi={10.4064/ap190529-19-12},
}

\bib{KimDano&Seo_adj-idl}{article}{
  author={Kim, Dano},
  author={Seo, Hoseob},
  title={On $L^2$ extension from singular hypersurfaces},
  journal={Math. Z.},
  volume={303},
  date={2023},
  number={4},
  pages={89},
  issn={0025-5874},
  review={\MR {4562223}},
  doi={10.1007/s00209-023-03248-z},
  arxiv={2104.03554v5 [math.CV]},
}

\bib{Kollar_AST_1992}{book}{
  editor={Koll\'ar, J\'anos},
  title={Flips and abundance for algebraic threefolds - {A} summer seminar at the {University} of {Utah} {(Salt} {Lake} {City,} 1991)},
  series={Ast\'erisque},
  publisher={Soci\'et\'e math\'ematique de France},
  number={211},
  date={1992},
  pages={268},
  review={ Zbl 0782.00075},
  language={English},
  url={http://www.numdam.org/item/AST_1992__211__1_0/},
}

\bib{Kollar_Sing-of-MMP}{book}{
  author={Koll\'{a}r, J\'{a}nos},
  title={Singularities of the minimal model program},
  series={Cambridge Tracts in Mathematics},
  volume={200},
  note={With a collaboration of S\'{a}ndor Kov\'{a}cs},
  publisher={Cambridge University Press, Cambridge},
  date={2013},
  pages={x+370},
  isbn={978-1-107-03534-8},
  review={\MR {3057950}},
  doi={10.1017/CBO9781139547895},
}

\bib{Lazarsfeld_book-II}{book}{
  author={Lazarsfeld, Robert},
  title={Positivity in algebraic geometry. II},
  series={Ergebnisse der Mathematik und ihrer Grenzgebiete. 3. Folge. A Series of Modern Surveys in Mathematics [Results in Mathematics and Related Areas. 3rd Series. A Series of Modern Surveys in Mathematics]},
  volume={49},
  note={Positivity for vector bundles, and multiplier ideals},
  publisher={Springer-Verlag, Berlin},
  date={2004},
  pages={xviii+385},
  isbn={3-540-22534-X},
  review={\MR {2095472}},
  doi={10.1007/978-3-642-18808-4},
}

\bib{Li_adj-idl-II}{article}{
  author={Li, Zhenqian},
  title={Analytic adjoint ideal sheaves associated to plurisubharmonic functions. II},
  journal={Ann. Sc. Norm. Super. Pisa Cl. Sci. (5)},
  volume={22},
  date={2021},
  number={1},
  pages={183--193},
  issn={0391-173X},
  review={\MR {4288652}},
}

\bib{Manivel}{article}{
  author={Manivel, Laurent},
  title={Un th\'{e}or\`eme de prolongement $L^2$ de sections holomorphes d'un fibr\'{e} hermitien},
  language={French},
  journal={Math. Z.},
  volume={212},
  date={1993},
  number={1},
  pages={107--122},
  issn={0025-5874},
  review={\MR {1200166}},
  doi={10.1007/BF02571643},
}

\bib{Matsumura_injectivity-Kaehler}{article}{
  author={Matsumura, Shinichi},
  title={Injectivity theorems with multiplier ideal sheaves for higher direct images under K\"{a}hler morphisms},
  journal={Algebr. Geom.},
  volume={9},
  date={2022},
  number={2},
  pages={122--158},
  issn={2313-1691},
  review={\MR {4429015}},
  doi={10.14231/ag-2022-005},
  arxiv={1607.05554v2 [math.CV]},
}

\bib{Shokurov_3-fold-log-flips}{article}{
  author={Shokurov, V. V.},
  title={3-fold log flips},
  journal={Russian Acad.~Sci.~Izv.~Math.},
  volume={40},
  number={1},
  date={1993},
  pages={95--202},
}

\bib{Siu_closed-pos-currents}{article}{
  author={Siu, Yum Tong},
  title={Analyticity of sets associated to Lelong numbers and the extension of closed positive currents},
  journal={Invent. Math.},
  volume={27},
  date={1974},
  pages={53--156},
  issn={0020-9910},
  review={\MR {352516}},
  doi={10.1007/BF01389965},
}

\bib{Skoda_Analytic-subsets}{article}{
  author={Skoda, H.},
  title={Sous-ensembles analytiques d'ordre fini ou infini dans ${\bf C}^{n}$},
  language={French},
  conference={ title={Fonctions analytiques de plusieurs variables et analyse complexe }, address={Colloq. Internat. CNRS, No. 208, Paris}, date={1972}, },
  book={ publisher={Gauthier-Villars, Paris}, },
  date={1974},
  pages={235. ``Agora Mathematica'', No. 1},
  review={\MR {0460705}},
}

\bib{Takagi_alg-adjoint-ideal}{article}{
  author={Takagi, Shunsuke},
  title={Adjoint ideals along closed subvarieties of higher codimension},
  journal={J. Reine Angew. Math.},
  volume={641},
  date={2010},
  pages={145--162},
  issn={0075-4102},
  review={\MR {2643928}},
  doi={10.1515/CRELLE.2010.031},
}

\bib{Takayama_adj-ideal}{article}{
  author={Takayama, Shigeharu},
  title={Pluricanonical systems on algebraic varieties of general type},
  journal={Invent. Math.},
  volume={165},
  date={2006},
  number={3},
  pages={551--587},
  issn={0020-9910},
  review={\MR {2242627}},
  doi={10.1007/s00222-006-0503-2},
}

\bib{Vaquie_adjIdl}{article}{
  author={Vaqui\'{e}, Michel},
  title={Irr\'{e}gularit\'{e} des rev\^{e}tements cycliques},
  language={French},
  conference={ title={Singularities}, address={Lille}, date={1991}, },
  book={ series={London Math. Soc. Lecture Note Ser.}, volume={201}, publisher={Cambridge Univ. Press, Cambridge}, },
  date={1994},
  pages={383--419},
  review={\MR {1295085}},
}

    }
  \end{biblist}
\end{bibdiv}

\end{document}
